\definecolor{webcolor}{rgb}{0.8,0,0.2}
\definecolor{webbrown}{rgb}{.6,0,0}
\numberwithin{equation}{section}
\renewcommand{\AA}{\mathbb A}
\newcommand{\CC}{\mathbb C}
\newcommand{\FF}{\mathbb F}
\newcommand{\GG}{\mathbb G}
\newcommand{\PP}{\mathbb P}
\newcommand{\QQ}{\mathbb Q}
\newcommand{\RR}{\mathbb R}
\newcommand{\ZZ}{\mathbb Z} 
\newcommand{\Zhat}{\widehat\ZZ}
\newcommand{\OO}{\mathcal O}
\newcommand{\calG}{\mathcal G}  \newcommand{\calF}{\mathcal F}
\newcommand{\calB}{\mathcal B}
\newcommand{\calS}{\mathcal S}
\newcommand{\calA}{\mathcal A}
\newcommand{\calC}{\mathcal C}
\newcommand{\calZ}{\mathcal Z}
\newcommand{\calL}{\mathcal L}
\newcommand{\calD}{\mathcal D}
\newcommand{\calW}{\mathcal W}
\newcommand{\calM}{\mathcal M}
\newcommand{\calU}{\mathcal U}
\newcommand{\calV}{\mathcal V}
\newcommand{\calY}{\mathcal Y}
\newcommand{\p}{\mathfrak p}
\newcommand{\scrS}{\mathscr S}
\newcommand{\Gr}{\operatorname{Gr}}
\newcommand{\norm}[1]{ \left|\!\left| #1 \right|\!\right|  }
\def\cyc{{\operatorname{cyc}}}
\def\ab{{\operatorname{ab}}}
\def\un{{\operatorname{un}}}
\def\conn{{\operatorname{conn}}}
\def\rank{{\operatorname{rank}}}
\def\Res{{\operatorname{Res}}}
\def\Spec{\operatorname{Spec}} 
\def\Gal{\operatorname{Gal}}
\def \GL {\operatorname{GL}}  
\def \GSp {\operatorname{GSp}}  
\def \SL {\operatorname{SL}}
\def \Sp {\operatorname{Sp}}
\def\Aut{\operatorname{Aut}} 
\def\End{\operatorname{End}}
\def\Frob{\operatorname{Frob}}
\def\sc{{\operatorname{sc}}}
\def\ad{{\operatorname{ad}}}
\newcommand{\q}{\mathfrak q}
\newcommand{\defi}[1]{\textsf{#1}} 
\newcommand\blank[1]{}
\def\bbar#1{\setbox0=\hbox{$#1$}\dimen0=.2\ht0 \kern\dimen0 
\overline{\kern-\dimen0 #1}}
\newcommand{\Qbar}{{\overline{\mathbb Q}}} 
\newcommand{\Kbar}{{\bbar{K}}}
\newcommand{\FFbar}{\overline{\FF}} 
\newtheorem{thm}{Theorem}[section]
\newtheorem{lemma}[thm]{Lemma}
\newtheorem{cor}[thm]{Corollary}
\newtheorem{prop}[thm]{Proposition}
\theoremstyle{definition}
\newtheorem{definition}[thm]{Definition}
\theoremstyle{remark}
\newtheorem{remark}[thm]{Remark}
\newtheorem{example}[thm]{Example}
\newenvironment{romanenum}{\hfill \begin{enumerate} }{\end{enumerate}}
\newenvironment{alphenum}{\hfill \begin{enumerate} }{\end{enumerate}}
\begin{document}

\title[Families of abelian varieties and large Galois images]{Families of abelian varieties and large Galois images}
\subjclass[2010]{Primary 11F80; Secondary 14K15}
\author{David Zywina}
\address{Department of Mathematics, Cornell University, Ithaca, NY 14853, USA}
\email{zywina@math.cornell.edu}
\urladdr{http://www.math.cornell.edu/~zywina}


\begin{abstract}
Associated to an abelian variety $A$ of dimension $g$ over a number field $K$ is a Galois representation $\rho_A\colon \Gal(\Kbar/K)\to \GL_{2g}(\Zhat)$.  The representation $\rho_A$ encodes the Galois action on the torsion points of $A$ and its image is an interesting invariant of $A$ that contains a lot of arithmetic information.  We consider abelian varieties over $K$ parametrized by the $K$-points of a nonempty  open subvariety $U\subseteq \PP^n_K$.  We show that away from a set of density $0$, the image of $\rho_A$ will be very large; more precisely, it will have uniformly bounded index in a group obtained from the family of abelian varieties.  This generalizes earlier results which assumed that the family of abelian varieties had ``big monodromy''.   We also give a version for a family of abelian varieties with a more general base.
\end{abstract}

\maketitle

\section{Introduction}

Fix an abelian scheme $\pi\colon A\to U$ of relative dimension $g\geq 1$, where $U$ is a non-empty open subvariety of $\PP^n_K$ with $K$ a number field and $n\geq 1$.  Choose an algebraic closure $\Kbar$ of $K$ and define the absolute Galois group $\Gal_K:=\Gal(\Kbar/K)$.  
 
Take any point $u\in U(K)$.  The fiber of $\pi$ over $u$ is an abelian variety $A_u$ over $K$ of dimension $g$.    For each positive integer $m$, let $A_u[m]$ be the $m$-torsion subgroup of $A(\Kbar)$.   The group $A_u[m]$ is a free $\ZZ/m\ZZ$-module of rank $m$ and has a natural $\Gal_K$-action.   This Galois action can  expressed in terms of a representation
\[
\bbar\rho_{A,m} \colon \Gal_K \to \Aut_{\ZZ/m\ZZ}(A[m]).
\]
Taking the inverse limit over all $m$, ordered by divisibility, we obtain a single representation
\[
\rho_{A_u} \colon \Gal_K \to \Aut(\varprojlim A_u[m]) \cong \GL_{2g}(\Zhat)
\]
that encodes the Galois action on the torsion of $A_u$, where $\Zhat$ is the profinite completion of $\ZZ$.  We are interested in describing how large the image of $\rho_{A_u}$ can be as we vary the point $u\in U(K)$.   \\

We first observe that the abelian scheme $A$ imposes a constraint on the image of $\rho_{A_u}$.  Let $\pi_1(U,\bbar\eta)$ be the \'etale fundamental group of $U$, where $\bbar\eta$ is a fixed geometric generic point of $U$.   For each positive integer $m$, let $A[m]$ be the $m$-torsion subscheme of $A$.  The morphism $A[m]\to U$ can be viewed as locally constant sheaf of $\ZZ/m\ZZ$-modules on $U$ that is free of rank $2g$; it thus corresponds to a representation $\bbar\rho_{A,m} \colon \pi_1(U,\bbar\eta) \to \Aut_{\ZZ/m\ZZ}(A[m]_{\bbar\eta})$,
where the group $A[m]_{\bbar\eta}$ is the fiber of $A[m]$ above ${\bbar\eta}$.  Taking the inverse limit over all $m$, ordered by divisibility, we obtain a single representation
\[
\rho_A \colon \pi_1(U,\bbar\eta) \to \Aut(\varprojlim A[m]_{\bbar\eta}) \cong \GL_{2g}(\Zhat).
\]
Specialization at a point $u\in U(K)$ induces a homomorphism $u_*\colon \Gal_K \to \pi_1(U,\bbar\eta)$, uniquely defined up to conjugacy.  Composing $u_*$ with $\rho_A$, we obtain a representation $\Gal_K \to \GL_{2g}(\Zhat)$ that agrees with $\rho_{A_u}$ up to an inner automorphism of $\GL_{2g}(\Zhat)$.   So we may identity $\rho_{A_u}$ with the specialization of $\rho_A$ at $u$.  In particular, we can view $\rho_{A_u}(\Gal_K)$ as a subgroup of $\rho_A(\pi_1(U,\bbar\eta))$ that is uniquely defined up to conjugation.      Suppressing the base point, we have 
\[
\rho_{A_u}(\Gal_K)\subseteq \rho_A(\pi_1(U))
\] 
for all $u\in U(K)$.  Our main result is that the index $[\rho_A(\pi_1(U)):\rho_{A_u}(\Gal_K)]$ is finite and bounded as we vary over ``most'' $u\in U(K)$.   Our notion of ``most'' will be that of density.   Let $H$ be the absolute multiplicative height function on $\PP^n(K)$. The \defi{density} of a set  $B\subseteq \PP^n(K)$ is the value
\[
\lim_{x\to + \infty} \frac{|\{ u \in B : H(u)\leq x\}|}{ |\{ u \in \PP^n(K) : H(u)\leq x\}|}
\] 
if the limit exists.  For example, $U(K)$ has density $1$.   Our main theorem is the following:

\begin{thm} \label{T:MAIN}
Fix an abelian scheme $\pi\colon A\to U$ of positive relative dimension, where $U$ is non-empty open subvariety of $\PP^n_K$ for a number field $K$ and $n\geq 1$.  Then there is a constant $C$  such that 
\[
\big[\rho_A(\pi_1(U)): \rho_{A_u}(\Gal_K)\big] \leq C
\]
holds for all $u\in U(K)$ in a set of density $1$.  
\end{thm}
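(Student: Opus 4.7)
Set $\calG := \rho_A(\pi_1(U)) \subseteq \GL_{2g}(\Zhat)$, and for each integer $m \geq 1$ let $\calG(m)$ denote the image of $\calG$ under reduction modulo $m$. The surjection $\pi_1(U) \twoheadrightarrow \calG(m)$ corresponds to a connected Galois \'etale cover $U_m \to U$ with group $\calG(m)$, and for $u \in U(K)$ the subgroup $\rho_{A_u,m}(\Gal_K) \subseteq \calG(m)$ equals $\calG(m)$ exactly when the fiber of $U_m$ over $u$ is the spectrum of a single field extension of $K$ of degree $|\calG(m)|$. Because $U$ is open in $\PP^n_K$, Hilbert's irreducibility theorem yields that
\[
T_m := \{ u \in U(K) : \rho_{A_u,m}(\Gal_K) \neq \calG(m) \}
\]
is a thin set, and in particular has density $0$.

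The theorem will then follow immediately from the following \emph{Frattini-type claim}: there exist an integer $m_0 \geq 1$ and a constant $C$ such that every closed subgroup $H \leq \calG$ whose image in $\calG(m_0)$ equals $\calG(m_0)$ satisfies $[\calG : H] \leq C$. Indeed, for every $u \in U(K) \setminus T_{m_0}$ the closed subgroup $H = \rho_{A_u}(\Gal_K) \subseteq \calG$ satisfies the hypothesis of the claim, and so $[\calG : \rho_{A_u}(\Gal_K)] \leq C$ on the density-$1$ complement of $T_{m_0}$.

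The Frattini claim is the main obstacle, and it is precisely where the argument must depart from prior work that simply assumed $\calG$ to be (an open subgroup of) a standard group like $\Sp_{2g}(\Zhat)$. My plan is to analyze $\calG$ prime-by-prime. For each prime $\ell$, the $\ell$-adic projection $\calG_\ell \subseteq \GL_{2g}(\ZZ_\ell)$ is a compact $\ell$-adic Lie subgroup whose Zariski closure in $\GL_{2g,\QQ_\ell}$ is reductive, with derived part controlled by the geometric monodromy $\rho_A(\pi_1(U_{\Kbar}))$ (via Deligne--Faltings semisimplicity) and center controlled by the action on determinants. A Lie-theoretic argument then produces, for each $\ell$, a level $\ell^{k_\ell}$ and a constant $c_\ell$ such that any closed subgroup of $\calG_\ell$ surjecting modulo $\ell^{k_\ell}$ has index at most $c_\ell$ in $\calG_\ell$; the crucial input, coming from Nori--Serre-type classifications of subgroups of reductive groups over $\FF_\ell$, is that $k_\ell = 0$ and $c_\ell = 1$ for all but finitely many $\ell$. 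Combining the $\ell$-adic pieces into a global Frattini statement then requires a Goursat-type argument that exploits the near-independence of the $\ell$-adic projections of the geometric monodromy together with the control of the arithmetic-over-geometric quotient by cyclotomic-type characters. With this in hand, taking $m_0 = \prod \ell^{k_\ell}$ and $C = \prod c_\ell$ over the finitely many exceptional primes produces the claim, and hence the theorem.
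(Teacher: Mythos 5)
The reduction you propose is unsound: the ``Frattini-type claim'' is false for the groups that actually occur, and no choice of a single level $m_0$ can make it true. The problem has two sources. First, for any fixed $m_0$ the condition that a closed subgroup $H\leq\calG=\rho_A(\pi_1(U))$ surjects onto $\calG(m_0)$ imposes no constraint whatsoever on the projections of $H$ to the $\ell$-adic factors with $\ell\nmid m_0$; for instance the closed subgroup consisting of elements that are trivial in every $\ell$-adic coordinate with $\ell\nmid m_0$ surjects mod $m_0$ but has infinite index. Second, even at a single prime the abelian part escapes: already for $\calG=\GL_2(\Zhat)$ (the case of the universal elliptic curve over $\QQ$), take $D\subseteq\Zhat^\times$ a closed subgroup of infinite index that still surjects onto $(\ZZ/m_0\ZZ)^\times$ (e.g.\ impose congruences at primes not dividing $m_0$, or use Teichm\"uller-type generators); then $H=\det^{-1}(D)$ surjects onto $\GL_2(\ZZ/m_0\ZZ)$ but $[\calG:H]=\infty$. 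This is exactly the point of the paper's remark that the theorem is \emph{not} a formal consequence of Hilbert irreducibility because $\rho_A(\pi_1(U))$ is not topologically finitely generated: no purely group-theoretic statement about closed subgroups with prescribed image at one finite level can bound the index, so your claimed ``$m_0=\prod\ell^{k_\ell}$, $C=\prod c_\ell$'' conclusion cannot hold.

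What rescues the theorem is arithmetic input about the specific subgroups $\rho_{A_u}(\Gal_K)$, applied uniformly at \emph{all} primes: the abelian/determinant directions are large because the specializations carry the cyclotomic character (and, in the paper, because one only compares against the commutator subgroup $\rho_A(W)'$ of the kernel of the component-group map, cf.\ \S\ref{S:main reduction}); and the semisimple directions must be shown to contain $\calS_{A,\ell}(\FF_\ell)'$ for \emph{every} $\ell\geq b_A$ outside a density-zero set. The latter is where the real work lies: a qualitative Hilbert irreducibility statement at each $\ell$ gives exceptional sets whose union over infinitely many $\ell$ is a priori uncontrolled, so the paper uses an effective open image theorem (Theorem~\ref{T:main new}) to reduce, for points of height at most $x$, to primes $\ell\ll(\log x)^\gamma$, together with a quantitative Hilbert irreducibility theorem with explicit dependence on $\ell$ (large sieve plus the derangement estimates of \S\ref{S:derangements}, Theorem~\ref{T:almost there mod ell}). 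Your sketch gestures at some of the right ingredients (Nori--Serre type classifications, Goursat), but as written the argument collapses at the first step, and the missing quantitative control over infinitely many primes is precisely the content of the paper's proof.
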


Theorem~\ref{T:MAIN} shows that, up to bounded index, the image of the specializations are usually as large as possible when the geometric constraints are taken into account.  This can thus be viewed as a variant of Hilbert's irreducibility theorem (and effective versions of Hilbert's irreducibility theorem will be a key component of our proof).  This is useful since in practice, $\rho_A(\pi_1(U))$ is easier to compute that the images of the representations $\rho_{A_u}$ (one reason is that there are geometric and topological approaches to computing the normal subgroup $\rho_{A}(\pi_1(U_{\Kbar}))$).  

However, note that our theorem is not a formal consequence of Hilbert's irreducibility theorem since $\rho_A(\pi_1(U))$ is not finitely generated when viewed as a topological group with the profinite topology.   Moreover, the constant $C$ cannot alway be taken to be $1$.   As an example, take $K=\QQ$ and consider any abelian scheme $A\to U:=\AA_\QQ^1-\{0,1728\}$ of relative dimension $1$ such that each fiber $A_u$ is an elliptic curve with $j$-invariant $u$.  In this case, we have $\rho_A(\pi_1(U))=\GL_2(\Zhat)$.   The theorem cannot hold with $C=1$ since from Serre we know that $\rho_{E}(\Gal_\QQ)\neq \GL_2(\Zhat)$ for all elliptic curves $E$ over $\QQ$, cf.~Proposition~22 of \cite{MR387283}.   For this example, the theorem will hold with $C=2$.

There are several special cases of Theorem~\ref{T:MAIN} occurring in the literature and we will recall some in \S\ref{SS:earlier}.   These earlier results have a strong constraint on the image of $\rho_A$; more precisely, they assume that $\rho_A(\pi_1(U))$ is an open subgroup of $\GSp_{2g}(\Zhat)$.   The main novelty of Theorem~\ref{T:MAIN} is the lack of restrictions on our abelian scheme $A\to U$.   Since we have less control on the image of $\rho_A$, the group theory involved is much more complicated; for example, the $\ell$-adic monodromy groups need not be connected and their derived subgroups need not be simply connected.

The constant $C$ in Theorem~\ref{T:MAIN} that occurs in our proof will be given in \S\ref{SS:the constant C}.   We have not tried to determine the optimal $C$.

\subsection{General base}

Fix a number field $K$.  Let $\pi\colon A\to X$ be an abelian scheme of relative dimension $g\geq 1$, where $X$ is a smooth and geometrically integral variety defined over $K$ of dimension $n\geq 1$.   As before, we can define a representation
\[
\rho_A \colon \pi_1(X)\to \GL_{2g}(\Zhat).
\]

Take any closed point $x$ of $X$.  The residue field $k(x)$ of $x$ is a finite extension of $K$.   The fiber of $A$ over $x$ is an abelian variety $A_x$ over $k(x)$.   Associated to $A_x$, we have a representation $\rho_{A_x} \colon \Gal_{k(x)} \to \GL_{2g}(\Zhat)$ whose image we may again view as a subgroup of $\rho_A(\pi_1(X))$.  The following theorem says that there are infinitely many closed points $x$ of $X$ of bounded degree such that $\rho_{A_x}(\Gal_{k(x)})$ is large.

\begin{thm} \label{T:general base}
There are constants $d$ and $C$ such that there are infinitely many closed points $x$ of $X$ satisfying $[k(x):K] \leq d$ and $[\rho_A(\pi_1(X)): \rho_{A_x}(\Gal_{k(x)})] \leq C$.
\end{thm}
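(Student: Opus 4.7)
My plan is to reduce Theorem~\ref{T:general base} to Theorem~\ref{T:MAIN} via Noether normalization and a Weil restriction. After shrinking $X$ to a nonempty open subvariety (a harmless operation: $X$ is smooth, so $\pi_1$ of a nonempty open surjects onto $\pi_1(X)$, leaving $\rho_A(\pi_1(X))$ unchanged), I would produce a finite \'etale morphism $\pi\colon X\to V$ of some degree $d$, with $V$ a nonempty open subvariety of $\PP^n_K$. To construct this, take a projective closure of $X$, project generically to $\PP^n_K$ from a suitable linear subspace of complementary dimension, and remove the ramification and non-\'etale loci in the source and target.

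Next I would form the Weil restriction $B := \Res_{X/V}(A)$. Because $\pi$ is finite \'etale, $B$ is an abelian scheme over $V$ of relative dimension $gd$, and its monodromy representation $\rho_B\colon \pi_1(V)\to \GL_{2gd}(\Zhat)$ is naturally identified with the representation induced from $\rho_A|_{\pi_1(X)}$ along the finite-index inclusion $\pi_1(X)\subseteq \pi_1(V)$. Applying Theorem~\ref{T:MAIN} to $B\to V$ yields a constant $C$ and a density-$1$ set $S\subseteq V(K)$ with
\[
\bigl[\rho_B(\pi_1(V)):\rho_{B_v}(\Gal_K)\bigr]\leq C \quad \text{for every } v\in S.
\]

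For each $v\in S$, the fiber $\pi^{-1}(v)$ decomposes into closed points $x_1,\dots,x_r$ of $X$ satisfying $\sum_i [k(x_i):K]=d$, so automatically $[k(x_i):K]\leq d$ for every $i$. Fix a geometric point $\bar v$ above $v$ and, for each $i$, a point $x'_i$ of the fiber set $X_{\bar v}$ lying above $x_i$. Under the standard correspondence between finite \'etale covers and $\pi_1(V,\bar v)$-sets, the stabilizer of $x'_i$ in $\pi_1(V,\bar v)$ is $\pi_1(X,x'_i)$; correspondingly, the stabilizer of $x'_i$ in $\Gal_K$ acting through $\rho_{B_v}$ is $\Gal_{k(x_i)}$ for the embedding $k(x_i)\hookrightarrow\Kbar$ encoded by $x'_i$. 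The Tate module $T(B)_{\bar v}$ splits as $\bigoplus_{x'\in X_{\bar v}} T(A)_{x'}$, and projecting onto the summand indexed by $x'_i$ identifies the image of $\operatorname{Stab}_{\rho_B(\pi_1(V))}(x'_i)$ with $\rho_A(\pi_1(X))$ and the image of $\operatorname{Stab}_{\rho_{B_v}(\Gal_K)}(x'_i)$ with $\rho_{A_{x_i}}(\Gal_{k(x_i)})$. The elementary bound $[H:H\cap G']\leq [G:G']$ for subgroups $H,G'\subseteq G$, applied with $H=\operatorname{Stab}(x'_i)$ and $G'=\rho_{B_v}(\Gal_K)$ inside $G=\rho_B(\pi_1(V))$, then yields $[\rho_A(\pi_1(X)):\rho_{A_{x_i}}(\Gal_{k(x_i)})]\leq C$. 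Distinct $v\in S$ produce disjoint sets of closed points of $X$, and $S$ is infinite, so we obtain infinitely many such $x$.

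The only genuinely delicate point is the identification of the Weil-restricted Galois representation with the induced representation, together with the compatibility of the stabilizer-and-project construction with the Galois-theoretic $\rho_{A_{x_i}}(\Gal_{k(x_i)})$. These identifications are formal once basepoint conventions are fixed, but they demand a careful matching between the three incarnations of the relevant subgroups: topological via \'etale $\pi_1$, algebraic via Weil restriction, and Galois-theoretic via residue fields.
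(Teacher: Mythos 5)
Your argument is correct, and its skeleton is the one the paper itself uses: shrink $X$, produce a finite \'etale map to an open $V\subseteq \PP^n_K$, apply Theorem~\ref{T:MAIN} to the Weil restriction $B=\Res_{X/V}(A)$, and transport the index bound back to $A$. Where you genuinely diverge is in that last transport step. The paper first treats the case of a Galois cover $\varphi\colon X\to U$, exploiting the product decomposition $B\times_U X\cong \prod_{\sigma\in G}A^{\sigma}$ and the resulting identity $\rho_{B_{\varphi(x)}}=\prod_{\sigma}\rho_{A_{\sigma(x)}}$ at $L$-points $x\in X(L)$ lying over $u\in U(K)$; passing from $\Gal_K$ to $\Gal_L$ costs a factor $[L:K]\leq d$, so the Galois case gives the bound $C\cdot d$, and the general case is then reduced to it via a Galois closure $X'\to X\to U$ (after possibly enlarging $K$), degrading the constant further to $C\cdot \deg\psi$. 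You instead stay with the arbitrary finite \'etale cover and use the induced-module structure $T(B)_{\bar v}=\bigoplus_{x'\in X_{\bar v}}T(A)_{x'}$: an element of $\rho_B(\pi_1(V))$ or of $\rho_{B_v}(\Gal_K)$ that preserves the summand $T(A)_{x'_i}$ must fix $x'_i$ in the fiber, so the stabilizer-and-project step identifies $H\cap G'$ with $\rho_{B_v}(\Gal_{k(x_i)})$, and since a surjective homomorphism does not increase indices you land directly on $[\rho_A(\pi_1(X)):\rho_{A_{x_i}}(\Gal_{k(x_i)})]\leq C$. Your route buys three things: no Galois closure, no extension of the base field, and the cleaner constant $C$ with $[k(x):K]\leq d$, working with closed points from the start; the paper's route buys a more transparent bookkeeping, since in the Galois case all comparisons happen inside an honest product of representations rather than an induced one. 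The identifications you defer (Weil restriction of torsion as pushforward, hence induction of $\pi_1$-modules; stabilizer of a fiber point equal to $\pi_1(X)$; $B_v\cong \prod_i \Res_{k(x_i)/K}(A_{x_i})$ compatibly with specialization) are indeed standard, so I see no gap.
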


The theorem can fail if we insist that $d=1$; for example, consider the case where $X$ is a curve of genus at least $2$ and hence $X(K)$ is finite.

We will deduce Theorem~\ref{T:general base} from Theorem~\ref{T:MAIN}.   The idea is to  find, after possibly shrinking $X$,  an \'etale map $X\to U$, where $U$ is open in $\PP^n_k$.  We then apply Theorem~\ref{T:MAIN} to the restriction of scalars of $A$ from $X$ to $U$.

\subsection{The constant $C$} \label{SS:the constant C}

We now give a brief description of the constant $C$ from Theorem~\ref{T:MAIN} that occurs in our proof, see Remark~\ref{R:specific constant}.  In particular, observe that the constant $C$ can be computed directly from $H:=\rho_{A}(\pi_1(U))$ and its normal subgroup $H_g:=\rho_{A}(\pi_1(U_\Kbar))$.

Take any prime $\ell$ and let $p_\ell\colon \GL_{2g}(\Zhat)\to \GL_{2g}(\ZZ_\ell)$ be the $\ell$-adic projection.   Let $G_\ell$ be the Zariski closure in $\GL_{2g,\QQ_\ell}$ of $p_\ell(\rho_A(\pi_1(U)))$; it is an algebraic group over $\QQ_\ell$ whose neutral component we denote by $G_\ell^\circ$.   Let $M$ be the kernel of the homomorphism
\[
H \xrightarrow{p_\ell} G_\ell(\QQ_\ell) \to G_\ell(\QQ_\ell)/G_\ell^\circ(\QQ_\ell).
\]
We will show later that $M$ does not depend on the choice of $\ell$.   The commutator subgroup $M'$ of $M$ is normal in $H$.  We will see that the image of the quotient map
\[
H_g \to H/M'
\]
is finite and its cardinality is our constant $C$. 

\begin{example}
As a special case, consider when $H=\GSp_{2g}(\Zhat)$ and $H_g=\Sp_{2g}(\Zhat)$; these are the largest that both $H$ and $H_g$ could possibly be, up to conjugation in $\GL_{2g}(\Zhat)$,  when the abelian scheme $A$ is principally polarized.   We have $M=H$ since $G_\ell=\GSp_{2g,\QQ_\ell}$ is connected.   Therefore, $C$ is the cardinality of the group $\Sp_{2g}(\Zhat)/\GSp_{2g}(\Zhat)'$.  One can show that $C=1$ if $g\geq 3$ and $C=2$ if $g=1$ or $2$.  
\end{example}

\subsection{Some earlier results} \label{SS:earlier}

We now discuss special cases of Theorem~\ref{T:MAIN} that have already been proved and some related results.   Note that some of the results use integral points instead of rational points.  In the results mentioned, the term \emph{most} will refer to a suitable notion of density; the reader is encouraged to look at the corresponding articles for the precise definitions. \\

We first discuss the fundamental case $g=1$, i.e., $A$ is an elliptic curve.   For a non-CM elliptic curve $E/\QQ$, \emph{Serre's open image theorem} says that $\rho_E(\Gal_\QQ)$ is an open subgroup of $\GL_2(\Zhat)$, cf.~\cite{MR387283}.  In particular, $\bbar\rho_{E,\ell}(\Gal_K)=\GL_2(\ZZ/\ell\ZZ)$ for all primes $\ell\geq c_E$, where $c_E$ is a constant depending on $E$.   As a consequence of Serre's theorem, if $A$ is non-isotrivial then $\rho_A(\pi_1(U))$ is an open subgroup of $\GL_2(\Zhat)$.\\

The following are all with respect to the family $A\to U:=\Spec \QQ[a,b, (4a^3+27b^2)^{-1}]$ of elliptic curves given by the Weierstrass equation $y^3=x^3+ax+b$.  In this case, we have $\rho_A(\pi_1(U))=\GL_2(\Zhat)$ and $\rho_A(\pi_1(U_{\Qbar}))=\SL_2(\Zhat)$.

Duke \cite{MR1485897} proved that for ``most'' elliptic curves $E/\QQ$, we have $\bbar\rho_{E,\ell}(\Gal_K)=\GL_2(\ZZ/\ell\ZZ)$ for \emph{all} primes.  Building on this, Jones \cite{MR2563740} showed that for ``most'' elliptic curves $E/\QQ$, $\rho_E(\Gal_\QQ)$ is an index $2$ subgroup of $\GL_2(\Zhat)$ (as already noted, $\rho_E$ is never surjective for an elliptic curve over $\QQ$).   Similar results for one-parameter families of elliptic curves over $\QQ$ can be found in \cite{MR2837018}.

For a number field $K\neq \QQ$, Zywina showed that for ``most'' elliptic curves $E/K$, we have $\rho_E(\Gal_K) = \{ B\in \GL_2(\Zhat) : \det(B) \in \chi_{\cyc}(\Gal_K) \}$, where $\chi_\cyc\colon \Gal_\QQ \to \Zhat^\times$ is the cyclotomic character.   In particular, if $K\neq \QQ$ contains no non-trivial abelian extension of $\QQ$, then there is an elliptic curve $E/K$ with $\rho_E(\Gal_K)=\GL_2(\Zhat)$.  
 Greicius \cite{MR2778661} had previously constructed an explicit elliptic curve $E$ over a number field with $\rho_E$ surjective.\\
 
We now consider $g\geq 2$ and assume further that $A$ is principally polarized.   After possibly conjugating $\rho_E$ by an element in $\GL_{2g}(\Zhat)$ we may assume that $\rho_A(\pi_1(U))$ is a subgroup of $\GSp_{2g}(\Zhat)$.  Under the ``big monodromy'' assumption that $\rho_A(\pi_1(U))$ is an open subgroup of $\GSp_{2g}(\Zhat)$,  Landesman, Swaminathan, Tao and Xu proved Theorem~\ref{T:MAIN} with an optimal $C$.   Earlier, Wallace \cite{MR3263960} had proved a variant of this with $g=2$;   also see Remark~1.3 in \cite{REU}.  The case $g=1$ had been proved in  \cite{1011.6465}.

When $K=\QQ$, one can use the Kronecker--Weber theorem to show that $\rho_{A_u}(\Gal_\QQ) \cap \GSp_{2g}(\Zhat)$ agrees with the commutator subgroup of $\rho_{A_u}(\Gal_\QQ)$; this and the inclusion $\rho_{A_u}(\Gal_K) \subseteq \rho_A(\pi_1(U))$ are the only constraints on the image of $\rho_{A_u}$ for ``most'' $u\in U(K)$.   In the general setting of Theorem~\ref{T:MAIN}, there may be additional constraints on the image of all the representations $\rho_{A_u}$.\\

In the setting of Theorem~\ref{T:MAIN}, define the set
\[
S:=\{u\in U(K): \rho_{A_u}(\Gal_K) \text{ is \emph{not} an open subgroup of } \rho_A(\pi_1(U)) \}.
\]
An immediate consequence of Theorem~\ref{T:MAIN} is that the set $S$ density $0$.   Moreover, Cadoret proved that the set $S$ is \emph{thin} in $U(K)$, cf.~Theorem~1.2 and \S1.1 of \cite{MR3455865}.   Recall that every thin subset of $U(K)$ has density $0$, cf.~\S13.1 of \cite{MR1757192}.

\subsection{Overview}
We now give a brief overview of the proof of Theorem~\ref{T:MAIN}.

For a rational prime $\ell$, let $\rho_{A,\ell} \colon \pi_1(U)\to \GL_{2g}(\ZZ_\ell)$ be the representation obtained by composing $\rho_A$ with the natural projection $\GL_{2g}(\Zhat)\to \GL_{2g}(\ZZ_\ell)$.   We define $\calG_{A,\ell}$ be the $\ZZ_\ell$-group subscheme of $\GL_{2g,\ZZ_\ell}$ obtained by taking the Zariski closure of $\rho_{A,\ell}(\pi_1(U))$.  These will agree with later definitions of $\rho_{A,\ell}$ and $\calG_{A,\ell}$ after choosing an appropriate $\ZZ_\ell$-basis for the $\ell$-adic Tate module $T_\ell(A)$.      The generic fiber $G_{A,\ell}$ of $\calG_{A,\ell}$ is an algebraic group over $\QQ_\ell$ that we call the \emph{$\ell$-adic monodromy groups} of $A$.   In \S\ref{S:monodromy in families},  we recall several properties of $G_{A,\ell}$.  

   We have an easy inclusion $\bbar\rho_{A,\ell}(\pi_1(U))\subseteq \calG_{A,\ell}(\FF_\ell)$.     Theorem~\ref{T:geometric gp theory}, which is a generalization of Serre's open image theorem, implies that 
   \[
   [\calG_{A,\ell}(\FF_\ell): \bbar\rho_{A,\ell}(\pi_1(U))] \leq C
\]
for a constant $C$ that does not depend on $\ell$.  There is a constant $b_A$ such that the neutral component of the algebraic group $(\calG_{A,\ell})_{\FF_\ell}$ over $\FF_\ell$ is reductive for all $\ell \geq b_A$.  For $\ell\geq b_A$, let $H_\ell$ is the derived subgroup of the neutral component of the group $(\calG_{A,\ell})_{\FF_\ell}$ and let $S_\ell$ be the commutator subgroup of $H_\ell(\FF_\ell)$ (in the notation of \S\ref{S:big ell-adic images}, $S_\ell=\calS_{A,\ell}(\FF_\ell)'$).  

    After possibly increasing $b_A$, we will observe that
   \[
   S_\ell \subseteq \bbar\rho_{A,\ell}(\pi_1(U))
   \]    
   holds for all $\ell\geq b_A$.     In the special case of the examples in \S\ref{SS:earlier} where $\rho_A(\pi_1(U))$ is an open subgroup of $\GSp_{2g}(\Zhat)$, we find that $\calG_{A,\ell}=\GSp_{2g,\ZZ_\ell}$ and $S_\ell =\Sp_{2g}(\FF_\ell)$ for all sufficiently large $\ell$.\\
  
Fix a prime $\ell\geq b_A$ and a point $u\in U(K)$, specialization gives an inclusion $\bbar\rho_{A_u,\ell}(\Gal_K) \subseteq \bbar\rho_{A,\ell}(\pi_1(U))$ uniquely determined up to conjugation.  Since $S_\ell$ is a normal subgroup of $\calG_{A,\ell}(\FF_\ell)$, and hence also of $\bbar\rho_{A,\ell}(\pi_1(U))$, it makes sense to ask whether or not $S_\ell$ is a subgroup of $\bbar\rho_{A_u,\ell}(\Gal_K)$.  Define the set 
 \[
B:=\{ u\in U(K) : \bbar\rho_{A_u,\ell}(\Gal_K)\not\supseteq S_\ell \text{ for some prime }\ell\geq b_A\}.
\]
One of the main tasks of this paper is to show that $B$ has density $0$; equivalently, that for ``most'' $u\in U(K)$, we have $\bbar\rho_{A,\ell}(\Gal_K) \supseteq S_\ell$ for \emph{all} $\ell\geq b_A$.   In \S\ref{S:main reduction},  we prove that if the set $B$ has density $0$, then Theorem~\ref{T:MAIN} will hold.   This will require some information about the groups $\rho_{A,\ell}(\pi_1(U_{\Kbar}))$ which we study in \S\ref{S:geometric monodromy}.   \\

We will prove Theorem~\ref{T:MAIN} in \S\ref{S:main proof}.    Take any real number $x\geq 2$.  Let $B(x)$ be the set of $u \in B$ satisfying $H(u)\leq x$.  To prove that $B$ has density $0$, we need to show that $|B(x)|=o(x^{[K:\QQ](n+1)})$ as $x\to \infty$.  For each $\ell\geq b_A$, let $B_\ell(x)$ be the set of $u\in U(K)$ with $H(u)\leq x$ satisfying $\bbar\rho_{A_u,\ell}(\Gal_K)\not\supseteq S_\ell$.  Using an effective open image theorem for abelian varieties, we will show that
\begin{align} \label{E: B intro}
B(x) \subseteq R(x) \cup T(x)  \cup \bigcup_{b_A\leq \ell \leq c(\log x)^\gamma} B_\ell(x).
\end{align}
for some constant $c$, where the sets $R(x)$ and $T(x)$ are defined at the beginning of \S\ref{S:main proof}.   The important aspect of the inclusion (\ref{E: B intro}) is that the right hand side involves only a bounded number of primes $\ell$ while the definition of $B$ requires considering all primes $\ell\geq b_A$.   In \S\ref{SS:bounding R(x)} and \S\ref{SS:bounding T(x)}, we show that $|R(x)| = o(x^{[K:\QQ](n+1)})$ and $|T(x)| = o(x^{[K:\QQ](n+1)})$.   So from (\ref{E: B intro}), we have
\begin{align} \label{E: |B| intro}
|B(x)| \leq \sum_{b_A\leq \ell \leq c(\log x)^\gamma} |B_\ell(x)|  + o(x^{[K:\QQ](n+1)}).
\end{align}
So we need to find bounds for $|B_\ell(x)|$.  The Hilbert Irreducibility Theorem (HIT) implies that $|B_\ell(x)| = o(x^{[K:\QQ](n+1)})$ as $x\to \infty$.  However, to use (\ref{E: |B| intro}) to find meaningful bounds for $|B(x)|$, we need to find better bounds for $|B_\ell(x)|$ with an  explicit dependency on $\ell$.\\

In \S\ref{S:explicit HIT}, we prove an effective version of HIT using the large sieve.  In \S\ref{SS:special HIT}, we use it to give a more specialized version of HIT that is relevant to our application.  To obtain the explicit bounds we require, we will need some group theoretic input which is discussed in \S\ref{S:derangements}.    For each prime $\ell\geq b_A$ and  $x\geq 2$, Theorem~\ref{T:almost there mod ell} says that 
\[
|B_\ell(x)| = O\Big( (\ell+1)^{3g(2g+1)/2} \cdot x^{[K:\QQ](n + 1/2)} \log x + (\ell+1)^{(6n+15/2)g(2g+1)} \Big),
\]   
where the implicit constant depends only on $A$.  Combining this with (\ref{E: |B| intro}) gives $|B(x)|=o(x^{[K:\QQ](n+1)})$ and hence $B$ has density $0$.

Finally in \S\ref{S:general base proof}, we prove Theorem~\ref{T:general base} by reducing to Theorem~\ref{T:MAIN}.

\subsection{Notation} \label{SS:notation}

Consider a topological group $G$.    The \defi{commutator subgroup} of $G$ is the closed subgroup $G'$ generated by the set of commutators of $G$.   We say that $G$ is \defi{perfect} if $G'=G$.   Profinite groups, and in particular finite groups, will always be considered with their profinite topology. 

For a scheme $X$ over a commutative ring $R$ and a commutative $R$-algebra $S$, we denote by $X_S$ the base extension of $X$ by $\Spec S$.  
  Let $M$ be a free module of finite rank over a commutative ring $R$.  Denote by $\GL_M$ the $R$-scheme such that $\GL_M(S)= \Aut_S(M\otimes_R S)$ for any commutative $R$-algebra $S$ with the obvious functoriality. 

For an algebraic group $G$ over a field $F$, we denote by $G^\circ$ the neutral component of $G$, i.e., the connected component of the identity of $G$.  Note that $G^\circ$ is an algebraic subgroup of $G$. 

For two real quantities $f$ and $g$, the expression $f \ll_{\alpha_1,\ldots,\alpha_n} g$ means that the inequality $|f| \leq C |g|$ holds for some positive constant $C$ depending only on $\alpha_1,\ldots,\alpha_n$.  In particular, $f\ll g$ means that the implicit constant $C$ is absolute.  We denote by $O_{\alpha_1,\ldots,\alpha_n}(g)$ a quantity $f$ satisfying $f \ll_{\alpha_1,\ldots,\alpha_n} g$.  For two real valued functions $f$ and $g$ of a real variable $x$, with $g(x)$ non-zero for all sufficiently large $x$, the expression $f=o(g)$ means that $f(x)/g(x)\to 0$ as $x\to +\infty$.

For a number field $K$, we denote by $\OO_K$ the ring of integers of $K$.   For a non-zero prime ideal $\p$ of $\OO_K$, we define its residue field $\FF_\p:=\OO_K/\p$.   For a representation $\rho\colon  \Gal_K \to G$ unramified at a prime $\p$, we will view $\rho(\Frob_\p)$ as either a conjugacy class of $G$ or as an element of $G$ that is uniquely defined up to conjugacy.  Throughout, $\ell$ will always denote a rational prime.

When talking about prime ideals of a number field $K$, density will always refer to \emph{natural density}.  From context, there should be no confusion with the notion of density of subsets of $\PP^n(K)$.

\subsection{Acknowledgements}

Many thanks to David Zureick--Brown;  this article was originally intended to be part of a joint work with him.   Many parts of the original project have been greatly expanded on by his REU students in \cite{REU} and \cite{REU2}.   In particular, an examination of their papers will hopefully make up for the lack of examples in this article.

\section{$\ell$-adic monodromy groups}  \label{S:monodromy in families}

Fix a number field $K$ and an abelian scheme $\pi\colon A\to U$ of relative dimension $g\geq 1$, where $U$ is a non-empty open subvariety of $\PP^n_K$ for some integer $n\geq 0$.  

Note that by including the case $n=0$, the following notation and definitions will also hold for an abelian variety of dimension $g\geq 1$ defined over the number field $K$ (when $n=0$, we have $U=\PP_K^n=\Spec K$ and we can identify $\pi_1(U)$ with $\Gal_K$).\\

We now extend our notation from the introduction.  For an integer $m\geq 2$, let $T_m(A)$ be the inverse limit of the groups $A[m^e]_{\bbar\eta}$ over all $e\geq 1$, where the transition homomorphisms $A[m^{e+1}]_{\bbar\eta}\to A[m^e]_{\bbar\eta}$ are multiplication by $m$.  The group $T_m(A)$ is a free $\ZZ_m$-module of rank $2g$, where $\ZZ_m:=\varprojlim_e\ZZ/m^e\ZZ=\prod_{\ell|m} \ZZ_\ell$.  The representations $\bbar\rho_{A,m^e}$ combine to give a continuous representation
\[
\rho_{A,m}\colon \pi_1(U) \to \Aut_{\ZZ_m}(T_m(A)).
\]

Take any prime $\ell$.   Define $V_\ell(A):= T_\ell(A) \otimes_{\ZZ_\ell} \QQ_\ell$; it is a $\QQ_\ell$-vector space of dimension $2g$.  With notation as in \S\ref{SS:notation},  we have an algebraic group $\GL_{V_\ell(A)}$ defined over $\QQ_\ell$.  We can view $\Aut_{\ZZ_\ell}(T_\ell(A))$, and hence also $\rho_{A,\ell}(\pi_1(U))$, as a subgroup of $\Aut_{\QQ_\ell}(V_\ell(A))=\GL_{V_\ell(A)}(\QQ_\ell)$.

\subsection{$\ell$-adic monodromy groups}

For a prime $\ell$, we have $\rho_{A,\ell}(\pi_1(U)) \subseteq \GL_{V_\ell(A)}(\QQ_\ell)$.  To study the image of $\rho_{A,\ell}$, we will  study a related algebraic group defined over $\QQ_\ell$.

\begin{definition}
The \defi{$\ell$-adic monodromy group} of $A$, which we denote by $G_{A,\ell}$, is the Zariski closure of $\rho_{A,\ell}(\pi_1(U))$ in $\GL_{V_\ell(A)}$; it is an algebraic group defined over $\QQ_\ell$. 
\end{definition}

For any $m\geq 2$ and $u\in U(K)$,  we can view $\rho_{A_u,m}(\Gal_K)$ as a closed subgroup of $\rho_{A,m}(\pi_1(U))$ that is uniquely determined up to conjugation.   With $m=\ell$, we can thus identify $G_{A_u,\ell}$ with a closed algebraic subgroup of $G_{A,\ell}$ uniquely defined up to conjugation by an element in $G_{A,\ell}(\QQ_\ell)$.

\begin{lemma} \label{L:HIT frattini}
Assume that $n\geq 1$.
\begin{romanenum}
\item \label{L:HIT frattini i}
For each integer $m\geq 2$, we have $\rho_{A_u,m}(\Gal_K)=\rho_{A,m}(\pi_1(U))$ for all $u\in U(K)$ away from a set of density $0$.  
\item \label{L:HIT frattini ii}
For each prime $\ell$, we have $G_{A_u,\ell}=G_{A,\ell}$ for all $u\in U(K)$ away from a set of density $0$.
\end{romanenum}
\end{lemma}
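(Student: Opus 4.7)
The plan is to reduce both statements to the Hilbert Irreducibility Theorem (HIT) by means of a Frattini argument.

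For part (i), fix $m\geq 2$ and set $H:=\rho_{A,m}(\pi_1(U))$, viewed as a closed subgroup of $\GL_{2g}(\ZZ_m)$. The structural fact I would invoke is that, as a closed subgroup of the compact $m$-adic analytic group $\GL_{2g}(\ZZ_m)$, $H$ is itself topologically finitely generated and its Frattini subgroup $\Phi(H)$ is open in $H$. In particular the quotient $Q:=H/\Phi(H)$ is finite, and any closed subgroup $K\subseteq H$ whose image in $Q$ equals $Q$ must equal $H$ (by the non-generator characterization of $\Phi(H)$). This collapses the problem from controlling $\rho_{A_u,m}$ at infinitely many finite levels down to a single finite level.

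Next, I would choose $e\geq 1$ large enough that the kernel of $H\to \rho_{A,m^e}(\pi_1(U))$ lies inside $\Phi(H)$; then $Q$ is a quotient of the finite group $G_e:=\rho_{A,m^e}(\pi_1(U))$. The surjection $\rho_{A,m^e}\colon \pi_1(U)\to G_e$ corresponds to a connected finite étale Galois $G_e$-cover $V\to U$, and for $u\in U(K)$ one has $\rho_{A_u,m^e}(\Gal_K)=G_e$ precisely when the fiber $V_u$ is $K$-irreducible. Since $n\geq 1$, HIT (in its density form, via the thin-set bound of \S13.1 of \cite{MR1757192}) says this condition fails only on a thin subset of $U(K)$, which has density $0$. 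For every other $u$, the image of $\rho_{A_u,m}(\Gal_K)$ in $Q$ is all of $Q$, and the Frattini property then forces $\rho_{A_u,m}(\Gal_K)=H$.

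Part (ii) is a direct consequence of part (i) applied with $m=\ell$: outside a density-$0$ set the equality $\rho_{A_u,\ell}(\Gal_K)=\rho_{A,\ell}(\pi_1(U))$ holds, and taking Zariski closures in $\GL_{V_\ell(A)}$ immediately yields $G_{A_u,\ell}=G_{A,\ell}$.

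The main obstacle, and the only nontrivial group-theoretic input, is the openness of $\Phi(H)$ in $H$. I would deduce this from the theory of compact $p$-adic analytic groups: for each $\ell\mid m$, the image of $H$ in $\GL_{2g}(\ZZ_\ell)$ contains an open uniform pro-$\ell$ subgroup $U_\ell$ whose Frattini subgroup $U_\ell^\ell[U_\ell,U_\ell]$ is open with finite elementary abelian quotient, and a short intersection-and-lifting argument across the finitely many primes $\ell\mid m$ upgrades this to openness of $\Phi(H)$ itself.
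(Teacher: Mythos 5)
Your proposal is correct and follows essentially the same route as the paper: reduce to a single finite level $m^e$ via openness of the Frattini subgroup $\Phi(H)$ of $H=\rho_{A,m}(\pi_1(U))$, apply Hilbert irreducibility (thin sets have density $0$) at that level, and deduce (ii) from (i) by taking Zariski closures. The only difference is cosmetic: you justify the openness of $\Phi(H)$ via uniform pro-$\ell$ subgroups of compact $\ell$-adic analytic groups, whereas the paper cites the proposition in \S10.5 of \cite{MR1757192} after noting that the kernel of reduction modulo $m$ is an open subgroup that is a product of finitely generated pro-$\ell$ groups.
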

\begin{proof}
Part (\ref{L:HIT frattini ii}) is an easy consequence of (\ref{L:HIT frattini i}) with $m=\ell$.  

We now prove (\ref{L:HIT frattini i}).  Define $H:=\rho_{A,m}(\pi_1(U))$.  Let $\Phi(H)$ be the Frattini subgroup of $H$, i.e., the intersection of the maximal closed and proper subgroups of $H$.      The kernel of $H \to \Aut_{\ZZ/m\ZZ}(T_m(A)/m T_m(A))$ is an open subgroup of $H$ that is a product of finitely generated pro-$\ell$ groups with $\ell|m$.  From the proposition of \cite{MR1757192}*{\S10.5}, we find that $\Phi(H)$ is an open, and hence finite index, subgroup of $H$.  

So there is an integer $e\geq 1$ such that $\rho_{A_u,m}(\Gal_K)=H$ if and only if $\bbar\rho_{A_u,m^e}(\Gal_K)=\bbar\rho_{A,m^e}(\pi_1(U))$.   The lemma follows since Hilbert's irreducibility theorem implies that $\bbar\rho_{A_u,m^e}(\Gal_K)=\bbar\rho_{A,m^e}(\pi_1(U))$ holds for all $u \in U(K)$ away from a set of density $0$.
\end{proof}

Note that a priori the implicit set of density $0$ in Lemma~\ref{L:HIT frattini}(\ref{L:HIT frattini ii}) depends on $\ell$.  The following proposition, which we prove in \S\ref{SS:monodromy independence},  removes this dependence on $\ell$.

\begin{prop} \label{P:monodromy independence}
Assume that $n\geq 1$.  The set of $u\in U(K)$ for which $G_{A_u,\ell}=G_{A,\ell}$ holds for all primes $\ell$ has density $1$.
\end{prop}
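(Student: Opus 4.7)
My plan is to upgrade the $\ell$-by-$\ell$ statement of Lemma~\ref{L:HIT frattini}(\ref{L:HIT frattini ii}) to a uniform-in-$\ell$ one, by splitting the equality $G_{A_u,\ell}=G_{A,\ell}$ into a ``component group'' piece that is $\ell$-independent and an ``identity component'' piece that can be handled en masse.  The difficulty is that applying Lemma~\ref{L:HIT frattini}(\ref{L:HIT frattini ii}) prime by prime yields a density-$0$ exceptional set for each $\ell$, but a countable union of such sets need not have density $0$.

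\emph{Step 1 (Component group).}  I would invoke the fact asserted in \S\ref{SS:the constant C} that the preimage $M\subseteq \rho_A(\pi_1(U))$ of the identity component $G_{A,\ell}^\circ(\QQ_\ell)$ is an open subgroup that does not depend on $\ell$.  Setting $\pi_0:=\rho_A(\pi_1(U))/M$, we obtain a fixed finite group canonically identified with the component group $G_{A,\ell}(\QQ_\ell)/G_{A,\ell}^\circ(\QQ_\ell)$ for every prime $\ell$.  For $u\in U(K)$, the subgroup $G_{A_u,\ell}$ meets every connected component of $G_{A,\ell}$ if and only if the composition $\Gal_K\xrightarrow{u_*}\pi_1(U)\twoheadrightarrow \pi_0$ is surjective.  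This is an $\ell$-independent condition on $u$, and a standard application of Hilbert's irreducibility theorem to the finite étale cover $U'\to U$ corresponding to $M$ shows that the exceptional set has density $0$.

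\emph{Step 2 (Identity component).}  For $u$ satisfying Step~1, it remains to show $G_{A_u,\ell}^\circ = G_{A,\ell}^\circ$ for every $\ell$ simultaneously.  I would invoke the theorem of Cadoret recalled in \S\ref{SS:earlier} (see \cite{MR3455865}) that the set $S=\{u\in U(K):\rho_{A_u}(\Gal_K)\text{ is not open in }\rho_A(\pi_1(U))\}$ is \emph{thin} in $U(K)$, hence of density $0$.  If $u\notin S$, then for every prime $\ell$ the projection $\rho_{A_u,\ell}(\Gal_K)$ is an open subgroup of $\rho_{A,\ell}(\pi_1(U))$.  An open subgroup of a compact $\ell$-adic Lie subgroup of $\GL_{V_\ell(A)}(\QQ_\ell)$ contains a $\QQ_\ell$-analytic neighborhood of the identity (which is Zariski-dense in the identity component) and meets every connected component, hence has the same Zariski closure.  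Therefore $G_{A_u,\ell}=G_{A,\ell}$ for every prime $\ell$.

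\emph{Main obstacle.}  Step~2 relies on Cadoret's thin-set theorem, which is a substantial external input; it is also what prevents a purely elementary proof.  A self-contained alternative would seek a single modulus $m$ such that the density-$1$ condition $\bbar\rho_{A_u,m}(\Gal_K)=\bbar\rho_{A,m}(\pi_1(U))$ of Lemma~\ref{L:HIT frattini}(\ref{L:HIT frattini i}) forces $G_{A_u,\ell}^\circ=G_{A,\ell}^\circ$ for \emph{every} prime $\ell$.  Constructing such an $m$ requires uniform open-image control for the geometric monodromy groups $\rho_{A,\ell}(\pi_1(U_{\Kbar}))$ across all $\ell$, which the paper takes up in \S\ref{S:geometric monodromy}.
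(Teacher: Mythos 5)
Your argument is correct in substance, but the treatment of the identity component is genuinely different from the paper's. Your Step 1 is essentially the paper's own handling of the component group: Lemma~\ref{L:connected independence families} shows that $\ker\gamma_{A,\ell}$ is independent of $\ell$ and that the specialized $\gamma_{A,\ell}$ is surjective for every $\ell$ away from a single density-$0$ set, which is exactly your condition that $\Gal_K\to\pi_0$ be onto. For the identity component, however, the paper does not appeal to Cadoret: it proves, via Faltings' semisimplicity and Tate theorems, that the rank of $G_{A,\ell}^\circ$ and the dimension of its commutant are independent of $\ell$ (Proposition~\ref{P:G specialization}(\ref{P:G specialization ii}),(\ref{P:G specialization iii})), and then applies Wintenberger's rigidity lemma (Lemma~\ref{L:reductive inclusion}) to conclude that once $G_{A_u,\ell}^\circ=G_{A,\ell}^\circ$ holds at $\ell=2$ and at the finitely many small primes where the commutant may fail to be semisimple, it holds at every $\ell$; so only finitely many applications of Hilbert irreducibility (Lemma~\ref{L:HIT frattini}) are needed. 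Your substitute --- Cadoret's adelic open image theorem \cite{MR3455865}, giving openness of $\rho_{A_u}(\Gal_K)$ in $\rho_A(\pi_1(U))$ off a thin (hence density-$0$) set, followed by the observation that a closed finite-index subgroup has Zariski closure containing $G_{A,\ell}^\circ$ --- is legitimate (the paper records Cadoret's result in \S\ref{SS:earlier}) and shorter, but it imports a much heavier external theorem, whereas the paper's route is self-contained modulo Faltings and also yields the $\ell$-independence statements that are reused later. One sentence of yours needs repair: an open subgroup of a compact $\ell$-adic group does \emph{not} automatically ``meet every connected component'' of the Zariski closure (consider a proper open subgroup of a finite group); openness alone only gives $G_{A_u,\ell}^\circ=G_{A,\ell}^\circ$, and it is your Step 1 (surjectivity onto $\pi_0$), combined with the Zariski density of $G_{A,\ell}(\QQ_\ell)$ in $G_{A,\ell}$, that supplies the remaining components --- exactly as in the closing argument of \S\ref{SS:monodromy independence}. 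With that sentence corrected, your two steps assemble into a valid proof.
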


\subsection{Neutral component} 
 \label{SS:neutral component}
 
Let $G_{A,\ell}^\circ$ be the neutral component of $G_{A,\ell}$, i.e., the connected component of $G_{A,\ell}$ containing the identity.  Note that $G_{A,\ell}^\circ$ is an algebraic subgroup of $G_{A,\ell}$.  Let
\[
\gamma_{A,\ell} \colon \pi_1(U) \to G_{A,\ell}(\QQ_\ell)/G_{A,\ell}^\circ(\QQ_\ell)
\]
be the surjective homomorphism obtained by composing $\rho_{A,\ell}$ with the obvious quotient map.    For any $u\in U(K)$ satisfying $G_{A_u,\ell}=G_{A,\ell}$, the specialization of $\gamma_{A,\ell}$ at $u$ gives the homomorphism $\gamma_{A_u,\ell}\colon \Gal_K \to G_{A_u,\ell}(\QQ_\ell)/G_{A_u,\ell}^\circ(\QQ_\ell)$.

\begin{lemma} \label{L:connected independence families}
\begin{romanenum}
\item \label{L:connected independence families i}
The kernel of $\gamma_{A,\ell}$ is independent of $\ell$. 
\item \label{L:connected independence families ii}
Suppose $n\geq 1$.    Then there is a set $S \subseteq U(K)$ with density $1$ such that  the specialization of 
$\gamma_{A,\ell}$ at $u$ is surjective for all $\ell$ and $u\in S$.
\end{romanenum}
\end{lemma}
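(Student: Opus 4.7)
The plan is to prove (i) by a Chebotarev argument that reduces $\ell$-independence of $\ker\gamma_{A,\ell}$ to $\ell$-independence of the connectedness of the $\ell$-adic monodromy group of an abelian variety over a finite field; then (ii) follows from (i) by applying Hilbert's irreducibility theorem to a single, $\ell$-independent finite cover of $U$.

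For (i), fix an integral model $\mathcal{U}\to \Spec \OO_K[1/N]$ to which the abelian scheme $A\to U$ extends. Since $\pi_0(G_{A,\ell})$ is finite, $N_\ell:=\ker\gamma_{A,\ell}$ is an open normal subgroup of $\pi_1(\mathcal{U})$ of finite index. By Chebotarev density for $\pi_1(\mathcal{U})$, $N_\ell$ is uniquely determined by the set of closed points $x$ of $\mathcal{U}$ with $\Frob_x\in N_\ell$ (up to a density-$0$ exceptional set). The closed-point analogue of Proposition~\ref{P:monodromy independence}, proven in exactly the same way, supplies a set $\Sigma$ of closed points of $\mathcal{U}$ of density one such that $G_{A_x,\ell}=G_{A,\ell}$ for every $x\in\Sigma$ and every prime $\ell$ distinct from the residue characteristic at $x$. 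For such $x$, the condition $\rho_{A,\ell}(\Frob_x)\in G_{A,\ell}^\circ(\QQ_\ell)$ is equivalent to $\Frob_x\in G_{A_x,\ell}^\circ(\QQ_\ell)$ and hence, since $\Frob_x$ topologically generates $G_{A_x,\ell}$, to the connectedness of $G_{A_x,\ell}$. But $G_{A_x,\ell}$ is the Zariski closure of a single semisimple element (by Weil's theorem), so its connectedness is controlled by the multiplicative relations among the eigenvalues of $\Frob_x$ acting on $V_\ell(A_x)$; these eigenvalues are the roots of the characteristic polynomial of Frobenius, which is an integral polynomial independent of $\ell$. Hence the condition ``$\Frob_x\in N_\ell$'' is $\ell$-independent on $\Sigma$, and Chebotarev then forces $N_\ell$ itself to be $\ell$-independent.

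For (ii), by (i) the subgroup $N:=\ker\gamma_{A,\ell}$ and the finite quotient $Q:=\pi_1(U)/N$ are both independent of $\ell$. The surjection $\pi_1(U)\twoheadrightarrow Q$ corresponds to a connected finite \'etale Galois cover $V\to U$ defined over $K$. Hilbert's irreducibility theorem applied to $V\to U$, in the same manner as the proof of Lemma~\ref{L:HIT frattini}, furnishes a density-$1$ set $S\subseteq U(K)$ such that the specialization $u_*\colon \Gal_K\to Q$ is surjective for every $u\in S$. Since $\gamma_{A,\ell}$ factors through $Q$ for each prime $\ell$, its specialization at any $u\in S$ is surjective for all $\ell$ simultaneously.

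The hard part is the $\ell$-independence claim in (i). The Chebotarev reduction and the passage to connectedness of $G_{A_x,\ell}$ are routine, but the substantive input is that for $A_x$ over a finite field $G_{A_x,\ell}$ is essentially diagonalizable and its component structure is controlled by the $\ell$-independent algebraic Weil numbers of $A_x$; this abelian-monodromy feature of the finite-field case is what makes $\ell$-independence accessible and would fail if we had to analyze a non-abelian monodromy group directly.
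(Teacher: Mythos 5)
Part (ii) of your proposal is fine and is essentially the paper's own argument: once the kernel is known to be $\ell$-independent, one applies Hilbert irreducibility to the single finite quotient $Q=\pi_1(U)/N$ and transfers surjectivity to every $\ell$ at once. The gap is in part (i). The ``closed-point analogue of Proposition~\ref{P:monodromy independence}'' you invoke is false: a closed point $x$ of an integral model $\calU$ over $\OO_K[1/N]$ has finite residue field, so the image of $\Gal_{\FF_\p}$ under $\rho_{A_x,\ell}$ is topologically generated by a single Frobenius element and $G_{A_x,\ell}$ is the Zariski closure of one semisimple element, hence commutative; it can never equal $G_{A,\ell}$ unless $G_{A,\ell}$ is itself commutative. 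Proposition~\ref{P:monodromy independence} concerns $K$-rational points of the generic fiber and is proved via Hilbert irreducibility, which has no analogue for finite-field points, so no density-one set $\Sigma$ of closed points with $G_{A_x,\ell}=G_{A,\ell}$ exists. Your sketch is in fact internally inconsistent, since it uses simultaneously $G_{A_x,\ell}=G_{A,\ell}$ and ``$\Frob_x$ topologically generates $G_{A_x,\ell}$''.

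Even if one discards that claim and works only with the Zariski closure of the cyclic group generated by $\Frob_x$, the remaining equivalence fails in the direction you need: $\rho_{A,\ell}(\Frob_x)\in G_{A,\ell}^\circ(\QQ_\ell)$ does \emph{not} imply that this closure is connected (an element of the identity component whose eigenvalue group has torsion already gives a disconnected closure, e.g.\ $-1\in\GG_m$); only the converse direction holds. Thus the $\ell$-independent condition on the eigenvalues of Frobenius only singles out a subset (the ``neat'' Frobenii) of $\{x:\Frob_x\in N_\ell\}$, and a common subset contained in all the $N_\ell$-Frobenius sets is not enough for Chebotarev to force $N_\ell=N_{\ell'}$; one would also need that the density of neat points equals $1/[\pi_1(U):N_\ell]$, which is essentially the statement being proved. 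What you are trying to reprove here is precisely the nontrivial $\ell$-independence theorem of Serre (cf.\ Larsen--Pink) for abelian varieties over number fields, which the paper cites rather than reproves; the paper then treats $n\geq 1$ by using Lemma~\ref{L:HIT frattini}(\ref{L:HIT frattini ii}) and Hilbert irreducibility to find a single $u\in U(K)$ with $G_{A_u,\ell}=G_{A,\ell}$, $G_{A_u,\ell'}=G_{A,\ell'}$, and with the two specialized kernels still distinct, contradicting the number-field case. You should either quote that input as the paper does or supply a genuine proof of it; the present sketch does not.
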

\begin{proof}
If $A$ is an abelian variety over $K$, then part (\ref{L:connected independence families i}) was proved by Serre \cite{MR1730973}*{133}; see also \cite{MR1441234}.   We may now assume that $n\geq 1$.

Now suppose that there are primes $\ell$ and $\ell'$ such that $\ker \gamma_{A,\ell} \neq \ker \gamma_{A,\ell'}$.   By Lemma~\ref{L:HIT frattini}(ii) and Hilbert's irreducibility theorem, there is a point $u\in U(K)$ such that $G_{A_u,\ell}=G_{A,\ell}$, $G_{A_u,\ell'}=G_{A,\ell'}$, and such that the kernel of the specializations of $\gamma_{A,\ell}$ and $\gamma_{A,\ell'}$ at $u$ give different subgroups of $\Gal_K$.  So $\ker \gamma_{A_u,\ell}\neq \ker \gamma_{A_u,\ell'}$ which contradicts the case of (\ref{L:connected independence families i}) we have already proved.   Therefore, $\ker \gamma_{A,\ell} = \ker \gamma_{A,\ell'}$ for any primes $\ell$ and $\ell'$.

We now prove (\ref{L:connected independence families ii}).
Let $S$ be the set of $u\in U(k)$ for which the specialization of $\gamma_{A,2}$ at $u$ is surjective.  The set $S$ has density $1$ by Hilbert's irreducibility theorem.   Take any point $u\in S$.  Since each $\gamma_{A,\ell}$ is surjective and $\ker \gamma_{A,\ell}$ is independent of $\ell$, we find that the specialization of $\gamma_{A,\ell}$ at $u$ is surjective for one prime $\ell$ if and only if it is surjective for all primes $\ell$.   Therefore, the specialization of $\gamma_{A,\ell}$ at $u$ is surjective for all $\ell$  by our definition of $S$.
\end{proof}

For an abelian variety $A$ defined over $K$,  we denote by $K_A^\conn$ the subfield of $\Kbar$ fixed by the kernel of the homomorphism 
\begin{align} \label{E:conn hom}
\gamma_{A,\ell}\colon \Gal_K \xrightarrow{\rho_{A,\ell}} G_{A,\ell}(\QQ_\ell)\to G_{A,\ell}(\QQ_\ell)/G_{A,\ell}^\circ(\QQ_\ell).  
\end{align}
Equivalently, $K_{A}^\conn$ is the smallest extension of $K$ in $\Kbar$ that satisfies $\rho_{A,\ell}(\Gal_{K_A^\conn}) \subseteq G_{A,\ell}^\circ(\QQ_\ell)$.   By Lemma~\ref{L:connected independence families}(\ref{L:connected independence families i}), the number field $K_A^\conn$ is independent of $\ell$.

\begin{prop} \label{P:G specialization}
\begin{romanenum}
\item \label{P:G specialization i}
The group $G_{A,\ell}^\circ$ is reductive.
\item \label{P:G specialization ii}
The rank of the reductive group $G_{A,\ell}^\circ$ is independent of $\ell$.
\item \label{P:G specialization iii}
Let $R_\ell$ be the commutant of $G_{A,\ell}^\circ$ in $\End_{\QQ_\ell}(V_\ell(A))$.  The dimension of $R_\ell$ as a $\QQ_\ell$-vector space is independent of $\ell$.  
\item \label{P:G specialization iv}  Suppose that $n\geq 1$.  
The set of $u\in U(K)$ for which $G_{A_u,\ell}^\circ=G_{A,\ell}^\circ$ for all primes $\ell$ has density $1$.
\end{romanenum}
\end{prop}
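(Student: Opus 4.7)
The plan is to reduce parts (i)--(iii) to the classical case $n=0$ of an abelian variety over $K$ by specialization (Lemma~\ref{L:HIT frattini}(ii)), and to derive (iv) from this together with Serre's $\ell$-independence results, collapsing the ``for all $\ell$'' condition down to a single auxiliary prime.

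\emph{Parts (i)--(iii).} When $n=0$, $A$ is an abelian variety $A_0/K$ and the three statements are classical: reductivity of $G_{A_0,\ell}^\circ$ is a consequence of Faltings' semisimplicity of $V_\ell(A_0)$; the rank-independence is due to Serre; and the commutant in (iii) equals $\End_{K_{A_0}^\conn}(A_0)\otimes\QQ_\ell$ by Faltings, whose $\QQ_\ell$-dimension is independent of $\ell$. For $n\ge 1$ and any prime $\ell$, Lemma~\ref{L:HIT frattini}(ii) applied at $m=\ell$ and at $m=2$ produces a density-$1$ intersection, hence a nonempty set; pick $u_0\in U(K)$ with $G_{A_{u_0},\ell}=G_{A,\ell}$ and $G_{A_{u_0},2}=G_{A,2}$. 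Taking neutral components, the required statements for $A$ at $\ell$, and the comparison between $\ell$ and $2$, are inherited from those for the abelian variety $A_{u_0}/K$. As $\ell$ was arbitrary, this simultaneously gives reductivity at every prime together with $\ell$-independence of both the rank and $\dim R_\ell$.

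\emph{Part (iv).} The inclusion $G_{A_u,\ell}^\circ\subseteq G_{A,\ell}^\circ$ holds for every $u\in U(K)$, and for smooth connected $\QQ_\ell$-groups this inclusion is an equality as soon as the dimensions agree. So it suffices to exhibit a density-$1$ set $S'\subseteq U(K)$ with
\[
\dim G_{A_u,\ell}^\circ = \dim G_{A,\ell}^\circ \quad\text{for every prime }\ell\text{ and every }u\in S'.
\]
Both sides are independent of $\ell$: for $\dim G_{A,\ell}^\circ$ this follows from the specialization argument used in parts (ii)--(iii), invoking Serre's $\ell$-independence of $\dim G^\circ$ for an abelian variety; for $\dim G_{A_u,\ell}^\circ$ at fixed $u$, it is Serre's result applied to $A_u$ directly. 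Hence it is enough to verify the equality at one auxiliary prime. Take $S'$ to be the density-$1$ set furnished by Lemma~\ref{L:HIT frattini}(ii) with $m=2$: for $u\in S'$ one has $G_{A_u,2}=G_{A,2}$, so $\dim G_{A_u,2}^\circ = \dim G_{A,2}^\circ$. Propagating in $\ell$ gives $\dim G_{A_u,\ell}^\circ = \dim G_{A,\ell}^\circ$ for every $\ell$, and therefore $G_{A_u,\ell}^\circ = G_{A,\ell}^\circ$ for every $\ell$ and every $u\in S'$, which is (iv).

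\emph{Main obstacle.} The delicate step is (iv): the naive intersection over infinitely many primes of the density-$1$ sets provided by Lemma~\ref{L:HIT frattini}(ii) need not retain density~$1$. The resolution is that $\ell$-independence of $\dim G^\circ$ (both for the family $A$ itself, via specialization, and for each individual abelian variety $A_u$, via Serre) collapses the infinite family of ``$\dim G_{A_u,\ell}^\circ = \dim G_{A,\ell}^\circ$'' conditions down to the single condition at $\ell=2$, which is then handled by one application of Hilbert irreducibility.
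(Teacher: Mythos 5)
Your parts (i)--(iii) are fine and essentially reproduce the paper's argument: specialize at a point where the monodromy groups at the two relevant primes agree, using Lemma~\ref{L:HIT frattini}(\ref{L:HIT frattini ii}), and quote Faltings (semisimplicity and the computation of the commutant) and Serre (rank independence) for the abelian-variety case.

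Part (iv), however, rests on a claim that is not available: you propagate the condition at $\ell=2$ to all $\ell$ by invoking ``Serre's $\ell$-independence of $\dim G^\circ$'' for an abelian variety (applied to each $A_u$ and, via specialization, to the family). Serre's theorem is about the \emph{rank} of $G_{A_u,\ell}^\circ$ (that is exactly part (\ref{P:G specialization ii}) here), not its dimension; the $\ell$-independence of $\dim G_{A_u,\ell}^\circ$ is not known unconditionally in general (it would follow from the Mumford--Tate conjecture), so it cannot be used as an input. This matters because a proper connected reductive subgroup can have full rank but strictly smaller dimension, so knowing $\dim G_{A_u,2}^\circ=\dim G_{A,2}^\circ$ gives no control at other primes, and your ``collapse to $\ell=2$'' step fails. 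The paper avoids this by propagating only the invariants that \emph{are} known to be $\ell$-independent: for $u\in S_2$ one gets equality of ranks and of commutants at $\ell=2$, hence (by parts (\ref{P:G specialization ii}) and (\ref{P:G specialization iii})) at every $\ell$, and then the inclusion $G_{A_u,\ell}^\circ\subseteq G_{A,\ell}^\circ$ is upgraded to an equality by Wintenberger's criterion (Lemma~\ref{L:reductive inclusion}: reductive groups $G_1\subseteq G_2$ of the same rank whose commutants have the same semisimple center coincide). Since that lemma is applied only for $\ell$ larger than a constant $b$ (where the relevant algebra and its center are semisimple), the remaining finitely many primes are handled by intersecting the finitely many density-one sets $S_\ell$ with $\ell\leq b$, rather than the single set $S_2$. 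To repair your argument you would need to replace the dimension comparison by this rank-plus-commutant comparison (or supply an unconditional proof of dimension independence, which is an open problem).
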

\begin{proof}
We first consider the case where $A$ is an abelian variety defined over a number field $K$.  After replacing $A/K$ by its base extension to $K_A^\conn$, we may assume without loss of generality that $G_{A,\ell}$ is connected for all $\ell$. From Faltings, cf.~\cite{MR861971}, we know that:
\begin{alphenum}
\item \label{I:Tate conjecture a}
The $\QQ_\ell[\Gal_K]$-module $V_\ell(A)$ is semisimple.
\item  \label{I:Tate conjecture b}
The natural map $\End(A) \otimes_\ZZ \QQ_\ell \hookrightarrow \End_{\QQ_\ell[\Gal_K]}(V_\ell(A))$ is an isomorphism.
\end{alphenum}
From (\ref{I:Tate conjecture a}), we deduce that $G_{A,\ell}$ is reductive.   From (\ref{I:Tate conjecture b}), the commutant $R_\ell$ of $G_{A,\ell}$ in $\End_{\QQ_\ell}(V_\ell(A))$ is isomorphic to $\End(A)\otimes_\ZZ \QQ_\ell$.    In particular, the dimension of $R_\ell$ over $\QQ_\ell$ is independent of $\ell$ and the center of $R_\ell$ is semisimple for all sufficiently large $\ell$.  Serre proved part (\ref{P:G specialization i}) in \cite{MR1730973}*{133}; this also follows from Theorem~1.2 of \cite{MR1441234} since the dimension of the groups $H_{v,\ell}$ that occur there do not depend on $\ell$.

It remains to consider the case where $n\geq 1$.  For a fixed $\ell$, Lemma~\ref{L:HIT frattini}(\ref{L:HIT frattini ii}) implies that $G_{A_u,\ell}=G_{A,\ell}$ for some $u\in U(K)$.   In particular, $G_{A_u,\ell}^\circ=G_{A,\ell}^\circ$.   The group $G_{A,\ell}^\circ=G_{A_u,\ell}^\circ$ is thus reductive from  part (\ref{P:G specialization i}) in the case of an abelian variety defined over a number field.  This proves (\ref{P:G specialization i}).

Take any two distinct primes $\ell$ and $\ell'$.    By Lemma~\ref{L:HIT frattini}(\ref{L:HIT frattini ii}), we have $G_{A_u,\ell}=G_{A,\ell}$ and $G_{A_u,\ell'}=G_{A,\ell'}$ for some $u\in U(K)$. In particular, $G_{A_u,\ell}^\circ=G_{A,\ell}^\circ$ and $G_{A_u,\ell'}^\circ=G_{A,\ell'}^\circ$.  By part (\ref{P:G specialization ii}) in the case of an abelian variety defined over a number field, the ranks of $G_{A_u,\ell}^\circ=G_{A,\ell}^\circ$ and $G_{A_u,\ell'}^\circ=G_{A,\ell'}^\circ$ are equal.  Since $\ell$ and $\ell'$ are arbitrary primes, we deduce that the rank of $G_{A,\ell}^\circ$ does not depend on $\ell$.  This proves (\ref{P:G specialization ii}).

 Note that specialization gives an isomorphism $V_\ell(A_u) = V_\ell(A)$ for which the actions of the groups $G_{A_u,\ell} \subseteq G_{A,\ell}$ are compatible.  Denote by $R_\ell$ the commutant of $G_{A,\ell}^\circ$ in $\End_{\QQ_\ell}(V_\ell(A))$.   For $u\in U(K)$, denote by $R_{u,\ell}$ the commutant of $G_{A_u,\ell}^\circ$ in $\End_{\QQ_\ell}(V_\ell(A))$. 
 
  Take any two distinct primes $\ell$ and $\ell'$.    As above, we have $G_{A_u,\ell}^\circ=G_{A,\ell}^\circ$ and $G_{A_u,\ell'}^\circ=G_{A,\ell'}^\circ$ for some $u\in U(K)$.   In particular, $R_{u,\ell}=R_\ell$ and $R_{u,\ell'}=R_{\ell'}$.  By part (\ref{P:G specialization iii}) in the case of an abelian variety defined over a number field, we deduce that $\dim_{\QQ_\ell} R_\ell$ is independent of $\ell$.  This proves (\ref{P:G specialization iii}).\\

It remains to prove that part (\ref{P:G specialization iv}) holds; suppose $n\geq 1$.    For each prime $\ell$, let $S_\ell$ be the set of $u\in U(K)$ for which $G_{A_u,\ell}^\circ=G_{A,\ell}^\circ$; it has density $1$ by Lemma~\ref{L:HIT frattini}(\ref{L:HIT frattini ii}).

Take any $u\in S_2$ and prime $\ell$.    We have an inclusion of groups $G_{A_u,\ell}^\circ \subseteq G_{A,\ell}^\circ$ and they are reductive by part (\ref{P:G specialization i}).   The inclusion implies that $R_{u,\ell}\subseteq R_\ell$.    We have $\dim_{\QQ_2} R_{u,2}=\dim_{\QQ_2} R_{2}$ since $u\in S_2$, and hence $\dim_{\QQ_\ell} R_{u,\ell}=\dim_{\QQ_\ell} R_{\ell}$ by part (\ref{P:G specialization iii}).  Therefore, $R_{u,\ell}=R_\ell$.  The groups $G_{A_u,2}^\circ$ and $G_{A,2}^\circ$ have the same rank since $u\in S_2$, and hence $G_{A_u,\ell}^\circ$ and $G_{A,\ell}^\circ$ have the same rank by part (\ref{P:G specialization ii}).

As noted above, we have $R_{u,\ell} \cong \End(A_u)\otimes_\ZZ \QQ_\ell$ and hence the $\QQ_\ell$-algebra $R_\ell=R_{u,\ell}$, and it center, are semisimple for all $\ell$ greater than some constant $b\geq 2$.

Applying Lemma~\ref{L:reductive inclusion} below, we deduce that $G_{A_u,\ell}^\circ= G_{A,\ell}^\circ$ for all primes $\ell\geq b$.    Therefore, $G_{A_u,\ell}^\circ= G_{A,\ell}^\circ$ holds for all primes $\ell$ with $u\in S:=\bigcap_{\ell\leq b} S_\ell$.   The set $S$ has density $1$ since it is a finite intersection of density $1$ sets by Lemma~\ref{L:HIT frattini}(\ref{L:HIT frattini ii}).  Part (\ref{P:G specialization iv}) now follows.
\end{proof}

\begin{lemma}  \cite{MR1944805}*{Lemma~7} \label{L:reductive inclusion}
Let $F$ be a perfect field whose characteristic is $0$ or at least $5$.   Let $G_1 \subseteq G_2$ be reductive groups defined over $F$ that have the same rank.   Suppose we have a faithful linear representation $G_2 \hookrightarrow \GL_V$, where $V$ is a finite dimension $F$-vector space, such that the centers of the commutants of $G_1$ and $G_2$ in $\End_F(V)$ are the same $F$-algebra $R$.  Suppose further that the commutative $F$-algebra $R$ is semisimple.   Then $G_1=G_2$. 
 \qed
\end{lemma}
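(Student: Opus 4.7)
My plan is to base-change to $\bar F$, use the commutant center $R$ to decompose $V$, match isotypy structures via a common maximal torus, and conclude by a root-space analysis inside $\End V$.

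First, I base-change to $\bar F$: since $F$ is perfect with characteristic $0$ or $\geq 5$, the groups remain reductive with the same rank, and the commutants together with their centers base-change to the corresponding objects over $\bar F$; so it suffices to prove the lemma over $\bar F$. Over $\bar F$, write $R \cong \bar F^n$ via primitive orthogonal idempotents $e_1, \ldots, e_n$, which decompose $V = \bigoplus_j V_j$ with $V_j = e_j V$. Since $G_i$ commutes with $Z(C_i) = R \subseteq C_i$, both $G_1$ and $G_2$ preserve each $V_j$. On each summand the induced commutant has center $\bar F$, so $V_j$ is isotypic both as a $G_1$- and as a $G_2$-module.

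Next, the same-rank hypothesis yields a common maximal torus $T \subseteq G_1 \subseteq G_2$ (any maximal torus of $G_1$ sits inside one of $G_2$, and dimensional equality forces equality). Writing $V_j = W_j \otimes \bar F^{m_j}$ as the $G_2$-isotypic decomposition with $W_j$ simple, the $G_1$-isotypy of $V_j$ forces $W_j|_{G_1}$ to be $G_1$-isotypic, say $W_j|_{G_1} = U_j \otimes \bar F^{b_j}$ with $U_j$ simple. Comparing the $T$-weight multiplicity of the highest weight $\lambda$ of $W_j$ (which equals $1$ in $W_j$) with its multiplicity $b_j \cdot m_{U_j}(\lambda)$ coming from the $G_1$-description forces $b_j = 1$. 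Hence $W_j$ is simple for both groups, and the isotypy structure $V_j = W_j^{\oplus m_j}$ is the same whether viewed over $G_1$ or $G_2$.

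Finally, in this rigid common setup I decompose the Lie algebras $\mathfrak{g}_i = \Lie G_i \subseteq \End V$ under $T$ as $\mathfrak{g}_i = \mathfrak{t} \oplus \bigoplus_{\alpha \in \Phi_i} \mathfrak{g}_i^\alpha$, with $\Phi_1 \subseteq \Phi_2$. The goal is $\Phi_1 = \Phi_2$, which combined with the common maximal torus yields $\mathfrak{g}_1 = \mathfrak{g}_2$, hence $G_1^\circ = G_2^\circ$; a normalizer-of-$T$ argument using faithfulness of $V$ then upgrades this to $G_1 = G_2$. The main obstacle I anticipate is ruling out any extra root $\alpha \in \Phi_2 \setminus \Phi_1$: the associated $\mathfrak{sl}_2$-triple $\{X_{\pm\alpha}, H_\alpha\} \subseteq \mathfrak{g}_2$ would have to act on the rigid common isotypic structure $V_j = W_j^{\oplus m_j}$ compatibly with the strictly smaller root system of $G_1$, and a Borel--de Siebenthal style analysis (relying on the characteristic hypothesis for well-behaved $\mathfrak{sl}_2$ representation theory) is needed to derive the contradiction.
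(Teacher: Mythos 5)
The paper itself gives no argument for this lemma; it is quoted from Wintenberger \cite{MR1944805}*{Lemma~7} with a reference in place of a proof, so your proposal can only be judged on its own merits. In characteristic $0$ your preparatory steps are fine: the reduction to $\bar F$, the decomposition $V=\bigoplus_j V_j$ along the idempotents of $R$, the fact that each $V_j$ is isotypic for both groups, the common maximal torus $T$ coming from equal rank, and the multiplicity-one trick showing each $W_j$ stays irreducible under $G_1$ (and, implicitly, that distinct $W_j$ stay non-isomorphic, since $e_jC_1e_k=0$). But the decisive step is missing: you never actually rule out a root $\alpha\in\Phi_2\setminus\Phi_1$; you only announce that ``a Borel--de Siebenthal style analysis is needed to derive the contradiction.'' That step is precisely the content of the lemma. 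Equal rank alone does not exclude proper subgroups --- Levi subgroups and subsystem (Borel--de Siebenthal) subgroups such as $\SL_2\times\SL_2\subset\Sp_4$ or $\SL_3\subset G_2$ are proper, connected, reductive and of full rank --- so the entire burden is to show that the rigid structure you have set up (each $W_j$ simple for both groups, distinct $W_j$ non-isomorphic over $G_1$, $V$ faithful) is incompatible with $\Phi_1\subsetneq\Phi_2$. Nothing in the proposal does this, and it is not a routine verification: it is where the cited proof does its real work (for instance by exploiting $\End_{G_1}(V)=\End_{G_2}(V)$ together with the equal-rank hypothesis, or a genuine case analysis of restrictions to full-rank subgroups). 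As written, the proof stops exactly at the point where the hypothesis on $R$ has to be converted into a contradiction.

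Two further problems. First, the inference ``the commutant of $G_i$ on $V_j$ has center $\bar F$, hence $V_j$ is isotypic,'' and the subsequent decompositions $V_j\cong W_j\otimes\bar F^{m_j}$, presuppose that $V$ is a semisimple $G_i$-module. This is automatic in characteristic $0$ but fails in general in characteristic $p\geq 5$, where reductive groups are not linearly reductive; so your argument does not cover the stated characteristic hypothesis without additional input. Second, the closing ``normalizer-of-$T$ argument'' upgrading $G_1^\circ=G_2^\circ$ to $G_1=G_2$ cannot exist at the stated level of generality: for disconnected groups the lemma is false (take $G_1=\SL_2\subsetneq G_2=\SL_2\cdot\mu_4\subset\GL_2$ on the standard representation: same rank, same commutant, hence the same semisimple $R=F$, yet $G_1\neq G_2$), so ``reductive'' must be read as connected reductive --- in which case there is nothing to upgrade, and the cleaner ending is that $T$ together with equal root systems generates both groups, avoiding the char-$p$ subtlety that equal Lie algebras need not force equal groups.
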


\subsection{Proof of Proposition~\ref{P:monodromy independence}}\label{SS:monodromy independence} 

Let $S$ be the set of $u \in U(k)$ such that $G_{A_u,\ell}^\circ=G_{A,\ell}^\circ$ for all primes $\ell$ and such that the specialization of $\gamma_{\ell}$ at $u$ is surjective for all primes $\ell$.  The set $S$ has density $1$ by Proposition~\ref{P:G specialization}(\ref{P:G specialization iv}) and Lemma~\ref{L:connected independence families}(\ref{L:connected independence families ii}).  

Now take any $u\in S$ and any prime $\ell$.  Specialization gives an inclusion $G_{A_u,\ell} \subseteq G_{A,\ell}$ and we have $G_{A_u,\ell}^\circ=G_{A,\ell}^\circ$  since $u\in S$.    The group $G_{A,\ell}(\QQ_\ell)$ is Zariski dense in $G_{A,\ell}$ since $G_{A,\ell}$ is defined as the Zariski closure of a subgroup of $\GL_{V_\ell(A)}(\QQ_\ell)$.   So to prove that $G_{A_u,\ell} = G_{A,\ell}$, it suffices to show that the natural injective homomorphism $\varphi\colon G_{A_u,\ell}(\QQ_\ell)/G_{A_u,\ell}^\circ(\QQ_\ell) \hookrightarrow G_{A,\ell}(\QQ_\ell)/G_{A,\ell}^\circ(\QQ_\ell)$ is surjective.  If $\varphi$  was not surjective, then the specialization of $\gamma_\ell$ at $u$ would not be surjective which is impossible since $u\in S$.   Therefore, $G_{A_u,\ell} = G_{A,\ell}$.  The proposition follows since $S$ has density 1 and $\ell$ was arbitrary.

\section{Big $\ell$-adic images} \label{S:big ell-adic images}

Fix an abelian scheme $\pi\colon A\to U$ of relative dimension $g\geq 1$, where $U$ is a non-empty open subvariety of $\PP^n_K$ with $K$ a number field and $n\geq 0$.    As noted in \S\ref{S:monodromy in families}, this includes the case that $A$ is an abelian variety defined over $K$.

\subsection{More $\ell$-adic monodromy groups} \label{SS:more groups}

Similar to our definition of $G_{A,\ell}$, we now define an $\ell$-adic monodromy that is a group scheme over $\ZZ_\ell$.   With notation as in \S\ref{SS:notation},  we have an algebraic group scheme $\GL_{T_\ell(A)}$ defined over $\ZZ_\ell$.  Note that the generic fiber of $\GL_{T_\ell(A)}$ is $\GL_{V_\ell(A)}$.  

We define $\calG_{A,\ell}$ to be the Zariski closure of $\rho_{A,\ell}(\pi_1(U))$ in $\GL_{T_\ell(A)}$; it is a group scheme defined over $\ZZ_\ell$.  The group schemes $G_{A,\ell}$ and $\calG_{A,\ell}$ determine each other.  More precisely, $G_{A,\ell}$ is the generic fiber of $\calG_{A,\ell}$ and $\calG_{A,\ell}$ is the Zariski closure of $G_{A,\ell}$ in $\GL_{T_\ell(A)}$.   

Let $\calG_{A,\ell}^\circ$ be the $\ZZ_\ell$-group subscheme of $\calG_{A,\ell}$ that is the Zariski closure of $G_{A,\ell}^\circ$.   

Let $\calS_{A,\ell}$ be the $\ZZ_\ell$-group subscheme of $\calG^\circ_{A,\ell}$ that is the Zariski closure of the derived subgroup of $G_{A,\ell}^\circ$.

\subsection{An open image theorem}

The following theorem says that the image of $\rho_{A,\ell}$ is ``large'' for all sufficiently large $\ell$.  More precisely, $\rho_{A,\ell}(\pi_1(U))$ contains $\calS_{A,\ell}(\ZZ_\ell)'$  and its index in $\calG_{A,\ell}(\ZZ_\ell)$ is uniformly bounded for all sufficiently large $\ell$.  The theorem also describes several important properties of the $\ZZ_\ell$-group schemes $\calG^\circ_{A,\ell}$ and $\calS_{A,\ell}$.  

\begin{thm} \label{T:geometric gp theory}
There is a constant $b_A$, depending only on $A$, such that the following hold for all primes $\ell \geq b_A$:
\begin{romanenum}
\item  \label{T:geometric gp theory i}
The $\ZZ_\ell$-group scheme $\calG_{A,\ell}^\circ$ is reductive and $\calS_{A,\ell}$ is semisimple.
\item \label{T:geometric gp theory ii}
We have $\rho_{A,\ell}(\pi_1(U)) \supseteq \calS_{A,\ell}(\ZZ_\ell)'$ and $\bbar\rho_{A,\ell}(\pi_1(U)) \supseteq \calS_{A,\ell}(\FF_\ell)'$.
\item
\label{T:geometric gp theory iii}
We have $[\calG_{A,\ell}(\ZZ_\ell): \rho_{A,\ell}(\pi_1(U))]\ll_A 1$.   
\item \label{T:geometric gp theory iv}
The groups $\calS_{A,\ell}(\ZZ_\ell)'$ and $\calS_{A,\ell}(\FF_\ell)'$ are perfect and all of their finite simple quotients are of Lie type in characteristic $\ell$.  We have $\calG_{A,\ell}^\circ(\ZZ_\ell)'=\calS_{A,\ell}(\ZZ_\ell)'$.
\item \label{T:geometric gp theory v}
The cardinality of $\calS_{A,\ell}(\ZZ_\ell)/\calS_{A,\ell}(\ZZ_\ell)'$ is finite and can be bounded in terms of $g$.
\item \label{T:geometric gp theory vi}
Suppose that $H$ is a closed subgroup of $\calS_{A,\ell}(\ZZ_\ell)$, in the $\ell$-adic topology, whose image modulo $\ell$ contains $\calS_{A,\ell}(\FF_\ell)'$.  Then $H\supseteq \calS_{A,\ell}(\ZZ_\ell)'$. 
\end{romanenum}
\end{thm}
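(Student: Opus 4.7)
The plan is to first reduce to the case where $A$ is an abelian variety over $K$ (i.e.\ $n=0$), and then invoke Serre's open image theorem together with its uniform-in-$\ell$ refinements. By Proposition~\ref{P:monodromy independence} I may choose $u \in U(K)$ with $G_{A_u,\ell} = G_{A,\ell}$ for every prime $\ell$. Since $\calG_{A,\ell}^\circ$ and $\calS_{A,\ell}$ are defined as Zariski closures in $\GL_{T_\ell(A)}$ and specialization identifies $T_\ell(A_u)$ with $T_\ell(A)$, this propagates to $\calG_{A_u,\ell}^\circ = \calG_{A,\ell}^\circ$ and $\calS_{A_u,\ell} = \calS_{A,\ell}$. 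Parts (i), (iv), (v), (vi) depend only on these group schemes and so pass from $A_u$ to $A$, while specialization gives $\rho_{A_u,\ell}(\Gal_K) \subseteq \rho_{A,\ell}(\pi_1(U)) \subseteq \calG_{A,\ell}(\ZZ_\ell)$, so the inclusion in (ii) and the index bound in (iii) for $A$ are immediate consequences of the corresponding statements for $A_u$. From this point on I may assume $A$ is an abelian variety over $K$.

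For (i), reductivity of the generic fiber $G_{A,\ell}^\circ$ is Proposition~\ref{P:G specialization}(i); I would extend it to reductivity of $\calG_{A,\ell}^\circ$ over $\ZZ_\ell$ for $\ell \geq b_A$ using the general principle that the Zariski closure in $\GL_{T_\ell(A)}$ of a reductive subgroup of its generic fiber has smooth reductive reduction modulo $\ell$ once $\ell$ exceeds explicit constants depending on the root system, combined with the fact that $A$ has good reduction at all primes above such $\ell$, forcing the image of $\rho_{A,\ell}$ to sit integrally inside $\GL_{T_\ell(A)}(\ZZ_\ell)$ in a controlled way. Semisimplicity of $\calS_{A,\ell}$ is then automatic, as the derived subgroup scheme of a reductive $\ZZ_\ell$-group scheme. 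For (ii) and (iii), Serre's open image theorem \cite{MR1730973} gives that $\rho_{A,\ell}(\Gal_K)$ is open in $\calG_{A,\ell}(\ZZ_\ell)$ for each individual $\ell$; upgrading this to a bound on the index independent of $\ell$ requires the $\ell$-independence and compatibility results of Hui and Larsen--Pink, which ultimately rest on Faltings' isogeny theorem. Once (iii) is in hand, the containment $\rho_{A,\ell}(\Gal_K) \supseteq \calS_{A,\ell}(\ZZ_\ell)'$ in (ii) follows because, for $\ell$ sufficiently large relative to the root system of $\calS_{A,\ell}$, any open subgroup of $\calS_{A,\ell}(\ZZ_\ell)$ of uniformly bounded index must contain the commutator subgroup; reducing modulo $\ell$ yields the second containment.

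The remaining parts (iv), (v), (vi) are standard statements about semisimple $\ZZ_\ell$-group schemes of bounded rank when $\ell$ is large. For (iv), Steinberg's theorem implies perfectness of $\calS_{A,\ell}(\FF_\ell)'$ and identifies its simple quotients as groups of Lie type in characteristic $\ell$; a pro-$\ell$ lifting argument transfers both statements to $\calS_{A,\ell}(\ZZ_\ell)'$, and the identity $\calG_{A,\ell}^\circ(\ZZ_\ell)' = \calS_{A,\ell}(\ZZ_\ell)'$ follows from the radical of $\calG_{A,\ell}^\circ$ being a torus, whose $\ZZ_\ell$-points are abelian and contribute nothing to the commutator. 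For (v), a uniform bound on $|\calS(\ZZ_\ell)/\calS(\ZZ_\ell)'|$ in terms of the fundamental group of $\calS_{A,\ell}$ (itself controlled by the absolute rank, which is at most $g$) yields a bound depending only on $g$. For (vi), the kernel of the reduction $\calS_{A,\ell}(\ZZ_\ell) \to \calS_{A,\ell}(\FF_\ell)$ is a pro-$\ell$ group, and a Frattini-type argument shows that a closed subgroup of $\calS_{A,\ell}(\ZZ_\ell)$ whose mod-$\ell$ image contains the perfect group $\calS_{A,\ell}(\FF_\ell)'$ must recover all of $\calS_{A,\ell}(\ZZ_\ell)'$.

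The main obstacle is the uniformity in $\ell$ of the index bound in (iii): Serre's open image theorem handles each prime individually, but establishing an index bound independent of $\ell$ requires the deeper compatibility results for the system of $\ell$-adic representations attached to an abelian variety. Everything else in the theorem can be read off, with some care, once this uniform open-image input is available.
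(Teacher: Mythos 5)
Your reduction to the case of an abelian variety over a number field, via Proposition~\ref{P:monodromy independence} and specialization at a point $u$ with $G_{A_u,\ell}=G_{A,\ell}$ for all $\ell$, is exactly the paper's Lemma~\ref{L:geometric gp theory reduction}, and that part is fine. The gap is in the abelian-variety case itself, at part (i). The claimed ``general principle'' --- that the Zariski closure in $\GL_{T_\ell(A)}$ of a group with reductive generic fiber has smooth reductive reduction once $\ell$ is large relative to the root system --- is false: the schematic closure depends on how the lattice $T_\ell(A)$ sits relative to $G_{A,\ell}$, and for any fixed $\ell$, however large, a badly positioned lattice gives a model whose special fiber is not reductive (e.g.\ the closure of an Iwahori-type intersection of two hyperspecial compacts has solvable special fiber). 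The assertion that $\calG_{A,\ell}^\circ$ is reductive for all $\ell\geq b_A$ is itself one of the deep arithmetic statements of the theorem --- essentially a hyperspecial-maximal-compact statement for the specific lattices $T_\ell(A)$ --- and the paper obtains it, together with (ii) and (iii), by citing the effective open image theorem (Theorem~\ref{T:main new}, i.e.\ \cite{Zywina-EffectiveOpenImage}). Your side remark that good reduction at primes above $\ell$ ``forces the image to sit integrally inside $\GL_{T_\ell(A)}(\ZZ_\ell)$'' is also off: the Galois action preserves $T_\ell(A)$ unconditionally; good reduction at $\ell$ enters the actual proofs through crystalline (Fontaine--Laffaille) control of inertia, not through integrality.

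The same issue affects (iii): a uniform bound for $[\calG_{A,\ell}(\ZZ_\ell):\rho_{A,\ell}(\Gal_K)]$ is only meaningful once one knows $\calG_{A,\ell}(\ZZ_\ell)$ is not too large, i.e.\ once (i) is in place, and it does not follow from per-$\ell$ openness (Bogomolov/Serre) plus generic compatibility; one needs uniform-in-$\ell$ largeness results of the Serre/Wintenberger/Larsen--Pink/Hui--Larsen type, or the single effective reference the paper uses (whose effectivity, with $b_{A_u}$ controlled by $h(A_u)$, $[K:\QQ]$ and $N(\q)$, is moreover exactly what Lemma~\ref{L:technical combo} requires later in the proof of Theorem~\ref{T:MAIN}). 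Your appeal to Hui--Larsen is the right genre of input but is not matched to the precise statements (i)--(iii), so as written the core of the theorem is asserted rather than proved. By contrast, your deduction of (ii) from (iii) and (iv) (bounded-index subgroups must contain $\calS_{A,\ell}(\ZZ_\ell)'$ once $\ell$ is large, then reduce mod $\ell$ using smoothness) and your sketches of (iv)--(vi) via the simply connected cover, the pro-$\ell$ reduction kernel and a Frattini-type argument are sound for large $\ell$ and parallel what the paper does (cf.\ Lemma~\ref{L:alternate commutator} and the cited lemmas of \cite{Zywina-EffectiveOpenImage}).
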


We will prove Theorem~\ref{T:geometric gp theory} in \S\ref{SS:main open image proof} by using the following lemma to reduce to the case of an abelian variety over a number field. 

\begin{lemma} \label{L:geometric gp theory reduction}
If Theorem~\ref{T:geometric gp theory} holds for abelian varieties over any number field $K$, then it holds in general.
\end{lemma}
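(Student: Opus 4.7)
The plan is to reduce the statement for an abelian scheme $\pi\colon A\to U$ (with $n\geq 1$; the case $n=0$ is precisely the hypothesis) to the assumed case of a single abelian variety obtained by a carefully chosen specialization. The idea is that if we pick $u\in U(K)$ so that the specialization preserves every monodromy invariant at every prime simultaneously, then the $\ZZ_\ell$-group schemes $\calG_{A,\ell}$, $\calG_{A,\ell}^\circ$, $\calS_{A,\ell}$ coincide with those of $B:=A_u$, and the image of $\rho_{A,\ell}$ on the larger group $\pi_1(U)$ only contains more than $\rho_{B,\ell}(\Gal_K)$, never less.

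First I would combine Proposition~\ref{P:monodromy independence} with Proposition~\ref{P:G specialization}(\ref{P:G specialization iv}): the set of $u\in U(K)$ for which $G_{A_u,\ell}=G_{A,\ell}$ and $G_{A_u,\ell}^\circ=G_{A,\ell}^\circ$ hold simultaneously for every prime $\ell$ has density $1$, and is in particular non-empty. Pick such a $u$ and set $B:=A_u$. Using the specialization isomorphism $T_\ell(B)\cong T_\ell(A)$, the Zariski closures inside $\GL_{T_\ell(A)}$ of the equal subgroups $G_{B,\ell}=G_{A,\ell}$ agree, giving $\calG_{B,\ell}=\calG_{A,\ell}$ as $\ZZ_\ell$-group schemes; similarly $\calG_{B,\ell}^\circ=\calG_{A,\ell}^\circ$ and, since the derived subgroups of $G_{B,\ell}^\circ=G_{A,\ell}^\circ$ coincide, $\calS_{B,\ell}=\calS_{A,\ell}$ as well.

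Now I would apply the assumed case of Theorem~\ref{T:geometric gp theory} to the abelian variety $B$ over $K$, obtaining a constant $b_B$; I claim every conclusion for $A$ follows with $b_A:=b_B$. Parts (\ref{T:geometric gp theory i}), (\ref{T:geometric gp theory iv}), (\ref{T:geometric gp theory v}), and (\ref{T:geometric gp theory vi}) are purely statements about the group schemes $\calG_{A,\ell}^\circ$ and $\calS_{A,\ell}$, so they transfer directly from the identical groups for $B$. For (\ref{T:geometric gp theory ii}) and (\ref{T:geometric gp theory iii}), the specialization map $u_*\colon \Gal_K\to \pi_1(U)$ yields, up to conjugacy in $\GL_{T_\ell(A)}(\ZZ_\ell)$, the inclusion $\rho_{B,\ell}(\Gal_K)\subseteq \rho_{A,\ell}(\pi_1(U))$. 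Therefore
\[
\rho_{A,\ell}(\pi_1(U))\supseteq \rho_{B,\ell}(\Gal_K)\supseteq \calS_{B,\ell}(\ZZ_\ell)' = \calS_{A,\ell}(\ZZ_\ell)',
\]
and the corresponding mod $\ell$ inclusion, giving (\ref{T:geometric gp theory ii}); and
\[
[\calG_{A,\ell}(\ZZ_\ell): \rho_{A,\ell}(\pi_1(U))] \leq [\calG_{B,\ell}(\ZZ_\ell): \rho_{B,\ell}(\Gal_K)] \ll_B 1,
\]
giving (\ref{T:geometric gp theory iii}) since $B$ depends only on $A$.

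No real obstacle arises: the only point requiring care is the identification of the integral models $\calG_{A,\ell}$ and $\calS_{A,\ell}$ with those of $B$, which is forced by the definition of these schemes as Zariski closures in $\GL_{T_\ell(A)}$ once one knows the generic fibers and their neutral components agree for every $\ell$.
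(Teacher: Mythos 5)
Your proposal is correct and follows essentially the same route as the paper: use Proposition~\ref{P:monodromy independence} to find $u\in U(K)$ with $G_{A_u,\ell}=G_{A,\ell}$ for all $\ell$, deduce $\calG_{A_u,\ell}=\calG_{A,\ell}$ and $\calS_{A_u,\ell}=\calS_{A,\ell}$, and transfer all parts via the specialization inclusion $\rho_{A_u,\ell}(\Gal_K)\subseteq\rho_{A,\ell}(\pi_1(U))$ with $b_A:=b_{A_u}$. (Your additional appeal to Proposition~\ref{P:G specialization}(\ref{P:G specialization iv}) is redundant, since equality of the $G_{A_u,\ell}$ and $G_{A,\ell}$ already forces equality of their neutral components, but this is harmless.)
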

\begin{proof}
We can assume that $n\geq 1$.   By Proposition~\ref{P:monodromy independence}, there is a $u\in U(K)$ such that $G_{A_u,\ell}=G_{A,\ell}$ for all $\ell$.  This implies that $\calG_{A_u,\ell}=\calG_{A,\ell}$ and $\calS_{A_u,\ell}=\calS_{A,\ell}$ hold for all $\ell$.  Specialization by $u$ gives inclusions $\rho_{A_u,\ell}(\Gal_K) \subseteq \rho_{A,\ell}(\pi_1(U)) \subseteq \calG_{A,\ell}(\ZZ_\ell)$.   It is now clear that Theorem~\ref{T:geometric gp theory} for the abelian variety $A_u/K$ implies that Theorem~\ref{T:geometric gp theory} holds for $A$ with $b_A:=b_{A_u}$.
\end{proof}

\subsection{An effective open image theorem} \label{SS:effective open image}

In this section, we assume that $A$ is an abelian variety of dimension $g\geq 1$ defined over a number field $K$.    We will state a version of Theorem~\ref{T:geometric gp theory} due to the author that gives a value of $b_A$ in terms of various invariants of $A$.  Before stating the results, we need to recall some quantities.\\

The algebraic group $G_{A,\ell}^\circ$ is reductive and its rank is independent of $\ell$, cf.~Proposition~\ref{P:G specialization}(\ref{P:G specialization ii}).  Denote the common rank of the groups $G_{A,\ell}^\circ$ by $r$.

Let $\p$ be any non-zero prime ideal of $\OO_K$ for which $A$ has good reduction.   Denote by $P_{A,\p}(x)$ the \defi{Frobenius polynomial} of $A$ at $\p$; it is a monic degree $2g$ polynomial with integer coefficients.   For a prime $\ell$ satisfying $\p\nmid \ell$, the representation $\rho_{A,\ell}$ is unramified at $\p$ and we have
\[
P_{A,\p}(x)= \det(xI - \rho_{A,\ell}(\Frob_\p)).
\]
Let  $\Phi_{A,\p}$ be the subgroup of $\CC^\times$ generated by the roots of $P_{A,\p}(x)$.  

We denote by $h(A)$ the (logarithmic absolute) Faltings height of $A$ obtained after base extending to any finite extension of $K$ over which $A$ has semistable reduction, see \S5 of \cite{MR861978}.   In particular, note that $h(A_L)=h(A)$ for any finite extension $L/K$.

\begin{thm} \label{T:main new}
Let $A$ be an abelian variety of dimension $g\geq 1$ defined over a number field $K$. Let $\q$ be a non-zero prime ideal of $\OO_K$ for which $A$ has good reduction and $\Phi_{A,\q}$ is a free abelian group of rank $r$.  Then there are positive constants $c$ and $\gamma$, depending only on $g$, such that Theorem~\ref{T:geometric gp theory} holds with
 \begin{align*} \label{E:effective bA}
 b_A= c \cdot (\max\{[K:\QQ],h(A), N(\q)\})^\gamma.
 \end{align*}
\end{thm}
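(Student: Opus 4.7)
The plan is to establish each of the conclusions (i)--(vi) of Theorem~\ref{T:geometric gp theory} for every prime $\ell$ larger than an explicit polynomial in $\max\{[K:\QQ], h(A), N(\q)\}$. The two central tools will be an effective isogeny theorem (Masser--W\"ustholz, in the sharpened form of Gaudron--R\'emond, which controls all relevant isogeny bounds polynomially in $[K:\QQ]$ and $h(A)$) together with the specified Frobenius element at $\q$. Part (i), reductivity of $\calG_{A,\ell}^\circ$ and semisimplicity of $\calS_{A,\ell}$, reduces via the $\ell$-divisible group of $A$ to requiring that $\ell$ exceed the residue characteristics of the primes of bad reduction and avoid ramification in the connected monodromy field $K_A^\conn$; both conditions can be made effective from $h(A)$ and $[K:\QQ]$ using Faltings' estimates on the conductor.

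The heart of the argument is the mod-$\ell$ version of part (ii). Since $\Phi_{A,\q}$ is free of rank $r=\rank G_{A,\ell}^\circ$, the semisimple element $\rho_{A,\ell}(\Frob_\q)$ lies in a maximal $\QQ_\ell$-torus of $G_{A,\ell}^\circ$, and provided $\ell$ exceeds an explicit function of the discriminant of $P_{A,\q}(x)$ (hence of $N(\q)$) its reduction modulo $\ell$ is a regular semisimple element of $(\calG_{A,\ell}^\circ)_{\FF_\ell}$ generating a maximal torus. An effective form of Nori's theorem, combined with the Larsen--Pink classification of subgroups of finite groups of Lie type of large rank, then shows that any subgroup of $\calG_{A,\ell}(\FF_\ell)$ containing such a regular torus together with the abelian data from the cyclotomic and similitude characters must contain $\calS_{A,\ell}(\FF_\ell)'$; the remaining ``exceptional'' candidate subgroups are ruled out by an eigenvalue count on $\rho_{A,\ell}(\Frob_\q)$, in which the maximal-rank hypothesis on $\Phi_{A,\q}$ is used crucially. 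The Frattini-type lifting statement (vi) is purely group-theoretic and, for $\ell$ exceeding an explicit bound depending only on $g$, follows from a standard analysis of the congruence filtration on the semisimple $\ZZ_\ell$-group scheme $\calS_{A,\ell}$; applied to the image of $\rho_{A,\ell}$, it promotes the mod-$\ell$ inclusion to part (ii) over $\ZZ_\ell$.

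The index bound (iii) then reduces to controlling the image of $\rho_{A,\ell}$ in the virtually abelian quotient $\calG_{A,\ell}(\ZZ_\ell)/\calS_{A,\ell}(\ZZ_\ell)'$, which is detected by the determinant and similitude characters and bounded uniformly in $\ell$ via the cyclotomic character together with the effective isogeny theorem. Parts (iv) and (v) are consequences of the structure theory of semisimple $\ZZ_\ell$-group schemes of rank bounded in terms of $g$, together with the classification of finite simple groups of Lie type; in particular $|\calS_{A,\ell}(\ZZ_\ell)/\calS_{A,\ell}(\ZZ_\ell)'|$ is bounded by the order of the fundamental group of the root datum of $\calS_{A,\ell}$, a function of $g$ alone. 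The main obstacle will be making the Nori--Larsen--Pink classification effective uniformly in $\ell$: each ``exceptional'' subgroup class must be excluded by an explicit polynomial condition in $N(\q)$, $h(A)$, and $[K:\QQ]$, and amalgamating all such conditions into the single polynomial bound of the theorem, without losing control of the exponent $\gamma$, is where the bulk of the technical work lies.
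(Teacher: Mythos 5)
There is a genuine gap. In the paper this theorem is not proved from scratch at all: it is obtained by quoting the effective open image theorem of \cite{Zywina-EffectiveOpenImage} (Theorem~1.2(a),(c),(d) there, together with Proposition~4.25 and Lemmas~4.23--4.24), and the only work done in the present paper is to translate those statements into parts (i)--(vi) of Theorem~\ref{T:geometric gp theory}, plus the one-line observation that the mod-$\ell$ inclusion follows from the $\ZZ_\ell$-inclusion by smoothness of $\calS_{A,\ell}$. What you have written is instead an outline of how one might prove the cited result itself, and as a proof it is incomplete: you yourself defer ``the bulk of the technical work'' (making the Nori/Larsen--Pink analysis effective and uniform in $\ell$, with controlled exponent $\gamma$), but that deferred work \emph{is} the theorem --- the whole content of the statement is the explicit polynomial dependence of $b_A$ on $[K:\QQ]$, $h(A)$ and $N(\q)$, so a sketch that leaves exactly this part open has not proved anything beyond the already known non-effective open image statements.

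Beyond the acknowledged incompleteness, the central step of your sketch is not correct as stated. Knowing that $\Phi_{A,\q}$ is free of rank $r$ gives you (for suitable $\ell$) that the reduction of $\rho_{A,\ell}(\Frob_\q)$ generates, in the Zariski sense, a maximal torus $T$ of $(\calG_{A,\ell}^\circ)_{\FF_\ell}$; but it is false that a subgroup of $\calG_{A,\ell}(\FF_\ell)$ containing such an element, together with the cyclotomic/similitude data, must contain $\calS_{A,\ell}(\FF_\ell)'$. Maximal-rank proper subgroups (normalizers of tori, Levi and subsystem subgroups, products such as $\SL_2\times\SL_2$ inside $\Sp_4$) all contain maximal tori, and they cannot be excluded by ``an eigenvalue count on $\rho_{A,\ell}(\Frob_\q)$'': the eigenvalues of $\Frob_\q$ only see the torus, which these subgroups share with the full group, so the maximal-rank hypothesis on $\Phi_{A,\q}$ gives no leverage against them. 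Ruling out these candidates requires genuinely different input (effective semisimplicity of the mod-$\ell$ representation via isogeny bounds, control of commutants/formal characters across $\ell$, and a careful comparison of the Nori envelope with $\calG_{A,\ell}^\circ$), which is precisely the content of the cited effective open image theorem. Similarly, your treatment of part (i) --- reducing reductivity of the special fiber of $\calG_{A,\ell}^\circ$ to avoiding bad primes and ramification in $K_A^\conn$ --- is too optimistic: integral reductivity of the Zariski closure is not guaranteed by good reduction and unramifiedness alone and is itself one of the nontrivial effective statements being invoked. If you want a self-contained argument you must either reproduce the effective machinery of \cite{Zywina-EffectiveOpenImage} or cite it, as the paper does.
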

\begin{proof}
Take any prime $\ell \geq b_A$.  Parts (\ref{T:geometric gp theory i}) and (\ref{T:geometric gp theory iii}) follow from parts (c) and (a), respectively, of Theorem~1.2 in \cite{Zywina-EffectiveOpenImage}.   Theorem~1.2(d) in \cite{Zywina-EffectiveOpenImage} implies that $\rho_{A,\ell}(\Gal_K) \supseteq \calS_{A,\ell}(\ZZ_\ell)'$.  Reducing modulo $\ell$ and using that $\calS_{A,\ell}$ is smooth, we find that $\bbar\rho_{A,\ell}(\Gal_K) \supseteq \calS_{A,\ell}(\FF_\ell)'$; this proves (\ref{T:geometric gp theory ii}).   Parts (\ref{T:geometric gp theory iv}) and (\ref{T:geometric gp theory v}) of Theorem~\ref{T:geometric gp theory} 
are shown to hold in the proof of Theorem~1.2 in \cite{Zywina-EffectiveOpenImage}, cf.~Proposition~4.25 of \cite{Zywina-EffectiveOpenImage}.   Part (\ref{T:geometric gp theory vi}) of Theorem~\ref{T:geometric gp theory} 
is also shown to hold in the proof of Theorem~1.2 in \cite{Zywina-EffectiveOpenImage}, cf.~Lemmas~4.23 and 4.24 of \cite{Zywina-EffectiveOpenImage}.
\end{proof}

\begin{remark} \label{R:q exists}
From Lemma~2.7 of \cite{Zywina-EffectiveOpenImage}, the set of non-zero prime ideals $\p$ of $\OO_K$ for which $A$ has good reduction and $\Phi_{A,\p}$ is a free abelian group of rank $r$ has density $1/[K_A^\conn:K]$.   In particular, there do exists prime ideals $\q$ as in the statement of Theorem~\ref{T:main new}.
\end{remark}

\subsection{Proof of Theorem~\ref{T:geometric gp theory}} \label{SS:main open image proof}
The theorem follows immediately from Lemma~\ref{L:geometric gp theory reduction} and Theorem~\ref{T:main new} (with Remark~\ref{R:q exists} to show that the assumptions are not vacuous).

\section{Geometric monodromy} \label{S:geometric monodromy}

 Fix an abelian scheme $\pi\colon A\to U$ of relative dimension $g\geq 1$, where $K$ is a number field and $U$ is a non-empty open subvariety of $\PP^n_K$ for some $n\geq 1$.     Fix notation as in \S\ref{S:monodromy in families} and \S\ref{S:big ell-adic images}.  Take a constant $b_A$ as in Theorem~\ref{T:geometric gp theory}.

In this section, we will prove the following constraints on the images of the representations $\rho_{A,\ell}$ when restricted to the geometric fundamental group $\pi_1(U_{\Kbar})$; there is no harm in suppressing base points below since $\pi_1(U_{\Kbar})$ is a normal subgroup of $\pi_1(U)$. 

\begin{prop} \label{P:geometric monodromy}
There is an open subgroup $H$ of $\pi_1(U_{\Kbar})$ such that the following hold:
\begin{alphenum}
\item \label{P:geometric monodromy a}
$\rho_{A,\ell}(H)$ lies in the group of $\QQ_\ell$-points of the derived subgroup of $G_{A,\ell}^\circ$ for all primes $\ell$,
\item \label{P:geometric monodromy b}
$\rho_{A,\ell}(H) \subseteq \calS_{A,\ell}(\ZZ_\ell)'$ for all primes $\ell \geq b_A$.
\end{alphenum}
\end{prop}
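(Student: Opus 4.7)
The plan is to first establish, for each prime $\ell$ separately, that the connected part of the geometric monodromy lies inside the derived subgroup of $G_{A,\ell}^\circ$, and then to assemble a single open subgroup $H \subseteq \pi_1(U_{\Kbar})$ that works for all primes simultaneously.

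For each prime $\ell$, let $G_{A,\ell}^{\mathrm{geom}}$ be the Zariski closure of $\rho_{A,\ell}(\pi_1(U_{\Kbar}))$ in $G_{A,\ell}$, with neutral component $(G_{A,\ell}^{\mathrm{geom}})^\circ$. Since $\pi_1(U_{\Kbar})$ is normal in $\pi_1(U)$, the algebraic subgroup $G_{A,\ell}^{\mathrm{geom}}$ is normal in $G_{A,\ell}$, and hence $(G_{A,\ell}^{\mathrm{geom}})^\circ$ is normal in $G_{A,\ell}^\circ$. The lisse $\ell$-adic sheaf $R^1\pi_\ast\QQ_\ell$ on $U$ is pure of weight $1$, so Deligne's theorem on the semisimplicity of geometric monodromy for pure lisse sheaves implies that $(G_{A,\ell}^{\mathrm{geom}})^\circ$ is semisimple. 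Since $G_{A,\ell}^\circ$ is reductive by Proposition~\ref{P:G specialization}(\ref{P:G specialization i}), and any semisimple normal subgroup of a reductive group is contained in its derived subgroup, we conclude $(G_{A,\ell}^{\mathrm{geom}})^\circ \subseteq (G_{A,\ell}^\circ)^{\mathrm{der}}$.

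Set $H_0 := \ker \gamma_{A,\ell} \cap \pi_1(U_{\Kbar})$; by Lemma~\ref{L:connected independence families}(\ref{L:connected independence families i}) this is an open subgroup of $\pi_1(U_{\Kbar})$ independent of $\ell$, with $\rho_{A,\ell}(H_0) \subseteq G_{A,\ell}^\circ(\QQ_\ell)$. As $H_0$ is normal in $\pi_1(U)$ (intersection of two normal subgroups), the Zariski closure of $\rho_{A,\ell}(H_0)$ is a normal algebraic subgroup of $G_{A,\ell}$ whose neutral component equals $(G_{A,\ell}^{\mathrm{geom}})^\circ$. Composing $\rho_{A,\ell}|_{H_0}$ with the quotient map $G_{A,\ell}^\circ \twoheadrightarrow T_\ell := G_{A,\ell}^\circ/(G_{A,\ell}^\circ)^{\mathrm{der}}$ produces a continuous homomorphism $H_0 \to T_\ell(\QQ_\ell)$ with finite image, by the previous paragraph. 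Denote its kernel by $H_\ell$; then $\rho_{A,\ell}(H_\ell) \subseteq (G_{A,\ell}^\circ)^{\mathrm{der}}(\QQ_\ell)$.

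The main obstacle is to combine the $H_\ell$ into a single open subgroup $H$. The plan is to invoke Proposition~\ref{P:monodromy independence} to pick a specialization point $u \in U(K)$ with $G_{A_u,\ell} = G_{A,\ell}$ for all $\ell$, so that the finite quotients $H_0/H_\ell$ reduce to analogous quotients for the abelian variety $A_u/K$. The classical theory of compatible systems of $\ell$-adic representations for abelian varieties over number fields (after Serre and Faltings) then yields a uniform bound on $|H_0/H_\ell|$ in $\ell$. Combined with the topological finite generation of $\pi_1(U_{\Kbar})$ (and hence of $H_0$), only finitely many distinct open subgroups $H_\ell$ arise, so $H := \bigcap_\ell H_\ell$ is open in $\pi_1(U_{\Kbar})$ and witnesses (a). For (b), observe that when $\ell \geq b_A$ the definition of $\calS_{A,\ell}$ gives $(G_{A,\ell}^\circ)^{\mathrm{der}}(\QQ_\ell) \cap \calG_{A,\ell}(\ZZ_\ell) = \calS_{A,\ell}(\ZZ_\ell)$, so $\rho_{A,\ell}(H) \subseteq \calS_{A,\ell}(\ZZ_\ell)$; by Theorem~\ref{T:geometric gp theory}(\ref{T:geometric gp theory v}), the index $|\calS_{A,\ell}(\ZZ_\ell)/\calS_{A,\ell}(\ZZ_\ell)'|$ is bounded uniformly in $\ell$ by a function of $g$, so topological finite generation of $\pi_1(U_{\Kbar})$ again allows us to shrink $H$ to an open subgroup for which $\rho_{A,\ell}(H) \subseteq \calS_{A,\ell}(\ZZ_\ell)'$ for all $\ell \geq b_A$, and finitely many additional shrinkings handle the primes $\ell < b_A$ while preserving (a).
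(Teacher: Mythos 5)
Your per-prime argument is sound: for a fixed $\ell$, the semisimplicity of the connected geometric monodromy group (the paper gets this from Deligne's Hodge~II semisimplicity for the polarizable family, rather than the ``pure lisse sheaf'' statement, which is the characteristic-$p$ version) does force the neutral component of the Zariski closure of $\rho_{A,\ell}(\pi_1(U_{\Kbar}))$ into the derived group of $G_{A,\ell}^\circ$, and your identification $(G_{A,\ell}^\circ)^{\mathrm{der}}(\QQ_\ell)\cap \calG_{A,\ell}(\ZZ_\ell)=\calS_{A,\ell}(\ZZ_\ell)$ together with the bounded abelian quotient $\calS_{A,\ell}(\ZZ_\ell)/\calS_{A,\ell}(\ZZ_\ell)'$ and topological finite generation is exactly how the paper deduces (b) from (a). The genuine gap is the step you yourself flag as the main obstacle: producing a \emph{single} open subgroup working for all $\ell$ in part (a). Your proposed mechanism --- choose $u\in U(K)$ with $G_{A_u,\ell}=G_{A,\ell}$ for all $\ell$ and invoke the theory of compatible systems for $A_u/K$ to bound $[H_0:H_\ell]$ uniformly --- is a non sequitur. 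The quotients $H_0/H_\ell$ are finite quotients of an open subgroup of the \emph{geometric} fundamental group $\pi_1(U_{\Kbar})$, while the specialization $u_*(\Gal_K)$ lives on the arithmetic side; there is no map from $H_0$ (or its quotients) to anything attached to $A_u$, so knowing $G_{A_u,\ell}=G_{A,\ell}$ gives no control on the image of $H_0$ in the torus $T_\ell(\QQ_\ell)$. Nor can finite generation alone rescue this: a finitely generated profinite abelian group (e.g.\ $\Zhat$) has finite quotients of unbounded order, and the torsion of $T_\ell(\QQ_\ell)$ grows with $\ell$, so without a uniform bound on the image the intersection $\bigcap_\ell H_\ell$ need not be open.

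The paper circumvents this uniformity problem entirely, and this is the one idea your proposal is missing: compare all the $\rho_{A,\ell}|_{\pi_1(U_{\Kbar})}$ with the single integral topological monodromy representation $\varrho\colon \pi_1^{\operatorname{top}}(U(\CC),u_0)\to \GL_{2g}(\ZZ)$, whose Zariski closure $\calG$ has semisimple neutral component by Deligne. Taking $H$ to be (the closure of the image of) $\ker\bigl(\pi_1^{\operatorname{top}}\to \calG(\QQ)/\calG^\circ_\QQ(\QQ)\bigr)$, the Zariski closure of $\rho_{A,\ell}(H)$ is $\calG^\circ_{\QQ_\ell}$ for \emph{every} $\ell$ simultaneously --- connected and semisimple because $\calG^\circ$ is a single group over $\QQ$ --- so the image in the torus quotient of $G^\circ_{A,\ell}$ is trivial, not merely finite, and (a) holds with no $\ell$-dependent shrinking at all. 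To repair your write-up you would need either this comparison or some other genuinely uniform input; as it stands the assembly step does not go through.
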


\subsection{The algebraic monodromy group}

Fix a field embedding $\Kbar \subseteq \CC$.    Let $\calA \to U_\CC$ be the fiber of $A\to U$ over $U_\CC$.   Associated to $\pi\colon \calA(\CC)\to U(\CC)$, we define the local system $\calF:=R^1 \pi_* \ZZ$ of $\ZZ$-modules on $U(\CC)$, where we are viewing $U(\CC)$ with its familiar analytic topology.    For each $u\in U(\CC)$, the fiber $\calF_u$ of $\calF$ at $u$ is the cohomology group $H^1(A_u(\CC),\ZZ)$.    Fix a point $u_0 \in U(\CC)$ and define $\Lambda:= H^1(A_{u_0}(\CC),\ZZ)$; it is a free abelian group of rank $2g$.   The local system $\calF$ gives rise to a monodromy representation
\[
\varrho\colon \pi_1^{\operatorname{top}}(U(\CC),u_0) \to \Aut_{\ZZ}(\Lambda) \cong \GL_{2g}(\ZZ),
\]
where we are now using the usual topological fundamental group.    Let $\calG$ be the $\ZZ$-group subscheme of $\GL_{2g,\ZZ}$ obtained by taking the Zariski closure of the image of $\varrho$; we call it the \defi{algebraic monodromy group} of the abelian scheme $A$.

\begin{lemma} \label{L:Deligne semisimplicity}
The neutral component $(\calG_\QQ)^\circ$ of the linear algebraic group $\calG_\QQ$ is semisimple.
\end{lemma}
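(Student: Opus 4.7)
The plan is to recognize $\calF_\QQ := \calF \otimes_\ZZ \QQ$ as the local system underlying a polarizable $\QQ$-variation of pure Hodge structure (VHS) of weight $1$ on the smooth connected complex variety $U(\CC)$, and then to invoke Deligne's semisimplicity theorem for the algebraic monodromy of such a VHS.

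First I would equip $\calF_\QQ$ with its Hodge-theoretic structure. The Hodge filtration arises from the relative de~Rham cohomology $R^1\pi_*\Omega^\bullet_{\calA/U_\CC}$; the Hodge-to-de~Rham spectral sequence degenerates since the fibers of $\pi$ are projective, and Griffiths transversality with respect to the Gauss--Manin connection is standard for any smooth projective family. Fiberwise one recovers the usual pure Hodge structure of weight $1$ on $H^1$ of a complex abelian variety. To see the VHS is polarizable, I would pass to a finite \'etale cover $V \to U_\CC$ over which the pullback of $\calA$ admits a polarization; such a polarization induces, via the Riemann form on $H^1$ of the fibers, a polarization of the pullback VHS $\calF_\QQ|_V$. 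The image of $\varrho$ restricted to $\pi_1^{\operatorname{top}}(V, u_0)$ is a finite-index subgroup of $\varrho(\pi_1^{\operatorname{top}}(U(\CC), u_0))$, so passing to $V$ does not alter the neutral component $(\calG_\QQ)^\circ$. Thus we may assume $\calF_\QQ$ itself is a polarizable $\QQ$-VHS.

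Second I would invoke Deligne's theorem (Corollaire~4.2.9 of \emph{Th\'eorie de Hodge II}, or the exposition in Peters--Steenbrink): for any polarizable $\QQ$-VHS on a smooth connected complex variety, the Zariski closure of the monodromy representation in the general linear group of the underlying $\QQ$-vector space is a reductive algebraic group whose identity component is semisimple. Applied to the weight-$1$ polarizable VHS $\calF_\QQ$, this is exactly the assertion that $(\calG_\QQ)^\circ$ is semisimple.

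The only substantive step is arranging the polarization on the abelian scheme after a finite \'etale base change; once this is in place the lemma follows directly from Deligne's general theorem, and everything else is routine Hodge theory.
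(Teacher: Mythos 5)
Your proposal is correct and rests on the same key input as the paper: Deligne's Corollaire~4.2.9 of \emph{Th\'eorie de Hodge II}, which the paper applies directly to the smooth proper family $\calA\to U_\CC$ over the smooth connected base $U_\CC$ to conclude that the neutral component of the algebraic monodromy group is semisimple. Your extra step of arranging a polarization after a finite \'etale cover (and noting that a finite-index subgroup does not change $(\calG_\QQ)^\circ$) is harmless but not needed, since Deligne's statement already covers the geometric case of a smooth proper morphism (and in any case an abelian scheme over the smooth, hence normal, base $U_\CC$ is projective).
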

\begin{proof}
It suffices to prove that the neutral component of $\calG_\CC$ is semisimple.  From Deligne \cite{MR0498551}*{Corollaire~4.2.9}, the neutral component of $\calG_\CC$ is semisimple; this uses that $U_\CC$ is smooth and connected and that $\pi\colon \calA \to U_\CC$ is smooth and proper.  
\end{proof}

\begin{lemma} \label{L:connected semisimple closure}
There is an open and normal subgroup $H$ of $\pi_1(U_\Kbar)$ such that the Zariski closure of $\rho_{A,\ell}(H)$ in $\GL_{V_\ell(A)}$ is connected and semisimple for all $\ell$.
\end{lemma}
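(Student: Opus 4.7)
The plan is to construct $H$ by pulling back, inside $\pi_1(U_\Kbar)$, the identity component of the algebraic monodromy group, exploiting the Artin comparison between the topological and \'etale fundamental groups.

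First, I would let $\Gamma := \varrho(\pi_1^{\operatorname{top}}(U(\CC),u_0)) \subseteq \GL_{2g}(\ZZ)$, which is Zariski dense in $\calG_\QQ$ by the definition of $\calG$. Set $\Gamma_0 := \Gamma \cap (\calG_\QQ)^\circ(\QQ)$. Since $(\calG_\QQ)^\circ$ is a normal algebraic subgroup of $\calG_\QQ$ with finite index (the quotient $(\calG_\QQ/(\calG_\QQ)^\circ)(\Qbar)$ being a finite group), $\Gamma_0$ is a normal subgroup of finite index in $\Gamma$. Writing $\Gamma$ as a finite disjoint union of cosets of $\Gamma_0$ and taking Zariski closures, the irreducibility of $(\calG_\QQ)^\circ$ combined with a dimension count forces $\Gamma_0$ to be Zariski dense in $(\calG_\QQ)^\circ$.

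Next, fix an embedding $\Kbar \hookrightarrow \CC$. By the Artin comparison theorem, $\pi_1(U_\Kbar)$ is the profinite completion of $\pi_1^{\operatorname{top}}(U(\CC),u_0)$, and under this identification the restriction of $\rho_{A,\ell}$ to $\pi_1(U_\Kbar)$ is the unique continuous extension of the composite of $\varrho$ with $\GL_{2g}(\ZZ) \hookrightarrow \GL_{2g}(\ZZ_\ell)$. Since the finite group $F := \Gamma/\Gamma_0$ is discrete, the composite $\pi_1^{\operatorname{top}}(U(\CC),u_0) \to \Gamma \twoheadrightarrow F$ extends uniquely to a continuous homomorphism $\varphi\colon \pi_1(U_\Kbar) \to F$; I would then define $H := \ker\varphi$, which is visibly open and normal in $\pi_1(U_\Kbar)$.

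Finally, I would verify the claim for each prime $\ell$. The continuous composite
\[
\pi_1(U_\Kbar) \xrightarrow{\rho_{A,\ell}} \calG_{\QQ_\ell}(\QQ_\ell) \longrightarrow \calG_{\QQ_\ell}(\QQ_\ell)/(\calG_{\QQ_\ell})^\circ(\QQ_\ell)
\]
lands in a finite discrete group; by density of the image of $\pi_1^{\operatorname{top}}$ and uniqueness of continuous extensions, it coincides with $\varphi$ followed by the natural injection $F \hookrightarrow \calG_{\QQ_\ell}(\QQ_\ell)/(\calG_{\QQ_\ell})^\circ(\QQ_\ell)$, so $\rho_{A,\ell}(H) \subseteq (\calG_{\QQ_\ell})^\circ(\QQ_\ell)$. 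On the other hand, $\rho_{A,\ell}(H)$ contains the image of $\Gamma_0$, whose Zariski closure in $\GL_{V_\ell(A)}$ equals $((\calG_\QQ)^\circ)_{\QQ_\ell} = (\calG_{\QQ_\ell})^\circ$ by base change. Therefore the Zariski closure of $\rho_{A,\ell}(H)$ is $(\calG_{\QQ_\ell})^\circ$, which is connected by construction and semisimple by Lemma~\ref{L:Deligne semisimplicity}, semisimplicity of connected linear algebraic groups in characteristic zero being preserved under extension of the base field. The main potential obstacle is the precise identification of $\rho_{A,\ell}|_{\pi_1(U_\Kbar)}$ with the profinite extension of $\varrho$ tensored with $\ZZ_\ell$, but this is the standard Artin comparison for the local system $R^1\pi_*\ZZ_\ell$ together with the usual self-duality relating $H^1$ to the Tate module.
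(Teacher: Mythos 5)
Your proposal is correct and follows essentially the same route as the paper: pass to the topological fundamental group via the comparison isomorphism, take the (closure of the) kernel of the map to the component group of the algebraic monodromy group $\calG_\QQ$, and invoke Deligne's semisimplicity (Lemma~\ref{L:Deligne semisimplicity}) to conclude that the Zariski closure of the image is the connected semisimple group $\calG^\circ_{\QQ_\ell}$. The one point to state carefully is that $\rho_{A,\ell}|_{\pi_1(U_{\Kbar})}$ is the continuous extension of the \emph{dual} of $\varrho$ (the Tate module is dual to the local system coming from $R^1\pi_*$), which you do flag; since passing to the contragredient preserves connectedness and semisimplicity of the Zariski closure, this does not affect the argument.
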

\begin{proof}
The homomorphism $\pi_1(U_\CC) \to \pi_1(U_{\Kbar})$ induced by the embedding $\Kbar\subseteq \CC$ is an isomorphism, so it suffices to prove the lemma with $\Kbar$ replaced by $\CC$.  Recall that there is a natural isomorphism between the profinite completion of $\pi_1^{\operatorname{top}}(U(\CC),u_0)$ and $\pi_1(U_\CC)$ (only uniquely defined up to an inner automorphism since we are suppressing base points in our \'etale fundamental groups).

Take any integer $e\geq 1$.   Let $\bbar\rho_{\calA,\ell^e} \colon \pi_1(U_\CC)\to \GL_{2g}(\ZZ/\ell^e\ZZ)$ be the representation arising from the locally constant sheaf $\calA[\ell^e]$ of $\ZZ/\ell^e\ZZ$-modules on $U_\CC$.    By choosing compatible bases, these representation combine to give a single representation $\rho_{\calA,\ell} \colon \pi_1(U_\CC)\to \GL_{2g}(\ZZ_\ell)$.     Let $S$ be the Zariski closure of $\rho_{\calA,\ell}(\pi_1(U_\CC))$ in $\GL_{2g,\QQ_\ell}$.  Since $\calA$ is the fiber of $A$ above $U_\CC$, it suffices to prove the lemma with $\rho_{A,\ell}$ replaced by $\rho_{\calA,\ell}$.

Note that $\calA[\ell^e]$ gives a local system of $\ZZ/\ell^e\ZZ$-modules on $U(\CC)$ that is dual to $R^1\pi_*(\ZZ/\ell^e\ZZ)$, where $\pi\colon \calA(\CC)\to U(\CC)$; the $\ell^e$-torsion of $\calA_u(\CC)$ for a point $u\in U(\CC)$ is isomorphic to the homology group $H_1(\calA_u(\CC),\ZZ/\ell^e\ZZ)$ which is dual to the corresponding cohomology group.  Since $R^1\pi_*(\ZZ/\ell^e\ZZ)$ is isomorphic to $\calF/\ell^e\calF$, we find that the representation $\bbar\rho_{\calA,\ell^e}\colon \pi_1(U_\CC)\to \GL_{2g}(\ZZ/\ell^e\ZZ)$ is isomorphic to the dual of the representation obtained by taking the profinite completion of 
\[
\pi_1^{\operatorname{top}}(U(\CC),u_0) \xrightarrow{\varrho} \GL_{2g}(\ZZ)\to \GL_{2g}(\ZZ/\ell^e\ZZ).
\]  
Therefore, $\rho_{\calA,\ell}$ is isomorphic to the dual of the representation obtained from $\pi_1^{\operatorname{top}}(U(\CC),u_0) \xrightarrow{\varrho} \GL_{2g}(\ZZ)\subseteq \GL_{2g}(\ZZ_\ell)$ by taking the profinite completion and extending using continuity.   

 Let $H_0$ be the kernel of $\pi_1^{\operatorname{top}}(U(\CC),u_0) \xrightarrow{\varrho} \calG(\QQ)/\calG_\QQ^\circ(\QQ)$.  The Zariski closure of $\varrho(H_0)$ in $\GL_{2g,\QQ_\ell}$ is $\calG_{\QQ_\ell}^\circ$ which is connected and semisimple by Lemma~\ref{L:Deligne semisimplicity}.  The lemma will then hold by taking $H$ to be the profinite completion of $H_0$. 
\end{proof}

\subsection{Proof of Proposition~\ref{P:geometric monodromy}}

By Lemma~\ref{L:connected semisimple closure}, there is an open and normal subgroup $H$ of $\pi_1(U_\Kbar)$ such that the Zariski closure $S_\ell$ of $\rho_{A,\ell}(H)$ in $\GL_{V_\ell(A)}$ is connected and semisimple for all $\ell$.   We have $S_\ell \subseteq G_{A,\ell}$ and hence $S_\ell \subseteq G_{A,\ell}^\circ$ since $S_\ell$ is connected.   Since $G_{A,\ell}^\circ$ is reductive, we find that $S_\ell$ is contained in the derived subgroup of $G_{A,\ell}^\circ$.  This proves (\ref{P:geometric monodromy a}).

Now take any prime $\ell\geq b_A$.  By part (\ref{P:geometric monodromy a}) and $\rho_{A,\ell}(\pi_1(U))\subseteq \calG_{A,\ell}(\ZZ_\ell)$, we have $\rho_{A,\ell}(H)\subseteq \calS_{A,\ell}(\ZZ_\ell)$.   Define the homomorphism
\[
\beta_\ell \colon H \to \calS_{A,\ell}(\ZZ_\ell)\to B_\ell,
\]
where $B_\ell:=\calS_{A,\ell}(\ZZ_\ell)/\calS_{A,\ell}(\ZZ_\ell)'$.  By Theorem~\ref{T:geometric gp theory}(\ref{T:geometric gp theory v}), the group $B_\ell$ is abelian and its cardinality divides a positive integer $Q$ that depends only on $g$.  In particular, $Q$ is independent of $\ell$.   So to prove (\ref{P:geometric monodromy a}), it suffices to show that $H$ has only finitely many abelian quotients whose cardinality divides $Q$.

The group $H$ is the \'etale fundamental group of a connected variety $X$ of finite type over $\Kbar$ (let $X\to U_{\Kbar}$ be an \'etale cover corresponding the subgroup $H$ of $\pi_1(U_{\Kbar})$).   From \cite{MR0354656}, II.2.3.1,  we find that $H$ is finitely generated as a topological group.   In particular, the abelianization $H^\ab$ of $H$, i.e, the quotient of $H$ by its commutator subgroup, is a finitely generated topological group.    Viewing $H^\ab$ as an additive group, the quotient $H^\ab/Q H^\ab$ is a finitely generated abelian group whose exponent divides $Q$.  Therefore, $H^\ab/Q H^\ab$ has finite order and hence $H$ has only finitely  abelian quotients whose cardinality divides $Q$.

\section{Main reduction}
\label{S:main reduction}

Let $K$ be a number field.  Fix an abelian scheme $\pi\colon A\to U$ of relative dimension $g\geq 1$, where $U$ is a non-empty open subvariety of $\PP^n_K$ for some $n\geq 1$.  Fix notation as in \S\ref{S:monodromy in families} and \S\ref{S:big ell-adic images}.    Take a constant $b_A$ as in Theorem~\ref{T:geometric gp theory}.  

Define the set 
\[
B:=\{ u\in U(K) : \bbar\rho_{A_u,\ell}(\Gal_K)\not\supseteq \calS_{A,\ell}(\FF_\ell)' \text{ for some prime }\ell\geq b_A\}.
\]
The goal of this section is to prove the following proposition which reduces the proof of Theorem~\ref{T:MAIN} to showing that the set $B\subseteq \PP^n(K)$ has density $0$.  

\begin{prop}\label{P:main reduction}
Suppose that $B$ has density $0$.  Then there is a constant $C$ such that $[\rho_A(\pi_1(U)):\rho_{A_u}(\Gal_K)] \leq C$ holds for all $u\in U(K)$ away from a set of density $0$.
\end{prop}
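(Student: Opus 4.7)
\emph{Strategy.} I will produce a closed normal subgroup $N$ of $H:=\rho_A(\pi_1(U))$ such that $[H:N]$ is bounded by a constant $C$ depending only on $A$ and such that $\rho_{A_u}(\Gal_K)\supseteq N$ for all $u\in U(K)$ away from a density-zero set. Following \S\ref{SS:the constant C} I take $N=M'$, the closed commutator subgroup of $M:=\ker\bigl(H\to G_{A,\ell}(\QQ_\ell)/G_{A,\ell}^\circ(\QQ_\ell)\bigr)$; $M$ is normal of finite index in $H$ and is $\ell$-independent by Lemma~\ref{L:connected independence families}(\ref{L:connected independence families i}), hence $M'$ is normal in $H$. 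The point $u$ yields a section $u_*\colon\Gal_K\hookrightarrow \pi_1(U)$ of the canonical surjection $\pi_1(U)\to\Gal_K$, so $\pi_1(U)=\pi_1(U_{\Kbar})\cdot u_*(\Gal_K)$ and therefore $H=H_g\cdot\rho_{A_u}(\Gal_K)$ with $H_g:=\rho_A(\pi_1(U_{\Kbar}))$. Assuming the containment $\rho_{A_u}(\Gal_K)\supseteq M'$, elementary coset counting gives
\[
[H:\rho_{A_u}(\Gal_K)]=[H_g:H_g\cap\rho_{A_u}(\Gal_K)]\le [H_g:H_g\cap M'],
\]
and the right-hand side equals the order of the image of $H_g$ in $H/M'$. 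Proposition~\ref{P:geometric monodromy} supplies an open subgroup $H_0\subseteq \pi_1(U_{\Kbar})$ with $\rho_{A,\ell}(H_0)\subseteq\calS_{A,\ell}(\ZZ_\ell)'$ for all $\ell\ge b_A$; together with Theorem~\ref{T:geometric gp theory}(\ref{T:geometric gp theory iv}), which identifies $\calS_{A,\ell}(\ZZ_\ell)'$ with the $\ell$-adic projection of $M'$, this yields $\rho_A(H_0)\subseteq M'$, and $[\pi_1(U_{\Kbar}):H_0]<\infty$ then forces the image of $H_g$ in $H/M'$ to be finite of cardinality $C=C(A)$.

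\emph{Containment for generic $u$.} Let $S\subseteq U(K)$ be the density-one set of $u$ satisfying (i) $G_{A_u,\ell}=G_{A,\ell}$, and hence $\calG_{A_u,\ell}=\calG_{A,\ell}$ and $\calS_{A_u,\ell}=\calS_{A,\ell}$, for every prime $\ell$ (Proposition~\ref{P:monodromy independence}); and (ii) $\bbar\rho_{A_u,m}(\Gal_K)=\bbar\rho_{A,m}(\pi_1(U))$ for a fixed integer $m$ divisible by a sufficiently high power of every prime $\ell<b_A$ (Lemma~\ref{L:HIT frattini}(\ref{L:HIT frattini i}) and the Hilbert irreducibility theorem). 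For $u\in S\setminus B$ I verify $\rho_{A_u}(\Gal_K)\supseteq M'$ prime by prime. For $\ell<b_A$, condition (ii) and the Frattini argument of Lemma~\ref{L:HIT frattini}(\ref{L:HIT frattini i}) give $\rho_{A_u,\ell}(\Gal_K)=\rho_{A,\ell}(\pi_1(U))$, which contains the $\ell$-adic projection of $M'$. For $\ell\ge b_A$, the hypothesis $u\notin B$ reads $\bbar\rho_{A_u,\ell}(\Gal_K)\supseteq\calS_{A,\ell}(\FF_\ell)'$; I upgrade this to $\rho_{A_u,\ell}(\Gal_K)\supseteq\calS_{A,\ell}(\ZZ_\ell)'$ (again the $\ell$-adic projection of $M'$) by applying Theorem~\ref{T:geometric gp theory}(\ref{T:geometric gp theory vi}) to an appropriate closed subgroup of $\rho_{A_u,\ell}(\Gal_K)\cap\calS_{A,\ell}(\ZZ_\ell)$. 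A Goursat-style patching then promotes these prime-by-prime containments to the global inclusion $\rho_{A_u}(\Gal_K)\supseteq M'$ in $\Zhat$; this is valid because by Theorem~\ref{T:geometric gp theory}(\ref{T:geometric gp theory iv}) each $\calS_{A,\ell}(\ZZ_\ell)'$ has finite simple quotients of Lie type in characteristic $\ell$, so distinct primes yield pairwise disjoint composition factors.

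\emph{Main obstacle.} The most delicate step is the mod-$\ell$ to $\ZZ_\ell$ upgrade for $\ell\ge b_A$: Theorem~\ref{T:geometric gp theory}(\ref{T:geometric gp theory vi}) requires a subgroup of $\rho_{A_u,\ell}(\Gal_K)$ contained in $\calS_{A,\ell}(\ZZ_\ell)$ whose reduction mod $\ell$ already contains $\calS_{A,\ell}(\FF_\ell)'$. A naive lift of $s\in\calS_{A,\ell}(\FF_\ell)'$ to some $\sigma\in\Gal_K$ with $\bbar\rho_{A_u,\ell}(\sigma)=s$ only produces (for $\ell$ large, when the component scheme reduces well) an element of $\calG_{A,\ell}^\circ(\ZZ_\ell)$, not necessarily of $\calS_{A,\ell}(\ZZ_\ell)$. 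The circumvention uses the smoothness of $\calS_{A,\ell}$, a reduction to a finite extension $L/K$ killing the component group $\calG_{A,\ell}/\calG_{A,\ell}^\circ$, and a derived-series argument exploiting the perfectness (Theorem~\ref{T:geometric gp theory}(\ref{T:geometric gp theory iv})) of $\calS_{A,\ell}(\FF_\ell)'$ to push successive commutators of $\rho_{A_u,\ell}(\Gal_L)$ into $\calS_{A,\ell}(\ZZ_\ell)$. The subsequent Goursat patching, though nontrivial, follows more routinely once the $\ell$-adic data is in hand.
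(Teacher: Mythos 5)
Your architecture is the same as the paper's: you take $N=\rho_A(W)'$ (your $M$ is the paper's $\rho_A(W)$, the kernel of $\gamma_{A,\ell}$), your constant is the order of the image of $H_g$ in $H/N$, and your ingredients are Proposition~\ref{P:geometric monodromy}, Theorem~\ref{T:geometric gp theory}(\ref{T:geometric gp theory ii}),(\ref{T:geometric gp theory iv}),(\ref{T:geometric gp theory vi}), Hilbert irreducibility at small primes, and a Goursat argument. But the two steps you treat as essentially done are exactly where the paper has to work, and as written they have genuine gaps. First, finiteness of the constant: from $\rho_{A,\ell}(H_0)\subseteq\calS_{A,\ell}(\ZZ_\ell)'$ for $\ell\ge b_A$ you conclude $\rho_A(H_0)\subseteq N$. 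This ignores the primes $\ell<b_A$, where Proposition~\ref{P:geometric monodromy}(\ref{P:geometric monodromy a}) only places $\rho_{A,\ell}(H_0)$ in the $\QQ_\ell$-points of the derived group of $G_{A,\ell}^\circ$, and one needs the openness/compactness argument of Lemma~\ref{L:finite geometric monodromy} (using \cite{HuiLarsen2015}) to shrink $H_0$ so that its image at the small primes lands in $\rho_{A,M}(W)'$, where now $M$ denotes the paper's product of all primes $\ell\le\max\{b_A,J\}$. More seriously, componentwise containments only place $\rho_A(H_0)$ in the product of the $\ell$-adic projections of $N$, which a priori is strictly larger than $N$; you need the identification $\rho_A(W)'=\calB$ of Lemma~\ref{L:commutator subgroup is B}, whose proof is itself a Goursat argument resting on Larsen--Pink (Lemma~\ref{L:Jordan PL}) and on putting \emph{all} primes up to $\max\{b_A,J\}$, not just $\ell<b_A$, into the small-prime block. (Your opening claim that $[H:N]\le C$ is also false, since $H/N$ is typically infinite, but your coset count only uses $[H_g:H_g\cap N]<\infty$, so that is a misstatement rather than the core problem.)

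Second, the final patching does not follow from disjointness of the composition factors of the groups $\calS_{A,\ell}(\ZZ_\ell)'$ alone. Knowing only that each projection $\rho_{A_u,\ell}(\Gal_K)$ contains the $\ell$-part of $N$ does not force $\rho_{A_u}(\Gal_K)\supseteq N$: Goursat rigidity applies to a closed subgroup \emph{of} $\calB$ that surjects onto its factors, whereas your group is not inside $\calB$, and lifts of elements of $\calS_{A,\ell}(\FF_\ell)'$ may have unconstrained components at the other primes, in particular entangled with the small-prime block, whose composition factors are controlled only by Larsen--Pink (abelian, of order $<J$, or of Lie type in characteristic dividing the modulus). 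The way to repair this --- and what the paper does in Lemmas~\ref{L:conn u and W}, \ref{L:most ell-adic images are large} and \ref{L:commutator density 1} --- is to pass to $L=K_{A_u}^\conn$ and to the closed commutator subgroup $\rho_{A_u}(\Gal_L)'$, which does lie in $\calB$ and surjects onto each factor. Establishing that surjectivity at $\ell\ge b_A$ requires first that $\bbar\rho_{A_u,\ell}(\Gal_L)\supseteq\calS_{A,\ell}(\FF_\ell)'$, which is not automatic from $u\notin B$ (that hypothesis is over $K$) and is proved by enlarging $b_A$ beyond $[\pi_1(U):W]=[L:K]$ and using that every simple quotient of $\calS_{A,\ell}(\FF_\ell)'$ has order divisible by $\ell$ --- perfectness alone does not suffice; and at the small primes it requires the block statement $\rho_{A_u,M}(\Gal_L)=\rho_{A,M}(W)$ via Lemma~\ref{L:conn u and W}, not merely $\rho_{A_u,\ell}(\Gal_K)=\rho_{A,\ell}(\pi_1(U))$. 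So the patching is not ``more routine'' than the local upgrade; it is precisely where $N=\calB$ and Larsen--Pink enter, and your proposal leaves both unjustified.
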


\begin{remark}
Note that $\calS_{A,\ell}(\FF_\ell)'$ is a normal subgroup of $\calG_{A,\ell}(\FF_\ell)$ while $\bbar\rho_{A_u,\ell}(\Gal_K)$ is a subgroup of $\calG_{A,\ell}(\FF_\ell)$ uniquely defined up to conjugation.   It thus makes sense to ask whether the inclusion $\bbar\rho_{A_u,\ell}(\Gal_K)\supseteq \calS_{A,\ell}(\FF_\ell)'$ holds or not. 
\end{remark}

\subsection{Proof of Proposition~\ref{P:main reduction}}

Let $W\subseteq \pi_1(U)$ be the kernel of $\gamma_{A,\ell}$ from \S\ref{SS:neutral component}; it is a normal and open subgroup of $\pi_1(U)$ and is independent of $\ell$ by Lemma~\ref{L:connected independence families}(\ref{L:connected independence families i}).   The group $\rho_A(W)'$ is thus normal in $\rho_A(\pi_1(U))$.   The homomorphism $\gamma_{A,\ell}$ is surjective so the integer $[G_{A,\ell}(\QQ_\ell): G_{A,\ell}^\circ(\QQ_\ell)]$ is independent of $\ell$.  

For a prime $\ell\geq b_A$, we have $\bbar\rho_{A,\ell}(\pi_1(U))\supseteq \calS_{A,\ell}(\FF_\ell)'$.   Hilbert's irreducibility theorem implies that $\bbar\rho_{A_u,\ell}(\Gal_K)\supseteq \calS_{A,\ell}(\FF_\ell)'$ for all $u\in U(K)$ away from a set of density $0$.  Thus there is no harm in replacing $b_A$ by a larger integer.   In particular, we may assume that $b_A>7$ and $b_A>[\pi_1(U):W]$.  

\begin{lemma} \label{L:conn u and W}
Take any $u\in U(K)$ satisfying $G_{A_u,\ell}=G_{A,\ell}$ for all $\ell$.    Then for any integer $m>1$, we have $\rho_{A_u,m}(\Gal_{K_{A_u}^\conn})=\rho_{A_u,m}(\Gal_K) \cap \rho_{A,m}(W)$.
\end{lemma}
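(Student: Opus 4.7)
The plan is to reduce the equality to the single observation that $\ker \rho_{A,m} \subseteq W$. By construction, $\gamma_{A,\ell}$ is the composition of $\rho_{A,\ell}$ with the quotient $G_{A,\ell}(\QQ_\ell) \to G_{A,\ell}(\QQ_\ell)/G_{A,\ell}^\circ(\QQ_\ell)$, so $\ker \rho_{A,\ell} \subseteq \ker \gamma_{A,\ell} = W$ for every prime $\ell$. Since $T_m(A)$ decomposes as $\prod_{\ell \mid m} T_\ell(A)$, we have $\ker \rho_{A,m} = \bigcap_{\ell \mid m} \ker \rho_{A,\ell}$, and any prime $\ell \mid m$ (which exists because $m > 1$) therefore witnesses $\ker \rho_{A,m} \subseteq W$.

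For the inclusion $\subseteq$, I would take $\sigma \in \Gal_{K_{A_u}^\conn}$ and pick any prime $\ell$. The hypothesis $G_{A_u,\ell} = G_{A,\ell}$ yields $G_{A_u,\ell}^\circ = G_{A,\ell}^\circ$, and the definition of $K_{A_u}^\conn$ then gives $\rho_{A,\ell}(u_*(\sigma)) = \rho_{A_u,\ell}(\sigma) \in G_{A,\ell}^\circ(\QQ_\ell)$, that is $u_*(\sigma) \in W$. Hence $\rho_{A_u,m}(\sigma) = \rho_{A,m}(u_*(\sigma))$ lies in both $\rho_{A,m}(W)$ and $\rho_{A_u,m}(\Gal_K)$.

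For the reverse inclusion, I would take $g \in \rho_{A_u,m}(\Gal_K) \cap \rho_{A,m}(W)$ and write $g = \rho_{A,m}(u_*(\sigma)) = \rho_{A,m}(w)$ for some $\sigma \in \Gal_K$ and $w \in W$. Then $u_*(\sigma) w^{-1} \in \ker \rho_{A,m} \subseteq W$ by the opening observation, so $u_*(\sigma) \in W$. Running the equivalences of the previous paragraph backwards — $u_*(\sigma) \in W$ forces $\rho_{A_u,\ell}(\sigma) \in G_{A_u,\ell}^\circ(\QQ_\ell)$, hence $\sigma \in \Gal_{K_{A_u}^\conn}$ — we conclude $g \in \rho_{A_u,m}(\Gal_{K_{A_u}^\conn})$. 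I anticipate no real obstacle here: once the containment $\ker \rho_{A,m} \subseteq W$ is in hand, the entire argument is a bookkeeping exercise with the definitions of $W$, $K_{A_u}^\conn$, and $\rho_{A,m}$, with the only nontrivial input being the independence of $W$ from $\ell$ already established in Lemma~\ref{L:connected independence families}(\ref{L:connected independence families i}).
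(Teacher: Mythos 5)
Your argument is correct and is in substance the same as the paper's: both identify $\rho_{A,m}(W)$ and $\rho_{A_u,m}(\Gal_{K_{A_u}^\conn})$ as the kernels of the map to $G_{A,\ell}(\QQ_\ell)/G_{A,\ell}^\circ(\QQ_\ell)$ for a prime $\ell\mid m$, using the $\ell$-independence of $W$ from Lemma~\ref{L:connected independence families}(\ref{L:connected independence families i}), the hypothesis $G_{A_u,\ell}=G_{A,\ell}$, and the definition of $K_{A_u}^\conn$. The only cosmetic difference is that the paper phrases this via the induced homomorphisms on the image groups $\rho_{A,m}(\pi_1(U))$ and $\rho_{A_u,m}(\Gal_K)$, so the containment $\ker\rho_{A,m}\subseteq W$ that you make explicit is absorbed into the statement that the kernel of $\rho_{A,m}(\pi_1(U))\to G_{A,\ell}(\QQ_\ell)/G_{A,\ell}^\circ(\QQ_\ell)$ equals $\rho_{A,m}(W)$.
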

\begin{proof}
Fix a prime $\ell|m$. The kernel of the homomorphism $\rho_{A,m}(\pi_1(U))\to G_{A,\ell}(\QQ_\ell)/G_{A,\ell}^\circ(\QQ_\ell)$ obtained by composing the $\ell$-adic projection with the obvious quotient map is equal to $\rho_{A,m}(W)$.  Similarly the kernel of the homomorphism $\rho_{A_u,m}(\Gal_K)\to G_{A_u,\ell}(\QQ_\ell)/G_{A_u,\ell}^\circ(\QQ_\ell)$ obtained by composing the $\ell$-adic projection with the obvious quotient map is equal to $\rho_{A_u,m}(\Gal_{K_{A_u}^\conn})$.  From $G_{A_u,\ell}=G_{A,\ell}$ and the inclusion $\rho_{A_u,m}(\Gal_K) \subseteq \rho_{A,m}(\pi_1(U))$, we deduce that $\rho_{A_u,m}(\Gal_{K_{A_u}^\conn})=\rho_{A_u,m}(\Gal_K) \cap \rho_{A,m}(W)$.
\end{proof}

\begin{lemma} \label{L:most ell-adic images are large}
The set of $u\in U(K)$ that satisfy 
\[
\rho_{A_u,\ell}(\Gal_{K_{A_u}^\conn})' = \calS_{A,\ell}(\ZZ_\ell)'
\]
for all primes $\ell\geq b_A$ has density $1$.  For all $\ell\geq b_A$, we have $\rho_{A,\ell}(W)' =\calS_{A,\ell}(\ZZ_\ell)'$.
\end{lemma}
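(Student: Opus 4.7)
The plan is to handle the two assertions separately, using Theorem~\ref{T:geometric gp theory} throughout and, for the density-$1$ statement, the hypothesis of Proposition~\ref{P:main reduction} that $B$ has density $0$.

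For the second assertion, note that $W$ is the preimage under $\rho_{A,\ell}$ of $G_{A,\ell}^\circ(\QQ_\ell)$, so $\rho_{A,\ell}(W)=\rho_{A,\ell}(\pi_1(U))\cap \calG_{A,\ell}^\circ(\ZZ_\ell)$. Combining Theorem~\ref{T:geometric gp theory}(\ref{T:geometric gp theory ii}) with the tautological inclusion $\calS_{A,\ell}(\ZZ_\ell)'\subseteq \calG_{A,\ell}^\circ(\ZZ_\ell)$ gives $\calS_{A,\ell}(\ZZ_\ell)'\subseteq \rho_{A,\ell}(W)$, and then perfectness of $\calS_{A,\ell}(\ZZ_\ell)'$ from Theorem~\ref{T:geometric gp theory}(\ref{T:geometric gp theory iv}) yields $\calS_{A,\ell}(\ZZ_\ell)' = (\calS_{A,\ell}(\ZZ_\ell)')'\subseteq \rho_{A,\ell}(W)'$. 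The reverse inclusion is immediate from $\rho_{A,\ell}(W)\subseteq \calG_{A,\ell}^\circ(\ZZ_\ell)$ together with the identity $\calG_{A,\ell}^\circ(\ZZ_\ell)'=\calS_{A,\ell}(\ZZ_\ell)'$ from Theorem~\ref{T:geometric gp theory}(\ref{T:geometric gp theory iv}).

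For the first assertion, let $S$ be the intersection of $U(K)\setminus B$ with the density-$1$ set from Proposition~\ref{P:monodromy independence} on which $G_{A_u,\ell}=G_{A,\ell}$ for every prime $\ell$; since $B$ has density $0$ by hypothesis, $S$ has density $1$. Fix $u\in S$ and $\ell\geq b_A$, and write $\Gamma_u=\rho_{A_u,\ell}(\Gal_K)$ and $\Gamma_{u,0}=\rho_{A_u,\ell}(\Gal_{K_{A_u}^\conn})$. Applying Lemma~\ref{L:conn u and W} at each finite level $m=\ell^e$ and passing to the inverse limit yields $\Gamma_{u,0}=\Gamma_u\cap \rho_{A,\ell}(W)\subseteq \calG_{A,\ell}^\circ(\ZZ_\ell)$, and therefore $\Gamma_{u,0}'\subseteq \calG_{A,\ell}^\circ(\ZZ_\ell)' = \calS_{A,\ell}(\ZZ_\ell)'$ by Theorem~\ref{T:geometric gp theory}(\ref{T:geometric gp theory iv}). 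For the reverse inclusion, I would exploit $u\notin B$, which gives $\bbar\rho_{A_u,\ell}(\Gal_K)\supseteq \calS_{A,\ell}(\FF_\ell)'$. Because $\calG_{A,\ell}^\circ$ is reductive for $\ell\geq b_A$ by Theorem~\ref{T:geometric gp theory}(\ref{T:geometric gp theory i}) and the component group $G_{A,\ell}/G_{A,\ell}^\circ$ has order $[\pi_1(U):W]$ less than $\ell$ (after possibly enlarging $b_A$), the component scheme $\calG_{A,\ell}/\calG_{A,\ell}^\circ$ is finite \'etale over $\ZZ_\ell$; hence an element of $\calG_{A,\ell}(\ZZ_\ell)$ lies in $\calG_{A,\ell}^\circ(\ZZ_\ell)$ if and only if its reduction modulo $\ell$ lies in $\calG_{A,\ell}^\circ(\FF_\ell)$. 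Any lift $g\in \Gamma_u$ of an element of $\calS_{A,\ell}(\FF_\ell)'\subseteq \calG_{A,\ell}^\circ(\FF_\ell)$ therefore sits in $\Gamma_u\cap \calG_{A,\ell}^\circ(\ZZ_\ell)=\Gamma_u\cap \rho_{A,\ell}(W)=\Gamma_{u,0}$, using $\Gamma_u\subseteq \rho_{A,\ell}(\pi_1(U))$. This shows that the reduction of $\Gamma_{u,0}$ modulo $\ell$ contains $\calS_{A,\ell}(\FF_\ell)'$, and taking commutators together with perfectness of $\calS_{A,\ell}(\FF_\ell)'$ shows the reduction of $\Gamma_{u,0}'$ modulo $\ell$ still contains $\calS_{A,\ell}(\FF_\ell)'$. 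As $\Gamma_{u,0}'$ is a closed subgroup of $\calS_{A,\ell}(\ZZ_\ell)$, Theorem~\ref{T:geometric gp theory}(\ref{T:geometric gp theory vi}) then yields $\Gamma_{u,0}'\supseteq \calS_{A,\ell}(\ZZ_\ell)'$, completing the equality.

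The main obstacle is the passage from a mod-$\ell$ inclusion on $\Gamma_u$ to an $\ell$-adic inclusion on its subgroup $\Gamma_{u,0}$; this requires both staying inside the neutral component when lifting (resolved by \'etaleness of the component scheme for $\ell\geq b_A$, which automatically puts lifts of $\calS_{A,\ell}(\FF_\ell)'$-elements into $\Gamma_{u,0}$ rather than merely $\Gamma_u$) and promoting mod-$\ell$ generation of the derived subgroup to the integral level, which is precisely the content of Theorem~\ref{T:geometric gp theory}(\ref{T:geometric gp theory vi}).
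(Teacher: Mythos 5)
Your overall skeleton matches the paper's: the same density-one set (Proposition~\ref{P:monodromy independence} together with the hypothesis that $B$ has density $0$), the identification $\rho_{A_u,\ell}(\Gal_{K_{A_u}^\conn})=\rho_{A_u,\ell}(\Gal_K)\cap\rho_{A,\ell}(W)$ via Lemma~\ref{L:conn u and W} (which applies directly with $m=\ell$; no inverse limit over $\ell^e$ is needed, since $\rho_{A_u,\ell}$ already denotes the $\ell$-adic representation), the upper bound through $\calG_{A,\ell}^\circ(\ZZ_\ell)'=\calS_{A,\ell}(\ZZ_\ell)'$, and the final appeal to Theorem~\ref{T:geometric gp theory}(\ref{T:geometric gp theory vi}); your sandwich argument for $\rho_{A,\ell}(W)'=\calS_{A,\ell}(\ZZ_\ell)'$ is the same one the paper uses in Lemma~\ref{L:commutator subgroup is B}. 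Where you genuinely diverge is the intermediate step that the mod-$\ell$ image of $\Gal_{K_{A_u}^\conn}$ contains $\calS_{A,\ell}(\FF_\ell)'$. The paper never lifts elements: it uses that $K_{A_u}^\conn/K$ is Galois of degree $[\pi_1(U):W]<b_A\leq\ell$, so $\calS_{A,\ell}(\FF_\ell)'\cap\bbar\rho_{A_u,\ell}(\Gal_{K_{A_u}^\conn})$ is a normal subgroup of index less than $\ell$ in $\calS_{A,\ell}(\FF_\ell)'$, and Theorem~\ref{T:geometric gp theory}(\ref{T:geometric gp theory iv}) (all simple quotients are of Lie type in characteristic $\ell$) forces this subgroup to be all of $\calS_{A,\ell}(\FF_\ell)'$. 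You instead lift each element of $\calS_{A,\ell}(\FF_\ell)'$ to some $g\in\rho_{A_u,\ell}(\Gal_K)$ and argue that $g$ must already lie in $\calG_{A,\ell}^\circ(\ZZ_\ell)$.

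The one real weakness is your justification of that lifting step. The assertion that $\calG_{A,\ell}/\calG_{A,\ell}^\circ$ is a finite \'etale $\ZZ_\ell$-scheme is not available from the results you cite: Theorem~\ref{T:geometric gp theory}(\ref{T:geometric gp theory i}) concerns only $\calG_{A,\ell}^\circ$ and $\calS_{A,\ell}$ and says nothing about smoothness of $\calG_{A,\ell}$ or about the components of its special fiber, and the paper itself only proves surjectivity, not injectivity, of $\calG_{A,\ell}(\ZZ_\ell)/\calG_{A,\ell}^\circ(\ZZ_\ell)\to G(\FF_\ell)/G^\circ(\FF_\ell)$ (see Lemma~\ref{L:Frm}). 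Saying that reductivity of $\calG_{A,\ell}^\circ$ plus a component group of order less than $\ell$ yields an \'etale component scheme is essentially restating what must be shown, namely that no non-identity generic component of $\calG_{A,\ell}$ specializes into the special fiber of $\calG_{A,\ell}^\circ$. The statement you need is nevertheless true and can be proved directly: if $g\in\calG_{A,\ell}(\ZZ_\ell)$ reduces into $\calG_{A,\ell}^\circ(\FF_\ell)$, then, since $\calG_{A,\ell}^\circ$ is smooth (being reductive) and $\ZZ_\ell$ is complete, the reduction lifts to some $h\in\calG_{A,\ell}^\circ(\ZZ_\ell)$; the element $gh^{-1}$ then lies in the pro-$\ell$ congruence kernel of $\calG_{A,\ell}(\ZZ_\ell)$, so its image in $\calG_{A,\ell}(\ZZ_\ell)/\calG_{A,\ell}^\circ(\ZZ_\ell)\hookrightarrow G_{A,\ell}(\QQ_\ell)/G_{A,\ell}^\circ(\QQ_\ell)$, a group of order $[\pi_1(U):W]<b_A\leq\ell$, is trivial, whence $g\in\calG_{A,\ell}^\circ(\ZZ_\ell)$. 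With that patch (or by switching to the paper's index-plus-Lie-type argument, which avoids integral lifting altogether), your proof goes through.
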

\begin{proof}
Let $B_1$ be the set of $u\in U(K)$ satisfying the following conditions:
\begin{alphenum}
\item \label{I:conn comm a}
$G_{A_u,\ell}=G_{A,\ell}$ for all primes $\ell$,
\item \label{I:conn comm b}
$\bbar\rho_{A_u,\ell}(\Gal_K) \supseteq \calS_{A,\ell}(\FF_\ell)'$ for all primes $\ell\geq b_A$.
\end{alphenum}
The set of $u\in U(K)$ satisfying (\ref{I:conn comm a}) has density 1 by Proposition~\ref{P:monodromy independence}.  The set of $u\in U(K)$ that satisfy (\ref{I:comm b}) has density 1 since the set $B$ in the statement of Proposition~\ref{P:main reduction} has density 0 by assumption.   Therefore, $B_1$ has density $1$.

Take any $u\in B_1$ and set $L:=K_{A_u}^\conn$.  Take any prime $\ell\geq b_A$.   It suffices to prove that $\rho_{A_u,\ell}(\Gal_{L})'$ and $\rho_{A,\ell}(W)'$ both equal $\calS_{A,\ell}(\ZZ_\ell)'$.

We claim that $\bbar\rho_{A_u,\ell}(\Gal_{L})' \supseteq \calS_{A,\ell}(\FF_\ell)'$.   Since $L/K$ is a Galois extension and $\calS_{A,\ell}(\FF_\ell)'$ is a (normal) subgroup of $\bbar\rho_{A_u,\ell}(\Gal_K)$, the group $H:=\calS_{A,\ell}(\FF_\ell)' \cap \bbar\rho_{A_u,\ell}(\Gal_{L})$ is a normal subgroup of $\calS_{A,\ell}(\FF_\ell)' $ of index at most \[
[L: K] = [G_{A_u,\ell}(\QQ_\ell): G_{A_u,\ell}^\circ(\QQ_\ell)] = [G_{A,\ell}(\QQ_\ell): G_{A,\ell}^\circ(\QQ_\ell)]=[\pi_1(U):W].
\]
By our choice of $b_A$ and $\ell\geq b_A$, we have $[\calS_{A,\ell}(\FF_\ell)':H] \leq [\pi_1(U):W] < b_A\leq \ell$.  Now suppose that $H\neq \calS_{A,\ell}(\FF_\ell)'$.  So there is a simple group $S$  that is a quotient of $\calS_{A,\ell}(\FF_\ell)'$ and satisfies $|S|<\ell$.   Theorem~\ref{T:geometric gp theory}(\ref{T:geometric gp theory iv}) implies that $S$ is of Lie type type in characteristic $\ell$ and hence $\ell$ divides $|S|$.   This contradicts $|S|<\ell$, so we deduce that $H=\calS_{A,\ell}(\FF_\ell)'$. Therefore, $\bbar\rho_{A_u,\ell}(\Gal_{L}) \supseteq H= \calS_{A,\ell}(\FF_\ell)'$.   Since $\calS_{A,\ell}(\FF_\ell)'$ is perfect by Theorem~\ref{T:geometric gp theory}(\ref{T:geometric gp theory iv}), we have $\bbar\rho_{A_u,\ell}(\Gal_{L})' \supseteq \calS_{A,\ell}(\FF_\ell)''=\calS_{A,\ell}(\FF_\ell)'$ which proves the claim.

Since $u\in B_1$, Lemma~\ref{L:conn u and W} implies that $\rho_{A_u,\ell}(\Gal_{L})$ is a subgroup of $ \rho_{A,\ell}(W) \subseteq \calG_{A,\ell}^\circ(\ZZ_\ell)$.   Therefore,
\[
\rho_{A_u,\ell}(\Gal_{L})' \subseteq \rho_{A,\ell}(W)' \subseteq \calG_{A,\ell}^\circ(\ZZ_\ell)' = \calS_{A,\ell}(\ZZ_\ell)',
\]   
where the last equality uses Theorem~\ref{T:geometric gp theory}(\ref{T:geometric gp theory iv}).  So to prove the lemma it thus suffices to show that $\rho_{A_u,\ell}(\Gal_{L})' \supseteq \calS_{A,\ell}(\ZZ_\ell)'$.  The image of $\rho_{A_u,\ell}(\Gal_{L})'$ in $\calS_{A,\ell}(\FF_\ell)$ is $\bbar\rho_{A_u,\ell}(\Gal_{L})'$ and hence contains $\calS_{A,\ell}(\FF_\ell)'$ by our claim.   Theorem~\ref{T:geometric gp theory}(\ref{T:geometric gp theory vi}) implies that $\rho_{A_u,\ell}(\Gal_{L})' \supseteq \calS_{A,\ell}(\ZZ_\ell)'$ as desired
\end{proof}

\begin{lemma}
\label{L:Jordan PL}
Take any prime $p$, any subgroup $G$ of $\GL_{2g}(\FF_p)$, and any composition factor $S$ of $G$.   There is an integer $J$, depending only on $g$, such that $S$ is abelian, $S$ is of Lie type in characteristic $p$, or $|S|<J$.
\end{lemma}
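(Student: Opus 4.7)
The plan is to invoke the Larsen--Pink classification of finite subgroups of $\GL_n$ in positive characteristic, which is the natural analogue of Jordan's theorem in this setting (and which is the likely source of the name ``Jordan PL''). Applied to the subgroup $G\subseteq \GL_{2g}(\FF_p)$, that theorem produces normal subgroups $G_1 \trianglelefteq G_2 \trianglelefteq G_3$ of $G$ with the following properties, where the bound $J(2g)$ depends only on $g$: the index $[G:G_3]$ is at most $J(2g)$; the quotient $G_3/G_2$ is a direct product of finite simple groups of Lie type in characteristic $p$; the quotient $G_2/G_1$ is abelian of order prime to $p$; and $G_1$ is itself a $p$-group.

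The key observation is that every composition factor of $G$ must appear as a composition factor of one of the four successive quotients $G/G_3$, $G_3/G_2$, $G_2/G_1$, $G_1$. I would then analyze the four cases in turn. A composition factor arising from $G_1$ is cyclic of order $p$, hence abelian. A composition factor arising from $G_2/G_1$ is likewise cyclic, hence abelian. A composition factor arising from $G_3/G_2$, being a composition factor of a direct product of finite simple groups of Lie type in characteristic $p$, is itself one of those simple factors and thus is of Lie type in characteristic $p$. Finally, a composition factor arising from $G/G_3$ has order at most $[G:G_3]\leq J(2g)$. Setting $J:=J(2g)+1$ then yields the trichotomy claimed by the lemma.

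The only real point of difficulty is the invocation of the Larsen--Pink theorem itself, whose proof is substantial; once that is granted, the lemma reduces to the routine case analysis above. No additional input specific to the arithmetic or monodromy setting of the paper is needed, so this lemma can be treated as a purely group-theoretic black box elsewhere in the argument.
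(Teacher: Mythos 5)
Your proposal is correct and follows essentially the same route as the paper, which simply cites Theorem~0.2 of Larsen--Pink and declares the lemma an immediate consequence; your case analysis on the composition factors of the filtration $G_1\trianglelefteq G_2\trianglelefteq G_3\trianglelefteq G$ is exactly the routine verification that the paper leaves implicit.
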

\begin{proof}
This is an immediate consequence of Theorem~0.2 of \cite{larsen-pink-finite_groups}. 
\end{proof}

Let $M$ be the product of all primes $\ell \leq \max\{b_A,J\}$, where $J$ is as in Lemma~\ref{L:Jordan PL}.  Define the group
\[
\calB:=\rho_{A,M}(W)' \times \prod_{\ell \nmid M}\calS_{A,\ell}(\ZZ_\ell)'. 
\]
After the following lemma, we will prove that $\rho_A(W)'$ is equal to $\calB$.

\begin{lemma}  \label{L:applied Goursat}
Let $H$ be a closed subgroup of $\calB$.   Suppose that the projection maps $H\to \rho_{A,M}(W)'$ and $H\to\calS_{A,\ell}(\ZZ_\ell)'$, with $\ell\nmid M$, are surjective.   Then $H=\calB$.
\end{lemma}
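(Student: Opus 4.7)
The plan is to apply the profinite form of Goursat's lemma iteratively, exploiting that the pieces of $\calB$ have essentially disjoint composition factors. Since $H$ is closed in the profinite group $\calB$ and $\calB$ is the inverse limit, over finite sets $T$ of primes not dividing $M$, of the products
\[
G_T := \rho_{A,M}(W)' \times \prod_{\ell \in T}\calS_{A,\ell}(\ZZ_\ell)',
\]
it suffices to prove that the continuous projection $H\to G_T$ is surjective for every such $T$; the conclusion $H=\calB$ then follows by closedness and passage to the inverse limit. I would proceed by induction on $|T|$: the base case $T=\emptyset$ is the first hypothesis of the lemma. For the inductive step, fix $p\notin T$ with $p\nmid M$ and set $G_1:=G_T$ and $G_2:=\calS_{A,p}(\ZZ_p)'$. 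The inductive hypothesis together with the second hypothesis imply that the image of $H$ in $G_1\times G_2$ surjects onto each factor, and Goursat's lemma then forces this image to be the whole product as soon as $G_1$ and $G_2$ share no nontrivial common continuous quotient; this in turn follows from disjointness of their composition factors, since any such common quotient would possess a simple quotient that would appear as a composition factor of both groups.

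By Theorem~\ref{T:geometric gp theory}(\ref{T:geometric gp theory iv}) the composition factors of $G_2=\calS_{A,p}(\ZZ_p)'$ are all non-abelian simple of Lie type in characteristic $p$, and each therefore has order divisible by $p$. On the other side, the composition factors of the individual $\calS_{A,\ell}(\ZZ_\ell)'$ with $\ell\in T$ are non-abelian simple of Lie type in characteristic $\ell\neq p$, again by Theorem~\ref{T:geometric gp theory}(\ref{T:geometric gp theory iv}). For the remaining factor $\rho_{A,M}(W)'\subseteq\prod_{q\mid M}\GL_{2g}(\ZZ_q)$, the kernel of reduction modulo $M$ is pro-$M$ and so contributes only abelian composition factors, while the image in $\prod_{q\mid M}\GL_{2g}(\FF_q)$ is a subdirect product whose composition factors lie among those of its projections to the individual $\GL_{2g}(\FF_q)$; by Lemma~\ref{L:Jordan PL} each of these is abelian, of order less than $J$, or non-abelian simple of Lie type in characteristic $q\mid M$. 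Since $p\nmid M$ forces $p>\max\{b_A,J\}$, none of these candidate composition factors of $G_1$ can have order divisible by $p$, provided $b_A$ is large enough that the defining characteristic of a finite simple group of Lie type is determined by its isomorphism class whenever the characteristic is at least $b_A$. With this disjointness in hand, Goursat's lemma closes the inductive step, and passing to the inverse limit over $T$ yields $H=\calB$.

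The main obstacle is the uniqueness-of-characteristic statement just invoked: there do exist small exceptional isomorphisms between simple groups of Lie type in distinct characteristics (for instance $\PSL_2(4)\cong A_5\cong \PSL_2(5)$), but by the classification of finite simple groups these exceptions are finite in number and of bounded order, so they are all eliminated by an appropriate enlargement of $b_A$, which is permissible in the preceding setup. Every other ingredient is a routine application of Goursat's lemma together with the composition-factor analyses already recorded in Lemma~\ref{L:Jordan PL} and Theorem~\ref{T:geometric gp theory}.
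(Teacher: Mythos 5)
Your proposal is correct and takes essentially the same route as the paper's proof: pass to finite pieces of $\calB$, apply Goursat's lemma to a hypothetical proper closed subgroup, and rule out a common finite simple quotient using Theorem~\ref{T:geometric gp theory}(\ref{T:geometric gp theory iv}), Lemma~\ref{L:Jordan PL} for the $\rho_{A,M}(W)'$ factor, and the impossibility of a simple group being of Lie type in two distinct characteristics when one of them is large. The only differences are organizational (you induct prime-by-prime over finite subproducts, while the paper reduces to the finite quotients $\calB/\calB_e$) and that the paper handles the cross-characteristic step via the fact that any such coincidence forces a characteristic at most $7$, so the standing assumption $b_A>7$ already suffices and no further enlargement of $b_A$ (nor an appeal to the classification) is needed; also note that Theorem~\ref{T:geometric gp theory}(\ref{T:geometric gp theory iv}) controls the finite simple \emph{quotients} of $\calS_{A,\ell}(\ZZ_\ell)'$ rather than all of its composition factors, which is what your Goursat argument actually uses.
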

\begin{proof}
    After choosing bases of the $\ZZ_\ell$-modules $T_\ell(A)$, one can identify $\calB$ with a closed subgroup of $\GL_{2g}(\Zhat)$.   For each integer $e\geq 2$, let $\calB_e$ be the kernel of the reduction modulo $e$ homomorphism $\calB \subseteq \GL_{2g}(\Zhat)\to \GL_{2g}(\ZZ/e\ZZ)$.   We have  $\calB=\varprojlim_e \calB/\calB_e$, where $e$ is ordered by divisibility.   Since $H$ is a closed subgroup of $\calB$, it suffices to prove that $H \to \calB/\calB_e$ is surjective for all $e\geq 2$.   
    
    Suppose that $H\to \calB/\calB_e$ is not surjective for some $e\geq 2$.  We have an isomorphism $\calB/\calB_e\cong Q_M \times \prod_{\ell\nmid M} Q_\ell$ of groups for which all the following hold:
\begin{itemize}
\item
$Q_M$ and $Q_\ell$ (with $\ell\nmid M$) are finite quotients of $\rho_{A,M}(W)'$ and $\calS_{A,\ell}(\ZZ_\ell)'$, respectively, 
\item
$Q_\ell=1$ for all but finitely many primes $\ell\nmid M$,
\item 
the projection maps $H\to \calB/\calB_e \to Q_M$ and $H\to \calB/\calB_e \to Q_\ell$ (with $\ell\nmid M$) are surjective.
\end{itemize}
The last condition uses our assumptions on $H$ in the statement of the lemma.  \\

Since $H\to \calB/\calB_e$ is not surjective, there is an integer $b>1$ relatively prime to $M$ and a proper subgroup $H_0$ of $Q_M \times \prod_{\ell|b} Q_\ell$ such that the projection map $H_0\to Q_m$ is surjective for all $m\in \{M\}\cup\{\ell: \ell |b\}$.  So there are non-empty and disjoint subsets $I_1$ and $I_2$ of $\{M\}\cup\{\ell: \ell |b\}$ and a proper subgroup $H_1$ of $\prod_{m\in I_1} Q_m \times \prod_{m\in I_2} Q_m$ such that the projection map $H_1 \to \prod_{m\in I_i} Q_m$ is surjective for each $i\in\{1,2\}$.   

Set $G_i:= \prod_{m\in I_i} Q_m$ for $1\leq i \leq 2$.  Let $N$ and $N'$ be the kernel of the projection maps $H_1 \to G_2$ and $H_1 \to G_1$, respectively.  We have $H_1 \subseteq G_1\times G_2$ and hence $N \subseteq G_1\times \{1\}$ and $N' \subseteq \{1\} \times G_2$.  We can thus identify $N$ and $N'$ with normal subgroups of $G_1$ and $G_2$, respectively.  By Goursat's lemma (see \cite{MR0457455}*{Lemma~5.2.1}), the image of $H_1$ in $G_1/N \times G_2/N'$ is the graph of an isomorphism $G_1/N \cong G_2/N'$.

Suppose that $G_1/N$ and $G_2/N'$ are both trivial.   We have $N=G_1$ and $N'=G_2$ and hence $H_1=G_1 \times G_2$.   However, this contradicts that $H_1$ is a proper subgroup of $G_1 \times G_2$.  Therefore, there is a simple group $S$ that is isomorphic to a quotient of both $G_1$ and $G_2$.   In particular, there are distinct $m_1,m_2 \in  \{M\}\cup\{\ell: \ell |b\}$ such that $S$ is isomorphic to a quotient of both $Q_{m_1}$ and $Q_{m_2}$.   We may assume that $m_2=\ell$ is a prime not dividing $M$.

Since $\ell\nmid M$, every simple quotient of $Q_{m_2}=Q_\ell$ is non-abelian and of Lie type in characteristic $\ell$ by Theorem~\ref{T:geometric gp theory}(\ref{T:geometric gp theory iv}).   Therefore, $S$ is a non-abelian simple group of Lie type in characteristic $\ell$.

Suppose that $m_1$ is a prime $p$ that does not $M$.  By the same argument, $S$ is also a non-abelian simple group of Lie type in characteristic $p$.  However, there are no simple groups that are of Lie type in both characteristic $\ell$ and $p$, where $\ell$ and $p$ are distinct primes with $\ell>7$ (we have $\ell>7$ since $\ell\nmid M$).    This contradiction implies that $m_1=M$.  In particular, $S$ is a non-abelian group that is a quotient of $Q_M$ and hence also of $\rho_{A,M}(W)'$.

Define $m:= \prod_{\ell|M} \ell$.   The kernel of the quotient homomorphism 
\[
\rho_{A,M}(W)' \subseteq \Aut_{\ZZ_m}(T_m(A)) \to \Aut_{\ZZ/m\ZZ}(T_m(A)/mT_m(A)) \cong {\prod}_{p | M } \GL_{2g}(\FF_p)
\] 
is a product of pro-$p$ groups with $p | M$; in particular, it is prosolvable.  So there is a prime $p|M$ and a subgroup $G$ of $\GL_{2g}(\FF_p)$ such that $S$ is isomorphic to a composition factor of $G$.     Since $S$ is non-abelian, Lemma~\ref{L:Jordan PL} implies that $S$ is of Lie type in characteristic $p$ or satisfies $|S|<J$.    The group $S$ cannot be of Lie type in characteristic $p$ since again there are no simple groups that are of Lie type in both characteristic $\ell$ and $p$, where $\ell$ and $p$ are distinct primes with $\ell>7$.   Therefore, $|S|<J$.    Since $S$ is of Lie type in characteristic $\ell$, it has an element of order $\ell$ and hence $\ell\leq |S|<J$.   However, this contradicts that $\ell \nmid M$.  This contradiction proves that $H\to \calB/\calB_e$ is surjective for all $e\geq 2$ as desired.
\end{proof}

\begin{lemma} \label{L:commutator subgroup is B}
We have $\rho_{A}(W)'=\calB$.
\end{lemma}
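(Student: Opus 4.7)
The plan is to prove both containments, and for the non-trivial direction to reduce to the Goursat-type Lemma~\ref{L:applied Goursat} just established.

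First I would check the containment $\rho_A(W)' \subseteq \calB$. Using the decomposition $\Zhat \cong \ZZ_M \times \prod_{\ell \nmid M} \ZZ_\ell$, the image $\rho_A(W) \subseteq \GL_{2g}(\Zhat)$ sits inside the product $\rho_{A,M}(W) \times \prod_{\ell \nmid M} \rho_{A,\ell}(W)$, so its (closed) commutator subgroup is contained in $\rho_{A,M}(W)' \times \prod_{\ell \nmid M} \rho_{A,\ell}(W)'$. For each prime $\ell \nmid M$ we have $\ell \geq b_A$, and since $W$ is the kernel of $\gamma_{A,\ell}$ we have $\rho_{A,\ell}(W) \subseteq \calG_{A,\ell}^\circ(\ZZ_\ell)$; Theorem~\ref{T:geometric gp theory}(\ref{T:geometric gp theory iv}) then gives $\rho_{A,\ell}(W)' \subseteq \calG_{A,\ell}^\circ(\ZZ_\ell)' = \calS_{A,\ell}(\ZZ_\ell)'$. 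Combining the factors yields $\rho_A(W)' \subseteq \calB$.

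For the reverse containment, set $H := \rho_A(W)' \subseteq \calB$; this is a closed subgroup. To invoke Lemma~\ref{L:applied Goursat} I need to verify that each projection map $H \to \rho_{A,M}(W)'$ and $H \to \calS_{A,\ell}(\ZZ_\ell)'$ (for $\ell \nmid M$) is surjective. For the $M$-component, the natural surjection $\rho_A(W) \twoheadrightarrow \rho_{A,M}(W)$ carries commutators to commutators and is continuous between compact Hausdorff groups, so it sends $\rho_A(W)'$ onto the closed subgroup of $\rho_{A,M}(W)$ generated by commutators, namely $\rho_{A,M}(W)'$. For each prime $\ell \nmid M$, the same argument shows that $H$ projects onto $\rho_{A,\ell}(W)'$, and by Lemma~\ref{L:most ell-adic images are large} this equals $\calS_{A,\ell}(\ZZ_\ell)'$ since $\ell \geq b_A$.

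Having verified the surjectivity of all projections, Lemma~\ref{L:applied Goursat} gives $H = \calB$, completing the proof. The only conceptual input beyond the earlier lemmas is the basic fact that projections of closed subgroups in a profinite product are closed and commute with the formation of (closed) commutator subgroups; no further obstacle arises since the heavy lifting — namely the prime-by-prime equality $\rho_{A,\ell}(W)' = \calS_{A,\ell}(\ZZ_\ell)'$ for $\ell \geq b_A$ and the Goursat-style rigidity across primes — has already been done in Lemmas~\ref{L:most ell-adic images are large} and~\ref{L:applied Goursat}.
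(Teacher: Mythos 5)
Your proposal is correct and follows essentially the same route as the paper: identify $\rho_A(W)'$ as a closed subgroup of $\calB$, check that its projections to $\rho_{A,M}(W)'$ and to each $\calS_{A,\ell}(\ZZ_\ell)'$ are surjective, and conclude by Lemma~\ref{L:applied Goursat}. The only cosmetic difference is that you quote the equality $\rho_{A,\ell}(W)'=\calS_{A,\ell}(\ZZ_\ell)'$ from the last sentence of Lemma~\ref{L:most ell-adic images are large}, whereas the paper re-derives it on the spot from the inclusions $\calS_{A,\ell}(\ZZ_\ell)'\subseteq\rho_{A,\ell}(W)\subseteq\calG_{A,\ell}^\circ(\ZZ_\ell)$ together with Theorem~\ref{T:geometric gp theory}(\ref{T:geometric gp theory ii}),(\ref{T:geometric gp theory iv}).
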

\begin{proof}
Take any prime $\ell\geq b_A$.  By Theorem~\ref{T:geometric gp theory}(\ref{T:geometric gp theory ii}), we have $ \rho_{A,\ell}(\pi_1(U))\supseteq \calS_{A,\ell}(\ZZ_\ell)' $. We thus have inclusions
\begin{align}\label{E:inclusions pre-comm}
\calS_{A,\ell}(\ZZ_\ell)' \subseteq \rho_{A,\ell}(W) \subseteq \calG_{A,\ell}^\circ(\ZZ_\ell).
\end{align}
By Theorem~\ref{T:geometric gp theory}(\ref{T:geometric gp theory iv}), the commutators subgroups of both $\calS_{A,\ell}(\ZZ_\ell)'$ and $\calG_{A,\ell}^\circ(\ZZ_\ell)$ are equal to $\calS_{A,\ell}(\ZZ_\ell)'$.  Taking commutators of the groups in (\ref{E:inclusions pre-comm}), we deduce that $\rho_{A,\ell}(W)'=\calS_{A,\ell}(\ZZ_\ell)'$.

We can identify $\rho_A(W)'$ with a closed subgroup of $\rho_{A,M}(W)' \times \prod_{\ell\nmid M} \rho_{A,\ell}(W)' = \calB$.  Since the projections of $\rho_A(W)'$ to $\rho_{A,M}(W)'$ and $\rho_{A,\ell}(W)'=\calS_{A,\ell}(\ZZ_\ell)'$, with $\ell\nmid M$, are surjective, we deduce that $\rho_A(W)' =\calB$ by Lemma~\ref{L:applied Goursat}.
\end{proof}

\begin{lemma} \label{L:commutator density 1}
The set of $u\in U(K)$ for which $\rho_{A_u}(\Gal_K) \supseteq \rho_A(W)'$ holds has density $1$.
\end{lemma}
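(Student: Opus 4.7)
The plan is to prove the containment $\rho_{A_u}(\Gal_K) \supseteq \rho_A(W)'$ for $u$ in a density $1$ set by working inside the normal subgroup $\rho_A(W)'$, which by Lemma~\ref{L:commutator subgroup is B} equals $\calB$. The key observation is that after passing to the smaller Galois group $\Gal_{K_{A_u}^\conn}$, the image lies in $\rho_A(W)$, so its commutator is a closed subgroup of $\calB$; the main task is then to show that this commutator already fills up all of $\calB$, which I would do with a Goursat-style argument based on Lemma~\ref{L:applied Goursat}.

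Concretely, I would define $S \subseteq U(K)$ as the intersection of the following density $1$ subsets: (i) the set where $G_{A_u,\ell} = G_{A,\ell}$ for every prime $\ell$ (Proposition~\ref{P:monodromy independence}); (ii) the set where $\rho_{A_u,\ell}(\Gal_{K_{A_u}^\conn})' = \calS_{A,\ell}(\ZZ_\ell)'$ for every $\ell \geq b_A$ (Lemma~\ref{L:most ell-adic images are large}); and (iii) the set where $\rho_{A_u,M}(\Gal_K) = \rho_{A,M}(\pi_1(U))$ (Lemma~\ref{L:HIT frattini}(\ref{L:HIT frattini i})). Fix $u \in S$. Applying Lemma~\ref{L:conn u and W} at every finite level $m$ and taking the inverse limit gives $\rho_{A_u}(\Gal_{K_{A_u}^\conn}) \subseteq \rho_A(W)$, so $H_u := \rho_{A_u}(\Gal_{K_{A_u}^\conn})'$ is a closed subgroup of $\rho_A(W)' = \calB$.

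Next I would verify the hypotheses of Lemma~\ref{L:applied Goursat} for $H_u \subseteq \calB$. At the $M$-adic level, Lemma~\ref{L:conn u and W} combined with (iii) gives $\rho_{A_u,M}(\Gal_{K_{A_u}^\conn}) = \rho_{A_u,M}(\Gal_K) \cap \rho_{A,M}(W) = \rho_{A,M}(W)$, so the $M$-adic projection of $H_u$ is exactly $\rho_{A,M}(W)'$. For each prime $\ell \nmid M$ (which automatically satisfies $\ell \geq b_A$), condition (ii) identifies the $\ell$-adic projection of $H_u$ with $\calS_{A,\ell}(\ZZ_\ell)'$. Lemma~\ref{L:applied Goursat} then forces $H_u = \calB$, and since $H_u \subseteq \rho_{A_u}(\Gal_K)$ we conclude $\rho_{A_u}(\Gal_K) \supseteq \rho_A(W)'$, which is what we wanted.

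The main obstacle has already been absorbed into Lemma~\ref{L:most ell-adic images are large}, which is what gives uniform control on $\rho_{A_u,\ell}(\Gal_{K_{A_u}^\conn})'$ simultaneously for all $\ell \geq b_A$; once that and the independent fact about $M$ are in place, the remaining Goursat assembly is routine. The one subtlety to be careful about is that the inclusion $\rho_{A_u}(\Gal_{K_{A_u}^\conn}) \subseteq \rho_A(W)$ must be deduced profinitely from the statements of Lemma~\ref{L:conn u and W} at each level $m$, rather than at a single finite level.
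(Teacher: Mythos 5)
Your proof is correct and follows essentially the same route as the paper: intersect the three density-$1$ conditions from Proposition~\ref{P:monodromy independence}, Lemma~\ref{L:HIT frattini}(\ref{L:HIT frattini i}) and Lemma~\ref{L:most ell-adic images are large}, use Lemma~\ref{L:conn u and W} to identify $\rho_{A_u,M}(\Gal_{K_{A_u}^\conn})$ with $\rho_{A,M}(W)$, and then apply Lemmas~\ref{L:applied Goursat} and \ref{L:commutator subgroup is B} to conclude $\rho_{A_u}(\Gal_{K_{A_u}^\conn})'=\calB=\rho_A(W)'$. The only cosmetic difference is that you pass through the adelic inclusion $\rho_{A_u}(\Gal_{K_{A_u}^\conn})\subseteq\rho_A(W)$ via all levels $m$, while the paper works directly with the $M$-adic and $\ell$-adic projections, which amounts to the same thing.
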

\begin{proof}
Take any $u\in U(K)$ that satisfies all the following conditions with $L:=K_{A_u}^\conn$:
\begin{alphenum}
\item \label{I:comm a}
$G_{A_u,\ell}=G_{A,\ell}$ for all $\ell$,
\item \label{I:comm b}
$\rho_{A_u,M}(\Gal_K)=\rho_{A,M}(\pi_1(U))$
\item \label{I:comm c}
$\rho_{A_u,\ell}(\Gal_L)' = \calS_{A,\ell}(\ZZ_\ell)'$ for all $\ell\geq b_A$
\end{alphenum}
By Lemma~\ref{L:conn u and W} with (\ref{I:comm a}) and (\ref{I:comm b}), we have $\rho_{A_u,M}(\Gal_{L})=\rho_{A_u,M}(\Gal_K) \cap \rho_{A,M}(W)=\rho_{A,M}(W)$.  In particular, $\rho_{A_u,M}(\Gal_{L})'=\rho_{A,M}(W)'$.   From this and (\ref{I:comm c}), we deduce that $\rho_{A_u}(\Gal_{L})'$ is a closed subgroup of $\calB=\rho_{A,M}(W)' \times \prod_{\ell \nmid M}\calS_{A,\ell}(\ZZ_\ell)'$ for which the projection maps to $\rho_{A,M}(W)'$ and $\calS_{A,\ell}(\ZZ_\ell)'$, with $\ell\nmid M$, are all surjective.  By Lemmas~\ref{L:applied Goursat} and \ref{L:commutator subgroup is B}, we have $\rho_{A_u}(\Gal_{L})'=\calB=\rho_A(W)'$.

To complete the lemma, it thus suffices to show that the set of $u\in U(K)$ that satisfy each of the conditions (\ref{I:comm a}), (\ref{I:comm b}) and (\ref{I:comm c}) has density 1.   The set of $u\in U(K)$ that satisfy (\ref{I:comm a}) has density 1 by Proposition~\ref{P:monodromy independence}.  The set of $u\in U(K)$ that satisfy (\ref{I:comm b}) has density 1 by Lemma~\ref{L:HIT frattini}(\ref{L:HIT frattini i}). The set of $u\in U(K)$ that satisfy (\ref{I:comm c}) has density 1 by Lemma~\ref{L:most ell-adic images are large}.
\end{proof}

Let 
\[
\beta_A \colon \pi_1(U) \to \rho_A(\pi_1(U))/ \rho_A(W)'
\]
be the surjective homomorphism obtained by composing $\rho_A$ with the obvious quotient map.

\begin{lemma} \label{L:finite geometric monodromy}
The group $\beta_A(\pi_1(U_{\Kbar}))$ is finite.
\end{lemma}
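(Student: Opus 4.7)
The plan is to show the abelian image $\beta_A(H)$ of a small open subgroup $H \subseteq \pi_1(U_{\Kbar})$ is finite by verifying that it has trivial Lie algebra; the lemma then follows from $[\pi_1(U_{\Kbar}) : H]<\infty$.

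First I would choose $H$ as in Proposition~\ref{P:geometric monodromy} and replace it by $H \cap W$; both conclusions of the proposition are inherited by open subgroups, so we may assume $H \subseteq W$.  Since $[\pi_1(U_{\Kbar}) : H]<\infty$, the quotient $\beta_A(\pi_1(U_{\Kbar}))/\beta_A(H)$ is finite, so it suffices to show that $\beta_A(H)$ is finite.  Note $\beta_A(H)$ is a subgroup of the abelian group $\rho_A(W)/\rho_A(W)'$.

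Next, apply Lemma~\ref{L:commutator subgroup is B} to write $\rho_A(W)' = \calB = \rho_{A,M}(W)' \times \prod_{\ell \nmid M} \calS_{A,\ell}(\ZZ_\ell)'$.  For every $\ell \nmid M$, Proposition~\ref{P:geometric monodromy}(\ref{P:geometric monodromy b}) places the $\ell$-component of $\rho_A(H)$ inside $\calS_{A,\ell}(\ZZ_\ell)'$, which is the $\ell$-component of $\calB$.  A direct calculation then furnishes the canonical isomorphism
\[
\beta_A(H) \;\cong\; \rho_{A,M}(H)\rho_{A,M}(W)' \big/ \rho_{A,M}(W)',
\]
realized as a closed subgroup of the compact abelian $M$-adic Lie group $\rho_{A,M}(W)/\rho_{A,M}(W)'$; recall that $M$ involves only finitely many primes.

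To finish, I would verify that this closed subgroup has trivial Lie algebra, whence it is necessarily finite.  By Proposition~\ref{P:geometric monodromy}(\ref{P:geometric monodromy a}), $\Lie(\rho_{A,M}(H)) \subseteq \prod_{\ell\mid M}[\Lie(G_{A,\ell}^\circ), \Lie(G_{A,\ell}^\circ)]$.  Theorem~\ref{T:geometric gp theory}(\ref{T:geometric gp theory iii}) ensures $\rho_{A,\ell}(W)$ is open in $\calG_{A,\ell}^\circ(\ZZ_\ell)$ for $\ell \geq b_A$, so $\Lie(\rho_{A,\ell}(W)) = \Lie(G_{A,\ell}^\circ)$ for such $\ell$; for the finitely many $\ell \mid M$ with $\ell < b_A$ the same equality would follow from a Bogomolov-style argument.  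Since $\QQ_M = \prod_{\ell\mid M}\QQ_\ell$, any $\QQ_M$-Lie subalgebra of $\prod_{\ell\mid M}\Lie(G_{A,\ell}^\circ)$ that surjects onto each $\QQ_\ell$-factor must be the full product, whence $\Lie(\rho_{A,M}(W)) = \prod_{\ell\mid M}\Lie(G_{A,\ell}^\circ)$ and its commutator subalgebra equals $\prod_{\ell\mid M}[\Lie(G_{A,\ell}^\circ), \Lie(G_{A,\ell}^\circ)]$.  Combining, $\Lie(\rho_{A,M}(H)) \subseteq \Lie(\rho_{A,M}(W)')$, so the displayed image has trivial Lie algebra and is therefore finite.

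The main obstacle is the Lie-algebra equality $\Lie(\rho_{A,\ell}(W)) = \Lie(G_{A,\ell}^\circ)$ at the finitely many small primes $\ell \mid M$ with $\ell < b_A$: for $\ell \geq b_A$ it follows at once from the openness in Theorem~\ref{T:geometric gp theory}(\ref{T:geometric gp theory iii}), but for small $\ell$ it requires an analog for abelian schemes of Bogomolov's theorem on the $\ell$-adic Lie algebra of Galois representations attached to abelian varieties.
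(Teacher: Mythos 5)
Your argument is essentially the paper's proof recast in Lie-algebra language: like the paper, you use Proposition~\ref{P:geometric monodromy}(\ref{P:geometric monodromy b}) together with Lemma~\ref{L:commutator subgroup is B} to reduce everything to the finitely many primes dividing $M$, and then you must show that an open subgroup of $\pi_1(U_{\Kbar})$ has finite image in $\rho_{A,M}(W)/\rho_{A,M}(W)'$. Where you compute Lie algebras, the paper argues with topology: $\rho_{A,\ell}(W)$ is open in $G_{A,\ell}^\circ(\QQ_\ell)$ for every $\ell\mid M$, hence by \cite{HuiLarsen2015}*{Proposition~3.2} the group $\rho_{A,M}(W)'$ is open in $\prod_{\ell\mid M}S_{A,\ell}(\QQ_\ell)$ (with $S_{A,\ell}$ the derived subgroup of $G_{A,\ell}^\circ$), while Proposition~\ref{P:geometric monodromy}(\ref{P:geometric monodromy a}) puts the compact group $\rho_{A,M}(H)$ inside that product; compact-inside-open gives finite index, and one shrinks $H$. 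Note that your implicit use of $\Lie\big(\overline{[X,X]}\big)\supseteq[\Lie X,\Lie X]$ for a compact $\ell$-adic analytic group is exactly the ingredient the paper sources from \cite{HuiLarsen2015}, so you should cite or prove it rather than treat it as automatic.

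The ``main obstacle'' you flag is not a genuine obstacle, and you should not leave it conditional: the openness of $\rho_{A,\ell}(W)$ in $G_{A,\ell}^\circ(\QQ_\ell)$ (equivalently $\Lie\rho_{A,\ell}(W)=\Lie G_{A,\ell}^\circ$) at the finitely many primes $\ell<b_A$ is precisely what the paper asserts at this point, and it follows in one line from results already available. By Lemma~\ref{L:HIT frattini} (or Proposition~\ref{P:monodromy independence}) there is a point $u\in U(K)$ with $\rho_{A_u,\ell}(\Gal_K)=\rho_{A,\ell}(\pi_1(U))$ and $G_{A_u,\ell}=G_{A,\ell}$; Bogomolov's classical theorem for the abelian variety $A_u$ over the number field $K$ says $\rho_{A_u,\ell}(\Gal_K)$ is open in $G_{A_u,\ell}(\QQ_\ell)$, so $\rho_{A,\ell}(\pi_1(U))$, and hence its finite-index subgroup $\rho_{A,\ell}(W)$, is open in $G_{A,\ell}^\circ(\QQ_\ell)$. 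No new ``Bogomolov for abelian schemes'' is needed beyond this specialization step. With that inserted (and the Hui--Larsen-type fact cited), your proof is complete and runs parallel to the paper's; the Lie-algebra formulation buys nothing extra but is a legitimate equivalent way to finish.
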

\begin{proof}
We need to prove that there is an open subgroup $H$ of $\pi_1(U_{\Kbar})$ such that $\rho_A(H)$ is contained in $\rho_A(W)'=\calB=\rho_{A,M}(W)' \times \prod_{\ell \nmid M}\calS_{A,\ell}(\ZZ_\ell)'$.     By Proposition~\ref{P:geometric monodromy}, there is an open subgroup $H$ of $\pi_1(U_{\Kbar})$ such that $\rho_{A,\ell}(H)\subseteq \calS_{A,\ell}(\ZZ_\ell)'$ for all $\ell\geq b_A$.    It thus suffices to show that there is an open subgroup $H$ of $\pi_1(U_{\Kbar})$ such that $\rho_{A,M}(H) \subseteq \rho_{A,M}(W)'$.

The group $\rho_{A,M}(W)$ is open in $\prod_{\ell | M} G_{A,\ell}^\circ(\QQ_\ell)$ since $\rho_{A,\ell}(W)$ is open in $G_{A,\ell}^\circ(\QQ_\ell)$ for all $\ell|M$ (note that each $\rho_{A,\ell}(W)$ has an open pro-$\ell$ subgroup).    For each $\ell$, let $S_{A,\ell}$ be the derived subgroup of $G_{A,\ell}^\circ$.  Note that for every open subgroup $H$ of $G_{A,\ell}^\circ(\QQ_\ell)$, the commutator subgroup $H'$ is open in $S_{A,\ell}(\QQ_\ell)$, cf.~\cite{HuiLarsen2015}*{Proposition~3.2}.    Therefore, $\rho_{A,M}(W)'$ is an open subgroup of $\prod_{\ell|M} S_{A,\ell}(\QQ_\ell)$.

By Proposition~\ref{P:geometric monodromy}, there is an open subgroup $H$ of $\pi_1(U_{\Kbar})$ such that $\rho_{A,M}(H) \subseteq \prod_{\ell|M} S_{A,\ell}(\QQ_\ell)$.  Since $\rho_{A,M}(H)$ is compact in  $\prod_{\ell|M} S_{A,\ell}(\QQ_\ell)$ and $\rho_{A,M}(W)'$ is open in  $\prod_{\ell|M} S_{A,\ell}(\QQ_\ell)$, we find that $\rho_{A,M}(H) \cap \rho_{A,M}(W)'$ is a finite index subgroup of $\rho_{A,M}(H)$.   So after replacing $H$ by a suitable open subgroup, we will have $\rho_{A,M}(H)\subseteq \rho_{A,M}(W)'$.
\end{proof}

Let $C$ be the cardinality of $\beta_A(\pi_1(U_{\Kbar}))$; it is finite by Lemma~\ref{L:finite geometric monodromy}.  Take any $u\in U(K)$ for which $\rho_{A_u}(\Gal_K) \supseteq \rho_A(W)'$.  We will now show that $[\rho_A(\pi_1(U)):\rho_{A_u}(\Gal_K)]\leq C$.  Since the set of $u\in U(K)$ satisfying $\rho_{A_u}(\Gal_K) \supseteq \rho_A(W)'$ has density $1$ by Lemma~\ref{L:commutator density 1}, this will complete the proof of the proposition.

Let $\beta_{A,u}\colon \Gal_K \to \rho_A(\pi_1(U))/\rho_A(W)'$ be the homomorphism obtained by specializing $\beta_A$ at $u$.   We have
\[
[\rho_A(\pi_1(U))/\rho_A(W)':\beta_{A,u}(\Gal_K)] \leq C
\]
since the homomorphism 
\[
\pi_1(U)\xrightarrow{\rho_A} (\rho_A(\pi_1(U))/\rho_A(W)')/\beta_A(\pi_1(U_{\Kbar}))
\]
 is surjective and factors through $\Gal_K$ (in particular, its specialization at a point $u\in U(K)$ is independent of the choice $u$).  Since $\rho_{A_u}(\Gal_K)\supseteq \rho_A(W)'$, we have
\begin{align*}
[\rho_A(\pi_1(U)):\rho_{A_u}(\Gal_K)] &= [\rho_A(\pi_1(U))/\rho_A(W)':\rho_{A_u}(\Gal_K)/\rho_A(W)']\\
&=[\rho_A(\pi_1(U))/\rho_A(W)':\beta_{A,u}(\Gal_K)] \\
&\leq C.
\end{align*}
This completes the proof of Proposition~\ref{P:main reduction}.

\begin{remark} \label{R:specific constant}
The above constant $C$ is precisely the one described in \S\ref{SS:the constant C}.  With notation as in \S\ref{SS:the constant C}, the group $M$ is equal to $\rho_A(W)$.  In \S\ref{S:main proof}, we will show that the set $B$ has density $0$ and this will imply, by Proposition~\ref{P:main reduction}, that Theorem~\ref{T:MAIN} holds with this particular constant $C$.
\end{remark}

\section{Explicit Hilbert irreducibility} \label{S:explicit HIT}

Let $U$ be a nonempty open subvariety of $\PP^n_K$ for some integer $n\geq 1$, where $K$ is a number field.  Let 
\[
\rho\colon \pi_1(U)\to G
\]
be a continuous and surjective homomorphism, where $G$ is a finite group.  For each point $u\in U(K)$, we obtain a homomorphism $\rho_u\colon \Gal_K\to G$ by specializing $\rho$ at $u$; it is uniquely defined up to conjugation by an element of $G$. 

 Let $G_g$ be the image of $\pi_1(U_{\Kbar})$ under $\rho$; it is a well-defined normal subgroup of $G$.  Let $L$ be the minimal extension of $K$ in $\Kbar$ for which $G_g$ is the image of $\pi_1(U_L)$.   We have a natural short exact sequence
\[
1\to G_g\to G \xrightarrow{\varphi} \Gal(L/K) \to 1.
\]

Let $\calU$ be the open subscheme of $\PP^n_{\OO_K}$ that is the complement of the Zariski closure of $\PP^n_K-U$ in $\PP^n_{\OO_K}$.   The $\OO_K$-scheme $\calU$ has generic fiber $U$.   Fix a finite set $\scrS$ of non-zero prime ideals of $\OO_K$ such that $\rho$ arises from a continuous homomorphism $\varrho\colon \pi_1(\calU_\OO)\to G$, where $\OO$ is the ring of $\scrS$-integers in $K$.   

\begin{thm} \label{T:HIT} 
With notation as above, fix a Galois extension $F\subseteq L$ of $K$ and a set $C\subseteq G$ that is stable under conjugation by $G$.  Define 
\[
\delta := \max_\kappa  \frac{|C\cap \kappa|}{|G_g|},
\]
where $\kappa$ varies over the $G_g$-cosets of $\varphi^{-1}(\Gal(L/F))$.  Assume that $\delta<1$.
Then for $x\geq 2$,
 \[
 |\{ u \in U(K) : H(u)\leq x,\, \rho_u(\Gal_K)\subseteq C\}| \ll_{U, F,\delta} \,\, x^{(n+1/2)[K:\QQ]} \log x + |\scrS|^{4n+4} + |C|^{2n+2}\cdot |G_g|^{4n+4}.
 \]  
\end{thm}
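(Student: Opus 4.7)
The plan is to reduce the count to a sieve estimate and apply the large sieve for number fields. Suppose $u \in U(K)$ satisfies $\rho_u(\Gal_K)\subseteq C$. Then for every prime $\p\notin \scrS$ at which $u$ has good specialization and which is unramified for $\varrho$ and for $L/K$, the reduced point $\bar u \in \calU(\FF_\p)$ satisfies $\varrho_{\bar u}(\Frob_\p) \in C$. Thus bounding the count of such $u$ reduces to sieving $U(K)$ with the local conditions $\Omega_\p := \{v \in \calU(\FF_\p) : \varrho_v(\Frob_\p)\in C\}$.

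The critical observation is that composing with $\varphi$ gives $\varphi(\varrho_{\bar u}(\Frob_\p)) = \Frob_\p|_L$, a quantity depending only on $\p$, not on $u$. Hence $\varrho_{\bar u}(\Frob_\p)$ is confined to the single $G_g$-coset $\kappa_\p := \varphi^{-1}(\Frob_\p|_L)$. I would restrict the sieve to primes $\p$ of $K$ that split completely in $F/K$, so that $\kappa_\p \subseteq \varphi^{-1}(\Gal(L/F))$; by hypothesis, then, $|C\cap \kappa_\p|\leq \delta|G_g|$.

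Next I would invoke Deligne's equidistribution theorem (Weil bounds applied to the étale cover of $\calU_{\FF_\p}$ corresponding to $G_g$, for $\p$ of good reduction) to obtain
\[
\frac{|\Omega_\p|}{|\calU(\FF_\p)|} \;\leq\; \delta \;+\; O_U\!\left(\frac{|C|^{1/2}}{N\p^{1/2}}\right),
\]
uniformly in $\p$. Once $N\p \gg_U |C|$, this gives $|\Omega_\p|/|\calU(\FF_\p)| \leq (1+\delta)/2 < 1$. The prime ideal theorem (or Chebotarev for $F/K$) then shows that the set $P_Q$ of ``useful'' primes with $N\p\leq Q$ has cardinality $\gg_{F} Q/\log Q$, once $Q$ is larger than a polynomial in $|\scrS|$, $|C|$, and $|G_g|$.

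With $\Omega_\p$ in hand, I would apply Serre's projective large sieve for number fields (as in \cite{MR1757192}, Ch.~9 and 12) to bound
\[
|\{u\in U(K) : H(u)\leq x,\; \bar u \in \Omega_\p\text{ for all }\p\in P_Q\}| \;\ll_{U}\; \frac{x^{(n+1)[K:\QQ]} + Q^{2(n+1)[K:\QQ]}}{L(Q)},
\]
where $L(Q) = \sum_{\p\in P_Q} |\Omega_\p|/(|\calU(\FF_\p)|-|\Omega_\p|) \gg (1-\delta)\,|P_Q| \gg Q/\log Q$. Balancing the two numerator terms by taking $Q = x^{[K:\QQ]/2}$ produces the main term $x^{(n+1/2)[K:\QQ]}\log x$, valid provided $Q$ exceeds the polynomial-in-$(|\scrS|,|C|,|G_g|)$ threshold for $P_Q$ to have enough primes; when $x$ is below that threshold the trivial bound $\ll x^{(n+1)[K:\QQ]}$ dominates and is absorbed into the terms $|\scrS|^{4n+4}$ and $|C|^{2n+2}|G_g|^{4n+4}$ (these powers are precisely the exponents the large sieve produces from the $Q^{2(n+1)[K:\QQ]}$ term once one tracks the thresholds needed to bound $L(Q)$ from below).

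The main obstacle is quantitative: the effective Deligne–Chebotarev step has to be stated with explicit polynomial dependence on $|C|$, $|G_g|$, and on $\p$-adic geometric invariants of the cover $\varrho$ over $\OO$, and this explicit dependence must be propagated through the thresholds in the large sieve so as to justify the stated lower-order terms. The standard bounds on $\ell$-adic cohomology of the covers of $\calU_{\FF_\p}$ (controlling the Betti-number-type constants in Deligne's theorem uniformly in $\p$, for $\p\notin\scrS$) do the job, but carrying out the bookkeeping to produce the explicit exponents $4n+4$ and $2n+2$ is the delicate part.
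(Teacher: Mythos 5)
Your outline is essentially the paper's proof: reduce to mod-$\p$ conditions at primes splitting completely in $F$, note that the Frobenius of a reduced point is confined to the coset $\kappa_\p\subseteq\varphi^{-1}(\Gal(L/F))$ so its chance of landing in $C$ is at most $\delta+o(1)$, prove this by an effective Deligne--Kowalski equidistribution statement with Betti-number bounds uniform in $\p$, and then feed the resulting local densities into the large sieve with $Q=x^{[K:\QQ]/2}$, the trivial bound for small $x$ producing the lower-order terms. This is exactly the structure of the paper's argument (its Theorem on tame equidistribution, the mod-$\p$ density lemma, and the large sieve proposition).

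There is, however, one genuine ingredient you gloss over with the phrase ``for $\p$ of good reduction'': the equidistribution step with density $|C\cap\kappa_\p|/|G_g|$ requires that the \emph{geometric} monodromy of the reduced representation, i.e.\ the image of $\pi_1(\calU_{\FFbar_\p})$, is still all of $G_g$, equivalently that the cover attached to $G_g$ stays geometrically connected mod $\p$. This can fail, and because the implied constant in the statement is allowed to depend only on $U$, $F$ and $\delta$ (not on $G$, $C$, $|\scrS|$ or the cover itself), the exceptional set of primes must be shown to lie in a set depending only on $U$, together with the primes in $\scrS$ and those dividing $|G_g|$ (whose number is absorbed as $O(\log|G_g|)$). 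If you merely discard ``bad primes for the cover'' as an unquantified finite set, the threshold for the sieve to have enough primes depends on $\varrho$ and the stated uniformity is lost. The paper devotes a separate lemma to this (a Bertini argument over the Grassmannian of lines, producing a bad set depending only on $U\subseteq\PP^n_K$, combined with Grothendieck's specialization theorem for the tame fundamental group, which is where the hypothesis $\p\nmid|G_g|$ enters). Two smaller slips, which do not affect the structure: the large sieve weight is $\omega_\p/(1-\omega_\p)$ with $\omega_\p$ the \emph{excluded} proportion (you wrote the reciprocal), and the equidistribution error term should carry an extra factor of order $|G_g|$ coming from the sum of Betti numbers of the covering space, which is where the $|G_g|^{4n+4}$ in the final bound actually originates.
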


 Recall that Hilbert's irreducibility theorem implies that $\rho_u(\Gal_K)=G$ for all $u\in U(K)$ away from a set of density $0$.
Corollary~\ref{C:HIT} shows how Theorem~\ref{T:HIT} can be viewed as an explicit version of {Hilbert's irreducibility theorem}. 

\begin{cor} \label{C:HIT}
For $x\geq 2$, we have
\[
 |\{ u \in U(K) : H(u)\leq x,\, \rho_u(\Gal_K)\neq G\}| \ll_{U,L,|G|} \, x^{[K:\QQ](n+1/2)}\log x + |\scrS|^{4n+4}.
 \]
\end{cor}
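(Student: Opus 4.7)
The plan is to reduce Corollary~\ref{C:HIT} to Theorem~\ref{T:HIT} by splitting the set of ``bad'' $u$ according to a maximal proper subgroup of $G$ containing the image. The starting observation is that since $L$ is defined so that $\rho(\pi_1(U_L))=G_g$, the composition $\varphi\circ\rho\colon \pi_1(U)\to \Gal(L/K)$ factors through the structural map $\pi_1(U)\to \Gal_K\to \Gal(L/K)$; in particular, for every $u\in U(K)$ the specialization $\varphi\circ\rho_u$ agrees with the natural surjection $\Gal_K\to \Gal(L/K)$, so $\varphi(\rho_u(\Gal_K))=\Gal(L/K)$. Hence, if $\rho_u(\Gal_K)\neq G$, then $\rho_u(\Gal_K)$ sits inside some maximal proper subgroup $M$ of $G$ with $\varphi(M)=\Gal(L/K)$, equivalently $MG_g=G$. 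Let $\scrM$ be a set of $G$-conjugacy class representatives of such $M$; it is finite and its size is bounded in terms of $|G|$. Setting $C_M:=\bigcup_{g\in G}gMg^{-1}$, the quantity to bound is $\sum_{M\in\scrM}|\{u\in U(K):H(u)\leq x,\,\rho_u(\Gal_K)\subseteq C_M\}|$.

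For each such $M$ I would apply Theorem~\ref{T:HIT} with $F=L$ and $C=C_M$. With this choice $\varphi^{-1}(\Gal(L/F))=G_g$, so the only $G_g$-coset to consider is $\kappa=G_g$ itself, and using the normality of $G_g$ in $G$,
\[
\delta_M=\frac{|C_M\cap G_g|}{|G_g|}=\frac{1}{|G_g|}\Big|\bigcup_{g\in G}g(M\cap G_g)g^{-1}\Big|.
\]
The main obstacle, and the heart of the argument, is verifying $\delta_M<1$. I would argue as follows: because $MG_g=G$, the subgroup $G_g$ acts transitively on $G/M$ by left translation, and the stabilizer of the coset $xM$ under this action is $G_g\cap xMx^{-1}=x(M\cap G_g)x^{-1}$. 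Thus the elements of $G_g$ lying in some conjugate of $M\cap G_g$ are exactly those having a fixed point on $G/M$. Since $M$ is proper, $|G/M|\geq 2$; Burnside's orbit-counting lemma (the average number of fixed points equals the number of orbits, which is $1$, while the identity alone already fixes $|G/M|\geq 2$ points) then forces some element of $G_g$ to have no fixed point, and such an element lies in $G_g\setminus C_M$. Hence $\delta_M<1$.

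Once $\delta_M<1$ is established, Theorem~\ref{T:HIT} gives
\[
|\{u\in U(K):H(u)\leq x,\,\rho_u(\Gal_K)\subseteq C_M\}|\ll_{U,L,\delta_M}\ x^{[K:\QQ](n+1/2)}\log x+|\scrS|^{4n+4}+|C_M|^{2n+2}|G_g|^{4n+4}.
\]
Since $|C_M|,|G_g|\leq |G|$ and $\delta_M$ takes only finitely many values as $M$ varies over $\scrM$ (all determined by the finite group $G$), the final additive term and the dependence of the implicit constant on $\delta_M$ are both absorbed into a constant depending only on $|G|$. Summing over the finitely many $M\in\scrM$ then yields the stated bound with implicit constant depending only on $U$, $L$, and $|G|$.
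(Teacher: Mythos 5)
Your proposal is correct and follows essentially the same route as the paper: reduce to maximal subgroups $M$ of $G$ that surject onto $G/G_g$, apply Theorem~\ref{T:HIT} with $F=L$ and $C=\bigcup_{g\in G}gMg^{-1}$, and verify $\delta<1$ by producing a derangement in $G_g$ acting on $G/M$. The only difference is cosmetic: where the paper cites Jordan's lemma (via Serre), you reprove it with Burnside's orbit-counting argument, which yields the same bound $\delta\leq 1-1/|G_g|$ and hence the same dependence of the constant on $|G|$.
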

\begin{proof}
For each $u\in U(K)$,  the quotient map $\rho_u(\Gal_K) \to G/G_g$ is surjective.  So if $\rho_u(\Gal_K) \neq G$, then $\rho_u(\Gal_K)$ is contained in a maximal subgroup $M$ of $G$ for which $M\to G/G_g$ is surjective.  It thus suffices to bound 
$ |\{ u \in U(K) : H(u)\leq x,\, \rho_u(\Gal_K)\subseteq {\bigcup}_{g\in G}\,g M g^{-1}\}|$ for any such  $M$; the corollary will then follow by summing over all $M$ (the number of such maximal subgroups can be bounded in terms of $|G|$).

Take any maximal subgroup $M$ of $G$ for which the quotient map $M\to G/G_g$ is surjective.  Define $C:=\cup_{g\in G} \, gMg^{-1} = \cup_{g\in G_g} gMg^{-1}$, where the last equality uses our assumption that $M\to G/G_g$ is surjective.    We have $C\cap G_g = \cup_{g\in G_g} g(M\cap G_g)g^{-1} $ since $G_g$ is normal in $G$.   The group $M\cap G_g$ is a proper subgroup of $G_g$ since $M\neq G$ and $M\to G/G_g$ is surjective.  Jordan's lemma (\cite{MR1997347}*{Theorem~4'}) implies that $C\cap G_g \neq G_g$.  In particular, $\delta:=|C\cap G_g|/|G_g|\leq 1-1/|G_g|<1$.     Applying Theorem~\ref{T:HIT} with $F=L$, we have
\[
 |\{ u \in U(K) : H(u)\leq x,\, \rho_u(\Gal_K)\subseteq C\}| \ll_{U,L,|G|} \, x^{[K:\QQ](n+1/2)}\log x + |\scrS|^{4n+4}
 \]
as desired.
\end{proof}

\subsection{Equidistribution over finite fields}

Fix a finite field $\FF_q$ of cardinality $q$ and denote its characteristic by $p$.  
In this section, we denote by $U$ a smooth affine variety over $\FF_q$ that is geometrically irreducible and has dimension $d\geq 1$.     Take positive integers $N$, $r$ and $\delta$ such that $U_{\FFbar_q}$ is isomorphic to a closed subscheme of $\AA^N_{\FFbar_q}$ defined by the vanishing of $r$ polynomials of degree at most $\delta$.

Consider a surjective continuous homomorphism
\[
\varrho\colon  \pi_1(U)\to G,
\]
where $G$ is a finite group.  Define $G_g:=\varrho(\pi_1(U_{\FFbar_q}))$; it is a normal subgroup of $G$.    We have a natural short exact sequence
\[
1\to G_g\to G \xrightarrow{\varphi} \Gal(\FF_{q^e}/\FF_q) \to 1
\]
for some $e\geq 1$.    Let $\kappa$ be the $G_g$-coset of $G$ that is the inverse image of the $q$-th power Frobenius automorphism $\Frob_q \in \Gal(\FF_{q^e}/\FF_q)$ under $\varphi$.    If $U(\FF_q)\neq \emptyset$, we can also characterize $\kappa$ as the unique $G_g$-coset of $G$ that contains $\rho(\Frob_u)$ for all $u\in U(\FF_q)$.

\begin{thm} \label{T:tame equidistribution}
Fix notation as above and assume that $p\nmid |G_g|$.   Let $C \subseteq \kappa$ be a set stable under conjugation by $G$.  Then
\[
|\{u \in U(\FF_q): \varrho(\Frob_u) \in C \}| = \frac{|C|}{|G_g|}\, |U(\FF_q)| + O_{N,r,\delta}\big(|C|^{1/2} |G_g| q^{d-1/2}\big).
\]
\end{thm}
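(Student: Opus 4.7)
The plan is to combine the Grothendieck--Lefschetz trace formula for lisse $\overline{\QQ}_\ell$-sheaves with Fourier analysis on $G$. Fix an auxiliary prime $\ell\neq p$. Each irreducible $\overline{\QQ}_\ell$-representation $(\sigma,V_\sigma)$ of $G$, composed with $\varrho$, determines a lisse sheaf $\calF_\sigma$ on $U$ of rank $\dim\sigma$ whose trace of Frobenius at $u\in U(\FF_q)$ equals $\chi_\sigma(\varrho(\Frob_u))$. Since $C$ is conjugation-stable, the indicator function $1_C$ is a class function on $G$ and so decomposes as $1_C=\sum_\sigma a_\sigma \chi_\sigma$ with $a_\sigma=\frac{1}{|G|}\sum_{g\in C}\overline{\chi_\sigma(g)}$. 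Applying Grothendieck's trace formula to each $\calF_\sigma$ yields
$$|\{u\in U(\FF_q):\varrho(\Frob_u)\in C\}|=\sum_\sigma a_\sigma \sum_i(-1)^i\tr\bigl(\Frob_q\,\big|\,H^i_c(U_{\FFbar_q},\calF_\sigma)\bigr).$$

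The analysis then splits by whether $\sigma|_{G_g}$ is trivial. When $\sigma$ factors through $G/G_g$, the sheaf $\calF_\sigma$ is geometrically constant and Frobenius acts on every stalk via the image of $\kappa$, so the inner trace equals exactly $\chi_\sigma(\kappa)\cdot|U(\FF_q)|$. Summing $a_\sigma\chi_\sigma(\kappa)$ over such $\sigma$, the column orthogonality relation for the cyclic quotient $G/G_g$ gives $\sum_{\sigma|_{G_g}=1}|\chi_\sigma(\kappa)|^2=[G:G_g]$, and hence the coefficient of $|U(\FF_q)|$ collapses to $|C|/|G_g|$, producing the claimed main term. When $\sigma|_{G_g}$ is nontrivial, Clifford's theorem combined with the fact that $G/G_g$ is cyclic forces every irreducible constituent of $\sigma|_{G_g}$ to be nontrivial, so $H^{2d}_c(U_{\FFbar_q},\calF_\sigma)\cong (V_\sigma)_{G_g}(-d)$ vanishes; by Deligne's Weil~II the remaining eigenvalues of $\Frob_q$ on $H^i_c$ have absolute value at most $q^{i/2}\leq q^{d-1/2}$.

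To promote this weight bound into a uniform estimate, I will invoke Katz's bound on sums of Betti numbers: the hypothesis $p\nmid|G_g|$ ensures that $\calF_\sigma$ is tamely ramified, so $\sum_i\dim H^i_c(U_{\FFbar_q},\calF_\sigma)\ll_{N,r,\delta}\dim\sigma$. This yields the per-representation bound $\bigl|\sum_{u\in U(\FF_q)}\chi_\sigma(\varrho(\Frob_u))\bigr|\ll_{N,r,\delta}\dim\sigma\cdot q^{d-1/2}$ when $\sigma|_{G_g}\neq 1$, so the total error is bounded by $q^{d-1/2}\sum_{\sigma|_{G_g}\neq 1}|a_\sigma|\dim\sigma$. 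Cauchy--Schwarz together with Parseval's identity $\sum_\sigma|a_\sigma|^2=|C|/|G|$ and $\sum_\sigma(\dim\sigma)^2=|G|$ then estimates this sum by $|C|^{1/2}$, which is comfortably within the claimed error $|C|^{1/2}|G_g|q^{d-1/2}$. The principal obstacle is securing the Katz-type uniform bound on the total Betti number of $\calF_\sigma$ with constants depending only on the embedding data $(N,r,\delta)$; the tameness hypothesis $p\nmid|G_g|$ is exactly what makes this available.
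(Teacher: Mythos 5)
Your proposal is essentially the paper's own argument: the paper proves this theorem by deferring to Kowalski's Theorem~1.1, whose proof is precisely your scheme --- decompose the conjugation-invariant indicator function into irreducible characters, apply the Grothendieck--Lefschetz trace formula and Deligne's Weil~II (the main term coming from the characters trivial on $G_g$, which are constant on the coset $\kappa \supseteq C$, and the top compactly supported cohomology vanishing for the remaining characters), and control the error by Katz-type bounds on sums of Betti numbers, which is exactly where the hypothesis $p\nmid |G_g|$ enters. The one caveat concerns your last step: the uniform bound actually available (Kowalski's Propositions~4.4, 4.5 and 4.7, obtained by passing to the degree-$|G_g|$ \'etale cover trivializing $\calF_\sigma$) is $\sum_i \dim H^i_c(U_{\FFbar_q},\calF_\sigma) \ll_{N,r,\delta} |G_g|\cdot \dim\sigma$ rather than your stronger claim $\ll_{N,r,\delta} \dim\sigma$, but feeding this weaker bound into your Cauchy--Schwarz/Parseval estimate still gives exactly the stated error term $O_{N,r,\delta}\big(|C|^{1/2}|G_g|\,q^{d-1/2}\big)$, so nothing is lost.
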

\begin{proof}(Sketch)
This is essentially Theorem~1.1 of \cite{MR2240230} due to Kowalski;  the key difference is that we are more explicit with the dependencies in the implicit constant (we also have $|G_g|$ in our error term instead of $|G|$).    We now sketch the minor changes that need to be made in Kowalski's proof.

Choose any prime $\ell\neq p$.  Let $V\to \calU_{\FFbar_q}$ be the finite  \'etale Galois covering with group $G_g$ corresponding to the surjective homomorphism $\pi_1(\calU_{\FFbar_q}) \xrightarrow{\rho} G_g$.  By Propositions~4.5 and 4.4 of \cite{MR2289204} and using $p\nmid |G_g|$, we have
\[
\sigma_c(V,\QQ_\ell):= \sum_{i} \dim H_c^i(V, \QQ_\ell) \leq c(N,r,\delta) \cdot |G_g|,
\]
where $c(N,r,\delta)$ is a constant depending only on $N$, $r$ and $\delta$.  Moreover, \cite{MR2289204} gives an explicit value of $c(N,r,d)$.   Proposition 4.7 of \cite{MR2289204} and its proof imply that 
\begin{align} \label{E:sigma c bound}
\sigma_c(U_{\FFbar_q},\pi(\rho)):= \sum_{i} H_c^i(U_{\FFbar_q}, \pi(\rho)) \leq c(N,r,\delta) \cdot |G_g| \cdot \dim \pi
\end{align}
for any representation $\pi \colon G_g \to \GL_{\dim \pi}(\Qbar_\ell)$, where $\pi(\rho)$ denotes the lisse $\Qbar_\ell$-sheaf corresponding to $\pi\circ \rho\colon\pi_1(U_{\FFbar_q})\to \GL_{\dim \pi}(\Qbar_\ell)$.  Examining the proof of Theorem~1.1 of \cite{MR2240230} with the bound (\ref{E:sigma c bound}), we have
\begin{align*}
\bigg| |\{u \in U(\FF_q) : \rho(\Frob_u) \in C \}| - \frac{|C|}{|G_g|} \,|U(\FF_q)|  \bigg| \leq c(N,r,\delta) \cdot  |C|^{1/2} |G_g|^{3/2} q^{d-1/2}
\end{align*}
which gives the theorem.
\end{proof}

\subsection{Sieving}

Fix a subset $B\subseteq \PP^n(K)$ with $n\geq 1$.  Let $\Sigma$ be a set of non-zero primes ideals $\p$ of $\OO_K$ with positive density.  Let $\scrS$ be a finite subset of $\Sigma$.  Suppose that there are real numbers $0\leq \delta<1$ and $c\geq 1$ such that the image of the reduction modulo $\p$ map $B\to \PP^n(\FF_\p)$ has cardinality at most $\delta N(\p)^n + c N(\p)^{n-1/2}$ for all $\p\in \Sigma-\scrS$.  

The follow proposition uses the large sieve to bound $|\{u\in B : H(u)\leq x\}|$; we will use it later to prove Theorem~\ref{T:HIT}.

\begin{prop} \label{P:large sieve}
Fix notation and assumptions as above.  For $x\geq 2$, we have
\[
|\{u\in B : H(u)\leq x\}| \ll_{K,n,\Sigma} (1-\delta)^{-1} \cdot x^{(n+1/2)[K:\QQ]} \log x + |\scrS|^{4n+4} + ((1-\delta)^{-1} c)^{4n+4}.
\]
\end{prop}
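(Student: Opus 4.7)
The plan is to apply a large sieve inequality for $\PP^n$ over the number field $K$, sieving with the primes of $\Sigma - \scrS$.

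First, I translate the hypothesis into a lower bound for the saved density
\[
\omega(\p) := 1 - \frac{|B_\p|}{|\PP^n(\FF_\p)|},
\]
where $B_\p$ denotes the image of $B$ in $\PP^n(\FF_\p)$. Since $|\PP^n(\FF_\p)| \geq N(\p)^n$, the hypothesis yields $\omega(\p) \geq (1-\delta) - c\,N(\p)^{-1/2}$ for all $\p \in \Sigma-\scrS$. Setting $N_0 := (2c/(1-\delta))^2$, we conclude $\omega(\p) \geq (1-\delta)/2$ for every $\p \in \Sigma-\scrS$ with $N(\p)\geq N_0$.

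Second, I invoke a standard form of the large sieve for $\PP^n$ over a number field (see for instance Chapter~13 of Serre's \emph{Lectures on the Mordell--Weil theorem} \cite{MR1757192}, or the variants used in the related work \cite{MR2837018}). For each sieve parameter $Q \geq 2$ it yields a bound of the shape
\[
|\{u \in B : H(u) \leq x\}|\cdot L(Q) \ll_{K,n} x^{(n+1)[K:\QQ]} + Q^{E},
\]
where $E$ is the appropriate exponent produced by the sieve and $L(Q)$ is a sum over $\p \in \Sigma-\scrS$ with $N(\p) \leq Q$ of quantities comparable to $\omega(\p)/(1-\omega(\p))$.

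Third, I bound $L(Q)$ from below. Because $\Sigma$ has positive density among the prime ideals of $\OO_K$, the prime ideal theorem gives $|\{\p\in\Sigma:N(\p)\leq Q\}|\gg_{K,\Sigma}Q/\log Q$. After discarding the finitely many primes of $\scrS$ as well as the primes with $N(\p)<N_0$, and combining with Step~1, this produces
\[
L(Q)\gg_{K,n,\Sigma} (1-\delta)\cdot Q/\log Q
\]
once $Q$ exceeds a threshold $Q_1$ satisfying $Q_1 \ll_{K,\Sigma} \max\{|\scrS|,\,N_0\}$.

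Finally, I optimize the sieve parameter $Q$.  Balancing $Q^{E}$ against $x^{(n+1)[K:\QQ]}$ and using the lower bound of Step~3 produces the main term $(1-\delta)^{-1}\,x^{(n+1/2)[K:\QQ]}\log x$.  When $x$ is too small for the optimal value of $Q$ to exceed $Q_1$, I instead take $Q=Q_1$, and the resulting bound $Q_1^{E}/L(Q_1)$ is absorbed into the additive terms $|\scrS|^{4n+4}$ and $((1-\delta)^{-1}c)^{4n+4}$ via $Q_1 \ll \max\{|\scrS|,\,(c/(1-\delta))^2\}$ and $(a+b)^k \ll a^k + b^k$.  The main technical obstacle is invoking the correct form of the large sieve over a number field and tracking precisely how the threshold $Q_1$ propagates through the two regimes, so that the additive error terms come out with the stated exponent $4n+4$.
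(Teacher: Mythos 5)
Your proposal follows essentially the same route as the paper: it likewise sieves with the primes of $\Sigma-\scrS$, lower-bounds the sieve sum $L(Q)$ using the positive density of $\Sigma$, takes $Q$ of size $x^{[K:\QQ]/2}$ (so the sieve exponent is $2(n+1)$), and disposes of the small-$x$ regime by a bound absorbed into the additive terms $|\scrS|^{4n+4}+((1-\delta)^{-1}c)^{4n+4}$. The only difference is bookkeeping: instead of quoting a projective large sieve with an unspecified exponent $E$, the paper lifts each point of $B$ to an integral tuple in $\OO_K^{n+1}$ of norm $\ll_{K,n} H(u)$ (Serre, \S13.4) and converts the projective density hypothesis into an affine one by counting fibers of $\FF_\p^{\,n+1}-\{0\}\to\PP^n(\FF_\p)$ before applying the sieve of \S12.1 of Serre's Lectures --- a step worth making explicit, since it is exactly what pins down $E=2(n+1)$ and hence the stated exponents.
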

\begin{proof}
For each $a=(a_1,\ldots,a_{n+1})\in \OO_K^{n+1}$, define \[
\norm{a}:=\max_{1\leq i \leq n+1} \max_\sigma |\sigma(a_i)|,
\]
where $\sigma$ runs over the field embeddings $K\hookrightarrow \CC$.   Note that $\norm{\cdot}$ extends uniquely to a norm on $\OO_K^{n+1}\otimes_\ZZ \RR$. 

Let $B'$ be the set of $a\in \OO_K^{n+1}-\{0\}$ for which the image of $a$ in $\PP^n(K)$ lies in $B$.  We first bound the number of $a\in B'$ for which $\norm{a}\leq x$.  For a non-zero prime ideal $\p$ of $\OO_K$, let $B'_\p$ be the the image of $B'$ under the reduction modulo $\p$ map $\OO_K^{n+1}\to \FF_\p^{n+1}$.    Define $\omega_\p:=1- |B'_\p|/N(\p)^{n+1}$.  We may assume $\omega_\p<1$ since otherwise $B=\emptyset$ and the proposition is trivial.

Now suppose $\p\in \Sigma-\scrS$.    Using our assumption on the image of $B$ modulo $\p$ and $c\geq 1$, we find that
\[
|B'_\p|\leq (\delta N(\p)^n + c N(\p)^{n-1/2}) \cdot |\FF_\p^\times| + 1 \leq \delta N(\p)^{n+1} + cN(\p)^{n+1/2}
\]
and hence $|B'_\p|/N(\p)^{n+1} \leq \delta +c/N(\p)^{1/2}$.      If $N(\p) \geq 4c^2/(1-\delta)^2$, then 
\[
{|B'_\p|}/{N(\p)^{n+1}} \leq \delta + c/(4c^2/(1-\delta)^2)^{1/2} = (1+\delta)/2
\]  
and hence $\omega_\p \geq (1-\delta)/2$. 

Since $\Sigma$ has positive density, there is a constant $b\geq 1$ depending only on $\Sigma$ such that 
\[
\#\{ \p \in \Sigma-\scrS : y/2 \leq N(\p) \leq y \} \gg_\Sigma \, y/\log y
\]
holds for all real $y$ satisfying $y\geq b$ and $y\geq |\scrS|^2$.\\

\noindent $\bullet$ First take any $x\geq 2$ satisfying 
\[
x^{[K:\QQ]/2} \geq \max\{b,|\scrS|^2,8c^2/(1-\delta)^2\}.
\] 
The large sieve (\cite{MR1757192}*{\S12.1}) implies that for any $Q\geq 1$, we have
\begin{align} \label{E:large sieve}
|\{a \in B' : \norm{a}\leq  x \}| \ll_{K,n} \frac{\max\{ x^{(n+1)[K:\QQ]}, Q^{2(n+1)}\}}{L(Q)},
\end{align}
where
\[
L(Q):=\sum_{\mathfrak{a}} \prod_{\p| \mathfrak{a}} \frac{\omega_\p}{1-\omega_\p}
\]
and the sum is over the square-free ideals $\mathfrak{a}$ of $\OO_K$ with norm at most $Q$.  We interpret (\ref{E:large sieve}) as being the trivial bound $+\infty$ when $L(Q)=0$.   

Set $Q:=x^{[K:\QQ]/2}$.   Take any prime $\p \in \Sigma-\scrS$ with $Q/2 \leq N(\p) \leq Q$.  We have $N(\p)\geq  \tfrac{1}{2} x^{[K:\QQ]/2} \geq 4c^2/(1-\delta)^2$ and hence $\omega_\p \geq (1-\delta)/2$.  Since $t/(1-t)$ is increasing on the interval $[0,1)$, we have $\omega_\p/(1-\omega_\p) \geq  ((1-\delta)/2)/(1-(1-\delta)/2)=(1-\delta)/(1+\delta)$.   Therefore,
\[
L(Q)\geq \sum_{\substack{\p\in \Sigma-\scrS \\ Q/2\leq N(\p) \leq Q}} \frac{1-\delta}{1+\delta} \geq \frac{1-\delta}{2} \cdot \#\{\p \in \Sigma-\scrS : Q/2\leq N(\p) \leq Q\}.
\]
We have $Q =x^{[K:\QQ]/2} \geq \max\{b,|\scrS|^2\}$ and hence $L(Q)\gg_\Sigma (1-\delta) Q/\log Q \gg_K (1-\delta)\, x^{[K:\QQ]/2}/\log x$.  From (\ref{E:large sieve}), we deduce that 
\[
|\{a \in B' : \norm{a}\leq  x \}| \ll_{K,n,\Sigma,}(1-\delta)^{-1} \cdot x^{(n+1/2)[K:\QQ]} \log x. \\
\]

\noindent $\bullet$  Now suppose that $x\geq 2$ satisfies $x^{[K:\QQ]/2} \leq \max\{b,|\scrS|^2,8c^2/(1-\delta)^2\}$.
Therefore,
\begin{align*}
|\{a \in B' : \norm{a}\leq  x \}| & \leq |\{a\in \OO_K^{n+1} : \norm{a} \leq x\}| \\
& \ll_{K,n} x^{(n+1)[K:\QQ]}\\
& \leq (\max\{b,|\scrS|^2,8c^2/(1-\delta)^2\})^{2n+2}.
\end{align*}

Combining both cases for $x\geq 2$ and using $b\ll_\Sigma 1$ gives
\begin{align} \label{E:integral version}
|\{a \in B' : \norm{a}\leq  x \}| \ll_{K,n,\Sigma} (1-\delta)^{-1} \cdot x^{(n+1/2)[K:\QQ]} \log x + |\scrS|^{4n+4} + ((1-\delta)^{-1} c)^{4n+4}.
\end{align}

By the proposition in \S13.4 of \cite{MR1757192}, there is a constant $c'$, depending only on $K$ and $n$, such that each $u \in \PP^n(K)$ is represented by a tuple $a\in \OO_K^{n+1}$ with $\norm{a} \leq c' H(u)$.  In particular, we have $|\{u\in B : H(u)\leq x\}| \leq |\{a\in B' : \norm{a}\leq c'x\}|$.  The proposition now follows directly from (\ref{E:integral version}) and $c'\ll_{K,n} 1$.
\end{proof}

\subsection{Proof of Theorem~\ref{T:HIT}}
We may assume that the set $C$ is non-empty since otherwise the bound in the theorem is trivial.      

\begin{lemma} \label{L:big mod p monodromy}
There is a finite set $\scrS_1$ of non-zero prime ideals of $\OO_K$, depending only on $U\subseteq\PP^n_K$, such that $\varrho(\pi_1(\calU_{\FFbar_\p}))=G_g$ for all non-zero prime ideals $\p \notin \scrS\cup \scrS_1$ of $\OO_K$ satisfying $\p\nmid |G_g|$.
\end{lemma}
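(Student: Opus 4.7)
The plan is to realize $\varrho$ as a connected Galois \'etale cover $\calV\to \calU_\OO$ with group $G$, extract from it a geometrically connected $G_g$-subcover $\calW$ defined over a finite extension of $\OO$, and deduce the claim by counting connected components of the reduction $\calV_{\FFbar_\p}$.

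First I would observe that the composition $\pi_1(\calU_\OO)\xrightarrow{\varrho} G\to G/G_g$ is trivial on $\pi_1(U_{\Kbar})$, so it factors through $\pi_1(\Spec \OO)$. Its kernel defines an extension of $K$ unramified outside $\scrS$ which one checks equals the field $L$ from the setup. Base-changing, the restriction of $\varrho$ to $\pi_1(\calU_{\OO_L[1/\scrS]})$ has image exactly $G_g$, and accordingly $\calV_{\OO_L[1/\scrS]}\to \calU_{\OO_L[1/\scrS]}$ decomposes into $[G:G_g]$ connected components after enlarging $\scrS$ by a finite set $\scrS_1'$ of primes of $\OO_L$ so that this decomposition spreads out integrally. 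Let $\calW$ be the component corresponding to the identity coset $G_g\subseteq G$; it is a finite \'etale Galois cover with group $G_g$ of $\calU_{\OO_L[1/(\scrS\cup\scrS_1')]}$, and its generic fiber $\calW_L$ is geometrically connected since $\varrho(\pi_1(U_{\Kbar}))=G_g$.

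The main technical step is to enlarge $\scrS_1'$ once more so that \emph{every} geometric fiber of $\calW\to \Spec \OO_L[1/(\scrS\cup\scrS_1')]$ is connected. This follows from the standard fact that for a smooth morphism of connected noetherian schemes whose geometric generic fiber is connected, the set of points of the base over which the geometric fiber is connected is a dense open; on the Dedekind base $\Spec \OO_L[1/(\scrS\cup\scrS_1')]$, ``dense open'' means the complement of a finite set of primes, which we absorb into $\scrS_1'$.

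Finally, let $\scrS_1$ be the finite set of primes of $\OO_K$ lying below primes in $\scrS_1'$. Take $\p\notin \scrS\cup\scrS_1$ with $\p\nmid |G_g|$, and pick any prime $\Pp\mid \p$ of $\OO_L$; then $\calW_{\FFbar_\p}=\calW_{\FFbar_\Pp}$ is a connected finite \'etale Galois $G_g$-cover of $\calU_{\FFbar_\p}$, and it is open-and-closed in $\calV_{\FFbar_\p}\to \calU_{\FFbar_\p}$, hence a single connected component of $\calV_{\FFbar_\p}$. Setting $G_g^\p:=\varrho(\pi_1(\calU_{\FFbar_\p}))$, the connected components of the $G$-torsor $\calV_{\FFbar_\p}$ correspond to the orbits of $G_g^\p$ acting on $G$ by left multiplication, i.e., to the right cosets of $G_g^\p$ in $G$. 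Since $\calW_{\FFbar_\p}$ corresponds by construction to the subset $G_g\subseteq G$ and $G_g$ contains the identity, we conclude $G_g^\p=G_g$, as required.
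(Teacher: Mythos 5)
The reduction you make—realizing $\varrho$ via a connected $G$-cover $\calV$, passing to $\OO_L$, and showing that the conclusion is equivalent to connectedness of the geometric special fibers of the $G_g$-subcover $\calW$—is correct, and for a \emph{fixed} $\varrho$ the appeal to constructibility of ``geometrically connected fibers'' (EGA IV, 9.7.7--9.7.8) does yield a finite exceptional set of primes. But this proves a weaker statement than the lemma. The lemma asserts that $\scrS_1$ depends \emph{only on $U\subseteq\PP^n_K$}; in your argument $\scrS_1$ is the locus where the fibers of the particular cover $\calW\to\Spec\OO_L[1/(\scrS\cup\scrS_1')]$ degenerate, so it depends on $\varrho$, $G$ and $G_g$, and the spreading-out theorem gives no control on this exceptional set in terms of $U$ alone. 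The tell-tale sign is that you never use the hypothesis $\p\nmid|G_g|$: in the intended statement the entire dependence on the representation is channelled through $\scrS$ and the arithmetic condition $\p\nmid|G_g|$, precisely so that the remaining excluded set is uniform in $\varrho$.

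This uniformity is not a cosmetic point: the lemma is applied (in Lemma~\ref{L:mod p bound for B}, hence in Theorems~\ref{T:HIT}, \ref{T:HIT main} and \ref{T:almost there mod ell}) with $\varrho=\bbar\rho_{A,\ell}$ for \emph{all} primes $\ell\geq b_A$, and the finitely many primes in $\scrS_1\cup\scrS_2$ are absorbed into an implicit constant that must depend only on $U$ (ultimately only on $A$), not on $\ell$. If $\scrS_1$ varied with the representation, the resulting bounds would acquire an uncontrolled dependence on $\ell$ and the main estimate would collapse. This is why the paper argues differently: it restricts to lines $\calL$ avoiding a bad locus $\calW\subseteq\Gr_{\OO_K}(1,n)$ constructed purely from $\calZ=\PP^n_{\OO_K}-\calU$ (so the excluded primes $\scrS_1$ depend only on $U$), lifts such a line over $\OO_\p^{\un}$, and transports full monodromy from characteristic $0$ to characteristic $p$ by Grothendieck's specialization theorem for the prime-to-$p$ fundamental group—this is exactly where $\p\nmid|G_g|$ is needed—before applying Bertini over $\FFbar_\p$. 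To repair your proof you would need to show that the primes where $\calW$ acquires geometrically disconnected fibers can be bounded in terms of $U$ alone (under the hypothesis $\p\nmid|G_g|$), which is essentially the content of the paper's argument and does not follow from the constructibility statement you invoke.
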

\begin{proof}
Define the closed subvariety $Z:=\PP^n_K-U$ of $\PP^n_K$. Let $\calZ$ be the Zariski closure of $Z$ in $\PP^n_{\OO_K}$; its complement in $\PP^n_{\OO_K}$ is $\calU$.  For a commutative ring $R$, let $\Gr_{R}(1,n)$ be the Grassmannian of lines in $\PP^n_R$.   In \S4H1 of \cite{REU}, a closed subscheme $\calW$ of $\Gr_{\OO_K}(1,n)$ is constructed such that for each $\OO_K$-algebra $R$ and line $\calL \in (\Gr_{\OO_K}(1,n)-\calW)(R)$, the scheme theoretic intersection $\calL \cap \calZ_R$ is finite and \'etale over $\Spec R$.  We have $\calW\neq \Gr_{\OO_K}(1,n)$ by Bertini's theorem.    Let $\scrS_1$ be the (finite) set consisting of all non-zero prime ideals $\p$ of $\OO_K$ for which $\calW_{\FF_\p}\neq \Gr_{\OO_K}(1,n)_{\FF_\p}$.  Note that $\scrS_1$ depends only on $U\subseteq \PP^n_K$.

Now take any non-zero prime ideal $\p \notin \scrS\cup \scrS_1$ of $\OO_K$ satisfying $\p\nmid |G_g|$.   Let $\OO_\p^{\un}$ be the ring of integers in the maximal unramified extension of $K_\p^{\un}$ of $K_\p$ in a fixed algebraic closure $\Kbar_\p$.  The ring $\OO_\p^{\un}$ is a complete discrete valuation ring with residue field $\FFbar_\p$.    Take any line $L\in (\Gr_{\OO_K}(1,n)-\calW)(\FFbar_\p)$.  Since $\Gr_{\OO_K}(1,n)$ is smooth and $\calW$ is a closed subscheme, there is a line $\calL \in (\Gr_{\OO_K}(1,n)-\calW)(\OO_\p^\un)$ whose image in $(\Gr_{\OO_K}(1,n)-\calW)(\FFbar_\p)$ is $L$.      Define the $\OO_\p^\un$-scheme $\calV:=\calU_{\OO_\p^\un} \cap \calL$.   We have $\calV=\calL-\calD$, where $\calD:=\calL \cap \calZ_{\OO_\p^\un}$.   Observe that $\calD$ is finite \'etale over $\Spec \OO_\p^\un$ since $\calL \notin \calW(\OO_\p^\un)$.

We claim that the homomorphism 
\[
\pi_1(\calV_{\Kbar_\p}) =\pi_1(U_{\Kbar_\p} \cap \calL_{\Kbar_\p}) \to \pi_1(U_{\Kbar_\p}) \overset{\varrho}{\to} G
\]
has image $G_g$.  Fix an embedding $\Kbar_\p \subseteq \CC$. To prove the claim there is no harm in replacing $\Kbar_\p$ by the larger algebraically closed field $\CC$.  By Bertini's theorem, the homomorphism $\pi_1(\calU_\CC \cap L) \to \pi_1(U_{\CC}) \overset{\varrho}{\to} G$ has image $G_g$ for a generic line $L \in \Gr_\CC(1,n)(\CC)$.  The above claim follows by (topologically) deforming $\calL$  in $\Gr_\CC(1,n)(\CC)$ to a generic line;  note that small changes in $\calL$ do not change the image of the representation since $\calL$ intersects $\calZ(\CC)$ only at smooth points and transversally at each of these points.

 Choose a point $a_0 \in \calV(\FFbar_\p)$ with a lift $a_1 \in \calV(\OO_\p^{\un})$.  Since $\calD$ is finite \'etale over $\Spec \OO_\p^\un$, the Grothendieck specialization theorem implies that the natural homomorphisms
\[
 \pi_1(\calV_{K_\p^{\un}}, a_1) \to \pi_1(\calV_{\OO_\p^{\un}},a_1) \leftarrow \pi_1(\calV_{\FFbar_\p}, a_0)
\]
induce an isomorphism between the prime to $p=\operatorname{char} \FF_\p$ quotients of $\pi_1(\calV_{\Kbar_\p}, a_1) $ and $\pi_1(\calV_{\FFbar_\p}, a_0)$.  In the present setting, an accessible proof of Grothendieck's theorem can be found in \cite{MR1708609}*{\S4}. Therefore, the homomorphism
\begin{align}  \label{E:V Fbar}
\pi_1(\calV_{\FFbar_\p},a_0) \to \pi_1(\calV_{\OO_\p^{\un}},a_1) &\to \pi_1(\calU,a_1) \xrightarrow{\varrho} G 
\end{align}
has the same image as $\pi_1(\calV_{\Kbar_\p},a_1) \to \pi_1(\calV_{\OO_\p^{\un}},a_1) \to \pi_1(\calU,a_1) \xrightarrow{\varrho} G$ which is $G_g$ by our claim (we have $p\nmid |G_g|$ since $\p\nmid |G_g|$ by assumption).  

We have thus proved that the image of $\pi_1(\calU_{\FFbar_\p} \cap L) \to \pi_1(\calU_{\FFbar_\p})\xrightarrow{\varrho} G$ is $G_g$ for all lines $L\in (\Gr_{\OO_K}(1,n)-\calW)(\FFbar_\p)$.  Since $\p \notin \scrS_1$, $(\Gr_{\OO_K}(1,n)-\calW)_{\FFbar_\p}$ is a non-empty open subvariety of $\Gr_{\OO_K}(1,n)_{\FFbar_\p}$.  By Bertini's theorem, we deduce that $\pi_1(\calU_{\FFbar_\p})\xrightarrow{\varrho} G$ has image $G_g$.
\end{proof}

Let $\Sigma$ be the set of non-zero prime ideals $\p$ of $\OO_K$ that split completely in $F$; it has positive density by the Chebotarev density theorem.  

\begin{lemma} \label{L:mod p bound for B}
For any non-zero prime ideal $\p\in \Sigma-\scrS$ of $\OO_K$ satisfying $\p\nmid |G_g|$, we have 
\[
|\{u \in \PP^n(\FF_\p) : u\notin \calU(\FF_\p) \text{ or } \varrho(\Frob_u) \in C \}| \leq \delta N(\p)^n + O_U(|C|^{1/2}|G_g| N(\p)^{n-1/2}).
\]
\end{lemma}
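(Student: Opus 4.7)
The plan is to separate the count into points of $\PP^n(\FF_\p)$ lying outside $\calU(\FF_\p)$ and points lying in $\calU(\FF_\p)$ whose Frobenius meets $C$. The complement of $\calU$ in $\PP^n_{\OO_K}$ is a closed subscheme of relative dimension at most $n-1$, so its special fibre contributes at most $O_U(N(\p)^{n-1})$ many $\FF_\p$-points. Since $|C|^{1/2}|G_g|\geq 1$ and $N(\p)^{n-1}\leq N(\p)^{n-1/2}$, this loss is absorbed into the error term in the statement. The main term $\delta N(\p)^n$ must therefore come from an equidistribution argument applied to the remaining set $\{u\in\calU(\FF_\p):\varrho(\Frob_u)\in C\}$.

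To get into a genuinely affine setting I would fix a standard affine chart $\AA^n_{\FF_\p}\subseteq\PP^n_{\FF_\p}$ and set $V:=\calU_{\FF_\p}\cap\AA^n_{\FF_\p}$, losing at most another $O_U(N(\p)^{n-1})$ points at infinity. Away from a finite (and $\p$-independent) set of primes absorbed into the implied constant, $V$ is smooth, affine, and geometrically irreducible of dimension $n$. It can be realised as a closed subscheme of $\AA^{n+1}_{\FF_\p}$ by a single slack equation $y\cdot f(x_1,\dots,x_n)=1$, where $f$ is the dehomogenisation of a fixed polynomial cutting out $\PP^n_K\setminus U$; thus the embedding parameters $N$, $r$ and the degree bound required by Theorem~\ref{T:tame equidistribution} depend only on $U$. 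Let $\kappa_\p\subseteq G$ be the unique $G_g$-coset containing $\varrho(\Frob_u)$ for every $u\in V(\FF_\p)$; since $\p$ splits completely in $F$, we have $\varphi(\kappa_\p)\in\Gal(L/F)$, so $\kappa_\p\subseteq\varphi^{-1}(\Gal(L/F))$, and the definition of $\delta$ gives $|C\cap\kappa_\p|\leq\delta|G_g|$. By Lemma~\ref{L:big mod p monodromy} (together with $\p\nmid|G_g|$) and the surjectivity of $\pi_1(V_{\FFbar_\p})\to\pi_1(\calU_{\FFbar_\p})$, the restriction of $\varrho$ to $\pi_1(V_{\FFbar_\p})$ still has image $G_g$.

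I would then apply Theorem~\ref{T:tame equidistribution} to $V$ and the $G$-conjugation stable subset $C\cap\kappa_\p\subseteq\kappa_\p$ to obtain
\[
|\{u\in V(\FF_\p):\varrho(\Frob_u)\in C\}|=\tfrac{|C\cap\kappa_\p|}{|G_g|}\,|V(\FF_\p)|+O_U\bigl(|C\cap\kappa_\p|^{1/2}|G_g|N(\p)^{n-1/2}\bigr).
\]
Bounding $|C\cap\kappa_\p|/|G_g|\leq\delta$, $|V(\FF_\p)|\leq|\AA^n(\FF_\p)|=N(\p)^n$, and $|C\cap\kappa_\p|\leq|C|$, and then adding back the two $O_U(N(\p)^{n-1})$ contributions set aside earlier, yields the claim. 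The only delicate point is keeping the implicit constant in the equidistribution error uniform in $\p$: Theorem~\ref{T:tame equidistribution} allows it to depend on $(N,r,\deg)$, so one must present $V\subseteq\AA^{n+1}$ via equations whose complexity depends only on the fixed datum $U$; this is exactly what the slack-variable description above provides.
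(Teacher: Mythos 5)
Your argument is correct and follows essentially the same route as the paper's proof: pass to a principal affine piece with a slack-variable presentation whose complexity depends only on $U$, use Lemma~\ref{L:big mod p monodromy} (with $\p\nmid|G_g|$) to keep geometric image $G_g$, observe that $\kappa_\p\subseteq\varphi^{-1}(\Gal(L/F))$ because $\p$ splits completely in $F$ so that $|C\cap\kappa_\p|/|G_g|\leq\delta$, and apply Theorem~\ref{T:tame equidistribution}, absorbing the $O_U(N(\p)^{n-1})$ boundary points into the error term. The one small imprecision is that $\calU_{\FF_\p}\cap\AA^n_{\FF_\p}$ need not be affine or cut out by a single slack equation when $\PP^n_K\setminus U$ is not a hypersurface, so you should shrink further to the non-vanishing locus of one fixed polynomial $f$ vanishing on $\PP^n_K\setminus U$ (exactly the paper's $\calU'=\Spec\OO_K[x_1,\ldots,x_n][f^{-1}]\subseteq\calU$), which only adds another $O_U(N(\p)^{n-1})$ excluded points.
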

\begin{proof}
We can view $\AA_{\OO_K}^n=\Spec \OO_K[x_1,\ldots, x_n]$ as an open subscheme of $\PP_{\OO_K}^n$ via the morphism $(a_1,\ldots, a_n)\mapsto [a_1,\ldots, a_n,1]$.   There is a non-zero polynomial $f\in \OO_K[x_1,\ldots, x_n]$ that is squarefree in $K[x_1,\ldots, x_n]$  such that $\calU':= \Spec (\OO_K[x_1,\ldots,x_n][f^{-1}])$ is an open $\OO_K$-subscheme of $\calU$.    There is a finite set $\scrS_2$ of non-zero prime ideals $\OO_K$ such that for all non-zero prime ideals $\p\notin \scrS_2$ of $\OO_K$:
\begin{itemize}
\item
$\calU'_{\FF_\p}$ is an open affine subvariety of $\PP^n_{\FF_\p}$ of dimension $n$ that is geometrically irreducible,
\item
$\calU'_{\FF_\p}$ is isomorphic to the closed subscheme of $\AA_{\FF_\p}^{n+1}=\Spec \FF_\p[x_1,\ldots, x_n,x_{n+1}]$ defined by the equation $\bbar{f}(x_1,\ldots, x_n) \cdot x_{n+1}=1$, where $\bbar{f}$ is obtained from $f$ by reducing its coefficients modulo $\p$.
\end{itemize}
Note that $f$ and $\scrS_2$ are choices that depends only on $U\subseteq \PP^n_K$. 

Now take any prime ideal $\p\in \Sigma-\scrS$ satisfying $\p\nmid |G_g|$. 
Let $\scrS_1$ be a set of prime ideals from Lemma~\ref{L:big mod p monodromy}.    Since $\scrS_1$ and $\scrS_2$ depend only on $U\subseteq \PP^n_K$, we may further assume that $\p\notin \scrS_1\cup \scrS_2$; the lemma holds for the finite number of excluded prime ideals by suitably increasing the implicit constant.  Similarly, we may also assume that $\calU'(\FF_\p)$ is non-empty.

From $\varrho$, we obtain a continuous homomorphism $\varrho_\p \colon \pi_1(\calU_{\FF_\p})\to G$.  We have $\varrho_\p(\calU_{\FFbar_\p})=G_g$ by Lemma~\ref{L:big mod p monodromy} and $\p\nmid |G_g|$.   Since $\calU'_{\FF_\p}$ is a non-empty open subvariety of $\calU_{\FF_\p}$, we can restrict $\varrho_\p$ to obtain a homomorphism $\pi_1(\calU'_{\FFbar_\p})\to G$ that satisfies $\varrho_\p(\pi_1(\calU'_{\FFbar_\p}))=\varrho_\p(\pi_1(\calU_{\FFbar_\p}))=G_g$.
 
There is a unique $G_g$-coset $\kappa$ of $G$ such that $\varrho_\p(\Frob_u) \in \kappa$ for all $u\in \calU'(\FF_\p)$.  By Theorem~\ref{T:tame equidistribution}, applied to the affine variety $\calU'_{\FF_\p}$ and the representation $\rho_\p$ (and using $\p\nmid|G_g|$), we have
\begin{align*}
|\{u \in \calU'(\FF_\p) : \varrho(\Frob_u) \in C\}|
&= |\{u \in \calU'(\FF_\p) : \varrho_\p(\Frob_u) \in C \cap\kappa\}|\\
&= \frac{|C\cap \kappa|}{|G_g|} \, |\calU'(\FF_\p)| + O_U(|C|^{1/2}|G_g| N(\p)^{n-1/2});
\end{align*}
note that the implicit term depends only on $U\subseteq \PP^n_K$ since $\calU_{\FFbar_\p}'$ is isomorphic to a closed subscheme of $\AA^{n+1}_{\FFbar_\p}$ defined by the polynomial equation $\bbar{f}(x_1,\ldots, x_n) \cdot x_{n+1}=1$, where $f$ is a choice depending only on $U$.
 
Take any $u\in \calU(\FF_\p)$.  Since $\p$ splits completely in $F$, we have $(\varphi\circ \varrho_\p)(\Frob_\p) \in \Gal(L/F)$.    Therefore, $\kappa\subseteq \varphi^{-1}(\Gal(L/F))$.    We thus have $|C\cap\kappa|/|G_g|\leq \delta$ by the definition of $\delta$.  Using this and $|\calU'(\FF_\p)| = N(\p)^n +O_U(N(\p)^{n-1/2})$, we find that 
\begin{align*}
&|\{u \in \PP^n(\FF_\p) : u\notin \calU(\FF_\p) \text{ or } \varrho(\Frob_u) \in C \}| \\
\leq&\, |\{u \in \calU'(\FF_\p) : \varrho(\Frob_u) \in C\}| + |\PP^n(\FF_\p)-\calU'(\FF_\p)| \\
\leq & \,\delta N(\p)^n + O_U(|C|^{1/2}|G_g| N(\p)^{n-1/2}),
\end{align*}
where since $C\neq \emptyset$ we can absorb the various error terms.  
\end{proof} 
 
Define the set 
\[
B:=\{ u \in U(K) : \rho_u(\Gal_K) \subseteq C\}.
\]   
For each $\p\in \Sigma-\scrS$, denote by $B_\p$ the image of $B$ under the reduction modulo $\p$ map $U(K)\subseteq \PP^n(K)\to \PP^n(\FF_\p)$.  Let $\scrS'$ be the finite set of primes $\p\in \Sigma$ that lie in $\scrS$  or divide $|G_g|$.   

Take any $\p\in \Sigma-\scrS'$ and $u\in B$.    Denote by $u_\p\in \PP^n(\FF_\p)$ the image of $u$ modulo $\p$.   If $u_\p \in \calU(\FF_\p)$, then $\rho_u(\Frob_\p)=\varrho(\Frob_{u_\p})$. Since $u\in B$, we deduce that $u_\p \notin \calU(\FF_\p)$ or $\varrho(\Frob_{u_\p}) \in C$.   By Lemma~\ref{L:mod p bound for B}, we deduce that
\[
|B_\p| \leq \delta N(\p)^n + O_U(|C|^{1/2}|G_g| N(\p)^{n-1/2})
\]
for all $\p\in \Sigma-\scrS'$.  Take any $x\geq 2$.  By Proposition~\ref{P:large sieve}, we have
\begin{align*}
|\{u\in B : H(u)\leq x\}| &\ll_{U,F} (1-\delta)^{-1} \cdot x^{(n+1/2)[K:\QQ]} \log x + |\scrS'|^{4n+4} + ((1-\delta)^{-1} |C|^{1/2}|G_g|)^{4n+4}\\
&\ll_{\delta} x^{(n+1/2)[K:\QQ]} \log x + |\scrS'|^{4n+4} + |C|^{2n+2}\cdot |G_g|^{4n+4}\\
&\ll_{n} x^{(n+1/2)[K:\QQ]} \log x + |\scrS|^{4n+4} + |C|^{2n+2}\cdot |G_g|^{4n+4},
\end{align*}
where the last inequality uses that $|\scrS'| = |\scrS|+ O( \log |G_g|)$ and $C\neq \emptyset$.

\section{Derangements} \label{S:derangements}

Let $G$ be a linear algebraic group defined over a finite field $\FF_\ell$ for which its neutral component $G^\circ$ is semisimple and adjoint.   Let $S$ be the commutator subgroup of $G^\circ(\FF_\ell)$.     Fix a group $H$ satisfying 
\[
S \subseteq H \subseteq G(\FF_\ell)
\] 
and fix a normal subgroup $H_g$ of $H$.  Define $H_0 := H \cap G^\circ(\FF_\ell)$; it is a normal subgroup of $H$ that contains $S$.  Let $r$ be the rank of $G^\circ$ and define $m=[G(\FF_\ell):G^\circ(\FF_\ell)]$.

\begin{prop} \label{P:coset gp theory}
With notation as above, let $M$ be a subgroup of $H$ for which $M\not\supseteq S$ and for which the natural homomorphisms $M\to H/H_g$ and $M\to H/H_0$ are surjective.   Define the subset 
\[
C := \bigcup_{h\in H} h M h^{-1}
\]    
of $H$.   Then there is a constant $0\leq \delta <1$, depending only on $r$ and $m$, satisfying
\begin{equation} \label{E:Msharp}
\frac{|C \cap \kappa| }{ |H_g| } \leq \delta + O_{r,m}(1/\ell)
\end{equation}
for every $H_g$-coset $\kappa$ in $H_0 \cdot H_g$.
\end{prop}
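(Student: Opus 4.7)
The plan is to reduce the estimate to a uniform derangement bound inside the perfect subgroup $S$, where it follows from known results on finite simple groups of Lie type of bounded rank. The starting point is that the normality of $H_g$ in $H$ together with the assumption $MH_g = H$ forces each conjugate $xMx^{-1}$ to meet every $H_g$-coset of $H$ in exactly $|M \cap H_g|$ elements; this shows that the natural target $|C \cap \kappa|/|H_g|$ is governed by the overlap between the conjugates of $M$ inside $\kappa$, and that any useful bound must come from genuine structural input on $M$.

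Next, I would reduce from $H$ down to the perfect subgroup $S$. The inclusions $S \subseteq H_0 \subseteq H$ have indices bounded in terms of $r$ and $m$: we have $[H:H_0]\leq m$ by definition, while $[H_0:S]$ divides $[G^\circ(\FF_\ell):S]$, which is bounded by a constant depending only on $r$ since $G^\circ$ is semisimple adjoint. Working inside an $H_g$-coset $\kappa \subseteq H_0 H_g$, I would partition $\kappa$ into finitely many translates of $S \cap H_g$ of bounded index, reducing the problem — after absorbing bounded-index factors into $\delta$ or into the $O_{r,m}(1/\ell)$ error term — to bounding the proportion of $s \in S$ lying in $\bigcup_{t \in S} t(M \cap S)t^{-1}$. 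The surjectivity hypotheses on $M \to H/H_g$ and $M \to H/H_0$ are used here precisely to make this reduction uniform in the choice of $\kappa$.

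The heart of the argument is the estimate inside $S$. Since $M \not\supseteq S$, the intersection $M \cap S$ is a proper subgroup of $S$. Modulo a central subgroup of order bounded in terms of $r$, the group $S$ is a direct product of finite simple groups of Lie type of rank $\leq r$ over various extensions of $\FF_\ell$. For a simple group $S_0$ of Lie type of rank $\leq r$ over $\FF_q$ and a proper subgroup $M_0 \subsetneq S_0$, a uniform derangement estimate — an effective Jordan-type theorem for bounded-rank Lie-type groups, in the spirit of Fulman--Guralnick — gives
\[
\frac{1}{|S_0|}\Bigl|{\bigcup}_{t\in S_0} t M_0 t^{-1}\Bigr| \leq \delta_0(r) + O_r(1/q)
\]
for some $\delta_0(r) < 1$. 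Applied factorwise to the simple factors of $S$ and combined with the coset bookkeeping of the previous step, this produces the required inequality with $\delta < 1$ depending only on $r$ and $m$.

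The main obstacle will be maintaining uniformity of $\delta$ through the several layers of quotient (from $\kappa \subseteq H_0 H_g$ down to $H_0$, then to $S$, then to its simple factors) while controlling the accumulation of $O_{r,m}(1/\ell)$ errors. A related subtlety is that $M \cap S$ need not be proper in every simple factor of $S$: the combination of $M \not\supseteq S$ with the surjectivity $M \to H/H_0$ is what guarantees that the intersection is ``proper enough'' in the direct-product sense to invoke the derangement bound, and this is exactly what keeps $\delta$ strictly less than $1$ independently of $\ell$.
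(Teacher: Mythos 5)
Your overall frame (reduce to cosets of $S\cap H_g$, then invoke a Fulman--Guralnick-type derangement bound of quality $\delta_0(r)+O_r(1/\ell)$) matches the paper up to a point, but there is a genuine gap at the heart of the argument: the ``factorwise'' application of the derangement bound to the simple factors of $S$. Writing $S=S_1\times\cdots\times S_n$, it can happen that $M\cap S$ is a proper subgroup of $S$ that nevertheless surjects onto \emph{every} simple factor --- the basic example being $n=2$ with $M\cap S=\{(s,f(s)):s\in S_1\}$ the graph of an isomorphism $f\colon S_1\to S_2$. This situation is fully consistent with all of your hypotheses: in the paper's normalization one can take $G_1=G_2$, $H_g=S_1\times\{1\}$ and $M=\{(g,g):g\in G_1(\FF_\ell)\}$, and then $M\not\supseteq S$ while $M\to H/H_g$ and $M\to H/H_0$ are surjective. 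So your closing claim that $M\not\supseteq S$ together with the surjectivity of $M\to H/H_0$ guarantees the intersection is ``proper enough in the direct-product sense'' is false, and in exactly this case there is no proper subgroup of any $S_i$ to which your derangement estimate could be applied. The paper's proof is organized around this obstruction: after a projection argument it uses Goursat's lemma to reduce to $n\leq 2$, and in the diagonal case $n=2$ it abandons derangement counting entirely, instead bounding $|C_1\cap\kappa_0|$ for a coset $\kappa_0=\alpha S_1\times\{\beta\}$ by the number of elements of $G_1(\FF_\ell)$ conjugate to $\beta$, i.e.\ by $|G_1(\FF_\ell)|/|\calC_\beta|\ll_r |S_1|/\ell$ via a centralizer estimate. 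Your proposal contains no substitute for this case, and it cannot be absorbed into the error term without such an argument.

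A secondary, smaller issue: the required estimate is per $(S\cap H_g)$-coset $\kappa_0$ of $H_0$, where $H_0$ may be strictly larger than $S$ and $S\cap H_g$ may be a proper subproduct of $S$; reducing this to ``the proportion of $s\in S$ lying in $\bigcup_{t\in S}t(M\cap S)t^{-1}$'' loses both the outer coset and the conjugation by all of $H_0$. The derangement input actually needed is the coset version of Fulman--Guralnick for $S\subseteq H\subseteq G(\FF_q)$ with $G$ geometrically simple adjoint (obtained in the paper after a Weil-restriction step, since the $S_i$ need not be geometrically simple over $\FF_\ell$), together with the surjectivity of $M\cap S\to S/(S\cap H_g)$ (the paper's Lemma~\ref{L:M S surjectivity}), which is what makes the coset bookkeeping go through uniformly. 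These refinements are fixable, but the diagonal case in the first paragraph is a missing idea, not a detail.
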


We will prove Proposition~\ref{P:coset gp theory} by repeated reducing to simpler cases.    Proposition~\ref{P:coset gp theory} will be used to prove a specialized version of Hilbert's irreducibility theorem (Theorem~\ref{T:HIT main}).  The notation $H_g$ is chosen because in our applications, $H$ will be the image of an arithmetic fundamental group and $H_g$ will be the image of its geometric subgroup. 

\subsection{A theorem of Fulman and Guralnick} \label{SS:derangements}

Consider a finite group $H$ acting on a set $\Omega$.  An element $h \in H$ is called a \defi{derangement} on $\Omega$ if it has no fixed points.  For a non-empty subset $B\subseteq H$, let $\delta(B,\Omega)$ be the proportion of elements in $B$ that are derangements on $\Omega$.

The following is a slight variant of a result of Fulman and Guralnick.

\begin{thm}[Fulman--Guralnick] \label{T:Fulman-Guralnick}
Let $G$ be a connected, geometrically simple and adjoint linear algebraic group of rank $r$ defined over a finite field $\FF_q$.   Let $S$ be the commutator subgroup of $G(\FF_q)$ and fix a group $S\subseteq H \subseteq G(\FF_q)$.  Fix a maximal subgroup $M$ of $H$ satisfying $M\not\supseteq S$ and define $\Omega=H/M$ with $H$ acting by left multiplication.  Then for every $S$-coset $\kappa$ in $H$, we have
\[
\delta(\kappa,\Omega) \geq \delta + O_r(1/q)
\]
with $0< \delta\leq1$ a constant that depends only on $r$.
\end{thm}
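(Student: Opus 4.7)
The plan is to adapt the Fulman--Guralnick derangement estimates from the socle $S$ to individual cosets of $S$ in $H$.

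First I would reduce to the non-trivial case. Since $S = G(\FF_q)'$ is characteristic in $G(\FF_q)$, it is normal in $H$, so each $S$-coset $\kappa$ has size $|S|$ and is a union of $H$-conjugacy classes. The set of non-derangements in $\kappa$ for the action on $\Omega = H/M$ equals $\kappa \cap \bigcup_{h \in H} hMh^{-1}$, which in turn equals $\bigcup_{m \in M_\kappa} m^H$, where $M_\kappa := M \cap \kappa$ and $m^H$ denotes the $H$-conjugacy class of $m$. If $M_\kappa$ is empty then every element of $\kappa$ is a derangement and the bound is immediate; otherwise $M_\kappa$ is a coset of $M_0 := M \cap S$ in $M$, of size $|M_0|$, and $M_0$ is a proper subgroup of $S$ since $M \not\supseteq S$.

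Next I would bound the number of non-derangements via the class-sum estimate
\[
\Big|\bigcup_{m \in M_\kappa} m^H\Big| \;\leq\; \sum_{[m]} \frac{|H|}{|C_H(m)|},
\]
where $[m]$ runs over the distinct $H$-conjugacy classes meeting $M_\kappa$. The key input from the theory of finite groups of Lie type is that at most an $O_r(1/q)$ fraction of elements of $\kappa$ fail to be regular semisimple, and regular semisimple elements have centralizer in $H$ of order $\gg_r q^r$. Combining this with a uniform bound on the number of $H$-conjugacy classes of each type that can meet $M_\kappa$ --- obtained from the Aschbacher-style classification of maximal subgroups of $H$ --- shows that the above sum is at most $(1-\delta)|\kappa| + O_r(|\kappa|/q)$ for some type-dependent constant $\delta > 0$.

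For the identity coset $\kappa = S$ this argument is essentially the content of the original Fulman--Guralnick theorem applied directly to $S$ with its proper subgroup $M_0$. The main obstacle for a non-trivial coset $\kappa$ is transferring the conjugacy-class-size estimates --- which rest on Deligne--Lusztig character theory --- from $S$ to $\kappa$. I would carry this out by fixing a representative $s_0 \in \kappa$ and invoking the Lang--Steinberg theorem (Shintani descent) to identify $H$-conjugacy classes in $\kappa$ with ordinary conjugacy classes in a twisted form of $G^\circ(\FF_q)$; both the class-size lower bounds and the density of regular semisimple elements then apply verbatim. Taking the minimum of the finitely many type-dependent constants that arise from the Aschbacher families for rank at most $r$ produces a single $\delta > 0$ depending only on $r$, with the error $O_r(1/q)$ uniform in $H$ and in the coset $\kappa$.
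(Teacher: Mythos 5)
Your reduction steps are fine (non-derangements in $\kappa$ are exactly $\kappa\cap\bigcup_{h}hMh^{-1}=\bigcup_{m\in M\cap\kappa}m^{H}$, and $M\cap\kappa$ is empty or a coset of $M\cap S$), but the heart of your argument is asserted rather than proved, and the route you sketch does not work as stated. The claim that the naive class-sum bound $\sum_{[m]}|m^{H}|$, taken over the $H$-classes meeting $M_\kappa$, can be pushed below $(1-\delta)|\kappa|+O_r(|\kappa|/q)$ by combining regular-semisimple centralizer bounds with ``a uniform bound on the number of classes meeting $M_\kappa$'' is false in general: for $M$ a maximal parabolic (already for $H=\operatorname{PGL}_2(\FF_q)$ acting on $\PP^1$, i.e.\ a Borel) a positive proportion of \emph{all} semisimple classes of $H$ meet $M$, so the number of classes meeting $M_\kappa$ is of order $q^{r}$ while regular semisimple classes have size about $|H|/q^{r}$, and the class-sum upper bound is of order $|\kappa|$ or larger --- it gives no saving at all. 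To get a proportion of derangements bounded away from $0$ one needs fixed-point-ratio estimates, i.e.\ control of $|m^{H}\cap M|/|m^{H}|$ (equivalently of $\mathrm{fix}(m)/|\Omega|$), carried out uniformly over all Aschbacher/Liebeck--Seitz families of maximal subgroups and over cosets of $S$; that analysis \emph{is} the Fulman--Guralnick theorem, not something that follows from the classification of maximal subgroups plus centralizer orders. So your outline essentially assumes the result it is meant to prove. (Your Shintani-descent remark addresses a genuine point --- transferring regular-semisimple density to a nontrivial coset --- but it is secondary to this gap.)

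For comparison, the paper does not reprove any derangement estimate: after discarding finitely many $q$ (harmless because the bound is $\delta+O_r(1/q)$), it notes that $S$ may be assumed nonabelian simple with $H$ of socle $S$, identifies $H\subseteq G(\FF_q)$ with a group of inner-diagonal automorphisms of $S$ acting by conjugation, and then quotes Corollary~7.4 of \cite{Fulman:2012} together with the remark following it, which already treats derangements in $S$-cosets of an almost simple group of bounded rank. If you want a self-contained proof, you would need to reproduce the fixed-point-ratio machinery of that paper; otherwise the correct move here is the reduction-plus-citation the paper makes.
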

\begin{proof}
Since $G$ is geometrically simple and adjoint group, by taking $q$ sufficiently large in terms of $r$, we may assume that $S$ is a non-abelian simple group and that $H$ has socle $S$.   Conjugation on $S$ allows us to view $G(\FF_q)$, and hence also $H$,  as a subgroup of the automorphism group of $S$.   Our theorem is then a consequence of Corollary 7.4 in \cite{Fulman:2012} and the remark following it;  note that since $H\subseteq G(\FF_q)$, $H$ lies in the group of inner-diagonal automorphisms of $S$.
\end{proof}

\begin{remark} \label{R:derangement equivalence}
With notation as in Theorem~\ref{T:Fulman-Guralnick}, define the set $C := \bigcup_{h\in H} h M h^{-1}$.     Left multiplication gives a transitive action of $H$ on $\Omega=H/M$.  Note that an element $x\in H$ fixes a coset $hM \in H/M$ if and only if $x \in hMh^{-1}$.   So an element $x \in H$ is a derangement on $\Omega$ if and only if it does {not} lie in $C$.  In particular, for any $S$-coset $\kappa$ in $H$, we have $\delta(\kappa,\Omega) = 1 - {|C\cap \kappa|}/{|S|}$. 

Similarly, we could reformulate Proposition~\ref{P:coset gp theory}
 in terms of derangements.
\end{remark}

\subsection{Proof of Proposition~\ref{P:coset gp theory}}

 Since $G^\circ$ is a connected and adjoint, we have $G^\circ= \prod_{i=1}^n G_i$, where the $G_i$ are connected, adjoint and simple groups defined over $\FF_\ell$.  We have $S= S_1\times \cdots \times S_n$, where $S_i$ is the commutator subgroup of $G_i(\FF_\ell)$.    
 
 By excluding a finite number of primes $\ell$ that depend only on $r$ and $m$, we may assume that all the groups $S_i$ are non-abelian and simple and that $\ell  > m$.  Since $S_i$ contains an element of order $\ell$,  we have $|S_i| \geq\ell > m \geq [H:H_0]$.  Note that the proposition holds for the finite number of excluded primes by increasing the implicit constant.
 
\begin{lemma} \label{L:M S surjectivity}
The natural map $M\cap S\to S/(S\cap H_g)$ is surjective.
\end{lemma}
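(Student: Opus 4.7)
The plan is to prove the equivalent statement $(M \cap S)(S \cap H_g) = S$. Passing to the quotient $\bar H := H/(S \cap H_g)$, and denoting by $\bar S$, $\bar H_g$, $\bar H_0$, $\bar M$ the images of $S$, $H_g$, $H_0$, $M$, a Dedekind modular-law calculation shows the goal is equivalent to $\bar S \subseteq \bar M$. The crucial structural observation is that $\bar S \cap \bar H_g = 1$: since both $\bar S$ and $\bar H_g$ are normal in $\bar H$, their commutator lies in $\bar S \cap \bar H_g = 1$, so they commute elementwise, and $\bar S \bar H_g$ is an internal direct product $\bar S \times \bar H_g$ inside $\bar H$.

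The core of the argument combines Goursat's lemma with the order constraint $|S_i| > m \geq [H:H_0]$. From $MH_g = H$, the subgroup $K := \bar M \cap (\bar S \times \bar H_g)$ projects surjectively onto $\bar S$ with kernel $\bar M \cap \bar H_g$, so Goursat's lemma yields an isomorphism $\bar S/(\bar M \cap \bar S) \cong B/(\bar M \cap \bar H_g)$, where $B \subseteq \bar H_g$ is the second projection of $K$. Independently, $B/(B \cap \bar H_0)$ embeds into $\bar H/\bar H_0$, a group of order at most $m$. The common quotient $B/[(\bar M \cap \bar H_g)(B \cap \bar H_0)]$ is therefore simultaneously a quotient of the semisimple group $\bar S/(\bar M \cap \bar S)$ (a product of some of the non-abelian simple factors $S_i$) and a quotient of a group of order $\leq m$. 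Since each $|S_i| > m$, this common quotient must be trivial, giving $B = (\bar M \cap \bar H_g)(B \cap \bar H_0)$.

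The conclusion then follows in two short steps. First, given $s \in \bar S$, pick $m = sh \in K$ and decompose $h = n h_0$ with $n \in \bar M \cap \bar H_g$ and $h_0 \in B \cap \bar H_0 \subseteq \bar H_0 \cap \bar H_g$; using the commutativity of $\bar S$ and $\bar H_g$ gives $sh_0 = n^{-1}m \in \bar M \cap \bar H_0$, with image $s$ under the injection $\bar S \hookrightarrow \bar H_0/(\bar H_0 \cap \bar H_g)$. Hence the image of $\bar M \cap \bar H_0$ in $\bar H_0/(\bar H_0 \cap \bar H_g)$ contains the isomorphic copy of $\bar S$. Second, since $S$ is perfect, $[\bar H_0, \bar H_0] = \bar S$, so $[\bar M \cap \bar H_0, \bar M \cap \bar H_0] \subseteq \bar M \cap \bar S$; passing to $\bar H_0/(\bar H_0 \cap \bar H_g)$ and using that $\bar S$ itself is perfect, the image of this commutator subgroup contains $[\bar S, \bar S] = \bar S$. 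Since $\bar M \cap \bar S$ injects into $\bar H_0/(\bar H_0 \cap \bar H_g)$, this forces $\bar M \cap \bar S = \bar S$, i.e., $\bar S \subseteq \bar M$, as required. The main obstacle in this program is the Goursat-plus-order step: one must identify the common quotient correctly and leverage the fact that a product of simple factors with $|S_i| > m$ admits no non-trivial quotients of order $\leq m$.
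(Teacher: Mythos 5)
Your argument is correct, but it follows a genuinely different route from the paper's. The paper proves the lemma by a Jordan--H\"older count carried out in $H/H_g$: it first shows no $S_i$ can be a composition factor of $M/(M\cap S)$ (because $[M:M\cap H_0]\leq[H:H_0]\leq m<|S_i|$ and $(M\cap H_0)/(M\cap S)$ embeds in the abelian group $H_0/S$), and then, using surjectivity of $M\to H/H_g$ and normality of $SH_g/H_g$, concludes that every composition factor of $SH_g/H_g$ must already occur in the image of $M\cap S$, forcing $\varphi(M\cap S)=SH_g/H_g$. You instead pass to $\bar H=H/(S\cap H_g)$ (legitimate, since $S$ is characteristic in the normal subgroup $G^\circ(\FF_\ell)$, so $S\cap H_g\trianglelefteq H$), observe $\bar S\cap\bar H_g=1$ so that $\bar S\bar H_g$ is a direct product, apply Goursat to $K=\bar M\cap(\bar S\times\bar H_g)$ (surjectivity onto $\bar S$ coming from $MH_g=H$), kill the common quotient $B/[(\bar M\cap\bar H_g)(B\cap\bar H_0)]$ by playing the bound $[H:H_0]\leq m$ against the fact that every nontrivial quotient of $\bar S/(\bar M\cap\bar S)$ has order $>m$, and finish with the perfectness of $S$ together with $[\bar H_0,\bar H_0]\subseteq\bar S$ and injectivity of $\bar S\hookrightarrow\bar H_0/(\bar H_0\cap\bar H_g)$. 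I checked the individual steps (normality of $\bar M\cap\bar H_g$ and $B\cap\bar H_0$ in $B$, the identification of the Goursat kernels, the commutation trick giving $sh_0=n^{-1}m\in\bar M\cap\bar H_0$, and the final commutator step) and they all go through; note that, like the paper, you never actually need the surjectivity of $M\to H/H_0$, only the index bound. The two proofs ultimately rest on the same inputs --- $|S_i|>m\geq[H:H_0]$ and the ``abelian above $S$, perfect at $S$'' dichotomy --- but the paper's composition-factor count is shorter, while your Goursat/direct-product argument is more explicit (it exhibits, for each element of $\bar S$, a concrete element of $\bar M$ representing it modulo $S\cap H_g$) and avoids multiplicity bookkeeping in Jordan--H\"older series.
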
 
\begin{proof}
We claim that $S_i$ is not isomorphic to a composition factor of $M/(M\cap S)$ for any $1\leq i \leq n$.   Take any $1\leq i \leq n$. To prove the claim it suffices to show that neither of the groups $M/(M\cap H_0)$ or $(M\cap H_0)/(M\cap S)$ have a composition factor isomorphic to $S_i$;  note that $H_0$ is a normal subgroup of $H$ that contains $S$.    The group $S_i$ is not a composition factor of $M/(M\cap H_0)$ since $[M: M\cap H_0] \leq [H:H_0] \leq m <|S_i|$.  The group $S_i$ is not a composition factor of $(M\cap H_0)/(M\cap S)$ since we have an injective homomorphism $(M\cap H_0)/(M\cap S) \hookrightarrow H_0/S$ and $H_0/S$ is abelian.  This proves the claim.

Let $B_1,\ldots,B_m$ be the composition factors of $S/(S\cap H_g)\cong (S H_g)/H_g$.  Note that each $B_j$ is isomorphic to some $S_i$.  

Let $\varphi\colon M\to H/H_g$ be the quotient homomorphism.  We have $\varphi(M\cap S)\subseteq (S H_g)/H_g$.  The groups $B_1,\ldots, B_m$ occur, with multiplicity, as composition factors of $\varphi(M)$ since $\varphi$ is surjective by our assumptions on $M$ and $(S H_g)/H_g$ is normal in $H/H_g$.   By the claim, the group $M/(M\cap S)$, and hence also $\varphi(M)/\varphi(M\cap S)$, has no composition factors isomorphic to any $B_i$.  Therefore, the groups $B_1,\ldots, B_m$ occur, with multiplicity, as composition factors of $\varphi(M\cap S)$.  In particular, $|\varphi(M\cap S)| \geq |B_1|\cdots |B_m| =|(S H_g)/H_g|$. Since $\varphi(M\cap S)\subseteq (S H_g)/H_g$, this implies that $\varphi(M\cap S)= (S H_g)/H_g$.  The lemma follows by noting that $(S H_g)/H_g\cong S/(S\cap H_g)$.
\end{proof}

Define the subgroup $M_0:= M\cap H_0$ of $H_0$ and the subset 
\begin{align} \label{E:C0 defn}
C_0:=\bigcup_{h\in H_0} h M_0 h^{-1}
\end{align}
of $H_0$.  We have $M\cap S=M_0\cap S$ since $S$ is a subgroup of $H_0$.  Therefore,  $M_0\not\supseteq S$.  The natural homomorphism $M_0\cap S \to S/(S\cap H_g)$ is surjective by Lemma~\ref{L:M S surjectivity}.  This surjectivity and $M_0\not\supseteq S$  implies that $S\cap H_g\neq 1$.  The following lemma will be used to reduce to a setting where the group $G$ is connected.

\begin{lemma}  \label{L:connected reduction}
Let $\kappa$ be any coset of $H_g$ in $H_0\cdot H_g$.   Then there is a coset $\kappa_0$ of $S\cap H_g$ in $H_0$ such that 
\begin{equation} \label{E:connected redn}
 \frac{|C\cap \kappa|}{|H_g|} \leq 1 - \frac{1}{e} + \frac{1}{e}\cdot  \frac{|C_0 \cap \kappa_0|}{|S\cap H_g|},
\end{equation}
where $e:=[H_g : S\cap H_g]$.
\end{lemma}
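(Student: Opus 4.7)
The plan is to partition $\kappa$ into $e$ right-translates of $S\cap H_g$, exactly one of which lies inside $H_0$, and to show that only this ``special'' piece contributes nontrivially to the bound. Concretely, the idea is to prove $|C\cap\kappa|\le |C_0\cap\kappa_0|+(e-1)|S\cap H_g|$ for a suitably chosen coset $\kappa_0$, and then divide by $|H_g|=e|S\cap H_g|$ to obtain (\ref{E:connected redn}).

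Two preliminary identities drive the argument. First, because $M\to H/H_0$ is surjective and $H_0$ is normal in $H$, we have $H=H_0 M$. Writing any $h\in H$ as $h=h_0 m$ with $h_0\in H_0$ and $m\in M$ gives $hMh^{-1}=h_0\, m M m^{-1}\, h_0^{-1}=h_0 M h_0^{-1}$, hence
\[
C=\bigcup_{h_0\in H_0} h_0 M h_0^{-1}.
\]
Second, intersecting with $H_0$ and using normality, $h_0 M h_0^{-1}\cap H_0=h_0(M\cap H_0)h_0^{-1}=h_0 M_0 h_0^{-1}$, so
\[
C\cap H_0 = \bigcup_{h_0\in H_0} h_0 M_0 h_0^{-1} = C_0.
\]

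Since $\kappa\subseteq H_0\cdot H_g$, any $x\in\kappa$ factors as $x=h_0 h_g$ with $h_0\in H_0$, $h_g\in H_g$; this shows that $\kappa\cap H_0\ne\emptyset$ and allows us to pick $h_0\in H_0$ with $\kappa=h_0 H_g$. Set $\kappa_0:=h_0(S\cap H_g)$, which is a coset of $S\cap H_g$ in $H_0$. Choose right-coset representatives $t_1=1, t_2,\ldots, t_e$ for $S\cap H_g$ in $H_g$; then
\[
\kappa=\bigsqcup_{i=1}^{e} h_0(S\cap H_g)\, t_i,
\]
a disjoint union of subsets each of size $|S\cap H_g|$. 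For $i=1$ the piece is $\kappa_0\subseteq H_0$, so by the second identity above, $|C\cap\kappa_0|=|C_0\cap\kappa_0|$. For $i\ge 2$ we use the trivial bound $|C\cap h_0(S\cap H_g)t_i|\le|S\cap H_g|$. Summing gives $|C\cap\kappa|\le|C_0\cap\kappa_0|+(e-1)|S\cap H_g|$, and division by $|H_g|=e|S\cap H_g|$ yields (\ref{E:connected redn}).

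The argument is essentially a bookkeeping/decomposition, so there is no real obstacle. The only conceptual point is the observation that the conjugation in the definition of $C$ may be restricted to $H_0$ (this is forced by $M\to H/H_0$ being onto together with $H_0\triangleleft H$), which makes $C\cap H_0$ collapse to $C_0$ and concentrates all the nontrivial information into the single $H_0$-piece of $\kappa$.
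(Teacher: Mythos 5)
Your proof is correct and follows essentially the same route as the paper: split the $H_g$-coset $\kappa$ into its $e$ cosets of $S\cap H_g$, isolate the one piece $\kappa_0$ lying in $H_0$, use the surjectivity of $M\to H/H_0$ together with the normality of $H_0$ to get $C=\bigcup_{h\in H_0}hMh^{-1}$ and hence $C\cap H_0=C_0$, and bound the remaining $e-1$ pieces trivially. The extra detail you supply (explicit coset representatives $t_i$ and the factorization $h=h_0m$) is exactly what the paper leaves implicit.
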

\begin{proof}
We have $\kappa \cap H_0\neq \emptyset$ since $\kappa$ is a $H_g$-coset in $H_0\cdot H_g$.  Therefore, there is a $S\cap H_g$-coset $\kappa_0$ of $H_0$ satisfying $\kappa_0\subseteq \kappa$.   Since $\kappa$ is the disjoint union of $e$ different $S\cap H_g$-cosets, one of which is $\kappa_0$, we have
\[
|C \cap \kappa | \leq (e-1)|S\cap H_g|  + |C \cap \kappa_0|=  |H_g|-|S\cap H_g|  + |C \cap \kappa_0|,
\]
where the inequality uses the trivial upper bound for the $S\cap H_g$-cosets that are not $\kappa_0$.   Dividing by $|H_g|$, we deduce that $|C\cap\kappa|/|H_g| \leq 1-1/e+1/e\cdot |C\cap\kappa_0|/|S\cap H_g|$.  So to prove (\ref{E:connected redn}), it suffices to show that $C\cap \kappa_0= C_0\cap \kappa_0$.   

Since $\kappa_0 \subseteq H_0$ and $C_0 \subseteq H_0$, it thus suffices to show that $C \cap H_0 = C_0$.    Using that $M\to H/H_0$ is surjective, we find that $C = \cup_{h\in H_0} hMh^{-1}$.  Therefore, $C\cap H_0= \cup_{h\in H_0} h (M\cap H_0) h^{-1} = C_0$ as desired.
\end{proof}

\begin{prop}  \label{P:claim:gp}
Take any subgroup $M_1$ of $H_0$ satisfying $M_1\not\supseteq S$ for which $M_1\cap S \to S/(S\cap H_g)$ is surjective.  Define the subset $C_1:=\bigcup_{h\in H_0} h M_1 h^{-1}$ of $H_0$.  Then for any coset $\kappa_0$ of $S\cap H_g$ in $H_0$, we have
\[
\frac{|C_1 \cap \kappa_0|}{|S\cap H_g|}  \leq \delta_0 + O_r(1/\ell)
\]
with $0\leq \delta_0 <1$ depending only on $r$.
\end{prop}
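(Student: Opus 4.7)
My plan is to derive Proposition~\ref{P:claim:gp} from Theorem~\ref{T:Fulman-Guralnick} by reducing to the case of a geometrically simple group via the product decomposition $G^\circ = G_1 \times \cdots \times G_n$ set up in the paragraphs preceding the statement.

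First I unpack the product structure. After excluding the finitely many primes $\ell$ already discarded (depending only on $r$ and $m$), each $S_i$ is a nonabelian finite simple group, so any normal subgroup of $S = \prod_{i=1}^n S_i$ is a subproduct. In particular, $S_g := S \cap H_g = \prod_{i \in I} S_i$ for some $I \subseteq \{1,\ldots,n\}$; setting $S_g^c := \prod_{i \notin I} S_i$ gives $S = S_g \times S_g^c$. The hypothesis that $M_1 \cap S$ surjects onto $S/S_g \cong S_g^c$ combined with $M_1 \not\supseteq S$ forces $K_0 := M_1 \cap S_g$ to be a \emph{proper} subgroup of $S_g$.

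Next I isolate a single simple factor on which Fulman--Guralnick applies. Analyzing $K_0 \subsetneq \prod_{i \in I} S_i$ via Goursat (subdirect products of nonabelian simple factors), I select an index $i_0 \in I$ and pass through the projection $\pi_{i_0} \colon G^\circ(\FF_\ell) \to G_{i_0}(\FF_\ell)$. Set $H_{0,i_0} := \pi_{i_0}(H_0)$, which contains $S_{i_0}$. I then enlarge $\pi_{i_0}(M_1)$ to a maximal subgroup $\widetilde{M}$ of $H_{0,i_0}$ that still does not contain $S_{i_0}$, so that $\pi_{i_0}(C_1) \subseteq \widetilde{C} := \bigcup_{h \in H_{0,i_0}} h \widetilde{M} h^{-1}$. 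Now the counting is immediate: writing $\kappa_0 = h_0 S_g$, the fibers of $\pi_{i_0}$ restricted to $\kappa_0$ have constant size $|S_g|/|S_{i_0}|$, while the image $\pi_{i_0}(\kappa_0) = \pi_{i_0}(h_0)\,S_{i_0}$ is an $S_{i_0}$-coset in $H_{0,i_0}$, so
\[
\frac{|C_1 \cap \kappa_0|}{|S_g|} \;\leq\; \frac{|\widetilde{C} \cap \pi_{i_0}(\kappa_0)|}{|S_{i_0}|}.
\]
Applying Theorem~\ref{T:Fulman-Guralnick} to the geometrically simple adjoint group $G_{i_0}$ with $H = H_{0,i_0}$, $M = \widetilde{M}$, and $\kappa = \pi_{i_0}(\kappa_0)$ bounds the right-hand side by $1 - \delta + O_r(1/\ell)$ for some $\delta > 0$ depending only on $r$, and $\delta_0 := 1-\delta$ works.

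The principal obstacle lies in extracting the index $i_0$ and the maximal subgroup $\widetilde{M}$: because $M_1 \cap S$ may embed as a nontrivial Goursat diagonal in $\prod_{i \in I} S_i$, it can happen that $\pi_{i_0}(M_1) \supseteq S_{i_0}$ for \emph{every} $i_0 \in I$ even though $M_1 \not\supseteq S$. To get around this I will partition $I$ into Goursat blocks of $K_0$ and, on a block $B \subseteq I$ on which $K_0$ projects to a proper subgroup of $\prod_{i \in B} S_i$, apply the single-factor reduction to any $i_0 \in B$ (shrinking the projection via the block structure). A secondary subtlety is ensuring that a maximal overgroup of $\pi_{i_0}(M_1)$ in $H_{0,i_0}$ which avoids $S_{i_0}$ actually exists and is maximal in $H_{0,i_0}$ (not merely maximal among subgroups avoiding $S_{i_0}$); this follows because once $\pi_{i_0}(M_1) \not\supseteq S_{i_0}$ one has $\pi_{i_0}(M_1) \cdot S_{i_0} = H_{0,i_0}$ for any maximal-avoiding overgroup, by simplicity of $S_{i_0}$.
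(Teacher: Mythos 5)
Your reduction to a single simple factor breaks down in exactly the case that your last paragraph gestures at, and the proposed fix (``Goursat blocks of $K_0$, shrinking the projection via the block structure'') does not repair it. Take the configuration $S=S_1\times S_2$ with $S_1\cong S_2$, $S\cap H_g=S_1\times\{1\}$, and $M_1$ containing the graph $\{(s,f(s)):s\in S_1\}$ of an isomorphism $f\colon S_1\to S_2$. This satisfies all hypotheses of the proposition: $M_1\cap S\to S/(S\cap H_g)\cong S_2$ is surjective and $M_1\not\supseteq S$. Here $I=\{1\}$, so the only available index is $i_0=1$, and $\pi_1(M_1)\supseteq\pi_1(M_1\cap S)=S_1$; hence no maximal subgroup $\widetilde M$ of $\pi_1(H_0)$ containing $\pi_1(M_1)$ and avoiding $S_1$ exists, and Theorem~\ref{T:Fulman-Guralnick} has nothing to bite on. Worse, the projection step itself is lossy: your inequality bounds $|C_1\cap\kappa_0|/|S\cap H_g|$ by $|\pi_{1}(C_1)\cap\pi_{1}(\kappa_0)|/|S_1|$, and in this example $\pi_1(C_1)$ can be all of $\pi_1(H_0)$, so the right-hand side is $1$. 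The point is that the smallness of $C_1\cap\kappa_0$ here comes from the correlation between the two coordinates (inside the coset $\alpha S_1\times\{\beta\}$, membership in an $H_0$-conjugate of the diagonal forces the first coordinate into the conjugacy class of $\beta$), and projecting onto the factors of $S\cap H_g$ discards precisely that information. This is not a corner case; it is the case the paper spends the second half of its proof on (Lemmas~\ref{L:proj coset}, \ref{L:n is 1 or 2}, \ref{L:final case n 2}), and there the bound is obtained not from derangements but from a centralizer estimate: $|C_1\cap\kappa_0|\leq |G_1(\FF_\ell)|/|\calC_\beta|\ll_r |S_1|/\ell$, since a non-semisimple $\beta$ commutes with a nontrivial unipotent and a semisimple $\beta$ lies in a maximal torus. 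Your argument needs a separate mechanism of this kind for the diagonal configurations; Fulman--Guralnick on one factor cannot supply it.

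Two smaller problems in the ``good'' case as well. First, Theorem~\ref{T:Fulman-Guralnick} requires a \emph{geometrically} simple adjoint group, and the factors $G_{i}$ of $G^\circ$ are only simple over $\FF_\ell$; you need the extra step (as in the paper's $n=1$ case) of writing $G_{i_0}=\Res_{\FF_{\ell^e}/\FF_\ell}(\calG)$ and applying the theorem to $\calG$ over $\FF_{\ell^e}$. Second, your justification that a maximal overgroup of $\pi_{i_0}(M_1)$ avoiding $S_{i_0}$ is maximal in $H_{0,i_0}$ rests on the claim $\pi_{i_0}(M_1)\cdot S_{i_0}=H_{0,i_0}$, which does not follow from the hypotheses of Proposition~\ref{P:claim:gp} ($M_1$ is not assumed to surject onto anything outside $S$); this is fixable by replacing $H_{0,i_0}$ with the subgroup $\pi_{i_0}(M_1)\cdot S_{i_0}$ and discarding cosets that miss it, which is what the paper does with its group $H_1$, but as written the step is unjustified.
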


Suppose that Proposition~\ref{P:claim:gp} holds.   Take any $H_g$-coset $\kappa$ in $H_0\cdot H_g$.  By Lemma~\ref{L:connected reduction}, there is a 
$(S\cap H_g)$-coset $\kappa_0$ in $H_0$ such that (\ref{E:connected redn}) holds.  Proposition~\ref{P:claim:gp}, with $M_1=M_0$, implies that $|C_0 \cap \kappa_0|/{|S\cap H_g|}  \leq \delta_0 + O_r(1/\ell)$ holds with a constant $0\leq\delta_0<1$ depending only on $r$. From (\ref{E:connected redn}), we deduce that
\[
\frac{|C \cap \kappa|}{|H_g|} \leq \delta + O_r(1/\ell)
\]
with $\delta:=1-1/e+\delta_0/e=1+(-1+\delta_0)/e$.  We have $0\leq \delta<1$ since $0\leq \delta_0<1$.      We have 
\[
e\leq [G(\FF_\ell): S] \leq m\cdot  [G^\circ(\FF_\ell): S] \ll_{r} m.
\]   
So  $\delta<1$ depends only on $r$ and $m$.   This completes the proof of Proposition~\ref{P:coset gp theory} assuming Proposition~\ref{P:claim:gp}.\\

It thus remains to prove Proposition~\ref{P:claim:gp}.  Note that the only role that $H_g$ plays in the proposition is through its subgroup $S\cap H_g$, so without loss generality assume that $H_g$ is a normal subgroup of $S$.  Take any subgroup $M_1\subseteq H_0$ such that $M_1\not\supseteq S$ and such that $M_1\cap S \to S/(S\cap H_g)=S/H_g$ is surjective.  We thus have $H_g\neq 1$.  Define $C_1:=\bigcup_{h\in H_0} h M_1 h^{-1}$.     Since the proposition now only concerns subgroups of $G^\circ(\FF_\ell)$, we may assume without loss of generality that $G$ is connected and hence that $H=H_0$.      

\begin{lemma} \label{L:proj coset}
It suffices to prove Proposition~\ref{P:claim:gp} with the additional assumption that the projection homomorphism $M_1\cap S \hookrightarrow S \to \prod_{j\in J} S_j$ is surjective for all proper subsets $J\subseteq \{1,\ldots,n\}$.
\end{lemma}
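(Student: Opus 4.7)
The plan is to enlarge $M_1$ inside $H = H_0$, and if necessary pass to a quotient with fewer simple factors, so that $M_1 \cap S$ surjects onto $\prod_{j\in J} S_j$ for every proper subset $J\subsetneq \{1,\ldots,n\}$. The starting point is the observation that since $H \subseteq G^\circ(\FF_\ell) = \prod_{i=1}^n G_i(\FF_\ell)$ and the distinct factors $G_i(\FF_\ell)$ commute with one another, conjugation by $H$ on $S = \prod_{i=1}^n S_i$ preserves each factor $S_i$. Consequently the subproduct $N_I := \prod_{i\in I}S_i$ is a normal subgroup of $H$ for every subset $I\subseteq\{1,\ldots,n\}$.

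Now suppose there is a proper $J\subsetneq \{1,\ldots,n\}$ for which the projection $\pi_J(M_1\cap S)$ is a proper subgroup of $\prod_{j\in J} S_j$, and let $J' = \{1,\ldots,n\}\setminus J$, which is non-empty. I would replace $M_1$ with $\tilde M_1 := M_1\cdot N_{J'}$, which is a subgroup of $H$ by the normality of $N_{J'}$. A direct calculation gives $\tilde M_1\cap S = (M_1\cap S)\cdot N_{J'}$, so $\pi_J(\tilde M_1\cap S) = \pi_J(M_1\cap S)$ remains proper; thus $\tilde M_1\not\supseteq S$, and the surjection $\tilde M_1\cap S \to S/H_g$ is inherited from $M_1\cap S$. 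Moreover $C_1 \subseteq \tilde C_1 := \bigcup_{h\in H} h\tilde M_1 h^{-1}$, so any upper bound for $|\tilde C_1\cap \kappa_0|$ yields the same bound for $|C_1\cap \kappa_0|$. We are therefore free to replace $M_1$ by $\tilde M_1$ without loss.

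Iterating this construction, I would take $M_1$ to be maximal in $H$ subject to containing the original group and to $M_1 \not\supseteq S$. If the projection property then holds for all proper $J$, we are done; otherwise, by maximality and the previous paragraph, every proper $J$ with $\pi_J(M_1 \cap S)\subsetneq \prod_{j\in J} S_j$ must already satisfy $N_{J'}\subseteq M_1$. In this case I would fix such a $J$ and pass to the quotient $\bar G := G / \prod_{j\in J'} G_j \cong \prod_{j\in J} G_j$, together with the images $\bar H$, $\bar M_1$, $\bar H_g$, $\bar S$ under the quotient map. This yields an instance of Proposition~\ref{P:claim:gp} with only $|J| < n$ simple factors, and the required bound for $M_1$ will follow from the bound in the quotient. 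Induction on $n$, with the vacuous base case $n=1$, then completes the reduction.

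The main obstacle is making the quotient step fully rigorous: one must check that $\bar M_1$ does not contain the commutator subgroup of $\bar G(\FF_\ell)$ (which follows from $\pi_J(M_1\cap S)$ being proper in $\prod_{j\in J} S_j$), that $\bar M_1 \cap \bar S \to \bar S/\bar H_g$ remains surjective, and most delicately that the $H_g$-coset $\kappa_0$ descends to a single $\bar H_g$-coset in $\bar H$ in such a way that the ratio $|C_1\cap\kappa_0|/|H_g|$ is controlled by the analogous ratio in the quotient. The care needed here comes from the fact that $H_g$, although normal in $S$, need not split cleanly across the decomposition $S = \prod_i S_i$, so one has to track how $H_g$ interacts with $N_{J'}$ and with the preimage structure under the quotient. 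Once these verifications are carried out, the inductive reduction closes.
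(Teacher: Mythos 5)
Your plan is the same reduction the paper makes---project onto a subproduct of the simple factors and deduce the bound there---but as written it has two genuine gaps, precisely at the points you yourself flag. The first is the quantitative transfer: the whole content of the lemma is the inequality comparing $|C_1\cap\kappa_0|/|S\cap H_g|$ with the corresponding ratio for the projected data, and you only say this ratio must be ``controlled'' by the quotient and defer the verification. In the paper this is a short but essential count: the projection $\varphi$ onto $\prod_{j\in J}G_j(\FF_\ell)$, restricted to the coset $\kappa_0$, maps onto the coset $\tilde\kappa_0:=\varphi(\kappa_0)$ and is exactly $b$-to-$1$ with $b=|H_g|/|\tilde H_g|$, whence $|C_1\cap\kappa_0|/|H_g|\le |\varphi(C_1)\cap\tilde\kappa_0|\,b/|H_g| = |\tilde C_1\cap\tilde\kappa_0|/|\tilde H_g|$, where $\tilde C_1=\bigcup_{h\in\tilde H}h\tilde M_1h^{-1}\supseteq\varphi(C_1)$. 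Without this your induction on $n$ reduces nothing. (Your worry that $H_g$ ``need not split'' is moot: at this stage $H_g$ is a nontrivial normal subgroup of $S=\prod_i S_i$ with the $S_i$ nonabelian simple, hence is itself a subproduct; and in any case the $b$-to-$1$ argument requires no splitting.)

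The second gap is your parenthetical that $\bar M_1\not\supseteq\bar S$ ``follows from $\pi_J(M_1\cap S)$ being proper in $\prod_{j\in J}S_j$.'' That is a non sequitur: $\pi_J(M_1)$ can be much larger than $\pi_J(M_1\cap S)$, so properness of the latter does not by itself exclude $\pi_J(M_1)\supseteq\prod_{j\in J}S_j$. Indeed, if $\pi_J(M_1)\supseteq\bar S$, then $\pi_J(M_1\cap S)$ is normal in $\bar S$, hence a proper subproduct, and ruling this situation out takes a genuine argument using the remaining hypotheses: the paper arranges it by choosing the failing index set minimal, while in your variant (where maximality gives $N_{J'}\subseteq M_1$) one can argue via Goursat's lemma, the triviality of the centralizer of $S_j$ in the adjoint group $G_j(\FF_\ell)$, the surjectivity of $M_1\cap S\to S/H_g$, and an order comparison valid for $\ell\gg_{r,m}1$---none of which appears in the proposal. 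The remaining steps you give (the enlargement $M_1\mapsto M_1 N_{J'}$ and its effect on $C_1$, and the surjectivity of $\bar M_1\cap\bar S\to\bar S/\bar H_g$, which follows by lifting an element of $\bar S$ and adjusting by $H_g$) are fine; once the two missing verifications are supplied, your argument closes and is essentially the paper's, with maximality plus induction on $n$ standing in for the paper's choice of a minimal failing subset.
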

\begin{proof}
    Since $M_1\not\supseteq S$, there is a minimal (non-empty) subset $I \subseteq \{1,\ldots, n\}$ for which the projection $M_1\cap S \to \prod_{i\in I} S_i$ is not surjective.   Define the projection
\[
\varphi \colon G(\FF_\ell) \to {\prod}_{i\in I} G_i(\FF_\ell) = \tilde G(\FF_\ell),
\]
where $\tilde{G}:=\prod_{i\in I} G_i$.  
Define the groups $\tilde H:=\varphi(H)$, $\tilde H_g:=\varphi(H_g)$ and $\tilde M_1 := \varphi(M_1)$.     The group $\varphi(S)$ equals $\tilde S:=\prod_{i\in I} S_i$.     By our choice of $I$, we have $\tilde M_1 \not\supseteq \tilde S$.  The natural homomorphism $M_1\cap S \to S/H_g$ is surjective and hence so is $\tilde M_1 \cap \tilde S \to \tilde S/ \tilde H_g$.

Take any coset $\kappa_0$ of $S\cap H_g=H_g$ in $H$.   Define $\tilde\kappa_0:=\varphi(\kappa_0)$; it is a coset of $\tilde H_g$ in $\tilde H$.   The map $\kappa_0 \to \tilde{\kappa}_0$, $x\mapsto \varphi(x)$ is $b$-to-$1$ with $b:=|H_g|/|\tilde H_g|$.    Therefore,
\begin{equation} \label{E:proj reduction}
\frac{|C_1 \cap \kappa_0|}{|H_g|} \leq \frac{|\varphi(C_1) \cap \varphi(\kappa_0)| \cdot b}{|H_g|} = \frac{|\varphi(C_1) \cap \tilde\kappa_0| \cdot b}{ |\tilde H_g|\cdot b} = \frac{|\tilde C_1 \cap \tilde\kappa_0|}{|\tilde H_g|},
\end{equation}
where $\tilde C_1:= \cup_{h \in \tilde H} h \tilde M_1 h^{-1}$.

The inequality (\ref{E:proj reduction}) shows that is suffices to prove Proposition~\ref{P:claim:gp} with $(G,H,H_g,M_1)$ replaced by $( \tilde G,\tilde H,\tilde H_g,\tilde M_1)$; note that we have already verified the required properties and that the rank of $\tilde G$ is at most the rank of $G$.   By the minimality of our choice of $I$, the projection $\tilde M_1 \cap \tilde S \to \prod_{j\in J} S_j$ is surjective for each proper subset $J\subseteq I$.  The lemma is now immediate.
\end{proof}

By Lemma~\ref{L:proj coset}, we may now assume that the projection $M_1\cap S \to \prod_{j\in J} S_j$ is surjective for every proper subset $J\subseteq \{1,\ldots,n\}$.    

\begin{lemma} \label{L:n is 1 or 2}
To prove Proposition~\ref{P:claim:gp}, it suffices to assume that $n=1$ or that $n=2$  and there is a group isomorphism $f\colon S_1\to S_2$ such that $M_1\cap S =\{(s,f(s)): s \in S_1\}$.
\end{lemma}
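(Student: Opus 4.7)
The plan is to analyze $M_1 \cap S$ as a subdirect product of the simple factors $S_1, \ldots, S_n$ and to show that for $n \geq 3$ the combined hypotheses of Proposition~\ref{P:claim:gp} and the surjectivity condition inherited from Lemma~\ref{L:proj coset} cannot simultaneously hold, while for $n = 2$ they force $M_1 \cap S$ to be the graph of an isomorphism $S_1 \to S_2$. The main obstacle is the $n \geq 3$ case, where one must rule out exotic subdirect structures; the small-prime reductions made earlier in the section let me assume that each $S_i$ is a non-abelian simple group.

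First I would observe, for each index $i$, that the kernel $K_i$ of the projection $M_1 \cap S \to \prod_{j \neq i} S_j$ (naturally identified with a subgroup of $S_i$) is a normal subgroup of $S_i$: indeed the projection $M_1 \cap S \to S_i$ is surjective (the case $J = \{i\}$ of the hypothesis, valid whenever $n \geq 2$), and a short conjugation in $M_1 \cap S$ shows that $s K_i s^{-1} \subseteq K_i$ for every $s \in S_i$. By simplicity, $K_i \in \{1, S_i\}$. If $K_i = S_i$ for some $i$, then $S_i \subseteq M_1 \cap S$; combined with the surjection onto the complement, this forces $M_1 \cap S = S$, contradicting $M_1 \not\supseteq S$. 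Hence all $K_i = 1$ and each projection $M_1 \cap S \to \prod_{j \neq i} S_j$ is an isomorphism.

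Next I would rule out $n \geq 3$. The previous step provides a homomorphism $\phi\colon \prod_{j \neq 1} S_j \to S_1$ whose graph is $M_1 \cap S$, and each restriction $\phi|_{S_j}$ is a map between non-abelian simple groups, so either trivial or an isomorphism. The images of any two distinct nontrivial restrictions would commute inside the centerless group $S_1$, so at most one $\phi|_{S_j}$ can be an isomorphism. If all are trivial then $M_1 \cap S = \{1\} \times \prod_{j \geq 2} S_j$, whose projection to $S_1 \times S_2$ is $\{1\} \times S_2$; if only $\phi|_{S_k}$ is an isomorphism then the projection to $S_1 \times S_k$ is the graph of $\phi|_{S_k}$. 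Neither equals the target pair, contradicting that all pairs are proper subsets once $n \geq 3$. Hence $n \geq 3$ is impossible.

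For $n = 2$, Goursat's lemma (\cite{MR0457455}*{Lemma~5.2.1}) applied to $M_1 \cap S \leq S_1 \times S_2$, together with the simplicity of $S_1$ and $S_2$, leaves only two possibilities for $M_1 \cap S$: the full product $S = S_1 \times S_2$, excluded by $M_1 \not\supseteq S$, or $M_1 \cap S = \{(s, f(s)) : s \in S_1\}$ for some isomorphism $f\colon S_1 \to S_2$. Combined with the unrestricted case $n = 1$, this yields the reduction claimed by the lemma.
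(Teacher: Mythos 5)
Your first two steps are sound and your $n=2$ case is exactly the paper's (Goursat plus simplicity of $S_1,S_2$; the relevant citation there is Lemma~5.1.1 of \cite{MR0457455}). The genuine gap is in the $n\geq 3$ step. You justify the dichotomy ``each restriction $\phi|_{S_j}\colon S_j\to S_1$ is either trivial or an isomorphism'' solely by the remark that it is a map between non-abelian simple groups. That statement is false in general: a nontrivial homomorphism between non-abelian simple groups is injective but need not be surjective, and proper embeddings of one finite simple group of Lie type into another in the same characteristic (which is what the $S_i$ are here) certainly occur. Your subsequent case analysis -- ``all restrictions trivial'' or ``exactly one restriction is an isomorphism'' -- therefore omits the possibility that one or more restrictions are proper embeddings, and nothing in your argument excludes that configuration, so the contradiction for $n\geq 3$ is not established as written.

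The gap is fixable with the tools you already have: the projection $M_1\cap S\to S_1$ is surjective (the case $J=\{1\}$ of the inherited hypothesis), so $\phi$ is surjective; the pairwise-commuting images $\phi(S_2),\dots,\phi(S_n)$ then generate $S_1$, so each $\phi(S_j)$ is normalized by all of $S_1$ and hence, by simplicity, equals $1$ or $S_1$; and two restrictions with image $S_1$ would force $S_1$ to be abelian. This gives the dichotomy you asserted, after which your projection-to-a-pair contradiction goes through. For comparison, the paper disposes of $n\geq 3$ in one stroke: since the projections of $M_1\cap S$ onto every pair $S_i\times S_j$ are surjective and the $S_i$ have no nontrivial abelian quotients, Lemma~5.2.2 of \cite{MR0457455} gives $M_1\cap S=S$, contradicting $M_1\not\supseteq S$; your kernel/graph analysis is a reasonable bare-hands substitute for that lemma, but only once the missing normality argument above is supplied.
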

\begin{proof}
Suppose that $n\geq 3$.  Then the projection $M_1\cap S \to S_i \times S_j$ is surjective for all distinct $i,j\in \{1,\ldots, n\}$.   Since the groups $S_i$ have no nontrivial abelian quotients, Lemma~5.2.2 of \cite{MR0457455} implies that $M_1\cap S = S$.   However, this is a contradiction since $M_1\not\supseteq S$ by assumption.   Therefore, $n\leq 2$.

Suppose that $n=2$.   The projection $M_1\cap S\to S_i$ is surjective for $i\in \{1,2\}$.   Using that $M_1\not\supseteq S$ and that the non-abelian  groups $S_i$ are simple, Goursat's lemma (\cite{MR0457455}*{Lemma~5.1.1}) implies that $M_1\cap S = \{(s,f(s)) : s\in S_1\}$ for some group isomorphism $f\colon S_1\to S_2$.
\end{proof}

By Lemma~\ref{L:n is 1 or 2}, we may assume that $n\leq 2$.  

\noindent $\bullet$ First consider the case $n=1$.  Since $H_g$ is a non-trivial normal subgroup of $S=S_1$ and $S_1$ is simple, we have $H_g=S$.   Take any coset $\kappa_0$ of $S\cap H_g =S$ in $H_0$. 

Since $n=1$, the connected and adjoint group $G$ is simple.  
There is an integer $e\geq 1$ and a connected, {geometrically simple} and adjoint linear algebraic group $\calG$ defined over $\FF_{\ell^e}$ such that $G$ is isomorphic to the Weil restriction $\Res_{\FF_{\ell^e}/\FF_\ell}(\calG)$, cf.~\cite{MR1632779}*{Theorem~26.8}.   Without loss of generality, we may assume that $G=\Res_{\FF_{\ell^e}/\FF_\ell}(\calG)$ and hence $G(\FF_\ell)=\calG(\FF_{\ell^e})$.   In particular, we can view $S$ as the commutator subgroup of $\calG(\FF_{\ell^e})$, and $H_0$ and $M_1$ as subgroups of $\calG(\FF_{\ell^e})$.   

Define $C_1=\bigcup_{h\in H_0} h M_1 h^{-1}$.   Let $H_1$ be the subgroup of $H_0$ generated by $M_1$ and $S$; it is a normal subgroup of $H_0$.  If $\kappa_0 \cap H_1 = \emptyset$, then $C_1\cap \kappa_0 = \emptyset$ and the bound of Proposition~\ref{P:claim:gp} is trivial for the coset $\kappa_0$.   So we may assume that $\kappa_0$ is an $S$-coset in $H_1$.   Since $M_1\to H_1/S$ is surjective, we can replace $M_1$ by a maximal subgroup of $H_1$; it still will satisfy the conditions of Proposition~\ref{P:claim:gp} and the set $C_1$ will only get larger.

Note that the rank of $\calG$ is at most $r$.  By Theorem~\ref{T:Fulman-Guralnick} and Remark~\ref{R:derangement equivalence},  applied with the algebraic group $\calG/\FF_{\ell^e}$, we have
\[
\frac{|C_1 \cap \kappa_0|}{|S\cap H_g|} = \frac{|C_1 \cap \kappa_0|}{|S|}=1-\delta(\kappa_0,H_1/M_1) \leq 1- \delta + O_r(1/\ell)
\]
with a constant $0<\delta\leq 1$ depending only on $r$.   This completes the proof of Proposition~\ref{P:claim:gp} in the case $n=1$.\\

\noindent $\bullet$ Finally, consider the case $n=2$.   Since $H_g\neq 1$ is a normal subgroup of $S$, $H_g$ is equal to $\{1\}\times S_2$,  $S_1\times\{1\}$ or $S_1\times S_2$. The following lemma allows us to make some further reductions.

\begin{lemma} \label{L:final case n 2}
It suffices to prove Proposition~\ref{P:claim:gp} in the case $n=2$ with $G_1=G_2$,  $H_g=S_1\times \{1\}$ and $M_1=\{(g,g) : g \in G_1(\FF_\ell)\}$.
\end{lemma}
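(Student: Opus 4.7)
The plan is to perform three reductions: first identify the two simple factors so that $G_2 = G_1$ and $M_1 \cap S$ becomes the diagonal of $S_1$; next enlarge $M_1$ to the full diagonal $\{(g,g) : g \in G_1(\FF_\ell)\}$; and finally reduce the three possible normal subgroups $H_g$ of $S$ to $S_1 \times \{1\}$. For the first reduction, since $M_1 \cap S$ is the graph of an isomorphism $f\colon S_1 \to S_2$ between non-abelian finite simple groups of Lie type in characteristic $\ell$, the classification of finite simple groups (valid for $\ell$ large in terms of $r$) together with the description of their automorphism groups produces an algebraic isomorphism $\widetilde{f}\colon G_1 \to G_2$ over $\FF_\ell$ that, after adjustment by an inner-diagonal automorphism of $G_2$, restricts to $f$ on $S_1$. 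Transporting by the algebraic automorphism $(x,y) \mapsto (x, \widetilde{f}^{-1}(y))$ of $G$ allows us to relabel so that $G_2 = G_1$ and $M_1 \cap S = \{(s,s) : s \in S_1\}$.

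For the second reduction, I claim $M_1$ is contained in the full diagonal $\widetilde{M}_1 := \{(g,g) : g \in G_1(\FF_\ell)\}$. Since $S$ is normal in $H_0$, the subgroup $M_1 \cap S$ is normal in $M_1$; for any $(x,y) \in M_1$ and $s \in S_1$, the conjugate $(xsx^{-1}, ysy^{-1})$ must lie on the diagonal, forcing $y^{-1}x$ to centralize $S_1$. Because $G_1$ is adjoint and $S_1$ is Zariski dense in $G_1$ for $\ell$ large (depending on $r$), this centralizer in $G_1(\FF_\ell)$ is trivial, so $x = y$. Replacing $M_1$ by $\widetilde{M}_1$ only enlarges $C_1$, so it suffices to bound $\widetilde{C}_1 := \bigcup_{h \in H_0} h \widetilde{M}_1 h^{-1}$, and the hypotheses of Proposition~\ref{P:claim:gp} are preserved: $\widetilde{M}_1 \cap S = M_1 \cap S$ still surjects onto $S/H_g$, and $\widetilde{M}_1 \not\supseteq S$ since $(s,1) \in \widetilde{M}_1$ forces $s = 1$.

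For the third reduction, the nontrivial normal subgroups of $S = S_1 \times S_1$ are exactly $S_1 \times \{1\}$, $\{1\} \times S_1$, and $S$. If $H_g = \{1\} \times S_1$, I apply the algebraic swap $\sigma(x,y) = (y,x)$, which fixes the diagonal $\widetilde{M}_1$ and sends $H_g$ to $S_1 \times \{1\}$, reducing to the desired case. If $H_g = S$, I apply Proposition~\ref{P:claim:gp} with the smaller normal subgroup $H_g' = S_1 \times \{1\}$ in place of $H_g$; its hypotheses hold because the diagonal $\widetilde{M}_1 \cap S$ surjects onto $S/H_g' \cong S_1$. Each $S$-coset $\kappa$ in $H_0$ decomposes into $|S_1|$ cosets $\kappa'$ of $S_1 \times \{1\}$, so summing the resulting bounds $|\widetilde{C}_1 \cap \kappa'| \leq (\delta_0 + O_r(1/\ell))|S_1|$ over these cosets and dividing by $|S| = |S_1|^2$ yields $|\widetilde{C}_1 \cap \kappa|/|S| \leq \delta_0 + O_r(1/\ell)$ as required.

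The main obstacle is the first reduction: promoting the abstract group isomorphism $f\colon S_1 \to S_2$ to one compatible with the algebraic-group structures on $G_1$ and $G_2$. This requires invoking the classification of outer automorphism groups of finite simple groups of Lie type and the fact that inner-diagonal, graph, and field automorphisms all lift to the algebraic-group level over $\FF_\ell$. Steps two and three are purely group-theoretic and elementary once the identification of Step one is in place.
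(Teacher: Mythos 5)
Your second and third reductions are exactly the paper's: show $M_1$ lies in the full diagonal using normality of $M_1\cap S$ in $M_1$ plus triviality of the centralizer of $S_1$ (adjointness), enlarge $M_1$ to the diagonal, dispose of $H_g=\{1\}\times S_2$ by symmetry, and handle $H_g=S$ by checking the diagonal surjects onto $S/(S_1\times\{1\})$ and breaking $S$-cosets into $S_1\times\{1\}$-cosets; all of this is correct. Where you genuinely diverge is the first step: you invoke the classification of finite simple groups and Steinberg's description of automorphisms of groups of Lie type to promote the abstract isomorphism $f\colon S_1\to S_2$ to an $\FF_\ell$-algebraic isomorphism $\widetilde f\colon G_1\to G_2$, and you rightly flag this as the delicate point. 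Note the extra wrinkle that the $G_i$ here are only $\FF_\ell$-simple, i.e.\ of the form $\Res_{\FF_{\ell^{e_i}}/\FF_\ell}(\calG_i)$ with $\calG_i$ geometrically simple, so the ``field automorphisms'' in Steinberg's theorem are automorphisms of $\FF_{\ell^{e_i}}$ and one must still verify they are induced by $\FF_\ell$-rational automorphisms of the restriction of scalars (true, but not free as stated). The paper bypasses all of this machinery: since the $G_i$ are adjoint, conjugation gives a faithful action of $G_i(\FF_\ell)$ on $S_i$, so after identifying $S_1=S_2$ via $f$ one views both $G_1(\FF_\ell)$ and $G_2(\FF_\ell)$ inside $\Aut(S_1)$; the same normality argument you use then shows the two coordinates of any element of $M_1$ act identically on $S_1$, hence coincide in $\Aut(S_1)$, which is all that the enlargement to the full diagonal and the subsequent centralizer estimate in the single group $G_1$ require. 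So your route is viable but buys nothing extra: the only data used downstream are the finite diagonal $\{(g,g):g\in G_1(\FF_\ell)\}$ and conjugacy-class bounds in $G_1$, and the CFSG/Steinberg input can be replaced by the one-line adjointness observation.
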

\begin{proof}
Using Lemma~\ref{L:n is 1 or 2}, we make an identification $S_1=S_2$ of abstract groups so that $M_1\cap S =\{(s,s): s \in S_1\}$.   Since the groups $G_i$ are adjoint, we find that conjugation gives a faithful action of $G_i(\FF_\ell)$ on $S_1=S_2$.   So we may identify $G_i(\FF_\ell)$ with a subgroup of $\Aut(S_1)$.  

Take any $(g_1,g_2)\in M_1$.   Since $M_1\cap S$ is a normal subgroup of $M_1$, we have $g_1 s g_1^{-1} = g_2 s g_2^{-1}$ for all $s\in S_1=S_2$.    Therefore, $g_1$ and $g_2$ are equal elements of $\Aut(S_1)$.   Therefore, $M_1$ is a subgroup of $\{(g,g): g \in G_1(\FF_\ell)\}$.   To prove the claim, there is no harm in increasing $M_1$ to be equal to $\{(g,g): g \in G_1(\FF_\ell)\}$; it also does not contain $S=S_1\times S_2$.   We may thus assume that $G_1=G_2$.

We have already observed that $H_g\in \{\{1\}\times S_2, S_1\times\{1\}, S_1\times S_2\}$.    By symmetry, we may assume that $H_g$ is $S_1\times\{1\}$ or $S_1\times S_2$.  From our explicit description of $M_1$, the homomorphism $M_1 \cap S \to S/(S_1\times\{1\})$ is surjective.   So we may assume that $H_g=S_1\times\{1\}$; note that the $S_1\times S_2$ cosets can be broken up into $S_1\times\{1\}$-cosets.
\end{proof}

We finally assume that we are in the setting of Lemma~\ref{L:final case n 2}.   Take any coset $\kappa_0$ of $S_1\times\{1\}$ in $G(\FF_\ell)$.    We have $\kappa_0= \alpha S_1 \times \{\beta\}$ for some $\alpha,\beta\in G_1(\FF_\ell)$. Using our explicit description of $M_1$, we have
\[
C_1 \cap \kappa_0 \subseteq \{ (g,\beta) : \text{$g\in G_1(\FF_\ell)$ is conjugate to $\beta$ in $G_1(\FF_\ell)$}\}.
\]
Therefore, $|C_1 \cap \kappa_0| \leq |G_1(\FF_\ell)|/|\calC_\beta|$, where $\calC_\beta$ is the centralizer of $\beta$ in $G_1(\FF_\ell)$.  If $\beta$ is semisimple in $G_1$, then it lies in a maximal torus (of rank $r$) and hence $|\calC_\beta|\gg_r \ell^r$.  If $\beta$ is not semisimple, then it commutes with a non-trivial unipotent element of $G_1(\FF_\ell)$ (whose order is a power of $\ell$).   Therefore,
\[
|C_1 \cap \kappa_0| \leq |G_1(\FF_\ell)|/|\calC_\beta| \ll_r  |G_1(\FF_\ell)|/\ell \ll_r |S_1|/\ell,
\]
where the last inequality uses that $[G_1(\FF_\ell):S_1]$ can be bounded in terms of $r$.   

We deduce that $|C_1 \cap \kappa_0|/{|H_g|}=|C_1 \cap \kappa_0|/{|S_1|} \ll_r 1/\ell$.  This completes our proof of Proposition~\ref{P:claim:gp}.  

\section{Hilbert irreducibility} \label{S:HIT}

  Fix an abelian scheme $A\to U$ of relative dimension $g\geq 1$, where $U$ is a non-empty open subscheme of $\PP^n_K$ with $n\geq 1$ and $K$ a number field.   Take any constant $b_A$ as in Theorem~\ref{T:geometric gp theory}.   For each prime $\ell\geq b_A$, we have $\bbar\rho_{A,\ell}(\pi_1(U)) \supseteq \calS_{A,\ell}(\FF_\ell)'$.    For a prime $\ell\geq b_A$ and real $x$, define the set
\[
B_\ell(x):=\{ u \in U(K) : H(u)\leq x,\,    \bbar\rho_{A_u,\ell}(\Gal_K) \not \supseteq \calS_{A,\ell}(\FF_\ell)' \}.
\]
  
In this section, we will prove the following.

\begin{thm} \label{T:almost there mod ell}
For each prime $\ell\geq b_A$ and  $x\geq 2$, we have 
\[
|B_\ell(x)| \ll_{A} \,\, (\ell+1)^{3g(2g+1)/2} \cdot x^{[K:\QQ](n + 1/2)} \log x + (\ell+1)^{(6n+15/2)g(2g+1)}.
\]
\end{thm}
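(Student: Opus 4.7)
The plan is to apply the effective Hilbert irreducibility estimate of Theorem~\ref{T:HIT} to the mod-$\ell$ representation $\bbar\rho_{A,\ell}\colon\pi_1(U)\to H$, where $H := \bbar\rho_{A,\ell}(\pi_1(U))$ and $H_g := \bbar\rho_{A,\ell}(\pi_1(U_{\Kbar}))$, with the hypothesis $\delta<1$ supplied by the group-theoretic derangement bound of Proposition~\ref{P:coset gp theory}. By Theorem~\ref{T:geometric gp theory}(ii), for $\ell\geq b_A$ the subgroup $S := \calS_{A,\ell}(\FF_\ell)'$ is contained in $H$ and normal in it. The first move is a covering argument: for each $u\in B_\ell(x)$ the image $\bbar\rho_{A_u,\ell}(\Gal_K)$ fails to contain $S$, so it lies inside some maximal subgroup $M\subseteq H$ with $M\not\supseteq S$. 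Setting $C_M := \bigcup_{h\in H}hMh^{-1}$, one obtains the cover
\[
B_\ell(x) \subseteq \bigcup_{[M]}\{u\in U(K): H(u)\leq x,\ \bbar\rho_{A_u,\ell}(\Gal_K)\subseteq C_M\},
\]
where $[M]$ ranges over the $H$-conjugacy classes of such maximal $M$; Aschbacher-type structure results for the finite classical group $\calG_{A,\ell}(\FF_\ell)$ bound the number of such classes by $O_g(1)$.

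I would then apply Theorem~\ref{T:HIT} to each $C_M$, taking the auxiliary Galois subextension $F\subseteq L$ to be the fixed field of $(H_0\cdot H_g)/H_g$ inside $\Gal(L/K)=H/H_g$, where $H_0 := H\cap \calG_{A,\ell}^\circ(\FF_\ell)$. With this choice $\varphi^{-1}(\Gal(L/F)) = H_0\cdot H_g$, so the $H_g$-cosets appearing in the definition of $\delta$ in Theorem~\ref{T:HIT} are exactly those in $H_0\cdot H_g$, to which Proposition~\ref{P:coset gp theory} applies. After passing to the adjoint quotient of $\calG_{A,\ell}^\circ$ modulo $\ell$ (whose kernel is a central torus, contributing only bounded error), Proposition~\ref{P:coset gp theory} yields $|C_M\cap\kappa|/|H_g|\leq \delta_0 + O_g(\ell^{-1}) < 1$ uniformly for $\ell$ larger than a constant depending only on $g$, with $0\leq \delta_0<1$ depending only on the rank of $\calG_{A,\ell}^\circ$ and $[H:H_0]$, both bounded in terms of $A$; the finitely many small $\ell$ are absorbed into the implicit $O_A$-constant. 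The surjectivity hypotheses of Proposition~\ref{P:coset gp theory} ($M\to H/H_g$ and $M\to H/H_0$ both surjective) hold for all but finitely many conjugacy classes of $M$, and the excluded classes correspond to $u$ satisfying an $\ell$-independent condition, which can be handled directly via Corollary~\ref{C:HIT} applied to the representation $\pi_1(U)\to H/(H_g\cap H_0)$, whose image is of size bounded in terms of $A$.

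The final step is to track the $\ell$-dependence of the quantities entering Theorem~\ref{T:HIT}. The set $\scrS$ may be taken to consist of the (fixed, $\ell$-independent) bad-reduction primes of $\pi$ together with primes of $K$ above $\ell$, so $|\scrS|=O_A(1)$. Using that $\calG_{A,\ell}^\circ$ sits inside $\GSp_{2g}$ up to conjugation (from a polarization on $A$), one obtains dimension bounds giving $|H|$, $|H_g|$, $|C_M| \ll_A (\ell+1)^{g(2g+1)}$ up to lower-order factors. Substituting into Theorem~\ref{T:HIT}, summing over the $O_g(1)$ conjugacy classes of maximal $M$, and bookkeeping the $(1-\delta)^{-1}$ factor yields the claimed bound: the main $x$-term picks up the factor $(\ell+1)^{3g(2g+1)/2}$ from the combined coset and Chebotarev constants, while the constant term $|C_M|^{2n+2}|H_g|^{4n+4}$ contributes $(\ell+1)^{(6n+15/2)g(2g+1)}$. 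The principal obstacle is group-theoretic: verifying the surjectivity hypotheses of Proposition~\ref{P:coset gp theory} for the maximal subgroups at hand while cleanly handling the passage to the adjoint quotient, and ensuring that every implicit constant in Theorem~\ref{T:HIT}---including those arising from $F$ (which varies with $\ell$, though $[F:K]$ is uniformly bounded in terms of $A$) and from the $(1-\delta)^{-1}$ factor---is controlled independently of $\ell$.
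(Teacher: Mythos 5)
Your overall strategy is the paper's: reduce to the effective large-sieve estimate of Theorem~\ref{T:HIT} with the derangement bound of Proposition~\ref{P:coset gp theory} supplying $\delta<1$, pass to the adjoint quotient, spread out to get an $\ell$-independent $\scrS$, bound $F$, $r$, $m$ uniformly via Lemma~\ref{L:connected independence families}, and bound $d$ by $g(2g+1)$ through a polarization. But there is a genuine gap at the crucial counting step. You cover $B_\ell(x)$ by conjugacy classes of maximal subgroups $M\subseteq H$ with $M\not\supseteq S$ and assert that ``Aschbacher-type structure results'' bound the number of such classes by $O_g(1)$. The group $H$ is not an almost simple classical group: it is an essentially arbitrary subgroup of $\calG_{A,\ell}(\FF_\ell)$ squeezed between $\calS_{A,\ell}(\FF_\ell)'$ and $\calG_{A,\ell}(\FF_\ell)$, with a possibly large abelian part and nontrivial component group, and no uniform $O_g(1)$ classification of its maximal subgroups is available (nor invoked correctly). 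The paper avoids this entirely: it only considers subgroups $M$ satisfying the two surjectivity conditions, takes maximal elements of that family, and bounds their \emph{number} (not classes) by the Liebeck--Pyber--Shalev resolution of Wall's conjecture, $O(|\widetilde H|^{3/2})$ maximal subgroups of $\widetilde H=M\cdot S$, which after the adjoint reduction gives $|\widetilde\calM|\ll_{r,m}|G^\circ(\FF_\ell)|^{3/2}\leq(\ell+1)^{3d/2}$. That is precisely where the factor $(\ell+1)^{3g(2g+1)/2}$ in the main term comes from; your bookkeeping attributes it to ``combined coset and Chebotarev constants,'' but in Theorem~\ref{T:HIT} the main-term constant depends only on $U$, $F$, $\delta$, so under your $O_g(1)$ claim that factor has no source, and without that claim your covering argument has no uniform bound on the number of sets being summed.

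A second, smaller but real, defect is your fallback for the surjectivity hypotheses of Proposition~\ref{P:coset gp theory}. For a specialization, surjectivity of $\rho_u(\Gal_K)\to H/H_g$ is automatic; the condition that can fail is surjectivity onto $H/H_0$, and this is a condition on $u$, not on the conjugacy class of $M$. You propose to handle the exceptional $u$ by Corollary~\ref{C:HIT} applied to $\pi_1(U)\to H/(H_g\cap H_0)$, claiming its image has size bounded in terms of $A$. That is false: $H/H_g$ already has order growing with $\ell$ (e.g.\ through the mod-$\ell$ similitude/cyclotomic character), so the implicit constant in Corollary~\ref{C:HIT}, which depends on the order of the target group, would depend on $\ell$. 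The paper instead applies Corollary~\ref{C:HIT} only to $\beta\colon\pi_1(U)\to H/H_0$, whose image has order at most $m\ll_A 1$. Relatedly, the central kernel in the adjoint reduction is not a ``bounded error'': it has order roughly $\ell^{\dim Z}$, and the correct reduction is the equivalence $\rho_u(\Gal_K)\supseteq S\iff\rho_u^{\ad}(\Gal_K)\supseteq S^{\ad}$ (Lemma~\ref{L:S to S'}), not an error estimate. With these three points repaired---replace the $O_g(1)$ class count by the $O(|H|^{3/2})$ maximal-subgroup bound restricted to the family $\calM$, run the dichotomy on surjectivity onto $H/H_0$ only, and do the adjoint reduction as an equivalence---your argument becomes the paper's proof.
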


\begin{remark}
In our application, we will use Theorem~\ref{T:almost there mod ell} when $\ell\leq c (\log x)^\gamma$ for some positive constants $c$ and $\gamma$ depending only on $A$.  For such $\ell$, we obtain a bound $|B_\ell(x)| \ll_A x^{[K:\QQ](n + 1/2)} (\log x)^{\gamma'}$ for a constant $\gamma'$.
\end{remark}

\subsection{A special version of Hilbert irreducibility} \label{SS:special HIT}
We now state a specialized version of Hilbert's irreducibility theorem.  To ease notation and make it suitable for future use, we keep it separate from our abelian variety application.  

Let $K$ be a number field.  Fix a non-empty open subvariety $U$ of $\PP^n_K$ with $n\geq 1$ and a continuous representation 
\[
\rho\colon \pi_1(U) \to G(\FF_\ell),
\]
where $G$ is a linear algebraic group defined over $\FF_\ell$ for which the neutral component $G^\circ$ is reductive.   Let $S$ be the commutator subgroup of $G^\circ(\FF_\ell)$.    Assume further that $\rho$ satisfies $\rho(\pi_1(U)) \supseteq S$.

For each point $u\in U(K)$, we obtain a homomorphism $\rho_u\colon \Gal_K\to G(\FF_\ell)$ by specializing $\rho$ at $u$; it is uniquely defined up to conjugation by an element of $G(\FF_\ell)$.  Hilbert's irreducibility theorem implies that $\rho_u(\Gal_K)\supseteq S$ for all $u\in U(K)$ away from a set of density $0$; Theorem~\ref{T:HIT main} below gives a quantitative version.   

We first define some quantities for which the implicit constant of our theorem depends on.  Let $\calU$ be the open subscheme of $\PP^n_{\OO_K}$ that is the complement of the Zariski closure of $\PP^n_K-U$ in $\PP^n_{\OO_K}$.   The $\OO_K$-scheme $\calU$ has generic fiber $U$.   Fix a finite set $\scrS$ of non-zero prime ideals of $\OO_K$ such that $\rho$ arises from a homomorphism $\pi_1(\calU_\OO)\to G(\FF_\ell)$, where $\OO$ is the ring of $\scrS_\ell$-integers in $K$ and $\scrS_\ell$ is the set of non-zero prime ideals $\p$ of $\OO_K$ that lie in $\scrS$ or divide $\ell$.  

Let $\alpha\colon \pi_1(U)\to G(\FF_\ell)/G^\circ(\FF_\ell)$ be the homomorphism obtained by composing $\rho$ with the obvious quotient map.   Let $F\subseteq \Kbar$ be the minimal extension of $K$ for which $\alpha(\pi_1(U_F))=\alpha(\pi_1(U_{\Kbar}))$.  

Let $G^\ad$ be the quotient of $G$ by the center of $G^\circ$.   The group $(G^\ad)^\circ$ is an adjoint algebraic group over $\FF_\ell$.  Denote the rank and dimension of $(G^\ad)^\circ$ by $r$ and $d$, respectively.  Define the index $m=[G(\FF_\ell):G^\circ(\FF_\ell)]$.

\begin{thm} \label{T:HIT main}
Fix notation and assumptions as above and take any $x\geq 2$.
There is a constant $c$, depending only on $r$ and $m$, such that if $\ell\geq c$, then 
 \begin{align*}
& |\{ u \in U(K) : H(u)\leq x,\, \rho_u(\Gal_K)\not\supseteq S\}| \\&\ll_{U,F,|\scrS|,r,m} \,\, (\ell+1)^{3d/2} \cdot x^{[K:\QQ](n + 1/2)} \log x + (\ell+1)^{(6n+15/2)d}.
 \end{align*}  
\end{thm}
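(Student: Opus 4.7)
The plan is to apply the effective Hilbert irreducibility theorem (Theorem~\ref{T:HIT}) --- with $\delta<1$ supplied by Proposition~\ref{P:coset gp theory} --- for each of a controlled collection of conjugation-stable subsets $C_M = \bigcup_{h\in H} hMh^{-1}$ of $H:=\rho(\pi_1(U))$ arising from maximal subgroups $M$ of $H$ not containing $S$. Set $H_g:=\rho(\pi_1(U_{\Kbar}))$ and $H_0:=H\cap G^\circ(\FF_\ell)$; both are normal in $H$ (the latter because $G^\circ \trianglelefteq G$). Tracing definitions, the extension $L/K$ of Theorem~\ref{T:HIT} is cut out by $H_g$, while the field $F$ appearing in the statement corresponds to $\rho(\pi_1(U_F))=H_0\cdot H_g$, so $K\subseteq F\subseteq L$ and $\varphi^{-1}(\Gal(L/F))=H_0 H_g$. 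In particular, the $H_g$-cosets averaged over in the definition of $\delta$ in Theorem~\ref{T:HIT} are exactly the $H_g$-cosets inside $H_0\cdot H_g$ --- precisely the objects handled by Proposition~\ref{P:coset gp theory}.

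I split the set to be bounded according to whether the component-group image is full. If $\rho_u(\Gal_K)\cdot H_0\neq H$, then the specialization of $\alpha$ at $u$ fails to be surjective onto the finite group $H/H_0$ of order at most $m$; by Corollary~\ref{C:HIT} applied to $\alpha$, such $u$ with $H(u)\leq x$ contribute $\ll_{U,F,|\scrS|,m} x^{[K:\QQ](n+1/2)}\log x + |\scrS|^{4n+4}$, which is absorbed by the claimed bound. It remains to bound those $u$ with $\rho_u(\Gal_K) H_0=H$ and $\rho_u(\Gal_K)\not\supseteq S$: for each such $u$, $\rho_u(\Gal_K)$ is contained in a maximal subgroup $M$ of $H$ satisfying $M\not\supseteq S$, $MH_g=H$ (automatic since $\rho_u(\Gal_K)$ always covers $H/H_g$), and $MH_0=H$.

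To apply Proposition~\ref{P:coset gp theory} --- which assumes $G^\circ$ semisimple and adjoint --- I pass to the adjoint quotient $G^\ad=G/Z(G^\circ)$, whose neutral component is semisimple adjoint of rank $r$ and dimension $d$. For $\ell$ larger than a constant depending only on $r$ and $m$, the image of $S$ in $G^\ad(\FF_\ell)$ coincides with the commutator subgroup of $(G^\ad)^\circ(\FF_\ell)$, and the surjectivity hypotheses on $M$ persist under the adjoint map; Proposition~\ref{P:coset gp theory} then yields, for every $H_g$-coset $\kappa\subseteq H_0 H_g$, a bound $|C_M\cap \kappa|/|H_g|\leq \delta_0 + O_{r,m}(\ell^{-1})$ with $\delta_0=\delta_0(r,m)<1$. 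Enlarging the cutoff $c$ makes this uniformly $<1$, so Theorem~\ref{T:HIT} applies and gives, using $|C_M|\leq |H|$ and $|H_g|\ll_m (\ell+1)^d$,
\[
|\{u\in U(K):H(u)\leq x,\, \rho_u(\Gal_K)\subseteq C_M\}| \ll_{U,F,|\scrS|,r,m} x^{[K:\QQ](n+1/2)}\log x + (\ell+1)^{(6n+6)d}
\]
for a single subgroup $M$.

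Finally I sum over $H$-conjugacy classes of maximal subgroups $M$ meeting the three conditions above. Bounding the number of such classes by $\ll_{r,m} (\ell+1)^{3d/2}$ --- via the Aschbacher-type classification of maximal subgroups of the finite groups of Lie type appearing as sections of $H$, cut down by the restriction $MH_0=H$ --- yields the claimed estimate $(\ell+1)^{3d/2}\cdot x^{[K:\QQ](n+1/2)}\log x + (\ell+1)^{(6n+15/2)d}$. The principal obstacle is producing this conjugacy-class count with the correct power of $\ell$; a secondary technical point is the reduction to the adjoint case, which requires care because $G^\circ$ is only reductive, so $Z(G^\circ)$ may be positive-dimensional and one must check that the coset and surjectivity structure transfers faithfully to $G^\ad$ for $\ell$ sufficiently large.
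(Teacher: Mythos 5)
Your overall strategy is the same as the paper's: reduce to the adjoint quotient (where the image of $S$ becomes the commutator subgroup of $(G^\ad)^\circ(\FF_\ell)$ for $\ell\gg_{r,m}1$), split off the $u$ whose image fails to surject onto $H/H_0$ via Corollary~\ref{C:HIT}, and for the remaining $u$ apply Proposition~\ref{P:coset gp theory} together with Theorem~\ref{T:HIT} to the conjugation-stable sets $C_M=\bigcup_h hMh^{-1}$, then sum over the relevant subgroups $M$. However, the step you yourself flag as ``the principal obstacle'' --- bounding the number of subgroups $M$ to be summed over by $\ll_{r,m}(\ell+1)^{3d/2}$ --- is a genuine gap, not a technicality. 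Your proposed route via an ``Aschbacher-type classification'' is not carried out, and it would be delicate here: $S$ is in general a central product of finite groups of Lie type of arbitrary (possibly exceptional) type, permuted by $H$, and $H$ need not be almost simple, so neither Aschbacher's theorem for classical groups nor the Liebeck--Seitz results apply off the shelf. Without that count the factor $(\ell+1)^{3d/2}$ in the statement is unjustified. The paper avoids any classification: it observes that each $M$ that is maximal among subgroups of $H$ not containing $S$ (and surjecting onto $H/H_g$ and $H/H_0$) is a maximal subgroup of $\widetilde H:=M\cdot S$, that there are only $O_{r,m}(1)$ possibilities for $\widetilde H$ because $[G(\FF_\ell):S]\ll_{r,m}1$, and then invokes the Liebeck--Pyber--Shalev theorem on Wall's conjecture \cite{MR2360145}, which bounds the number of maximal subgroups of \emph{any} finite group $\widetilde H$ by $O(|\widetilde H|^{3/2})$; combined with $|G^\circ(\FF_\ell)|\leq(\ell+1)^d$ this gives the required $(\ell+1)^{3d/2}$ directly. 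If you want to complete your argument, this is the missing ingredient.

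A secondary inaccuracy: you say each bad $u$ (with full component-group image) has $\rho_u(\Gal_K)$ contained in a \emph{maximal subgroup of $H$} with $M\not\supseteq S$. That need not exist --- every maximal subgroup of $H$ containing $\rho_u(\Gal_K)$ could contain $S$. What one actually uses is a subgroup $M$ containing $\rho_u(\Gal_K)$ that is maximal among subgroups of $H$ not containing $S$ (equivalently, a maximal element of the family $\calM$ of the paper); such an $M$ is maximal in $M\cdot S$ rather than in $H$, and it is exactly this observation that makes the Wall-conjecture bound applicable. Your reduction to the adjoint case and the identification of the cosets in Theorem~\ref{T:HIT} with those handled by Proposition~\ref{P:coset gp theory} are correct and match the paper (Lemmas~\ref{L:alternate commutator} and \ref{L:S to S'} there), as is your treatment of the component-group part via Corollary~\ref{C:HIT}.
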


\subsection{Proof of Theorem~\ref{T:HIT main}}
We assume that $\ell \geq c$, where $c$ is a constant that depends only on $r$ and $m$; we will allow ourselves to appropriately increase $c$ throughout the proof while maintaining the dependencies. 

Let $G^\ad$ be the quotient of $G$ by the center of $G^\circ$ and let $\pi\colon G\to G^\ad$ be the quotient map.     The morphism $\pi$ gives rise to homomorphisms $G(\FF_\ell)\to G^\ad(\FF_\ell)$ and $S\to S^\ad$, where $S^\ad$ is the commutator subgroup of $(G^\ad)^\circ(\FF_\ell)$.  Let 
\[
\rho^\ad\colon \pi_1(U)\to G^\ad(\FF_\ell),
\]
be the representation obtained by composing $\rho$ with $\pi$.      

Before proceeding, the following lemma gives an alternate description of $S$ and $S^\ad$ for all sufficiently large $\ell$.

\begin{lemma} \label{L:alternate commutator}
Let $G$ be a connected reductive group over $\FF_\ell$.   Let $H$ be the derived subgroup of $G$ and let $\varphi\colon H^\sc \to H$ be its  simply connected cover.   Then $G(\FF_\ell)'$ is perfect and equal to $\varphi(H^\sc(\FF_\ell))$ for all $\ell \gg_r 1$, where $r$ is the rank of $H$.
\end{lemma}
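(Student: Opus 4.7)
The plan is to combine Steinberg's perfection theorem for quasi-simple groups of Lie type with Lang's theorem over $\FF_\ell$, proving both inclusions separately.

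First I would show that $H^\sc(\FF_\ell)$ is perfect for $\ell$ above a bound depending only on $r$. Since $H^\sc$ is semisimple and simply connected of rank $r$, it decomposes over $\FF_\ell$ as $\prod_i \Res_{F_i/\FF_\ell}(H_i)$, a product of Weil restrictions of absolutely quasi-simple simply connected groups $H_i$ over finite extensions $F_i/\FF_\ell$, with total rank $r$. Thus $H^\sc(\FF_\ell) = \prod_i H_i(F_i)$, and each $H_i(F_i)$ is a central extension of a finite quasi-simple group of Lie type in characteristic $\ell$. Steinberg's classical theorem then ensures each factor $H_i(F_i)$ is perfect provided $|F_i|\geq \ell$ exceeds a bound depending only on the root datum of $H_i$, hence only on $r$. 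Consequently $\varphi(H^\sc(\FF_\ell))$ is perfect, and one has immediately
\begin{equation*}
\varphi(H^\sc(\FF_\ell)) \,=\, [\varphi(H^\sc(\FF_\ell)),\varphi(H^\sc(\FF_\ell))] \,\subseteq\, G(\FF_\ell)',
\end{equation*}
so that once the reverse inclusion is established, $G(\FF_\ell)'$ inherits perfectness.

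For the reverse inclusion $G(\FF_\ell)' \subseteq \varphi(H^\sc(\FF_\ell))$, I would show that $G(\FF_\ell)/\varphi(H^\sc(\FF_\ell))$ is abelian. Using the almost direct product decomposition $G = Z(G)^\circ \cdot H$, the map $\psi\colon H^\sc \times Z(G)^\circ \to G$, $(h,z)\mapsto \varphi(h)z$, is a surjective isogeny of connected algebraic groups with finite central kernel. Lang's theorem then yields an exact sequence
\begin{equation*}
H^\sc(\FF_\ell)\times Z(G)^\circ(\FF_\ell) \,\to\, G(\FF_\ell) \,\to\, H^1(\FF_\ell,\ker\psi) \,\to\, 1
\end{equation*}
with finite abelian cokernel, so $G(\FF_\ell)/[\varphi(H^\sc(\FF_\ell))\cdot Z(G)^\circ(\FF_\ell)]$ is abelian. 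Because $Z(G)^\circ(\FF_\ell)$ is central in $G(\FF_\ell)$, and because any $G$-action on $Z(H^\sc)$ factors through the trivial homomorphism $G \to \Out(H^\sc)$ (as $G$ is connected and $\Out(H^\sc)$ finite), the further quotient $H(\FF_\ell)/\varphi(H^\sc(\FF_\ell)) \cong H^1(\FF_\ell, Z(H^\sc))$ is a finite abelian group on which $G(\FF_\ell)$ acts trivially. The group $G(\FF_\ell)/\varphi(H^\sc(\FF_\ell))$ is thus a central extension of one finite abelian group by another; a reduction to the adjoint quotient $G^\ad = G/Z(G) = H^\ad$—where the analogous statement is immediate from perfectness together with Lang's theorem—then lets one push the commutator subgroup of $G(\FF_\ell)$ into $\varphi(H^\sc(\FF_\ell))$.

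The hard part will be closing the final gap, namely showing that this central extension is actually abelian rather than 2-step nilpotent; equivalently, that the commutator pairing $(G/H)(\FF_\ell)\otimes(G/H)(\FF_\ell)\to H^1(\FF_\ell,Z(H^\sc))$ induced by lifting to $G(\FF_\ell)$ vanishes. The cleanest way to do this is to invoke the standard finite-field consequence of the Kneser--Tits conjecture (essentially due to Steinberg): for any connected reductive $G$ over $\FF_\ell$, the subgroup $G(\FF_\ell)^+:=\varphi(H^\sc(\FF_\ell))$ coincides with $G(\FF_\ell)'$. Once cited, this directly yields the equality claimed in the lemma and, combined with Step 1, the perfectness of $G(\FF_\ell)'$.
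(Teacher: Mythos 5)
Your first half is fine and parallels the paper: perfectness of $H^\sc(\FF_\ell)$ for $\ell\gg_r 1$ via the decomposition into (Weil restrictions of) quasi-simple simply connected factors, hence $\varphi(H^\sc(\FF_\ell))$ is perfect and contained in $G(\FF_\ell)'$; and the Lang/Galois-cohomology argument showing the relevant quotients are abelian (small slip: the quotient $H(\FF_\ell)/\varphi(H^\sc(\FF_\ell))$ injects into $H^1(\Gal_{\FF_\ell},\ker\varphi)$, not $H^1$ of the full center of $H^\sc$). But the decisive step, the reverse inclusion $G(\FF_\ell)'\subseteq \varphi(H^\sc(\FF_\ell))$, is exactly what you flag as ``the hard part,'' and your way of closing it --- citing ``the standard finite-field consequence of Kneser--Tits: $\varphi(H^\sc(\FF_\ell))=G(\FF_\ell)'$ for any connected reductive $G$'' --- is not a proof. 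That cited sentence \emph{is} the equality the lemma asserts; Kneser--Tits/Steinberg-type results concern the simply connected (or semisimple) group itself, and the passage to a general reductive $G$ is precisely the content at issue. Moreover, as you state it (with no lower bound on $\ell$) it is false: for $G=H=\SL_2$ over $\FF_3$ one has $\varphi(H^\sc(\FF_3))=\SL_2(\FF_3)$, while $\SL_2(\FF_3)'$ is the quaternion group of order $8$. Your cohomological setup only yields that $G(\FF_\ell)/\varphi(H^\sc(\FF_\ell))$ is a central extension of abelian by abelian, and the vanishing of the resulting commutator pairing is never established.

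The paper closes this gap with a short concrete argument you could adopt: let $G(\FF_\ell)^+$ be the subgroup generated by the elements of order $\ell$. Since $G/H$ is a torus it has no $\FF_\ell$-points of order $\ell$, so $G(\FF_\ell)^+\subseteq H(\FF_\ell)$; since $[H(\FF_\ell):\varphi(H^\sc(\FF_\ell))]\leq C_r$, taking $\ell>C_r$ forces every element of order $\ell$ into $\varphi(H^\sc(\FF_\ell))$, i.e.\ $G(\FF_\ell)^+\subseteq \varphi(H^\sc(\FF_\ell))$; and $G(\FF_\ell)/G(\FF_\ell)^+$ is abelian by a genuinely weaker citable input (Proposition~1.1 of \cite{MR3566639}), whence $G(\FF_\ell)'\subseteq G(\FF_\ell)^+\subseteq\varphi(H^\sc(\FF_\ell))$. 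If you prefer to keep your framing, you must either prove the vanishing of the commutator pairing directly or cite a precise reference for the reductive-group statement with its hypotheses on $\ell$, rather than the conclusion of the lemma itself.
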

\begin{proof}
Since $H^\sc$ is simply connected, it is a product of simply connected and simple groups $H_1,\ldots, H_m$.   We know that each group $H_i(\FF_\ell)$ is perfect for $\ell\gg 1$ (moreover, the quotient by its center is a non-abelian simple group).   In particular, we may assume that the group $H^\sc(\FF_\ell)$ is perfect.  Therefore, $\varphi(H^\sc(\FF_\ell))=\varphi(H^\sc(\FF_\ell)')\subseteq H(\FF_\ell)'$.  The group $\varphi(H^\sc(\FF_\ell))$ is perfect since $H^\sc(\FF_\ell)$ is perfect.

Let $Y$ be the kernel of $\varphi$; it is commutative since the isogeny $\varphi$ is central.  The degree of $\varphi$, and hence also the cardinality of $Y(\FFbar_\ell)$, can be bounded in terms of $r$.   Galois cohomology gives an injective homomorphism $H(\FF_\ell)/\varphi(H^\sc(\FF_\ell)) \hookrightarrow H^1(\Gal_{\FF_\ell}, Y(\FFbar_\ell))$ of groups, so $H(\FF_\ell)/\varphi(H^\sc(\FF_\ell))$ is abelian and its cardinality can be bounded in terms of $r$.   In particular, we have $\varphi(H^\sc(\FF_\ell)) \supseteq H(\FF_\ell)'$.  

We thus have $\varphi(H^\sc(\FF_\ell)) = H(\FF_\ell)'$ since we have shown both inclusions and we have seen that $\varphi(H^\sc(\FF_\ell))$ is perfect.   We clearly have $H(\FF_\ell)'\subseteq G(\FF_\ell)'$ so it suffices to show that $G(\FF_\ell)' \subseteq \varphi(H^\sc(\FF_\ell))$ for all $\ell \gg_r 1$.

Let $G(\FF_\ell)^+$ be the (normal) subgroup of $G(\FF_\ell)$ generated by its elements of order $\ell$.    We have $G(\FF_\ell)^+ \subseteq H(\FF_\ell)$ since $G/H$ is a torus over $\FF_\ell$ and hence has no $\FF_\ell$-points of order $\ell$. We have already shown that $[H(\FF_\ell):\varphi(H^\sc(\FF_\ell))]\leq C_r$, where $C_r$ is a constant depending only on $r$.   By taking $\ell>C_r$, we find that $G(\FF_\ell)^+ \subseteq  \varphi(H^\sc(\FF_\ell))$.  The group $G(\FF_\ell)/G(\FF_\ell)^+$ is abelian by \cite{MR3566639}*{Proposition~1.1} and hence $G(\FF_\ell)'\subseteq G(\FF_\ell)^+$.   Therefore, $G(\FF_\ell)'\subseteq G(\FF_\ell)^+\subseteq  \varphi(H^\sc(\FF_\ell))$.
\end{proof}

\begin{lemma} \label{L:S to S'}
If $\ell \gg_r 1$ and $M$ is a subgroup of $G(\FF_\ell)$, then $S$ is a subgroup of $M$ if and only if $S^\ad$ is a subgroup $\pi(M)$.
\end{lemma}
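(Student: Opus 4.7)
The plan is to use Lemma~\ref{L:alternate commutator} twice to relate $S$ and $S^\ad$ via $\pi$, and then exploit the perfectness of $S$ plus centrality of $Z(G^\circ)$.

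First I would establish that $\pi(S) = S^\ad$ for all $\ell \gg_r 1$. Let $H$ be the derived subgroup of $G^\circ$ and $\varphi\colon H^\sc \to H$ its simply connected cover. Since $G^\circ$ is reductive, $(G^\ad)^\circ = G^\circ/Z(G^\circ)$ is semisimple adjoint, and the composition $H \hookrightarrow G^\circ \twoheadrightarrow (G^\ad)^\circ$ is surjective with finite central kernel (the intersection $H\cap Z(G^\circ)\subseteq Z(H)$ is finite). Thus $H$ and $(G^\ad)^\circ$ share the same simply connected cover $H^\sc$, and the corresponding cover map to $(G^\ad)^\circ$ is $\pi\circ\varphi$. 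Applying Lemma~\ref{L:alternate commutator} to both $G^\circ$ and $(G^\ad)^\circ$ yields $S=\varphi(H^\sc(\FF_\ell))$ and $S^\ad=(\pi\circ\varphi)(H^\sc(\FF_\ell))=\pi(S)$. The forward implication is then immediate: $S\subseteq M$ gives $S^\ad=\pi(S)\subseteq \pi(M)$.

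For the converse, assume $S^\ad\subseteq \pi(M)$. Set $Z:=Z(G^\circ)(\FF_\ell)$; this is the kernel of $\pi\colon G(\FF_\ell)\to G^\ad(\FF_\ell)$, so $\pi^{-1}(S^\ad)=SZ$ using $\pi(S)=S^\ad$. Hence $SZ\subseteq MZ$, and in particular $S\subseteq MZ$. Put $M':=M\cap SZ$. Then for every $s\in S$ we can write $s=mz$ with $m\in M$, $z\in Z$; but $m=sz^{-1}\in SZ$, so $m\in M'$, giving $\pi(M')\supseteq \pi(S)=S^\ad$. The reverse containment is trivial, so $\pi(M')=S^\ad$, equivalently $M'\cdot Z=SZ$.

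The key observation is that for any subgroup $H\subseteq SZ$ satisfying $H\cdot Z=SZ$, one has $[H,H]=S$. The containment $[H,H]\subseteq [SZ,SZ]=S$ follows because $Z$ is central in $SZ$ (since $Z\subseteq Z(G^\circ)(\FF_\ell)$ commutes with $S\subseteq G^\circ(\FF_\ell)$, and $Z$ is abelian). For the reverse, any $s,s'\in S$ can be written as $s=hz$, $s'=h'z'$ with $h,h'\in H$ and $z,z'\in Z$ because $HZ=SZ$; centrality of $z,z'$ gives $[s,s']=[h,h']\in [H,H]$. Since $S$ is perfect for $\ell\gg_r 1$ by Lemma~\ref{L:alternate commutator}, $S$ is generated by such commutators, so $S\subseteq [H,H]$.

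Applying this with $H=M'$ yields $S=[M',M']\subseteq M'\subseteq M$. The only obstacles to a fully routine write-up are making sure the central kernel computation $H\cap Z(G^\circ)\subseteq Z(H)$ is clean and confirming $(G^\ad)^\circ$ is semisimple adjoint, both of which are standard facts about reductive groups; the perfectness hypothesis on $S$ (hence the lower bound on $\ell$) is the only quantitative input, and it is exactly what Lemma~\ref{L:alternate commutator} furnishes.
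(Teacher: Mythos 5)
Your proof is correct and follows essentially the same route as the paper: both use Lemma~\ref{L:alternate commutator} (applied to $G^\circ$ and to $(G^\ad)^\circ$ via the common simply connected cover $H^\sc$) to get $\pi(S)=S^\ad$ and perfectness, and then recover $S$ inside $M$ by exploiting that the kernel of $\pi$ is central so commutators strip it away. The only difference is bookkeeping: you intersect $M$ with $SZ$ and take commutators there, whereas the paper first replaces $M$ by $M\cap G^\circ(\FF_\ell)$ and then by its commutator subgroup to land inside $S$ before running the same central-kernel argument.
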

\begin{proof} 
Let $H$ be the derived subgroup of $G^\circ$; it has rank $r$   Let $\varphi\colon H^\sc\to H$ be the simply connected cover of $H$.  Define $\varphi^\ad:=\pi\circ \varphi \colon H^\sc\to (G^\ad)^\circ$; it is the simply connected cover of $(G^\ad)^\circ$.   By assuming $\ell$ is sufficiently large in terms of $r$,  Lemma~\ref{L:alternate commutator} implies that $S=\varphi(H^\sc(\FF_\ell))$, $S^\ad=\varphi^\ad(H^\sc(\FF_\ell))$, and that $S$ and $S^\ad$ are perfect.  If $M$ is a subgroup of $G(\FF_\ell)$ containing $S$, then 
\[
\pi(M) \supseteq \pi(S)=\pi(\varphi(H^\sc(\FF_\ell)))=\varphi^\ad(H^\sc(\FF_\ell))=S^\ad.
\]

Now let $M$ be a subgroup of $G(\FF_\ell)$ that satisfies $\pi(M)\supseteq S^\ad$.   We need to show that $M\supseteq S$.   We have $\pi^{-1}((G^\ad)^\circ) \subseteq G^\circ$, so there is no harm in replacing $M$ by the smaller group $M\cap G^\circ(\FF_\ell)$.  In particular, we may assume that $M\subseteq G^\circ(\FF_\ell)$.    We have $\pi(M') = \pi(M)' \supseteq (S^\ad)'=S^\ad$.   So after replacing $M$ by the smaller group $M'$, we may further assume that $M \subseteq S$.   We thus have $\pi(M)=S^\ad$ since $\pi(S)=S^\ad$.  The homomorphism $S\xrightarrow{\pi} S^\ad$ is surjective and its kernel $Z$ lies in the center of $G(\FF_\ell)$ since $\pi$ is a central isogeny, so we have $S=M\cdot Z$.   Taking commutator subgroups, we find that $S=S' =(M\cdot Z)'=M'$.   Since $M'\subseteq M\subseteq S$, we deduce that $M=S$.
\end{proof}

Take any $u\in U(K)$.   Specializing $\rho^\ad$ at $u$ gives a representation $ \rho^\ad_u\colon \Gal_K \to G^\ad(\FF_\ell)$.  Up to an inner automorphism, $ \rho^\ad_u$ agrees with $\pi\circ \rho_u$.  By suitably increasing the constant $c$, Lemma~\ref{L:S to S'} implies that $\rho_u(\Gal_K) \supseteq S$ if and only $\rho^\ad_u(\Gal_K) \supseteq S^\ad$.    So to prove the theorem, we need to bound the cardinality of the set
\[
\{ u \in U(K) : H(u)\leq x,\,  \rho^\ad_u(\Gal_K)\not\supseteq  S^\ad\}.
\]

We now show that the assumptions of Theorem~\ref{T:HIT main} hold for $\rho^\ad$ and relate its basic invariants to those of $\rho$.  Lemma~\ref{L:S to S'} and the assumption $\rho(\pi_1(U))\supseteq S$ implies that $ \rho^\ad(\pi_1(U))\supseteq  S^\ad$.   By our assumption on $\rho$, the representation $\rho^\ad$ arises from the homomorphism $\pi_1(\calU_\OO)\to G(\FF_\ell)\xrightarrow{\pi} G^\ad(\FF_\ell)$,  where $\OO$ is the ring of $\scrS_\ell$-integers in $K$.   The quantities $r$ and $d$ associated to $\rho$ and $\rho^\ad$ are the same.

There is no harm in replacing $G$ by the algebraic subgroup generated by $G^\circ$ and $\rho(\pi_1(U))$.  In particular, we may assume that $G$ is generated by $G^\circ$ and $G(\FF_\ell)$.    From this we find that the natural map $G(\FF_\ell)/G^\circ(\FF_\ell) \to G^\ad(\FF_\ell)/ (G^\ad)^\circ(\FF_\ell)$ is an isomorphism.   Therefore, $\alpha$, $F$ and $m$ are the same for $\rho$ and $\rho^\ad$.

It is now clear that proving Theorem~\ref{T:HIT main} for $\rho^\ad$ will give the desired bound for $\rho$.\\   

Without loss of generality, we may now assume that the group $G^\circ$ is adjoint.  Define the subgroups
\[
H:=\rho(\pi_1(U)),\quad H_g:=\rho(\pi_1(U_{\Kbar})) \quad \text{ and } \quad H_0 := H \cap G^\circ(\FF_\ell)
\]
of $G(\FF_\ell)$.   Let $\calM$ be the set of subgroups $M$ of $H$ for which $M\not\supseteq S$ and for which 
the quotient maps $M\to H/H_g$ and $M\to H/H_0$ are surjective.

\begin{lemma} \label{L:HIT for a fixed M}
Take any $x\geq 2$.   There is a constant $c$, depending only on $r$ and $m$, such that if $\ell\geq c$, then 
\[
|\{ u \in U(K) : \rho_u(\Gal_K) \in \calM \}|  \ll_{U,F,|\scrS|,r,m} \, (\ell+1)^{3d/2} \cdot x^{[K:\QQ](n + 1/2)} \log x + (\ell+1)^{(6n+15/2)d}.
\]
\end{lemma}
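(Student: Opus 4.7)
The plan is to apply the effective Hilbert irreducibility Theorem~\ref{T:HIT} to $\rho$, viewed as having image in the finite group $H$, against the conjugation-invariant subset $C_M := \bigcup_{h \in H} h M h^{-1}$ for each representative $M$ of an $H$-conjugacy class of maximal elements of $\calM$, combine this with Proposition~\ref{P:coset gp theory} to control the density $\delta$ uniformly in $\ell$, and then sum over classes.

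First I reduce: since $S$, $H_0$ and $H_g$ are all normal in $H$, the set $\calM$ is $H$-conjugation-stable. If $\rho_u(\Gal_K) \in \calM$ then it is a proper subgroup of $H$, so it is contained in some maximal element of $\calM$; after $H$-conjugating, such a maximal element may be assumed to lie in a chosen set $\mathcal{N}$ of representatives of the $H$-conjugacy classes of maximal elements of $\calM$. A union bound gives
\[
\{u \in U(K) : H(u) \leq x,\, \rho_u(\Gal_K) \in \calM\} \subseteq \bigcup_{M \in \mathcal{N}} \{u : H(u) \leq x,\, \rho_u(\Gal_K) \subseteq C_M\}.
\]

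Next I apply Theorem~\ref{T:HIT} with $G = H$, $G_g = H_g$, $C = C_M$, and the extension $F$ fixed in the setup. The key identification is that $\varphi^{-1}(\Gal(L/F)) = H_0 \cdot H_g$, where $\varphi \colon H \twoheadrightarrow H/H_g = \Gal(L/K)$: after the harmless reduction that $G$ is generated by $G^\circ$ and $\rho(\pi_1(U))$, the homomorphism $\alpha$ factors through $\pi_1(U) \twoheadrightarrow H/H_0$, so $\alpha(\pi_1(U_{\Kbar})) = (H_g H_0)/H_0$, and the minimality of $F$ forces $\Gal(F/K) = H/(H_0 H_g)$ and thus $\Gal(L/F) = (H_0 H_g)/H_g$. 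The $H_g$-cosets indexing the $\delta$ of Theorem~\ref{T:HIT} therefore run over the $H_g$-cosets inside $H_0 \cdot H_g$, which is exactly the setting of Proposition~\ref{P:coset gp theory}. Since $G^\circ$ is semisimple adjoint (by the earlier reduction in the proof of Theorem~\ref{T:HIT main}) and $M$ satisfies the defining conditions of $\calM$, that proposition yields
\[
\delta_M := \max_{\kappa} \frac{|C_M \cap \kappa|}{|H_g|} \leq \delta_0 + O_{r,m}(1/\ell),
\]
with $\delta_0 < 1$ depending only on $r$ and $m$. For $\ell$ larger than a constant $c(r,m)$ this is $\leq (1+\delta_0)/2 < 1$ uniformly in $\ell$, so Theorem~\ref{T:HIT} gives, for each $M \in \mathcal{N}$,
\[
|\{u : H(u) \leq x,\, \rho_u(\Gal_K) \subseteq C_M\}| \ll_{U,F,|\scrS|,r,m} x^{(n+1/2)[K:\QQ]} \log x + |C_M|^{2n+2} |H_g|^{4n+4},
\]
where the $|\scrS|^{4n+4}$ term is absorbed into the implicit constant.

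Finally I sum over $\mathcal{N}$. The bounds $|C_M|, |H_g| \leq |G(\FF_\ell)| \ll_r (\ell+1)^d$ reduce the claim to a count $|\mathcal{N}| \ll_{r,m} (\ell+1)^{3d/2}$ on the number of $H$-conjugacy classes of maximal elements of $\calM$: the $x$-term contributes $|\mathcal{N}| \cdot x^{(n+1/2)[K:\QQ]} \log x$ and the remainder contributes $|\mathcal{N}| \cdot (\ell+1)^{d(6n+6)}$, which together with $|\mathcal{N}| \ll (\ell+1)^{3d/2}$ produce exactly the exponents $3d/2$ and $d(6n+15/2)$ in the lemma. I expect establishing this count to be the main obstacle; it should follow from structural bounds on finite subgroups of reductive groups (of Larsen--Pink type) and the classification of maximal subgroups of finite groups of Lie type, using that $|H/S|$ is bounded purely in terms of $r$ and $m$, so that only the intersection $M \cap S$ carries genuine $\ell$-dependence.
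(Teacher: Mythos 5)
Your overall strategy is the paper's: reduce to maximal elements of $\calM$, feed the conjugation-stable set $C_M=\bigcup_{h\in H}hMh^{-1}$ into Theorem~\ref{T:HIT} with the identification $\varphi^{-1}(\Gal(L/F))=H_0\cdot H_g$, use Proposition~\ref{P:coset gp theory} to make $\delta$ uniform in $\ell$ once $\ell\geq c(r,m)$, and then sum. All of that matches the paper. But the one step you explicitly leave open --- the bound on the number of (classes of) maximal elements of $\calM$ by $O_{r,m}\bigl((\ell+1)^{3d/2}\bigr)$ --- is precisely the content that makes the lemma work, so as written the proof has a genuine gap. Moreover, the route you sketch for it (Larsen--Pink plus the classification of maximal subgroups of finite groups of Lie type) does not apply directly: a maximal element $M$ of $\calM$ is maximal only within the poset $\calM$, not a maximal subgroup of $H$, of $G(\FF_\ell)$, or of any almost simple group, so classification results for maximal subgroups have no immediate purchase; in addition $S$ can be a product of several simple factors, and controlling diagonal-type subgroups of such products by ``bounded rank'' arguments requires extra work.

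The paper closes this gap with a purely finite-group-theoretic device: for $M$ maximal in $\calM$, set $\widetilde H:=M\cdot S$ and observe that $M$ \emph{is} a maximal subgroup of $\widetilde H$ (otherwise an intermediate group would again lie in $\calM$, contradicting maximality). The Liebeck--Pyber--Shalev theorem \cite{MR2360145} bounds the number of maximal subgroups of any finite group $\widetilde H$ by $O(|\widetilde H|^{3/2})$, and since $[G(\FF_\ell):S]\ll_{r,m}1$ there are only $O_{r,m}(1)$ possible groups $\widetilde H$ between $S$ and $G(\FF_\ell)$; hence the total number of maximal elements of $\calM$ (not merely conjugacy classes) is $\ll_{r,m}|G(\FF_\ell)|^{3/2}\ll_{r,m}(\ell+1)^{3d/2}$ by Nori's bound $|G^\circ(\FF_\ell)|\leq(\ell+1)^d$. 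If you insert this $M\cdot S$ reduction and the $O(|\widetilde H|^{3/2})$ count in place of your appeal to maximal-subgroup classifications, your argument becomes complete and coincides with the paper's proof.
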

\begin{proof}
Let $L$ be the minimal extension of $K$ in $\Kbar$ for which $H_g$ is the image of $\pi_1(U_L)$ under $\rho$.   We have a natural short exact sequence
\[
1\to H_g\to H \xrightarrow{\varphi} \Gal(L/K) \to 1.
\]
Observe that $F$ is the subfield of $L$ that satisfies $\varphi^{-1}(\Gal(L/F))=\rho(\pi_1(U_F))=H_0\cdot H_g$.

Fix $x\geq 2$.  Let $\widetilde \calM$ be the set of maximal elements of $\calM$ with respect to inclusion.  Take any group $M\in \widetilde\calM$ and define the subset $C:=\bigcup_{h \in H} hMh^{-1}$ of $H$.  By  appropriately increasing the constant $c$, Proposition~\ref{P:coset gp theory} says that there is a constant $0\leq \delta <1$, depending only on $r$ and $m$, such that ${|C \cap \kappa| }/{ |H_g| } \leq \delta$ holds for every coset $\kappa$ of $H_g$ in $H_0 \cdot H_g$.    By Theorem~\ref{T:HIT}, we have \[
 |\{ u \in U(K) : H(u)\leq x,\, \rho_u(\Gal_K)\subseteq C\}| \ll_{U, F,\delta} \,\, x^{(n+1/2)[K:\QQ]} \log x + |\scrS_\ell|^{4n+4} + |C|^{2n+2}\cdot |H_g|^{4n+4}.
 \]  
Since $|\scrS_\ell|\leq |\scrS| + [K:\QQ]$ and $\delta$ depends only on $r$ and $m$, we deduce that
 \begin{align*}
& |\{ u \in U(K) : H(u)\leq x,\, \rho_u(\Gal_K) \text{ is conjugate in $H$ to a subgroup of } M\}| \\
& \ll_{U, F,r, m} \,\, x^{(n+1/2)[K:\QQ]} \log x + |\scrS|^{4n+4} +  |G(\FF_\ell)|^{6n+6}.
 \end{align*}
 By summing over all $M\in \widetilde\calM$ and using that the implicit constant depends on $m$, we have
 \[
 |\{ u \in U(K) : \rho_u(\Gal_K) \in \calM \}| \ll_{U, F,r, m} \, |\widetilde\calM| \cdot \big(x^{(n+1/2)[K:\QQ]} \log x + |\scrS|^{4n+4} +  |G^\circ(\FF_\ell)|^{6n+6}\big).
 \]
We now bound $|\widetilde \calM|$.  Take any $M\in \widetilde \calM$ and define the subgroup $\widetilde H:=M\cdot S$ of $G(\FF_\ell)$.   Observe that $M$ is a maximal subgroup of $\widetilde H$ (if not, then it would give rise to a larger group in the set $\calM$).   By \cite{MR2360145}, the group $\widetilde H$ has at most $O(|\widetilde H|^{3/2})$ maximal subgroups, where the constant is absolute.  Therefore, 
\[
|\widetilde \calM| \ll |G(\FF_\ell)|^{3/2}\cdot |\{\widetilde H : \widetilde H \text{ subgroup of $G(\FF_\ell)$ containing $S$}\}|.
\]
We obtain $|\widetilde \calM| \ll_{r,m} |G(\FF_\ell)|^{3/2}\leq m \cdot |G^\circ(\FF_\ell)|^{3/2}$ by using that the order of the quotient group $G(\FF_\ell)/S$ can be bounded in terms of $r$ and $m$.  Therefore,
 \[
 |\{ u \in U(K) : \rho_u(\Gal_K) \in \calM \}| \ll_{U, F,r,m} \, |G^\circ(\FF_\ell)|^{3/2} \big(x^{(n+1/2)[K:\QQ]} \log x + |\scrS|^{4n+4} +  |G^\circ(\FF_\ell)|^{6n+6}\big).
 \]
 The bound in the lemma follows by noting that $|G^\circ(\FF_\ell)| \leq (\ell+1)^d$, cf.~\cite{MR880952}*{Lemma~3.5}.
\end{proof}

Let $\beta\colon \pi_1(U)\to H/H_0$ be the surjective representation obtained by composing $\rho$ with the obvious quotient map.  As usual, we have a specialization $\beta_u\colon \Gal_k\to H/H_0$ for each $u\in U(k)$.

\begin{lemma} \label{L:HIT beta}
We have
\[
|\{ u \in U(K) : H(u)\leq x,\, \beta_u(\Gal_k) \neq H/H_0 \}|  \ll_{U,F,|\scrS|,m} \,  x^{[k:\QQ](n + 1/2)} \log x + |\scrS|^{4n+4}.
\]
\end{lemma}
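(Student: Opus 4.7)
The plan is to apply Corollary~\ref{C:HIT} directly to the representation $\beta$, which is a surjective continuous homomorphism from $\pi_1(U)$ onto the finite group $H/H_0$.

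First, I would verify the hypotheses of Corollary~\ref{C:HIT}. Since $\rho$ arises from a homomorphism $\pi_1(\calU_\OO)\to G(\FF_\ell)$ by the setup of \S\ref{SS:special HIT} and $\beta$ is obtained from $\rho$ by composing with the quotient $H\to H/H_0$, the representation $\beta$ also arises from a homomorphism on $\pi_1(\calU_\OO)$ (using the same set $\scrS_\ell$). The image $H/H_0$ is a finite group of order at most $m = [G(\FF_\ell):G^\circ(\FF_\ell)]$.

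Next, I would identify the ``$L$'' of Corollary~\ref{C:HIT} for the representation $\beta$ with the field $F$ defined in \S\ref{SS:special HIT}. Observe that $\alpha$ and $\beta$ have the same kernel: both factor through the quotient of $H$ by $H\cap G^\circ(\FF_\ell) = H_0$, the map $H/H_0 \hookrightarrow G(\FF_\ell)/G^\circ(\FF_\ell)$ induced by inclusion being injective by the second isomorphism theorem. Consequently the minimal extension of $K$ over which $\beta(\pi_1(U_L)) = \beta(\pi_1(U_{\Kbar}))$ coincides with the corresponding extension for $\alpha$, which is $F$ by definition.

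Applying Corollary~\ref{C:HIT} to $\beta$ (with $G$ replaced by $H/H_0$ and $L$ replaced by $F$) then yields
\[
|\{ u \in U(K) : H(u)\leq x,\, \beta_u(\Gal_K)\neq H/H_0\}| \ll_{U,F,|H/H_0|} x^{[K:\QQ](n+1/2)}\log x + |\scrS_\ell|^{4n+4}.
\]
Since $|H/H_0|\leq m$, the implicit constant may be absorbed into one depending on $(U,F,m)$; since $|\scrS_\ell|\leq |\scrS| + [K:\QQ]$, the error term $|\scrS_\ell|^{4n+4}$ is bounded by a constant multiple (depending on $|\scrS|$ and $[K:\QQ]$) of $|\scrS|^{4n+4}$. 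This yields the stated bound. The argument is essentially mechanical; the only point requiring a moment's care is the identification of the extensions $L$ and $F$, and this is immediate once one observes the kernels of $\alpha$ and $\beta$ agree.
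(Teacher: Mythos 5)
Your proposal is correct and follows essentially the same route as the paper: apply Corollary~\ref{C:HIT} to $\beta$, note that $\beta$ descends to $\pi_1(\calU_\OO)$, that the relevant minimal field is $F$, and that $|H/H_0|\leq m$ and $|\scrS_\ell|\leq |\scrS|+[K:\QQ]$. Your extra remark that $\alpha$ and $\beta$ share the same kernel (so the field $F$ is the right one for $\beta$) just makes explicit a step the paper states without comment.
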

\begin{proof}
To ease notation, define $Y:=H/H_0=\beta(\pi_1(U))$ and its normal subgroup $Y_g:=\beta(\pi_1(U_{\Kbar}))$.  The field $F$ is the smallest extension of $K$ in $\Kbar$ for which $\beta(\pi_1(U_{F}))=Y_g$.  The homomorphism $\beta$ arises from a continuous homomorphism $\pi_1(\calU_{\OO}) \to Y$, where $\OO$ is the ring of $\scrS_\ell$-integers (since $\rho$ arises from a representation of $\pi_1(\calU_{\OO})$).  By Corollary~\ref{C:HIT}, we have
\[
 |\{ u \in U(K) : H(u)\leq x,\, \beta_u(\Gal_K)\neq Y\}| \ll_{U,F,|Y|} \, x^{[K:\QQ](n+1/2)}\log x + |\scrS_\ell|^{4n+4}.
 \]
 The lemma follows by noting that $|\scrS_\ell| \leq |\scrS|+[K:\QQ]$ and $|Y|\leq m$.
\end{proof}

Take any $u\in U(K)$ for which $S$ is \emph{not} a subgroup of $\rho_u(\Gal_K)$.   The natural map $\rho_{u}(\Gal_K)\to H/H_g$ is always surjective.   If $\beta_u(\Gal_K)=H/H_0$ (equivalently, if the natural map $\rho_u(\Gal_K)\to H/H_0$ is surjective), then we must have $\rho_u(\Gal_K) \in \calM$.  Therefore, $ |\{ u \in U(K) : H(u)\leq x,\, \rho_u(\Gal_K)\not\supseteq S\}|$ is less than or equal to
\[
 |\{ u \in U(K) : \beta_u(\Gal_K) \neq H/H_0 \}|
 + |\{ u \in U(K) : \rho_u(\Gal_K) \in\calM \}|.
 \]
The theorem is now a direct consequence of Lemmas~\ref{L:HIT for a fixed M} and \ref{L:HIT beta}.

\subsection{Proof of Theorem~\ref{T:almost there mod ell}}

Take any prime $\ell\geq b_A$.   Corollary~\ref{C:HIT} with our assumption $\bbar\rho_{A,\ell}(\pi_1(U)) \supseteq \calS_{A,\ell}(\FF_\ell)'$ implies that $|B_\ell(x)| \ll_{A,\ell} x^{[K:\QQ] (n+1/2)} \log x$, where the implicit constant depends on $\ell$.   So during our proof we may exclude a finite number of primes $\ell$.

Define the linear algebraic group $G:=(\calG_{A,\ell})_{\FF_\ell}$ over $\FF_\ell$.  By Theorem~\ref{T:geometric gp theory}(\ref{T:geometric gp theory i}), $G^\circ=(\calG_{A,\ell}^\circ)_{\FF_\ell}$ is reductive with derived subgroup $(\calS_{A,\ell})_{\FF_\ell}$.   The rank of $G^\circ$ is bounded in terms of $g$ since it is isomorphic to an algebraic subgroup of $\GL_{2g,\FF_\ell}$.

Let $S$ be the commutator subgroup of $G^\circ(\FF_\ell)$.  By first excluding a finite number of primes,  Lemma~\ref{L:alternate commutator} implies that $S=G^\circ(\FF_\ell)'$ equals $\calS_{A,\ell}(\FF_\ell)'$.  We have a representation
\[
\rho:=\bbar\rho_{A,\ell}\colon\pi_1(U)\to G(\FF_\ell).
\]
For each $u\in U(K)$, specializing $\rho$ at $u$ gives a representation $\rho_u\colon \Gal_K\to G(\FF_\ell)$ that is uniquely defined up to an inner automorphism of $G(\FF_\ell)$ and agrees with $\bbar\rho_{A_u,\ell}$.  In particular, we have
\[
B_\ell(x)=\{u\in U(K): H(u)\leq x, \, \rho_u(\Gal_K)\not\supseteq S\}.
\]  
We are thus in the setting of \S\ref{SS:special HIT} and hence we can define $\calU$, $r$, $d$, $m$ and $F$ as in that section.    

\begin{lemma} \label{L:spread out}
There is a finite set $\scrS$ of non-zero prime ideals of $\OO_K$, not depending on $\ell$, such that $\bbar\rho_{A,\ell}$ arises from a homomorphism $\pi_1(\calU_\OO)\to G(\FF_\ell)$, where $\scrS_\ell$ is the set of prime ideals of $\OO_K$ that lies in $\scrS$ or divides $\ell$ and $\OO$ is the ring of $\scrS_\ell$-integers in $K$.
\end{lemma}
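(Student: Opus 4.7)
The plan is to spread out the abelian scheme once, independently of $\ell$, and then exploit étaleness of $\ell$-torsion away from $\ell$. Since $\pi\colon A\to U$ is an abelian scheme of finite presentation and $U$ is the generic fiber of $\calU$, by the usual spreading-out results (EGA IV, \S8) there is a finite set $\scrS$ of non-zero prime ideals of $\OO_K$, depending only on $A$ and the choice of $\calU$, together with an abelian scheme $\calA\to \calU_{\OO_\scrS}$ whose generic fiber is $\pi\colon A\to U$; here $\OO_\scrS$ denotes the ring of $\scrS$-integers in $K$. Crucially, $\scrS$ depends only on $A$, not on $\ell$.

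Now fix a prime $\ell$ and let $\OO$ be the ring of $\scrS_\ell$-integers of $K$, where $\scrS_\ell=\scrS\cup\{\p:\p\mid\ell\}$. The $\ell$-torsion group scheme $\calA[\ell]\to \calU_{\OO_\scrS}$ is finite flat of order $\ell^{2g}$ and, after inverting $\ell$, is finite étale; thus $\calA[\ell]_{\calU_\OO}\to \calU_\OO$ is a finite étale cover, hence corresponds to a locally constant constructible sheaf of free $\FF_\ell$-modules of rank $2g$ on $\calU_\OO$. Choosing a geometric generic point $\bbar\eta$ of $U$ (and viewing it as a geometric point of $\calU_\OO$ via the inclusion $U\hookrightarrow \calU_\OO$), this sheaf produces a continuous representation
\[
\tilde\rho_\ell\colon \pi_1(\calU_\OO,\bbar\eta) \to \Aut_{\FF_\ell}(\calA[\ell]_{\bbar\eta})\cong \GL_{2g}(\FF_\ell).
\]

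To finish, I would check the two compatibilities. First, the formation of $\ell$-torsion commutes with base change, so the pullback of the étale sheaf $\calA[\ell]_{\calU_\OO}$ along the open immersion $U\hookrightarrow \calU_\OO$ is $A[\ell]$; consequently the composition $\pi_1(U,\bbar\eta)\to \pi_1(\calU_\OO,\bbar\eta)\xrightarrow{\tilde\rho_\ell}\GL_{2g}(\FF_\ell)$ is precisely $\bbar\rho_{A,\ell}$. Second, the inclusion $U\hookrightarrow \calU_\OO$ of an open dense subscheme induces a surjection $\pi_1(U,\bbar\eta)\twoheadrightarrow \pi_1(\calU_\OO,\bbar\eta)$, so the image of $\tilde\rho_\ell$ equals the image of $\bbar\rho_{A,\ell}$, which by construction lies in $G(\FF_\ell)=(\calG_{A,\ell})_{\FF_\ell}(\FF_\ell)$. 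Hence $\tilde\rho_\ell$ lands in $G(\FF_\ell)$ and witnesses the asserted factorization through $\pi_1(\calU_\OO)$.

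Nothing in the argument is genuinely difficult; the only point that needs care is that the set $\scrS$ is chosen once and for all from $A$ alone, and that enlarging $\scrS$ to $\scrS_\ell$ by the primes dividing $\ell$ is exactly what is needed to promote $\calA[\ell]$ from finite flat to finite étale. No $\ell$-dependence is introduced in $\scrS$ itself.
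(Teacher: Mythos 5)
Your proof is correct and follows essentially the same route as the paper: spread out $A$ to an abelian scheme over $\calU_{\OO_\scrS}$ with $\scrS$ chosen once (independently of $\ell$), then observe that over the ring of $\scrS_\ell$-integers the $\ell$-torsion becomes finite \'etale, i.e.\ a locally constant sheaf of rank $2g$ whose restriction to $U$ recovers $A[\ell]$, so $\bbar\rho_{A,\ell}$ extends to $\pi_1(\calU_\OO)$. Your extra verification that the extended representation lands in $G(\FF_\ell)$, via surjectivity of $\pi_1(U)\to\pi_1(\calU_\OO)$, is a harmless refinement of what the paper leaves implicit.
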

\begin{proof}
We first ``spread out'' $A$.   There is an abelian scheme $\calA'\to \calU_{\OO'}$, where $\OO'$ is the ring of $\scrS$-integers in $K$ for some finite set $\scrS$ of nonzero prime ideals of $\OO_K$ such that the fiber over $(\calU_{\OO'})_K=U$ is the abelian scheme $A\to U$.    
 
Let $\OO$ be the ring of $\scrS_\ell$-integers in $K$, where $\scrS_\ell$ is the set of prime ideals of $\OO_K$ that lie in $\scrS$ or divide $\ell$.  Let $\calA$ be the abelian scheme over $\calU_{\OO}$ obtained from $\calA'$ by base change.   The $\ell$-torsion subscheme $\calA[\ell]$ of $\calA$ can be viewed as locally constant sheaf of $\ZZ/\ell\ZZ$-modules on $\calU_\OO$ that is free of rank $2g$. The fiber of $\calA[\ell]$ over $U=(\calU_\OO)_K$ is $A[\ell]$.   Since $\rho=\bbar\rho_{A,\ell}$ is the representation associated to $A[\ell]$, we find that $\rho$ arises via base change from a representation of $\pi_1(\calU_\OO)$. 
\end{proof}

Let $\scrS$ be a set of prime ideals as in Lemma~\ref{L:spread out}.   We may assume that $\scrS$ is chosen so that $|\scrS|$ is minimal and hence $|\scrS|\ll_A 1$.

\begin{lemma} \label{L:Frm}
As the prime $\ell\geq b_A$ varies, there are only finitely many possibilities for $F$, $r$ and $m$.  
\end{lemma}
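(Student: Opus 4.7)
The plan is to show that each of $r$, $m$, and $F$ is in fact the same for every prime $\ell\geq b_A$, which immediately yields the claimed finiteness. I will trace all three quantities back to data already known to be $\ell$-independent: the rank of $G_{A,\ell}^\circ$ (Proposition~\ref{P:G specialization}(\ref{P:G specialization ii})) and the kernel of $\gamma_{A,\ell}$ (Lemma~\ref{L:connected independence families}(\ref{L:connected independence families i})).

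The invariance of $r$ is quickest: by definition $r$ is the rank of $(G^\ad)^\circ$, which equals the rank of the reductive group $G^\circ=(\calG_{A,\ell}^\circ)_{\FF_\ell}$. Since $\calG_{A,\ell}^\circ$ is reductive over $\ZZ_\ell$ for $\ell\geq b_A$ by Theorem~\ref{T:geometric gp theory}(\ref{T:geometric gp theory i}), it has constant fiber rank, so $r$ agrees with the rank of the generic fiber $G_{A,\ell}^\circ$; Proposition~\ref{P:G specialization}(\ref{P:G specialization ii}) then supplies $\ell$-independence.

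For both $m$ and $F$ the crux is to identify the quotient representation $\alpha\colon \pi_1(U)\to G(\FF_\ell)/G^\circ(\FF_\ell)$ from \S\ref{SS:special HIT} with the representation $\gamma_{A,\ell}\colon \pi_1(U)\to G_{A,\ell}(\QQ_\ell)/G_{A,\ell}^\circ(\QQ_\ell)$ from \S\ref{SS:neutral component}.  Smoothness of $\calG_{A,\ell}^\circ$ together with Hensel's lemma yields surjectivity of $\calG_{A,\ell}^\circ(\ZZ_\ell)\twoheadrightarrow G^\circ(\FF_\ell)$, and a short diagram chase then produces a canonical isomorphism
\[
\calG_{A,\ell}(\ZZ_\ell)/\calG_{A,\ell}^\circ(\ZZ_\ell)\xrightarrow{\sim} G(\FF_\ell)/G^\circ(\FF_\ell).
\]
On the other hand, the inclusion $\calG_{A,\ell}(\ZZ_\ell)\hookrightarrow G_{A,\ell}(\QQ_\ell)$ maps the left-hand side into $G_{A,\ell}(\QQ_\ell)/G_{A,\ell}^\circ(\QQ_\ell)$ (with kernel $\calG_{A,\ell}^\circ(\ZZ_\ell)$, since $\calG_{A,\ell}^\circ$ is closed in $\calG_{A,\ell}$), and this map is surjective because $\gamma_{A,\ell}$ factors through $\rho_{A,\ell}(\pi_1(U))\subseteq\calG_{A,\ell}(\ZZ_\ell)$ and is surjective onto $G_{A,\ell}(\QQ_\ell)/G_{A,\ell}^\circ(\QQ_\ell)$.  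Composing, both $\alpha$ and $\gamma_{A,\ell}$ coincide with the canonical quotient $\pi_1(U)\to\calG_{A,\ell}(\ZZ_\ell)/\calG_{A,\ell}^\circ(\ZZ_\ell)$ under a fixed identification of targets, so they share kernel and image.

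Lemma~\ref{L:connected independence families}(\ref{L:connected independence families i}) tells us that $\ker\gamma_{A,\ell}$, and therefore its image, is independent of $\ell$.  Consequently $m=|\gamma_{A,\ell}(\pi_1(U))|$ is $\ell$-independent; and since $F$ is defined as the minimal extension of $K$ for which $\alpha(\pi_1(U_F))=\alpha(\pi_1(U_{\Kbar}))$, and $\alpha$ factors through the fixed quotient $\pi_1(U)/\ker\gamma_{A,\ell}$, the field $F$ is $\ell$-independent as well.  The main potential obstacle is the identification of $\alpha$ with $\gamma_{A,\ell}$, but this ultimately rests only on the reductiveness (hence smoothness) of $\calG_{A,\ell}^\circ$, which holds by choice of $b_A$, and is routine thereafter.
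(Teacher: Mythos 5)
Your overall strategy---identifying $\alpha$ with $\gamma_{A,\ell}$ through the component groups of $\calG_{A,\ell}$---is the same as the paper's, but you push it further than your argument supports. The paper only establishes that the reduction map $\calG_{A,\ell}(\ZZ_\ell)/\calG_{A,\ell}^\circ(\ZZ_\ell)\to G(\FF_\ell)/G^\circ(\FF_\ell)$ is \emph{surjective} (already this uses Hensel's lemma for the full group scheme $\calG_{A,\ell}$, not just for $\calG_{A,\ell}^\circ$), and then deduces only the inequalities it needs: $m\leq [G_{A,\ell}(\QQ_\ell):G_{A,\ell}^\circ(\QQ_\ell)]$ and $F\subseteq F'$, where the right-hand sides are $\ell$-independent by Lemma~\ref{L:connected independence families}(\ref{L:connected independence families i}); finiteness follows. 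Your claim that this map is an \emph{isomorphism} does not follow from surjectivity of $\calG_{A,\ell}^\circ(\ZZ_\ell)\to G^\circ(\FF_\ell)$ by a diagram chase: injectivity is the assertion that an element of $\calG_{A,\ell}(\ZZ_\ell)$ whose generic fibre lies in a non-identity component of $G_{A,\ell}$ cannot reduce into $G^\circ(\FF_\ell)$, i.e.\ that distinct components do not merge modulo $\ell$. That needs a separate argument (for instance: after Hensel-lifting the reduction into $\calG_{A,\ell}^\circ(\ZZ_\ell)$, the discrepancy lies in the pro-$\ell$ kernel of reduction, so its image in the component group is an $\ell$-group, which is only forced to vanish when $\ell$ does not divide the $\ell$-independent order $[\pi_1(U):W]$), and without such a hypothesis it can genuinely fail. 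Since your asserted exact $\ell$-independence of $m$ and of $F$ rests entirely on this isomorphism, there is a real gap; the surjectivity-only factorization of $\alpha$ through $\gamma_{A,\ell}$, which your argument essentially contains, already yields everything the lemma requires.

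A second, smaller slip: $r$ is defined as the rank of $(G^\ad)^\circ$, i.e.\ the semisimple rank of $G^\circ$, not the rank of the reductive group $G^\circ$ (compare $\GL_2$ versus $\PGL_2$), so Proposition~\ref{P:G specialization}(\ref{P:G specialization ii}) does not directly give $\ell$-independence of $r$. For the lemma this is harmless: $r$ is at most the rank of $G^\circ$, which is bounded in terms of $g$ since $G^\circ$ embeds in $\GL_{2g,\FF_\ell}$, and boundedness is all that is needed (and all the paper proves).
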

\begin{proof}
Let $\alpha \colon \pi_1(U)\to G(\FF_\ell)/G^\circ(\FF_\ell)$ be the homomorphism obtained by composing $\rho$ with the obvious quotient map.   The field $F\subseteq \Kbar$ is the minimal extension of $K$ for which $\alpha(\pi_1(U_F))=\alpha(\pi_1(U_{\Kbar}))$.  

Using that $G_{A,\ell}$ is the Zariski closure of a subset of $\calG_{A,\ell}(\ZZ_\ell)$, we find that the natural map
$\calG_{A,\ell}(\ZZ_\ell)/\calG_{A,\ell}^\circ(\ZZ_\ell) \to G_{A,\ell}(\QQ_\ell)/G_{A,\ell}^\circ(\QQ_\ell)$ is an isomorphism.  Using Hensel's lemma, we find that the reduction module $\ell$ homomorphism $\calG_{A,\ell}(\ZZ_\ell)/\calG_{A,\ell}^\circ(\ZZ_\ell) \to G(\FF_\ell)/G^\circ(\FF_\ell)$ is surjective.  Therefore, $\alpha$ can be obtained by composing the homomorphism $\gamma_{A,\ell}$ of \S\ref{SS:neutral component} with a surjective homomorphism $G_{A,\ell}(\QQ_\ell)/G_{A,\ell}^\circ(\QQ_\ell)\to G(\FF_\ell)/G^\circ(\FF_\ell)$.   In particular, $m$ is at most $[G_{A,\ell}(\QQ_\ell):G_{A,\ell}^\circ(\QQ_\ell)]$ which is independent of $\ell$ by Lemma~\ref{L:connected independence families}(\ref{L:connected independence families i}).  The field $F$ is contained in the minimal extension $F'\subseteq \Kbar$ of $K$ for which $\gamma_{A,\ell}(\pi_1(U_{F'}))=\gamma_{A,\ell}(\pi_1(U_{\Kbar}))$.  By Lemma~\ref{L:connected independence families}(\ref{L:connected independence families i}), $F'$ is independent of $\ell$ and hence there are only finitely many possibilities for $F$.     We can bound $r$ in terms of the rank of $G^\circ$ which we have already noted can be bounded in terms of $g$.
\end{proof}

Take any $x\geq 2$.  After first excluding a finite number of primes $\ell$,  Theorem~\ref{T:HIT main} implies that
 \begin{align*}
|B_\ell(x)|&= |\{ u \in U(K) : H(u)\leq x,\, \rho_u(\Gal_K)\not\supseteq S\}| \\&\ll_{U,F,|\scrS|,r,m} \,\, (\ell+1)^{3d/2} \cdot x^{[K:\QQ](n + 1/2)} \log x + (\ell+1)^{(6n+15/2)d}.
\end{align*}
By Lemma~\ref{L:Frm} and $|\scrS|\ll_A 1$, we have
 \begin{align*}
|B_\ell(x)|& \ll_{A} \,\, (\ell+1)^{3d/2} \cdot x^{[K:\QQ](n + 1/2)} \log x + (\ell+1)^{(6n+15/2)d}.
\end{align*}

It remains to bound $d$.  By choosing a polarization of $A$ and combining with the Weil pairing on the $\ell$-torsion of $A$, we find that $G$ is isomorphic to an algebraic subgroup of $\GSp_{2g,\FF_\ell}$ by taking $\ell$ sufficiently large.   Since $d$ is equal to the dimension of the derived subgroup of $G^\circ$ it is at most $\dim \Sp_{2g,\FF_\ell} = g(2g+1)$ and hence
\[
|B_\ell(x)| \ll_{A} \,\, (\ell+1)^{3g(2g+1)/2} \cdot x^{[K:\QQ](n + 1/2)} \log x + (\ell+1)^{(6n+15/2)g(2g+1)}.
\]

\section{Proof of Theorem~\ref{T:MAIN}} \label{S:main proof}

Take any constant $b_A$ as in Theorem~\ref{T:geometric gp theory} and define the set 
\[
B :=\{ u\in U(K):  \bbar\rho_{A_u,\ell}(\Gal_K)\not\supseteq \calS_{A,\ell}(\FF_\ell)' \text{ for some prime }\ell\geq b_A\}.
\]
To prove the theorem, it suffices by Proposition~\ref{P:main reduction} to show that $B$ has density $0$.\\  

Take any real number $x\geq 2$.  We now define some finite sets that we will use to study $B$.   Let $r$ be the common rank of the groups $G_{A,\ell}^\circ$, cf.~Proposition~\ref{P:G specialization}(\ref{P:G specialization ii}).  Fix a $b>0$ for which $\OO_K$ has a prime ideal of norm at most $b\log 2$.
\begin{itemize}
\item
Let $B(x)$ be the set of $u\in B$ for which $H(u)\leq x$. 
\item
For a prime $\ell$, let $B_\ell(x)$ be the set of $u\in U(K)$ with $H(u)\leq x$ satisfying $\bbar\rho_{A_u,\ell}(\Gal_K)\not\supseteq \calS_{A,\ell}(\FF_\ell)'$.
\item
Let $R(x)$ be the set of $u\in U(K)$ with $H(u)\leq x$ such that $G_{A_u,\ell} \neq G_{A,\ell}$ for some prime $\ell$.   
\item
Let $T(x)$ be the set of $u\in U(K)$ with $H(u)\leq x$ such that for any non-zero prime ideal $\p$ of $\OO_K$ satisfying $N(\p)\leq b\,\log x$, the abelian variety $A_u$ has bad reduction at $\p$ or the roots of the polynomial $P_{A_u,\p}$ in $\CC^\times$ generate a group that is not isomorphic to $\ZZ^r$.  
\end{itemize}

\begin{lemma} \label{L:technical combo}
Take any $u\in U(K)$ with $H(u)\leq x$ satisfying  $u\notin R(x)\cup T(x)$.  There are positive constants $\gamma$ and $c$, with $\gamma$ depending only on $g$ and $c$ depending only on $K$ and $g$, such that if $\ell \geq c (\max\{[K:\QQ],h(A_u), \log x\})^\gamma$, then $\bbar\rho_{A_u,\ell}(\Gal_K) \supseteq \calS_{A,\ell}(\FF_\ell)'$.
\end{lemma}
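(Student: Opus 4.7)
The plan is to apply the effective open image theorem for abelian varieties (Theorem~\ref{T:main new}) directly to $A_u/K$, using the two conditions $u\notin R(x)$ and $u\notin T(x)$ to control the invariants that enter into the constant $b_{A_u}$ provided by that theorem.

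First, because $u\notin R(x)$, we have $G_{A_u,\ell}=G_{A,\ell}$ for every prime $\ell$. Passing to Zariski closures inside $\GL_{T_\ell(A_u)}=\GL_{T_\ell(A)}$ (after choosing compatible bases coming from specialization, cf.\ \S\ref{SS:more groups}), this yields $\calG_{A_u,\ell}=\calG_{A,\ell}$, $\calG_{A_u,\ell}^\circ=\calG_{A,\ell}^\circ$, and in particular $\calS_{A_u,\ell}=\calS_{A,\ell}$ for all $\ell$. Therefore the conclusion $\bbar\rho_{A_u,\ell}(\Gal_K)\supseteq \calS_{A,\ell}(\FF_\ell)'$ we want is identical to $\bbar\rho_{A_u,\ell}(\Gal_K)\supseteq \calS_{A_u,\ell}(\FF_\ell)'$, which is precisely the kind of statement Theorem~\ref{T:main new} produces. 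Note also that the common rank $r$ of the groups $G_{A,\ell}^\circ$ coincides with the rank of $G_{A_u,\ell}^\circ$ for every $\ell$, since these algebraic groups are equal.

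Next, I would use the hypothesis $u\notin T(x)$ to exhibit a prime ideal $\q$ of $\OO_K$ that satisfies the hypotheses of Theorem~\ref{T:main new} applied to $A_u$. By the choice of $b$, there is at least one non-zero prime ideal of $\OO_K$ of norm at most $b\log x$, and since $u\notin T(x)$, \emph{some} such prime $\q$ has the property that $A_u$ has good reduction at $\q$ and that $\Phi_{A_u,\q}$ is a free abelian group of rank $r$. Thus $\q$ satisfies the hypotheses of Theorem~\ref{T:main new} with the common rank $r$ as above, and the theorem furnishes constants $c_0,\gamma>0$ depending only on $g$ such that Theorem~\ref{T:geometric gp theory} holds for $A_u$ with
\[
b_{A_u}=c_0\cdot\bigl(\max\{[K:\QQ],\,h(A_u),\,N(\q)\}\bigr)^\gamma.
\]
In particular, by part (ii) of that theorem together with the identification $\calS_{A_u,\ell}=\calS_{A,\ell}$, we obtain $\bbar\rho_{A_u,\ell}(\Gal_K)\supseteq \calS_{A,\ell}(\FF_\ell)'$ for every $\ell\geq b_{A_u}$.

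Finally, I would convert the bound on $N(\q)$ into a bound of the form claimed in the lemma. Since $N(\q)\leq b\log x$, we have
\[
b_{A_u}\leq c_0\cdot\bigl(\max\{[K:\QQ],\,h(A_u),\,b\log x\}\bigr)^\gamma
\leq c\cdot\bigl(\max\{[K:\QQ],\,h(A_u),\,\log x\}\bigr)^\gamma,
\]
where $c:=c_0\cdot\max\{1,b\}^\gamma$ depends only on $K$ and $g$ (via $b=b(K)$ and $c_0,\gamma=c_0,\gamma(g)$). Any $\ell$ exceeding this bound exceeds $b_{A_u}$, and the lemma follows. There is no genuine obstacle here; the only thing to check carefully is the matching of ranks and of the group schemes $\calS_{A_u,\ell}$ and $\calS_{A,\ell}$, which is immediate from $u\notin R(x)$.
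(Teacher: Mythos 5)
Your proposal is correct and follows essentially the same route as the paper: use $u\notin T(x)$ to produce a prime $\q$ of norm at most $b\log x$ satisfying the hypotheses of Theorem~\ref{T:main new} for $A_u$, apply that theorem together with Theorem~\ref{T:geometric gp theory}(\ref{T:geometric gp theory ii}), use $u\notin R(x)$ to identify $\calS_{A_u,\ell}=\calS_{A,\ell}$ (and match the rank $r$), and absorb $N(\q)\leq b\log x$ into the constant $c$. Your explicit check that the rank of $G_{A_u,\ell}^\circ$ agrees with the family's common rank is a point the paper passes over more quickly, but the argument is the same.
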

\begin{proof}
Since $u \notin T(x)$, there is a non-zero prime ideal $\q$ of $\OO_K$ satisfying $N(\q)\leq b \,\log x$ for which $A_u$ has good reduction at $\q$ and for which the subgroup $\Phi_{A_u,\q}$ of $\CC^\times$ generated by the roots of $P_{A_u,\q}$ is isomorphic to $\ZZ^r$.  This uses our choice of $b$ and $x\geq 2$.  By Theorem~\ref{T:main new} and Theorem~\ref{T:geometric gp theory}(\ref{T:geometric gp theory ii}), we have $\bbar\rho_{A_u,\ell}(\Gal_K) \supseteq \calS_{A_u,\ell}(\FF_\ell)'$ for all primes 
\begin{align} \label{E:ell large enough}
\ell  \geq c \cdot \max(\{[K:\QQ],h(A_u), N(\q)\})^\gamma, 
\end{align}
where $c$ and $\gamma$ are positive constants that depend only on $g$. Since $u \notin R(x)$, we have $G_{A_u,\ell} = G_{A,\ell}$.  In particular, we have $\calG_{A_u,\ell}=\calG_{A,\ell}$ and $\calS_{A_u,\ell}=\calS_{A,\ell}$.  Therefore,  we have $\bbar\rho_{A_u,\ell}(\Gal_K) \supseteq \calS_{A,\ell}(\FF_\ell)'$ for all primes $\ell$ satisfying (\ref{E:ell large enough}).  Finally, since $N(\q)\leq b\log x$, we can replace $N(\q)$ by $b\log x$ in (\ref{E:ell large enough}) and adjust the constant $c$ to obtain the lemma.
\end{proof}

We now bound the Faltings height $h(A_u)$ in terms of $H(u)$.

\begin{lemma}  \label{L:Faltings height bound}
We have
$\max\{1,h(A_u)\} \ll_A  \log H(u) + 1$
for all $u\in U(K)$.
\end{lemma}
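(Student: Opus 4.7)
The plan is to compare $h(A_u)$ with an Arakelov-type Weil height on $U$ coming from the Hodge line bundle, and then dominate that Weil height by the standard height $\log H$ on $\PP^n_K$. The maximum with $1$ is harmless since $\log H(u)\geq 0$ for $u\in \PP^n(K)$, so it suffices to prove $h(A_u)\leq C_1\log H(u) + C_2$ for constants $C_1,C_2$ depending only on $A$.

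First, I would consider the Hodge line bundle $\omega := \det \pi_* \Omega^1_{A/U}$ on $U$, equipped with its natural Hermitian metric at Archimedean places (from integration of holomorphic $g$-forms over the complex analytic fibers) and with an integral structure at non-Archimedean places (from a spread-out of $A$ to an abelian scheme over an open subscheme of $\PP^n_{\OO_K}$ away from a finite set $\scrS$ of primes). The construction of the Faltings height as a normalized Arakelov degree of the metrized Hodge bundle---see Moret-Bailly's \emph{Pinceaux de vari\'et\'es ab\'eliennes}, Chapter IV, or Faltings' original paper---yields
\[
h(A_u) = h_\omega(u) + O_A(1)
\]
for every $u\in U(K)$, where $h_\omega$ is the Weil height on $U(K)$ associated to $\omega$ with the above metric structure; the bounded error absorbs contributions from the primes in $\scrS$ via semistable reduction.

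To dominate $h_\omega$, I would extend a positive tensor power $\omega^{\otimes d}$ to a line bundle $\calL$ on a projective birational model $\widetilde{\PP}\to\PP^n_K$ obtained by suitably blowing up along the boundary $\PP^n_K\setminus U$. Standard Weil-height theory on projective varieties then gives $h_{\calL}(u)\ll_A \log H(u)+1$ for all $u\in U(K)$, and rescaling by $1/d$ yields $h_\omega(u)\ll_A \log H(u)+1$. Combining with the previous display proves the lemma. The main technical obstacle is producing the extension $\calL$ and controlling the non-Archimedean contributions at primes of bad reduction uniformly in $u$; both are handled by fixing a sufficiently nice integral model of $A$ and invoking N\'eron/stable-reduction machinery, with all bounded contributions absorbed into the $O_A(1)$.
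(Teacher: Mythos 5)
Your overall strategy---compare $h(A_u)$ with a height attached to (a power of) the Hodge bundle and then dominate that height by $\log H(u)$---is in spirit the same as the paper's, except that the paper performs the comparison on the compactified moduli space $\bbar{\defi{A}}_g$ (after reducing to the principally polarized case via Zarhin's trick), while you try to do it directly on a compactification of the base. The genuine gap is your key identity $h(A_u)=h_\omega(u)+O_A(1)$: it is not correct as stated, and the error cannot be ``absorbed'' into $O_A(1)$. Once $h_\omega$ is made into an honest Weil height (via your extension $\calL$ of $\omega^{\otimes d}$ to a blowup of $\PP^n_K$), the discrepancy between the stable Faltings height and $\tfrac1d h_{\calL}(u)$ has two sources that depend on $u$ and are unbounded over $U(K)$. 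Archimedean: the $L^2$ (Faltings) metric on the Hodge bundle has logarithmic singularities along the boundary, so its comparison with any continuous metric on $\calL$ degenerates as $u$ approaches $\bbar{U}-U$. Non-archimedean: the primes of bad reduction of $A_u$ are exactly the primes where $u$ reduces into the boundary; they are not contained in $\scrS$, they vary with $u$, and the difference between the N\'eron/semistable Hodge lattice and the lattice furnished by $\calL$ at such a prime grows with the depth of bad reduction. Already for $g=1$ the stable height differs from $\tfrac1{12}h(j_{A_u})$ by a term of size $\log\bigl(2+h(j_{A_u})\bigr)$, not $O(1)$.

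The correct comparison therefore has an error of size $O_A(\log H(u)+1)$ rather than $O_A(1)$; that weaker statement would still suffice for the lemma (the final bound is itself $\ll_A \log H(u)+1$), but proving it is precisely the content of Faltings' results that the paper invokes: the metric on $\bbar{\defi{A}}_g$ has logarithmic singularities, giving $|h(x)-h_1(x)|\ll_g \log h_1(x)+1$, together with $h(A)=r\,h(x)+O_g(1)$ for semistable principally polarized $A$. Your citation of Faltings/Moret-Bailly points at the right tools, but as written the hard step is asserted rather than proved; the remaining step (dominating $h_{\calL}$ on $U(K)$ by $\log H(u)+1$, i.e.\ the easy upper-bound direction of height functoriality for a map defined on $U$) is fine and matches the paper's use of $\log H(\varphi(u))\ll_\varphi \log H(u)+1$.
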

\begin{proof}
We recall some results of Faltings \cite{MR861971}.    Let $\defi{A}_g$ be the coarse moduli space of  the moduli stack $\mathfrak{A}_g$ of principally polarized abelian varieties of relative dimension $g$; it is a variety defined over $\QQ$.   There is an integer $r>0$ for which $(\omega_{\calA/\mathfrak{A}_g})^{\otimes r}$ defines a very ample line bundle on $\defi{A}_g$, where $\calA\to \mathfrak{A}_g$ is the universal abelian variety.    Using this line bundle, we will identify $\defi{A}_g$ with a subvariety of a projective space $\PP^m_\QQ$.   Let $\bbar{\defi{A}}_g$ be the Zariski closure of $\defi{A}_g$ in $\PP_{\ZZ}^m$ and let $\calM$ be the induced line bundle $\mathcal{O}(1)$ on $\bbar{\defi{A}}_g$.     In \cite{MR861971}*{\S3}, Faltings defines a hermitian metric $\norm{\cdot}$ on the line bundle induced by $\calM$ on $(\defi{A}_g)_\CC$; it gives rise to a height function $h\colon \defi{A}_g(\Kbar) \to \RR$.  Choose any hermitian metric $\norm{\cdot}_1$ on the line bundle induced by $\calM$ on $(\bbar{\defi{A}}_g)_\CC$; it gives rise to a height function $h_1\colon \bbar{\defi{A}}_g(\Kbar) \to \RR$.   For our implicit constants below, we note that the choices of $r$, $\calM$ and $\norm{\cdot}_1$ depend only on $g$.

Faltings observes that the metric $\norm{\cdot}$ has logarithmic singularities along $\bbar{\defi{A}}_g-\defi{A}_g$, cf.~\cite{MR861971}*{p.~15}.  This implies that 
\[
|h(x) - h_1(x)| \ll_g \log h_1(x) + 1
\]
for all $x\in \defi{A}_g(\Kbar)$; see the proof of \cite{MR861971}*{Lemma~3} or \cite{MR861975}*{Proposition~8.2}.   Therefore, we have
\[
\max\{1,h(x)\} \ll_g \max\{1,h_1(x)\} \ll_g \log H(x) + 1
\]
for all $x\in \defi{A}_g(\Kbar)$, where $H$ is the usual absolute height on $\PP^m(\Kbar)$.

Consider a semistable abelian variety $A$ defined over $K$ that has a principal polarization $\xi$ (the connected N\'eron model of $A$ is a scheme over the ring of integers of $K$ that is semiabelian).  Denote by $x \in \defi{A}_g(K)$ the point on the moduli space corresponding to the pair $(A,\xi)$.     Then we have
\[
h(A) = r \cdot h(x) +O_g( 1);
\]
this is noted in the proof of \cite{MR861971}*{Theorem~1}.  Combining the bounds above, we have
\[
\max\{1,h(A)\} \ll_g \log H(x) + 1;
\]
note that this remains true without the semistable hypothesis since both sides are stable under replacing $K$ by a finite extension.

We finally consider our abelian scheme $A\to U$.    First suppose that $A\to U$ has a principal polarization $\xi$.   There is thus a morphism $\varphi\colon U\to (\defi{A}_g)_K$ such that the pair $(A_u,\xi_u)$ represents the point $\varphi(u) \in \defi{A}_g(K)$ for each $u\in U(K)$, where $\xi_u$ is the specialization of $\xi$ at $u$.  So from above, we find that
\[
\max\{1,h(A_u)\} \ll_g \log H(\varphi(u)) + 1.
\]
The lemma now follows since $\log H(\varphi(u)) \ll_\varphi \log H(u) + 1$ for all $u\in U(K)$, where the implicit constant depends only on the morphism $\varphi\colon U \to (\defi{A}_g)_K \subseteq \PP^m_K$, cf.~\cite{MR1757192}*{\S2.6}.

It remains to consider a general $A\to U$ that need not have a principal polarization.  Define the abelian scheme $B:=(A\times A^\vee)^4 \to U$, where $A^\vee$ is the dual of $A$.    Using Zarhin's trick \cite{MR1175627}*{Ch.~IV Proposition~3.8}, one finds that the abelian scheme $B\to U$ is principally polarized.   So by the case of the lemma already proved, we have $\max\{1,h(B_u)\} \ll_A  \log H(u) + 1$ for all $u\in U(K)$.   The lemma follows since $h(B_u)=8h(A_u)$ for all $u\in U(K)$, cf.~the remarks after Propositions~3.7 and 3.8 in Ch.~IV of \cite{MR1175627} (recall we are using the stable Faltings height).  
\end{proof}

Take $x\geq 2$.  Take any $u \in U(K)$ satisfying $H(u)\leq x$ and $u\notin R(x)\cup T(x)$.  We have $\max\{1,h(A_u)\} \ll_A \log x$ by Lemma~\ref{L:Faltings height bound}.    So by Lemma~\ref{L:technical combo}, there are positive constants $c$ and $\gamma$ such that $\bbar\rho_{A_u,\ell}(\Gal_K) \supseteq \calS_{A,\ell}(\FF_\ell)'$ holds for all $\ell\geq c(\log x)^\gamma$, where $\gamma$ depends only on $g$ and $c$ depends only on $A$.   Therefore,
\[
B(x) \subseteq R(x) \cup T(x)  \cup \bigcup_{b_A\leq \ell \leq c(\log x)^\gamma} B_\ell(x).
\]
In particular, we have
\begin{equation} \label{E:B(x) bound}
|B(x)| \leq |R(x)| + |T(x)| + \sum_{b_A\leq \ell \leq c(\log x)^\gamma} |B_\ell(x)|.
\end{equation}
We now bound the terms on the right hand side of (\ref{E:B(x) bound}).

\subsection{Bounding the sum of the $|B_\ell(x)|$}

Take $c$ and $\gamma$ as in (\ref{E:B(x) bound}).  For each prime $b_A\leq \ell \leq c(\log x)^\gamma$, Theorem~\ref{T:almost there mod ell} implies that $|B_\ell(x)| \ll_A x^{[K:\QQ](n+1/2)} (\log x)^{\gamma'}$, where $\gamma'$ is a positive constant depending only on $g$.  Therefore,
\begin{align} \label{E:B bound}
\sum_{b_A\leq \ell \leq c(\log x)^\gamma} |B_\ell(x)| \ll_A x^{[K:\QQ](n+1/2)} (\log x)^{\gamma' +\gamma} = o(x^{[K:\QQ](n+1)}).
\end{align}

\subsection{Bounding $|R(x)|$} \label{SS:bounding R(x)}

Let $R$ be the set of $u\in U(K)$ such that $G_{A_u,\ell}\neq G_{A,\ell}$ for some $\ell$.  Note that $R(x)$ is the set of $u\in R$ with $H(u)\leq x$.  The set $R$ has density $0$ by Proposition~\ref{P:monodromy independence} and hence
\begin{align}\label{E:R bound}
|R(x)| = o(x^{[K:\QQ](n+1)}).
\end{align}

\subsection{Bounding $|T(x)|$} \label{SS:bounding T(x)}

We fix a prime $\ell\geq b_A$.  Let $\calU$ be the open subscheme of $\PP^n_{\OO_K}$ that is the complement of the Zariski closure of $\PP^n_K-U$ in $\PP^n_{\OO_K}$.    There is an abelian scheme $\calA\to \calU_{\OO}$, where $\OO$ is the ring of $\scrS$-integers in $K$ for some finite set $\scrS$ of nonzero prime ideals of $\OO_K$, such that the fiber over $(\calU_{\OO})_K=U$ is our abelian scheme $A\to U$.   By increasing $\scrS$, we may further assume that it contains all prime ideals that divide $\ell\cdot |\calG_{A,\ell}(\ZZ/\ell\ZZ)|$ and that $\calU(\FF_\p)$ is non-empty for all $\p\notin \scrS$.   There is no harm in replacing $U$ by a non-empty open subvariety since this only removes a density 0 set of rational points.   So after replacing $U$ and increasing $\scrS$, we may further assume that $\calU_{\FF_\p}$ is affine and geometrically irreducible for all non-zero prime ideals $\p\notin \scrS$ of $\OO_K$.

For each integer $e\geq 1$, the $\ell^e$-torsion subscheme $\calA[\ell^e]$ of $\calA$ can be viewed as locally constant sheaf of $\ZZ/\ell^e\ZZ$-modules on $\calU$ that is free of rank $2g$. The fiber of $\calA[\ell^e]$ over $U=(\calU)_K$ is $A[\ell^e]$.   Since $\bbar\rho_{A,\ell^e}$ is the representation associated to $A[\ell^e]$, we find that it arises via base change from a representation of $\pi_1(\calU)$.   Combining these representations together appropriately, we obtain a representation $\varrho_{\calA,\ell}$ of $\pi_1(\calU)$ such that base change gives rise to our representation $\rho_{A,\ell}\colon \pi_1(U)=\pi_1(\calU_K) \to \GL_{V_\ell(A)}(\QQ_\ell)$.    Since $\varrho_{\calA,\ell}$ and $\rho_{A,\ell}$ have the same image, we have $\varrho_{\calA,\ell}\colon \pi_1(\calU)\to \calG_{A,\ell}(\ZZ_\ell)$. 

For a non-zero prime ideal $\p\notin \scrS$ of $\OO_K$ and  a point $u\in \calU(\FF_\p)$, let $\calA_u$ be the abelian variety over $\FF_\p$ that is the fiber of $\calA$ over $u$.   Since $\p\nmid \ell$, we have $P_{\calA_u}(x)=\det(xI-\varrho_{\calA,\ell}(\Frob_u))$, where $P_{\calA_u}(x)$ is the Frobenius polynomial of $\calA_u$.  Let $\Phi_{\calA_u}$ be the subgroup of $\CC^\times$ generated by the roots of the Frobenius polynomial $P_{\calA_u}(x)$.  

\begin{lemma} \label{L:subvariety Y}
There is a closed subvariety $Y\subsetneq G_{A,\ell}^\circ$, stable under conjugation by $G_{A,\ell}$, such that if $\varrho_{\calA,\ell}(\Frob_u) \in G_{A,\ell}^\circ(\QQ_\ell)- Y(\QQ_\ell)$ for a prime ideal $\p\notin \scrS$ of $\OO_K$ and a point $u\in \calU(\FF_\p)$, then $\Phi_{\calA_u}\cong \ZZ^r$.  
\end{lemma}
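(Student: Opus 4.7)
My plan is to construct $Y$ via the character lattice of a maximal torus of $G_{A,\ell}^\circ$ combined with the Weil constraints on Frobenius eigenvalues.  Fix a maximal torus $T$ of $G_{A,\ell}^\circ$ (over a finite extension of $\QQ_\ell$), and let $\chi_1,\ldots,\chi_{2g}\in X^*(T)$ be the weights of $T$ acting on $V_\ell(A)$.  Since $T\hookrightarrow \GL_{V_\ell(A)}$ is faithful, the sublattice $L:=\ang{\chi_1,\ldots,\chi_{2g}}\subseteq X^*(T)$ has full rank $r=\dim T$.  For any semisimple $t\in T(\QQbar_\ell)$, the group $\Phi_t\subseteq \QQbar_\ell^\times$ generated by the eigenvalues $\chi_1(t),\ldots,\chi_{2g}(t)$ is the image of the evaluation homomorphism $L\to \QQbar_\ell^\times$, $\lambda\mapsto \lambda(t)$.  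Hence $\Phi_t\cong \ZZ^r$ precisely when this homomorphism is injective, i.e.\ when $t\notin \ker\lambda$ for every nonzero $\lambda\in L$.  This reduces the lemma to exhibiting a proper $G_{A,\ell}$-invariant Zariski closed $Y\subset G_{A,\ell}^\circ$ that contains all Frobenius elements for which this injectivity fails.

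The key arithmetic input is that if $t=\varrho_{\calA,\ell}(\Frob_u)$ for $u\in \calU(\FF_\p)$, then the eigenvalues $\chi_i(t)$ are Weil numbers of absolute value $\sqrt{N(\p)}$.  A relation $\lambda(t)=1$ with $\lambda=\sum n_i\chi_i$ thus forces $\sum n_i=0$ by comparing complex absolute values, so $\lambda$ lies in the proper ``weight-zero'' sublattice $L_0\subseteq L$ of rank $r-1$.  I would then define $Y$ as the union of (a) the non-regular-semisimple locus of $G_{A,\ell}^\circ$ (automatically proper, closed, and $G_{A,\ell}$-invariant), and (b) the $G_{A,\ell}$-conjugacy closure of the subtori $\ker\lambda\subset T$ attached to a finite list of distinguished primitive $\lambda\in L_0$ (chosen, up to the Weyl group action, to exhaust the possible ``bad'' relations for Frobenius elements).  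By construction $Y$ is Zariski closed and $G_{A,\ell}$-invariant, and it is proper in $G_{A,\ell}^\circ$ because Remark~\ref{R:q exists} supplies a Frobenius $\varrho_{\calA,\ell}(\Frob_\q)\in G_{A,\ell}^\circ(\QQ_\ell)$ with $\Phi_{A,\q}\cong \ZZ^r$, necessarily lying outside $Y$.  Finally, for $t=\varrho_{\calA,\ell}(\Frob_u)\in G_{A,\ell}^\circ(\QQ_\ell)\setminus Y(\QQ_\ell)$, the reduction in the previous paragraph yields $\Phi_{\calA_u}\cong \ZZ^r$ as required.

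The main obstacle is the reduction to a \emph{finite} list of relevant $\lambda\in L_0$: a priori, the locus of $t\in T$ with $\lambda(t)=1$ for some nonzero $\lambda\in L_0$ is only a countable union of proper subtori of $T$ and is not Zariski closed.  Overcoming this requires using the arithmetic rigidity of Frobenius elements---their eigenvalues are algebraic integers of bounded degree and archimedean size---together with the Weyl group action on $L$, to cut the countable family down to a finite one.  This finiteness assertion is the technical heart of Lemma~2.7 of \cite{Zywina-EffectiveOpenImage}, whose argument would be followed here in essentially the same form; the remaining verifications of closedness, $G_{A,\ell}$-invariance, and properness of $Y$ are then essentially formal.
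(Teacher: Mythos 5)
Your plan runs on the same underlying mechanism as the paper's proof: everything reduces to the Serre/Larsen--Pink theory of Frobenius tori, i.e.\ to the fact that the multiplicative relations among the eigenvalues of the semisimple elements $\varrho_{\calA,\ell}(\Frob_u)$ (Weil numbers of degree at most $2g$, with prescribed archimedean and $p$-adic valuations) fall into only finitely many patterns, uniformly in $\p$ and $u$, so that the bad locus fits inside a single proper, conjugation-stable closed subvariety. The paper imports exactly this from the proofs of Lemma~1.3(b) and Theorem~1.2 of \cite{MR1441234}, noting that those arguments use only semisimplicity and the valuations of the roots of $P_{\calA_u}(x)$ and hence apply to every fiber, and then deduces $\Phi_{\calA_u}\cong\ZZ^r$ from the statement that off $Y$ the Zariski closure of the group generated by Frobenius is a maximal torus. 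Your deferral of the finiteness to the argument behind Lemma~2.7 of \cite{Zywina-EffectiveOpenImage} points at the same machinery, so this is not a different route; but that unproved finiteness, in a form uniform over all $\p\notin\scrS$ and all $u\in\calU(\FF_\p)$ rather than over the Frobenii of a single abelian variety over $K$, is the entire content of the lemma, and your proposal asserts rather than supplies it.

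There is also a concrete defect in the construction as written: kernels of \emph{primitive} weight-zero characters do not detect relations whose value is a nontrivial root of unity. If $\lambda$ is primitive and $\lambda(t)=\zeta$ has exact order $N>1$, then $N\lambda$ kills $t$, so the evaluation map on $L$ is not injective and $\Phi_{\calA_u}$ has torsion (hence is not $\cong\ZZ^r$), yet $t\notin\ker\lambda$; such eigenvalue configurations do occur (already an elliptic curve times a quadratic twist of itself gives eigenvalues differing by $-1$), and these Frobenii would lie outside your $Y$ while violating the conclusion. You must enlarge $Y$ to include the subgroups $\lambda^{-1}(\mu_N)$ for the finitely many relevant $N$ (bounded, since such roots of unity lie in the splitting field of $P_{\calA_u}(x)$, whose degree is bounded in terms of $g$), or, as the paper does, phrase the off-$Y$ condition as ``the Zariski closure of $\ang{\varrho_{\calA,\ell}(\Frob_u)}$ is a maximal torus,'' which handles torsion relations automatically. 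Two smaller points: properness of $Y$ is cleaner by a dimension count on each piece than by invoking Remark~\ref{R:q exists}, which concerns an abelian variety over a number field (so needs a specialization step to land in the family's $G_{A,\ell}^\circ$) and in any case only exhibits a point outside the union of conjugates, not outside its Zariski closure; and the claim that $L_0$ has rank $r-1$ is unjustified as stated (it rests on $G_{A,\ell}^\circ$ containing the homotheties), though nothing in the argument actually needs it.
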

\begin{proof}
This essentially follows from Theorem~1.2 \cite{MR1441234}; we give a few extra details since this theorem was only stated for representations of $\Gal_K$.

Take any prime ideal $\p\notin \scrS$ and $u\in \calU(\FF_\p)$.   Let $H_{u,\ell}$ be the Zariski closure of the subgroup of $G_{A,\ell}$ generated by $\varrho_{A,\ell}(\Frob_u)$.   The proof of Lemma~1.3(b) of \cite{MR1441234} shows that there are only finitely many possibilities for $(H_{u,\ell})_{\Qbar_\ell}$ up to conjugation by $\GL_{V_\ell(A)}(\Qbar_\ell)$ as we vary over all $\p$ and $u$; note that the proof only uses that $\varrho_{A,\ell}(\Frob_u)$ is semisimple along with information about the valuations of the roots of $P_{\calA_u}(x)$.   The end of the proof of Theorem~1.2 \cite{MR1441234} then shows how to construct a closed subvariety $Y\subsetneq G_{A,\ell}^\circ$, stable under conjugation by $G_{A,\ell}$, such that if $\varrho_{\calA,\ell}(\Frob_u) \in G_{A,\ell}^\circ(\QQ_\ell)- Y(\QQ_\ell)$ for a prime ideal $\p\notin \scrS$ of $\OO_K$ and a point $u\in \calU(\FF_\p)$, then $H_{u,\ell}$ is a maximal torus of $G_{A,\ell}^\circ$.

Now suppose that $T:=H_{u,\ell}$ is a maximal torus of $G_{A,\ell}^\circ$; it remains to show that $\Phi_{\calA_u}\cong \ZZ^r$.  Let $X(T)$ be the group of characters $T_{\Qbar_\ell}\to \GG_{m,\Qbar_\ell}$; it is a free abelian group whose rank is equal to $\dim T = \rank G_{A,\ell}^\circ=r$.  Define the homomorphism $\varphi\colon X(T) \to \CC^\times$, $\alpha\mapsto \iota(\alpha(\varrho_{A,\ell}(\Frob_u)))$, where $\iota$ is any embedding of $\Qbar_\ell$ into $\CC$.  The homomorphism $\varphi$ is injective since otherwise $H_{u,\ell}\neq T$.  Since $\varrho_{A,\ell}(\Frob_u)$ is semisimple with characteristic polynomial $P_{\calA_u}(x)$, we find that the image of $\varphi$ is generated by the roots of $P_{\calA_u}(x)$.   Therefore, we have isomorphisms $\Phi_{\calA_u} \cong X(T)\cong \ZZ^r$.
\end{proof}

For each non-zero prime ideal $\p\notin\scrS$ of $\OO_K$, let  $D_\p$ be the set of  $u\in \PP^n(\FF_\p)$ for which $u\notin \calU(\FF_\p)$ or for which $\Phi_{\calA_u}\not\cong \ZZ^r$.    Define $\delta_\p:=|D_\p|/|\PP^n(\FF_\p)|$.

\begin{lemma} \label{L:deltap supply}
There is a constant $0\leq \delta<1$ such that $\delta_\p\leq \delta$ holds  for infinitely many prime ideals  $\p\notin \scrS$ of $\OO_K$.
\end{lemma}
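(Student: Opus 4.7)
The strategy is to apply Theorem~\ref{T:tame equidistribution} at a suitable finite level $\ell^e$, combined with the Chebotarev density theorem, in order to leverage the Zariski density of $\rho_{A,\ell}(\pi_1(U))$ in $G_{A,\ell}$. Let $\calY$ denote the Zariski closure of $Y$ in $\calG_{A,\ell}^\circ$; it is a proper closed $\ZZ_\ell$-subscheme, stable under $G_{A,\ell}$-conjugation. By Lemma~\ref{L:subvariety Y}, it suffices to produce infinitely many primes $\p\notin\scrS$ for which a positive proportion of $u\in\calU(\FF_\p)$ satisfy $\varrho_{\calA,\ell}(\Frob_u)\in (G_{A,\ell}^\circ\setminus Y)(\QQ_\ell)$, since such $u$ have $\Phi_{\calA_u}\cong\ZZ^r$ and therefore lie outside $D_\p$.

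Since $\rho_{A,\ell}(\pi_1(U))$ is Zariski dense in $G_{A,\ell}$, it meets the non-empty Zariski-open set $(\calG_{A,\ell}^\circ\setminus\calY)(\ZZ_\ell)$; pick some $g_0$ in this intersection. Because $\calY(\ZZ_\ell)$ is $\ell$-adically closed in $\calG_{A,\ell}(\ZZ_\ell)$, we may choose $e\geq 1$ so that every $g\in\calG_{A,\ell}(\ZZ_\ell)$ congruent to $g_0$ modulo $\ell^e$ lies in $(\calG_{A,\ell}^\circ\setminus\calY)(\ZZ_\ell)$. After enlarging $\scrS$ so that $\bbar\rho_{A,\ell^e}$ extends to a representation of $\pi_1(\calU_\OO)$, set $G_e:=\bbar\rho_{A,\ell^e}(\pi_1(\calU_\OO))$ and $G_{e,g}:=\bbar\rho_{A,\ell^e}(\pi_1(U_{\Kbar}))$; then $G_e/G_{e,g}\cong\Gal(L_e/K)$ for a finite Galois extension $L_e/K$, and the image $\bar g_0\in G_e$ lies in a specific coset $\tau G_{e,g}$. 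Define $C$ to be the $G_e$-conjugation-stable subset of $\tau G_{e,g}$ consisting of those $\bar h\in G_e$ whose entire $\ell^e$-fiber in $\calG_{A,\ell}(\ZZ_\ell)$ lies in $(\calG_{A,\ell}^\circ\setminus\calY)(\ZZ_\ell)$; by construction $\bar g_0\in C$, so $|C|\geq 1$.

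By Chebotarev, infinitely many primes $\p$ of $\OO_K$ map to $\tau$ in $\Gal(L_e/K)$. Excluding a finite set of bad primes and invoking a level-$\ell^e$ analogue of Lemma~\ref{L:big mod p monodromy}, we may further assume $\bbar\rho_{A,\ell^e}(\pi_1(\calU_{\FFbar_\p}))=G_{e,g}$ and $\p\nmid\ell$. For any such $\p$, Theorem~\ref{T:tame equidistribution} applied to $\calU_{\FF_\p}$ with the conjugation-stable set $C\subseteq\tau G_{e,g}$ yields
\[
\bigl|\{u\in\calU(\FF_\p):\bbar\rho_{A,\ell^e}(\Frob_u)\in C\}\bigr| = \frac{|C|}{|G_{e,g}|}\,|\calU(\FF_\p)| + O_A\bigl(N(\p)^{n-1/2}\bigr).
\]
Each $u$ counted on the left has $\rho_{A,\ell}(\Frob_u)\in(\calG_{A,\ell}^\circ\setminus\calY)(\ZZ_\ell)\subseteq(G_{A,\ell}^\circ\setminus Y)(\QQ_\ell)$ by the definition of $C$, so Lemma~\ref{L:subvariety Y} gives $\Phi_{\calA_u}\cong\ZZ^r$ and $u\notin D_\p$. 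Bounding $|D_\p|$ accordingly and dividing by $|\PP^n(\FF_\p)|$ produces $\delta_\p\leq 1-|C|/(2|G_{e,g}|)=:\delta<1$ for all such $\p$ of sufficiently large norm, which completes the argument. The main technical point to verify is the level-$\ell^e$ specialization $\bbar\rho_{A,\ell^e}(\pi_1(\calU_{\FFbar_\p}))=G_{e,g}$ for $\p$ outside a finite set, which requires the same Bertini-section plus Grothendieck-specialization argument as in Lemma~\ref{L:big mod p monodromy}, adapted from the mod-$\ell$ level to the mod-$\ell^e$ level.
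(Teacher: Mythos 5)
Your proposal is correct in substance and follows the same overall skeleton as the paper (work at a finite level $\ell^e$, extend to $\pi_1(\calU_\OO)$, use Lemma~\ref{L:big mod p monodromy} to control the geometric mod-$\p$ monodromy, apply Theorem~\ref{T:tame equidistribution} on each $\calU_{\FF_\p}$, and use Chebotarev to single out infinitely many good $\p$), but it handles the one genuinely delicate step differently. The paper shows that the ``bad'' set $\calY_e$ does not swallow the whole image $G$ by a cardinality comparison: $|\calY_e|\ll_{\calY}\ell^{e(d-1)}$ because $\calY$ is an $\ell$-adic analytic set of dimension at most $d-1$ (Serre), while $|G|\gg_A \ell^{ed}$ by smoothness of $\calG_{A,\ell}$ and the bounded-index statement of Theorem~\ref{T:geometric gp theory}(\ref{T:geometric gp theory iii}); one then takes $e$ large. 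You instead use Zariski density of $\rho_{A,\ell}(\pi_1(U))$ in $G_{A,\ell}$ to find a single element $g_0$ of the image in $G^\circ_{A,\ell}(\QQ_\ell)\setminus Y(\QQ_\ell)$, and $\ell$-adic openness of the complement of $\calY$ to find one good congruence class mod $\ell^e$; this is more elementary, avoiding both the analytic point-count and the index bound, at the cost of producing only one good class (which is all you need, since you then track its Frobenius coset via Chebotarev) rather than a quantitative proportion. Two small points to tidy: (i) the coset $\tau G_{e,g}$ need not be stable under $G_e$-conjugation when $\Gal(L_e/K)$ is non-abelian, so you should define the conjugation-stable set $\calC$ of all good elements of $G_e$ and, for each Chebotarev prime $\p$, apply Theorem~\ref{T:tame equidistribution} to $\calC\cap\kappa_\p$, where $\kappa_\p$ is only $G_e$-conjugate to $\tau G_{e,g}$ (conjugation preserves the count, so your lower bound $|\calC\cap\kappa_\p|\geq 1$ and the final $\delta$ are unaffected); (ii) no adaptation of Lemma~\ref{L:big mod p monodromy} is needed, since it is stated for an arbitrary finite group and hence applies verbatim to $\bbar\rho_{A,\ell^e}$, exactly as the paper uses it.
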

\begin{proof}
Fix an integer $e\geq 1$.   Take $Y$ as in Lemma~\ref{L:subvariety Y} and define $\calY = Y(\QQ_\ell) \cap \calG_{A,\ell}(\ZZ_\ell)$; it is stable under conjugation by $\calG_{A,\ell}(\ZZ_\ell)$.  Let $\calY_e$ be the image of $\calY$ in $\calG_{A,\ell}(\ZZ/\ell^e\ZZ)$.    Let $\bbar\varrho_{\calA,\ell^e}\colon \pi_1(\calU)\to \calG_{A,\ell}(\ZZ/\ell^e\ZZ)$ be the representation obtained by composing $\varrho_{\calA,\ell}$ with the reduction map $\calG_{A,\ell}(\ZZ_\ell)\to \calG_{A,\ell}(\ZZ/\ell^e\ZZ)$.

Define the finite group $G:=\bbar\varrho_{A,\ell^e}(\pi_1(\calU))=\bbar\rho_{A,\ell^e}(\pi_1(U)) \subseteq \calG_{A,\ell}(\ZZ/\ell^e\ZZ)$.   Define $G_g:=\bbar\varrho_{\calA,\ell^e}(\pi_1((\calU_\OO)_{\Kbar}))=\bbar\rho_{A,\ell^e}(\pi_1(U_{\Kbar}))$; it is a normal subgroup of $G$.   After possibly increasing the finite set $\scrS$, we may assume that
 \[
 \bbar\varrho_{\calA,\ell^e}(\pi_1(\calU_{\FFbar_\p}))=G_g
 \]
 holds for all non-zero prime ideals $\p\notin \scrS$ of $\OO_K$, see Lemma~\ref{L:big mod p monodromy}.   
 
 Take a non-zero prime ideal $\p\notin \scrS$ of $\OO_K$.  From $\bbar\varrho_{\calA,\ell^e}$, base change to $\FF_\p$ gives a homomorphism we will denote by $\varrho_\p\colon \pi_1(\calU_{\FF_\p}) \to G$; uniquely defined up to conjugation by $G$ and satisfying $\varrho_{\p}(\pi_1(\calU_{\FFbar_\p}))=G_g$.   Let $\kappa_\p$ be the unique $G_g$-coset of $G$ that contains $\varrho_{\p}(\Frob_u)$ for all $u\in \calU(\FF_\p)$.  Note that the set $\calY_e \cap \kappa_\p$ is stable under conjugation by $G$.
 
  If $\varrho_\p(\Frob_u) \in \kappa_\p- (\calY_e\cap \kappa_\p)$ for a point $u\in \calU(\FF_\p)$, then $\varrho_{\calA,\ell}(\Frob_u) \notin Y(\QQ_\ell)$ and hence $\Phi_{\calA_u}\cong \ZZ^r$ by Lemma~\ref{L:subvariety Y}.   Therefore,
 \[
 \{u \in \calU(\FF_\p): \varrho_\p(\Frob_u) \in \kappa_\p- (\calY_e\cap \kappa_\p) \} \subseteq \PP^n(\FF_\p)-D_\p.
 \]
 By Theorem~\ref{T:tame equidistribution}, this implies that 
\[
\Big(1- \frac{| \calY_e\cap \kappa_\p|}{|G_g|}\Big)\, |\calU(\FF_\p)| + O_{A,\ell^e}(N(\p)^{n-1/2}\big) \leq |\PP^n(\FF_\p)| - |D_\p|.
\]
Dividing by $|\PP^n(\FF_\p)|$ and using that $|\PP^n(\FF_\p)-\calU(\FF_\p)|\ll_A N(\p)^{n-1/2}$, we deduce that 
\begin{align} \label{E:deltap bound}
\delta_\p \leq  \frac{|\calY_e \cap \kappa_\p|}{|G_g|}  + O_{A,\ell^e}(N(\p)^{-1/2}).
\end{align}

Now suppose that $ \calY_e \cap G$ is a proper subset of $G$.  Then there is a $G_g$-coset $\kappa_0$ of $G$ such that $\calY_e \cap \kappa_0$ is a proper subset of $\kappa_0$.  Let $C$ be the conjugacy class of $G/G_g$ that contains the image of $\kappa_0$.   Define the homomorphism 
\[
\alpha\colon \pi_1(\calU) \xrightarrow{\bbar\varrho_{\calA,\ell^e}} G\to G/G_g.
\] 
The homomorphism $\alpha$ factors through $\Gal_K$; moreover, for $u\in \calU(\FF_\p)$ with $\p\notin \scrS$, the conjugacy class of $\alpha(\Frob_u)$ is represented by the image of $\kappa_\p$ in $G/G_g$.     By the Chebotarev density theorem, there are infinitely many $\p\notin \scrS$ for which the image of $\kappa_\p$ in $G/G_g$ lies in $C$; now take any such $\p$.  The cosets $\kappa_\p$ and $\kappa_0$ are conjugate in $G$.  Since $\calY_e \cap G$ is stable under conjugation by $G$, we have
\[
\frac{|\calY_e \cap \kappa_\p|}{|G_g|} = \frac{|\calY_e \cap \kappa_0|}{|G_g|} <1,
\]
where the inequality uses our choice of $\kappa_0$.  In particular, ${|\calY_e \cap \kappa_\p|}/{|G_g|}\leq  1-1/|G_g|$.    After first excluding a finite number of $\p$, we deduce that $\delta_\p<1$ by (\ref{E:deltap bound}).

So to prove the lemma, it suffices to show that $\calY_e \cap G$ is a proper subset of $G$.   Since $Y\subsetneq G_{A,\ell}^\circ$, the variety $Y$ has  dimension at most $d-1$, where $d:= \dim G_{A,\ell}^\circ$.  So $\calY$ is a $p$-adic analytic manifold of dimension at most $d-1$ and hence $|\calY_e| \ll_\calY \ell^{e(d-1)}$, cf.~\cite{MR644559}*{The\'eor\`eme~8}.   Since $\ell\geq b_A$, $\calG_{A,\ell}$ is smooth and $[\calG_{A,\ell}(\ZZ_\ell) : \rho_{A,\ell}(\pi_1(U))] \ll_A 1$ by Theorem~\ref{T:geometric gp theory}(\ref{T:geometric gp theory i}) and (\ref{T:geometric gp theory iii}).   Therefore, $[\calG_{A,\ell}(\ZZ/\ell^e\ZZ) : \bbar\rho_{A,\ell^e}(\pi_1(U))]\ll_A 1$ and hence $|G| \gg_A |\calG_{A,\ell}(\ZZ/\ell^e\ZZ)| \gg_A \ell^{ed}$.   We have not imposed any conditions on the integer $e\geq 1$ yet.  So using $|G|\gg_A \ell^{ed}$ and $|\calY_e| \ll_\calY \ell^{e(d-1)}$, we choose $e\geq 1$ large enough so that $|G|>|\calY_e|$ and hence $\calY_e \cap G$ is a proper subset of $G$. 
\end{proof}

By Lemma~\ref{L:deltap supply}, there are infinitely many non-zero prime ideals $\p_1,\p_2,\ldots $ of $\OO_K$ that are not in $\scrS$ and satisfy $\delta_{\p_i}\leq \delta$ for some $0\leq \delta<1$.  Take any integer $m\geq 1$.

Let $D$ be the set of $u\in \PP^n(K)$ for which the image in $\PP^n(\FF_{\p_i})$ under the reduction modulo $\p_i$ lies in $D_{\p_i}$ for all $1\leq i \leq m$.  The subset $D$ of $\PP^n(K)$ has density $\prod_{i=1}^m \delta_{\p_i}$.    Note that if the reduction $\bbar{u}$ of a point $u\in U(K)$ modulo $\p_i$ lies in $\PP^n(\FF_{\p_i})-D_{\p_i}$, then $A_u=\calA_u$ has good reduction at $\p_i$ and $\Phi_{A_u,\p}=\Phi_{\calA_{\bbar{u}}} \cong \ZZ^r$.    Therefore, we have $T(x) \subseteq D$ for all sufficiently large $x$.  Since $D$ has density $\prod_{i=1}^m \delta_{\p_i}$, we deduce that 
\[
\limsup_{x\to +\infty} \frac{|T(x)|}{|\{u\in \PP^n(K): H(u)\leq x\}|} \leq \prod_{i=1}^m \delta_{\p_i} \leq \delta^m.
\]
Since $0\leq \delta<1$ and since $m\geq 1$ was arbitrary, this implies that $\lim_{x\to +\infty} {|T(x)|}/{|\{u\in \PP^n(K): H(u)\leq x\}|}=0$.  Equivalently, we have
\begin{align} \label{E:T bound}
|T(x)| = o(x^{[K:\QQ](n+1)}).
\end{align}

\subsection{End of the proof}

Using (\ref{E:B(x) bound}) with (\ref{E:B bound}), (\ref{E:R bound}) and (\ref{E:T bound}),  we deduce that $|B(x)|=o(x^{[K:\QQ](n+1)})$ and hence $B$ has density 0.  As already noted, the theorem now follows directly from Proposition~\ref{P:main reduction}.

\section{Proof of Theorem~\ref{T:general base}} \label{S:general base proof}

After replacing $X$ by a non-empty open subvariety, and restricting $A$, we may assume that there is an \'etale morphism $\varphi\colon X\to U$, where $U$ is a non-empty open subvariety of $\PP^n_K$ and $n$ is the dimension of $X$.   \\

We first consider the special case where $\varphi\colon X\to U$ is a Galois cover.   Denote the degree of $\varphi$ by $d$.  Define 
\[
B:=\Res_{X/U}(A), 
\]
i.e., the Weil restriction of $A$ along the morphism $\varphi$; it is an abelian scheme of relative dimension $g\cdot d$ over $U$.   Note that for any $U$-scheme $S$, we have $B(S)=A(S\times_U X)$.

Using $\varphi$, we can identify $\pi_1(X)$ with a normal subgroup of $\pi_1(U)$.  Let $G$ be the Galois group of $\varphi$, i.e., the group of  automorphisms $\sigma$ of $X$ such that $\varphi\circ \sigma = \varphi$.    For each $\sigma\in G$, let $A^\sigma$ be the abelian scheme over $X$ obtained by composing $A\to X$ with $\sigma^{-1}$.   Using that $\varphi$ is a Galois cover, we have a natural isomorphism
\begin{align}  \label{E:B U X}
B\times_U X = \prod_{\sigma\in G} A^\sigma
\end{align}
of abelian schemes over $X$ and hence an isomorphism $
\rho_{B}|_{\pi_1(X)}=\prod_{\sigma\in G}\rho_{A^\sigma}$ of representations of $\pi_1(X)$.

For any number field $L/K$ and point $x\in X(L)$, taking the fiber of (\ref{E:B U X}) above $x$ gives a natural isomorphism
\[
B_{\varphi(x)} = \prod_{\sigma\in G} A_{\sigma(x)}
\]
of abelian varieties over $L$; the fiber of $A^\sigma$ over $x$ is $A_{\sigma(x)}$.    Therefore, we have an equality $\rho_{B_{\varphi(x)}} = \prod_{\sigma\in G} \rho_{A_{\sigma(x)}}$ of representations of $\Gal_L$.  By considering specializations, we find that 
\[
[\rho_B(\pi_1(X)): \rho_{B_{\varphi(x)}}(\Gal_L)] = \Big[ (\prod_{\sigma\in G} \rho_{A_{\sigma}})(\pi_1(X)):  (\prod_{\sigma\in G} \rho_{A_{\sigma(x)}})(\Gal_L)\Big].
\] 
Therefore, $[\rho_{A}(\pi_1(X)):\rho_{A_{x}}(\Gal_L)] \leq  [\rho_B(\pi_1(X)): \rho_{B_{\varphi(x)}}(\Gal_L)] \leq [\rho_B(\pi_1(U)): \rho_{B_{\varphi(x)}}(\Gal_L)]$.

By Theorem~\ref{T:MAIN}, there is a constant $C$ such that $[\rho_B(\pi_1(U)): \rho_{B_{u}}(\Gal_K)]\leq C$ holds for infinitely many $u\in U(K)$.  Take any such $u \in U(K)$.   There is a field $L/K$ with $[L:K]\leq d$ and a point $x \in X(L)$ such that $\varphi(x)=u$.  Therefore,
\[
[\rho_{A}(\pi_1(X)):\rho_{A_{x}}(\Gal_L)] \leq [\rho_B(\pi_1(U)): \rho_{B_{\varphi(x)}}(\Gal_L)] \leq [\rho_B(\pi_1(U)): \rho_{B_u}(\Gal_K)]\cdot [L:K] \leq C\cdot d.
\]
This proves that $[\rho_{A}(\pi_1(X)):\rho_{A_{\tilde{x}}}(\Gal_{k(\tilde{x})})]\leq C\cdot d$ and $[k(\tilde{x}):K]\leq d$, where $\tilde x$ is the closed point of $X$ corresponding to $x$ (one can identify $\tilde x$ with the $\Gal_K$-orbit of $x$ in $X(\Kbar)$).  There are infinitely many such closed points $\tilde x$ since we have infinitely many $u\in U(K)$ for which $[\rho_B(\pi_1(U)): \rho_{B_{u}}(\Gal_K)]\leq C$.  This completes the proof in the case where $\varphi$ is a Galois cover.
\\

We now consider the general case.    There is an \'etale morphism $\psi\colon X' \to X$ such that its composition with $\varphi$ gives a Galois cover $\varphi'\colon X'\to U$.  To prove Theorem~\ref{T:general base} there is no harm in replacing $K$ by a finite extension $K'$ and $A$ by its base change over $X_{K'}$.   So without loss of generality, we may assume that $X'$ is a geometrically irreducible variety defined over $K$.  
 
  Let $A'\to X'$ be the base change of $A$ by $\psi$; it is an abelian scheme over $X'$.   Since $\varphi'$ is Galois, the case of Theorem~\ref{T:general base} already proved shows that there are integers $d$ and $C$ such that $[\rho_{A'}(\pi_1(X')): \rho_{A'_{x'}}(\Gal_{k(x')})] \leq C$ holds for infinitely many closed points $x'$ of $X'$ satisfying $[k(x'):K] \leq d$.  Take any such closed point $x'$ of $X'$ and define the closed point $x=\varphi(x')$ of $X$.    Using $\psi$, we can view $k(x')$ as an extension of $k(x)$ of degree at most $\deg \psi$.  In particular, $[k(x):K] \leq [k(x'):K]\leq d$.   We have an isomorphism $A_x$ between $A'_{x'}$ as abelian varieties over $k(x')$.  Therefore, we have 
\[
[\rho_{A}(\pi_1(X')): \rho_{A_x}(\Gal_{k(x')})] = [\rho_{A'}(\pi_1(X')): \rho_{A'_{x'}}(\Gal_{k(x')})].
\]
and hence 
\begin{align*}
[\rho_{A}(\pi_1(X)): \rho_{A_x}(\Gal_{k(x)})]
&\leq  [\rho_{A}(\pi_1(X')): \rho_{A_x}(\Gal_{k(x')})] \cdot \deg \psi \\
&= [\rho_{A'}(\pi_1(X')): \rho_{A'_{x'}}(\Gal_{k(x')})] \cdot \deg \psi \\
&\leq C\cdot \deg \psi.
\end{align*}
Therefore, $[k(x):k]\leq d$ and $[\rho_{A}(\pi_1(X)): \rho_{A_x}(\Gal_{k(x)})]\leq C\cdot  \deg \psi$.   Finally, there are infinitely many such closed points $x$ of $X$ since they arose from infinitely many closed points $x'$ of $X'$.


\begin{bibdiv}
\begin{biblist}

\bib{MR3455865}{article}{
   author={Cadoret, Anna},
   title={An open adelic image theorem for abelian schemes},
   journal={Int. Math. Res. Not. IMRN},
   date={2015},
   number={20},
   pages={10208--10242},
   issn={1073-7928},
   review={\MR{3455865}},
   doi={10.1093/imrn/rnu259},
}

\bib{MR861978}{article}{
   author={Chai, Ching-Li},
   title={Siegel moduli schemes and their compactifications over ${\bf C}$},
   conference={
      title={Arithmetic geometry},
      address={Storrs, Conn.},
      date={1984},
   },
   book={
      publisher={Springer, New York},
   },
   date={1986},
   pages={231--251},
}

\bib{MR2837018}{article}{
   author={Cojocaru, Alina-Carmen},
   author={Grant, David},
   author={Jones, Nathan},
   title={One-parameter families of elliptic curves over $\Bbb Q$ with
   maximal Galois representations},
   journal={Proc. Lond. Math. Soc. (3)},
   volume={103},
   date={2011},
   number={4},
   pages={654--675},
   issn={0024-6115},
}

\bib{MR0498551}{article}{
   author={Deligne, Pierre},
   title={Th\'eorie de Hodge. II},
   language={French},
   journal={Inst. Hautes \'Etudes Sci. Publ. Math.},
   number={40},
   date={1971},
   pages={5--57},
   issn={0073-8301},
}

\bib{MR1485897}{article}{
   author={Duke, William},
   title={Elliptic curves with no exceptional primes},
   language={English, with English and French summaries},
   journal={C. R. Acad. Sci. Paris S\'{e}r. I Math.},
   volume={325},
   date={1997},
   number={8},
   pages={813--818},
   issn={0764-4442},
}

\bib{MR861971}{incollection}{
      author={Faltings, Gerd},
       title={Finiteness theorems for abelian varieties over number fields},
        date={1986},
   booktitle={Arithmetic geometry ({S}torrs, {C}onn., 1984)},
   publisher={Springer},
     address={New York},
       pages={9\ndash 27},
        note={Translated from the German original [Invent. Math. {{\bf{7}}3}
  (1983), no. 3, 349--366; ibid. {{\bf{7}}5} (1984), no. 2, 381; MR
  85g:11026ab] by Edward Shipz},
}

\bib{MR1175627}{book}{
   author={Faltings, Gerd},
   author={W\"ustholz, Gisbert},
   author={Grunewald, Fritz},
   author={Schappacher, Norbert},
   author={Stuhler, Ulrich},
   title={Rational points},
   series={Aspects of Mathematics, E6},
   edition={3},
   note={Papers from the seminar held at the Max-Planck-Institut f\"ur
   Mathematik, Bonn/Wuppertal, 1983/1984;
   With an appendix by W\"ustholz},
   publisher={Friedr. Vieweg \& Sohn, Braunschweig},
   date={1992},
   pages={x+311},
   isbn={3-528-28593-1},
}

\bib{Fulman:2012}{article}{
      author={Fulman, Jason},
      author={Guralnick, Robert},
       title={Bounds on the number and sizes of conjugacy classes in finite
  {C}hevalley groups with applications to derangements},
        date={2012},
        ISSN={0002-9947},
     journal={Trans. Amer. Math. Soc.},
      volume={364},
      number={6},
       pages={3023\ndash 3070},
}

\bib{MR932724}{book}{
      author={Goresky, Mark},
      author={MacPherson, Robert},
       title={Stratified {M}orse theory},
      series={Ergebnisse der Mathematik und ihrer Grenzgebiete (3) [Results in
  Mathematics and Related Areas (3)]},
   publisher={Springer-Verlag, Berlin},
        date={1988},
      volume={14},
        ISBN={3-540-17300-5},
}

\bib{MR2778661}{article}{
   author={Greicius, Aaron},
   title={Elliptic curves with surjective adelic Galois representations},
   journal={Experiment. Math.},
   volume={19},
   date={2010},
   number={4},
   pages={495--507},
   issn={1058-6458},
}

\bib{HuiLarsen2015}{article}{
      author={Hui, Chun Yin},
      author={Larsen, Michael},
       title={Adelic openness without the Mumford-Tate conjecture
},
        date={2015},
      eprint={https://arxiv.org/abs/1312.3812v2},
        note={arXiv:1312.3812v2},
}

\bib{MR2563740}{article}{
   author={Jones, Nathan},
   title={Almost all elliptic curves are Serre curves},
   journal={Trans. Amer. Math. Soc.},
   volume={362},
   date={2010},
   number={3},
   pages={1547--1570},
   issn={0002-9947},
}

\bib{MR1632779}{book}{
      author={Knus, Max-Albert},
      author={Merkurjev, Alexander},
      author={Rost, Markus},
      author={Tignol, Jean-Pierre},
       title={The book of involutions},
      series={American Mathematical Society Colloquium Publications},
   publisher={American Mathematical Society, Providence, RI},
        date={1998},
      volume={44},
        ISBN={0-8218-0904-0},
         url={http://dx.doi.org/10.1090/coll/044},
        note={With a preface in French by J. Tits},
}

\bib{MR2240230}{article}{
   author={Kowalski, E.},
   title={On the rank of quadratic twists of elliptic curves over function
   fields},
   journal={Int. J. Number Theory},
   volume={2},
   date={2006},
   number={2},
   pages={267--288},
   issn={1793-0421},
}

\bib{MR2289204}{article}{
   author={Kowalski, E.},
   title={The large sieve, monodromy and zeta functions of curves},
   journal={J. Reine Angew. Math.},
   volume={601},
   date={2006},
   pages={29--69},
   issn={0075-4102},
}

\bib{REU}{article}{
   author={Landesman, Aaron},
   author={Swaminathan, Ashvin},
   author={Tao, James},
   author={Xu, Yujie},
   title={Surjectivity of Galois representations in rational families of
   abelian varieties},
   note={With an appendix by Davide Lombardo},
   journal={Algebra Number Theory},
   volume={13},
   date={2019},
   number={5},
   pages={995--1038},
   issn={1937-0652},
}

\bib{REU2}{article}{
   author={Landesman, Aaron},
   author={Swaminathan, Ashvin},
   author={Tao, James},
   author={Xu, Yujie},
       title={Hyperelliptic curves with maximal Galois action on the torsion points of their Jacobians},
        date={2017},
        eprint={https://arxiv.org/abs/1705.08777},
        note={arXiv:1011.6465},
}

\bib{MR1441234}{article}{
      author={Larsen, Michael},
      author={Pink, Richard},
       title={A connectedness criterion for {$l$}-adic {G}alois
  representations},
        date={1997},
        ISSN={0021-2172},
     journal={Israel J. Math.},
      volume={97},
       pages={1\ndash 10},
}

\bib{larsen-pink-finite_groups}{article}{
      author={Larsen, Michael},
      author={Pink, Richard},
       title={Finite subgroups of algebraic groups},
        date={2011},
     journal={J. Amer. Math. Soc.},
      volume={24},
       pages={1105\ndash 1158},
}

\bib{MR2360145}{article}{
   author={Liebeck, Martin W.},
   author={Pyber, Laszlo},
   author={Shalev, Aner},
   title={On a conjecture of G. E. Wall},
   journal={J. Algebra},
   volume={317},
   date={2007},
   number={1},
   pages={184--197},
   issn={0021-8693},
}

\bib{explicit}{article}{
      author={Lombardo, Davide},
       title={Explicit open image theorems for abelian varieties with trivial endomorphism ring},
        date={2015},
      eprint={https://arxiv.org/abs/1508.01293},
        note={arXiv:1508.01293},
}

\bib{MR880952}{article}{
      author={Nori, Madhav~V.},
       title={On subgroups of {${\rm GL}_n({\bf F}_p)$}},
        date={1987},
        ISSN={0020-9910},
     journal={Invent. Math.},
      volume={88},
      number={2},
       pages={257\ndash 275},
}

\bib{MR3566639}{article}{
   author={Petersen, Sebastian},
   title={Group-theoretical independence of $\ell$-adic Galois
   representations},
   journal={Acta Arith.},
   volume={176},
   date={2016},
   number={2},
   pages={161--176},
   issn={0065-1036},
}

\bib{MR0457455}{article}{
      author={Ribet, Kenneth~A.},
       title={Galois action on division points of {A}belian varieties with real
  multiplications},
        date={1976},
        ISSN={0002-9327},
     journal={Amer. J. Math.},
      volume={98},
      number={3},
       pages={751\ndash 804},
}

\bib{MR387283}{article}{
   author={Serre, Jean-Pierre},
   title={Propri\'{e}t\'{e}s galoisiennes des points d'ordre fini des courbes
   elliptiques},
   language={French},
   journal={Invent. Math.},
   volume={15},
   date={1972},
   number={4},
   pages={259--331},
   issn={0020-9910},
}

\bib{MR644559}{article}{
   author={Serre, Jean-Pierre},
   title={Quelques applications du th\'{e}or\`eme de densit\'{e} de Chebotarev},
   language={French},
   journal={Inst. Hautes \'{E}tudes Sci. Publ. Math.},
   number={54},
   date={1981},
   pages={323--401},
   issn={0073-8301},
}

\bib{MR1757192}{book}{
   author={Serre, Jean-Pierre},
   title={Lectures on the Mordell-Weil theorem},
   series={Aspects of Mathematics},
   edition={3},
   note={Translated from the French and edited by Martin Brown from notes by
   Michel Waldschmidt;
   With a foreword by Brown and Serre},
   publisher={Friedr. Vieweg \& Sohn, Braunschweig},
   date={1997},
   pages={x+218},
   isbn={3-528-28968-6},
}

\bib{MR1730973}{book}{
      author={Serre, Jean-Pierre},
       title={{\OE}uvres. {C}ollected papers. {IV}},
   publisher={Springer-Verlag},
     address={Berlin},
        date={2000},
        ISBN={3-540-65683-9},
        note={1985--1998},
}

\bib{MR1997347}{article}{
      author={Serre, Jean-Pierre},
       title={On a theorem of {J}ordan},
        date={2003},
        ISSN={0273-0979},
     journal={Bull. Amer. Math. Soc. (N.S.)},
      volume={40},
      number={4},
       pages={429\ndash 440 (electronic)},
}

\bib{MR0354656}{book}{
   title={Groupes de monodromie en g\'{e}om\'{e}trie alg\'{e}brique. I},
   language={French},
   series={Lecture Notes in Mathematics, Vol. 288},
   note={S\'{e}minaire de G\'{e}om\'{e}trie Alg\'{e}brique du Bois-Marie 1967--1969 (SGA 7
   I);
   Dirig\'{e} par A. Grothendieck. Avec la collaboration de M. Raynaud et D. S.
   Rim},
   publisher={Springer-Verlag, Berlin-New York},
   date={1972},
   pages={viii+523},
   review={\MR{0354656}},
   label={SGA7.1},     
}

\bib{MR861975}{article}{
   author={Silverman, Joseph H.},
   title={The theory of height functions},
   conference={
      title={Arithmetic geometry},
      address={Storrs, Conn.},
      date={1984},
   },
   book={
      publisher={Springer, New York},
   },
   date={1986},
   pages={151--166},
}

\bib{MR3263960}{article}{
   author={Wallace, Erik},
   title={Principally polarized abelian surfaces with surjective Galois
   representations on $l$-torsion},
   journal={J. Lond. Math. Soc. (2)},
   volume={90},
   date={2014},
   number={2},
   pages={451--471},
   issn={0024-6107},
}

\bib{MR1708609}{incollection}{
      author={Wewers, Stefan},
       title={Deformation of tame admissible covers of curves},
        date={1999},
   booktitle={Aspects of {G}alois theory ({G}ainesville, {FL}, 1996)},
      series={London Math. Soc. Lecture Note Ser.},
      volume={256},
   publisher={Cambridge Univ. Press},
     address={Cambridge},
       pages={239\ndash 282},
}

\bib{MR1944805}{article}{
      author={Wintenberger, J.-P.},
       title={D\'emonstration d'une conjecture de {L}ang dans des cas
  particuliers},
        date={2002},
        ISSN={0075-4102},
     journal={J. Reine Angew. Math.},
      volume={553},
       pages={1\ndash 16},
}

\bib{1011.6465}{article}{
      author={Zywina, David},
       title={Hilbert's irreducibility theorem and the larger sieve},
        date={2010},
       eprint={https://arxiv.org/abs/1011.6465},
        note={arXiv:1011.6465},
}

\bib{Zywina-EffectiveOpenImage}{article}{
	author={Zywina, David},
	title={An effective open image theorem for abelian varieties (preprint)},
	date={2019},
}

\end{biblist}
\end{bibdiv}

\end{document}